\numberwithin{equation}{section}
\newcommand{\tr}{\mathrm{Tr}}
\newcommand{\norm}[1]{\left \| #1 \right \|}
\newcommand{\inclu}[0] {\ar@{^{(}->}}
\newcommand{\R}{\mathbb{R}}
\newcommand{\N}{\mathbb{N}}
\newcommand{\cS}{\mathcal{S}}
\newcommand{\abs}[1]{\left| #1 \right|}
\newcommand{\Diag}{{\rm Diag}}
\newtheorem{thm}{Theorem}[section]
\newtheorem{theorem}{Theorem}[section]
\newtheorem{conjecture}{Conjecture}[section]
\newtheorem{proposition}[thm]{Proposition}
\newtheorem{lemma}[thm]{Lemma}
\crefname{claim}{claim}{claims}
\Crefname{claim}{Claim}{Claims}
\crefname{lem}{lemma}{lemmas}
\Crefname{lem}{Lemma}{Lemmas}
\crefname{algorithm}{algorithm}{algorithms}
\Crefname{algorithm}{Algorithm}{Algorithms}
\newtheorem{remark}{Remark}
\theoremstyle{remark}
\newcommand{\set}[1]{\left\{ #1 \right\}}
\newcommand{\lk}{\lambda^{(k)}}
\newcommand{\hk}{\mathfrak{h}^{(k)}}
\newcommand{\by}{\times}
\newcommand{\mb}{\mathbf}
\DeclareMathOperator{\rev}{rev}
\newcounter{mathematicaref}[section]
\newcommand{\nextmathematica}{\stepcounter{mathematicaref}{\color{red}\texttt{Mathematica proof \arabic{section}.\arabic{mathematicaref}}}}
\newcommand{\mfhk}{\mathfrak{h}^{(k)}}
\begin{document}

\title{Accelerated Gradient Descent via Long Steps}

\author{
    Benjamin Grimmer\footnote{Johns Hopkins University, Department of Applied Mathematics and Statistics, \texttt{grimmer@jhu.edu}}
    \and
    Kevin Shu\footnote{Georgia Institute of Technology, School of Mathematics, \texttt{kshu8@gatech.edu}}
    \and
    Alex L.\ Wang\footnote{Purdue University, Daniels School of Business, \texttt{wang5984@purdue.edu}}
}
	\date{\today}
	\maketitle

    \begin{abstract}
        Recently Grimmer~\cite{Grimmer2023-long} showed that for smooth convex optimization, by utilizing longer steps periodically, gradient descent's textbook $LD^2/2T$ convergence guarantees can be improved by constant factors, conjecturing an accelerated rate strictly faster than $O(1/T)$ could be possible. Here we prove such a big-O gain, establishing gradient descent's first accelerated convergence rate in this setting. Namely, we prove a $O(1/T^{1.0564})$ rate for smooth convex minimization by utilizing a nonconstant nonperiodic sequence of increasingly large stepsizes. It remains open if one can achieve the $O(1/T^{1.178})$ rate conjectured by~\cite{gupta2023branch} or the optimal gradient method rate of $O(1/T^2)$. Big-O convergence rate accelerations from long steps follow from our theory for strongly convex optimization, similar to but somewhat weaker than those concurrently developed by Altschuler and Parrilo~\cite{altschuler2023acceleration}.
    \end{abstract} 

    \section{Introduction}
We consider minimizing a continuously differentiable, convex objective function $f\colon\mathbb{R}^n\rightarrow \mathbb{R}$ via gradient descent. We assume that $f$ has an $L$-Lipschitz continuous gradient (i.e., $f$ is $L$-smooth).
Gradient descent proceeds by iterating
\begin{equation}
	x_{i+1} = x_i - \frac{h_i}{L} \nabla f(x_i) 
\end{equation}
with (normalized) stepsizes $h_i>0$ starting from some $x_0\in\R^n$.
We assume a minimizer $x_\star$ of $f$ exists and that $f$'s sublevel sets are bounded with $D = \sup\set{\norm{x-x_\star}_2\mid f(x)\leq f(x_0)}<\infty$.

When utilizing {\it constant stepsizes}, until recently, the best known guarantee was the textbook result~\cite{Bertsekas2015ConvexOA} that fixing $h_i=1$ ensures $f(x_T)-f(x_\star) \leq LD^2/2T$.
This was improved by the tight convergence theory of Teboulle and Vaisbourd~\cite{Teboulle2022}, showing a rate of
$$ f(x_T)-f(x_\star) \leq \frac{LD^2}{4 T}$$
when the stepsizes $h_i = 1$. Utilizing nonconstant stepsizes monotonically converging up to $2$, they further showed a rate approaching $LD^2/8T$.
These coefficient improvements were first conjectured by~\cite{taylor2017interpolation}.

By utilizing {\it nonconstant periodically long stepsizes}, Grimmer~\cite{Grimmer2023-long} showed improved convergence rates are possible outside the classic range of stepsizes $(0,2)$. We refer to steps with $h_i>2$ as long steps since they go beyond the classic regime $h_i\in(0,2)$ where descent on the objective value is guaranteed. Their strongest result, resulting from a computer-aided semidefinite programming proof technique, showed repeating a cycle of $127$ stepsizes $h_0,\dots,h_{126}$ ranging from $1.4$ to $370.0$ gives a rate of
$$ \min_{i\leq T} f(x_i)-f(x_\star) \leq \frac{LD^2}{5.83463 T} +O(1/T^2).$$
Note, bounding $\min_{i\leq T} f(x_i)-f(x_\star)$ (or a similar quantity) is natural for such long step methods as monotone decrease of the objective is no longer ensured.
By considering longer and more complex patterns, increasing gains in the coefficient appear to follow. However, the reliance on numerically solving semidefinite programs with size depending on the pattern length limited this prior work's ability to explore and prove continued improvements in convergence rates. Grimmer conjectured at least a $O(1/T\log(T))$ rate would follow if one could design and analyze (algebraically) cyclic patterns of generic length.

We show greater gains are possible. By using {\it nonconstant, nonperiodic stepsizes} $h_i$, we prove
\begin{equation}\label{eq:main-rate}
	\min_{i\leq T} f(x_i)-f(x_\star)  = \frac{11.7816\times LD^2}{T^{1.0564}}.
\end{equation}
Proving this relies on semidefinite programming-based analysis techniques and considers the overall effect of many iterations at once (rather than the one-step inductions typical to most first-order method analysis).

In related work, Das Gupta et.\ al.~\cite{gupta2023branch} produced numerically globally optimal stepsize selections via a branch-and-bound procedure
for gradient descent with a fixed number of steps $T\in [0,25]$. By fitting to asymptotics of their numerical guarantees~\cite[Figure 2]{gupta2023branch}, they conjectured a $O(1/T^{1.178})$ rate may be possible and may be best possible. Our work leaves open the gap between our $O(1/T^{1.0564})$ rate and their conjecture, as well as the gap between their conjecture and the known lower bound for general gradient methods of $O(1/T^2)$~\cite{nesterov-textbook}.

Generally, studying accelerated convergence rates stemming from long steps can yield several advantages/insights beyond what classic momentum methods can provide. Understanding the acceleration stemming from long steps may yield insights into the fundamental mechanism enabling acceleration; we have shown that changing the update directions based on an auxiliary momentum sequence is not needed to beat $O(1/T)$. Hence, an acceleration can be attained by a method storing only one vector in memory at each step rather than two. Further, using long steps may partially mitigate the effects of inexact or stochastic gradients, known to hamper momentum methods~\cite{Olivier2014}, as no momentum term exists to propagate past errors into future steps.
Lastly, we note that continued work in this direction may yield theoretical support for such cyclic long stepsize patterns used in neural network training~\cite{Smith2015CyclicalLR,Smith2017SuperconvergenceVF}.

Stronger guarantees for gradient descent with variable stepsizes are known in specialized settings, like $\mu$-strongly convex minimization. Classically, gradient descent with constant stepsizes $h_i = 1/L$ produces an $\epsilon$-minimizer in $O(\kappa\log(1/\epsilon))$ iterations, where $\kappa=L/\mu$. 
Concurrent to this work, Altschuler and Parrilo~\cite{altschuler2023acceleration} recently showed an accelerated rate through the inclusion of long steps of $O(\kappa^{0.786434}\log(1/\epsilon))$ (extending their prior preliminary results in~\cite{altschuler2018greed}). Our convergence theory, using a different pattern of long steps, also improves on the classic $O(\kappa\log(1/\epsilon))$, although at a weaker rate. Our Theorem~\ref{thm:strongly-convex} ensures that under our long stepsize selection, gradient descent has a $O(\kappa^{0.94662}\log(1/\epsilon))$ rate. The silver ratio, $1+\sqrt{2}$, occurs prominently in both our analysis and theirs, indicating potential deeper connections.

Altschuler and Parrilo~\cite{altschuler2023acceleration}'s faster accelerated rate for smooth strongly convex problems can be extended to give a $O(1/T^{1.271553})$ guarantee for a {\it modified} gradient descent method for general smooth convex problems, the main focus of this work. Doing so requires running gradient descent on a modified objective function (whose choice depends on specifying a target accuracy and an initial distance to optimal). In contrast, our results show acceleration beyond $O(1/T)$ is possible for gradient descent via long steps alone, i.e., without needing this modification and additional problem knowledge.

In the further specialized case of minimizing strongly convex quadratics, the optimal stepsizes were given by~\cite{young1953}, which attain the optimal $O(\kappa^{1/2}\log(1/\epsilon))$ rate. 
For nonconvex optimization, exact worst-case guarantees for gradient descent with short steps $h_k\in (0,1]$  were given by Abbaszadehpeivasti et al.~\cite{Abbaszadehpeivasti2021TheEW}.
The potential use of longer steps (greater than $2/L$) in nonconvex settings is an interesting future direction but beyond the scope of this work.

In the remainder of this introduction, we define our stepsize selection which accelerates due to the inclusion of long steps. To prove our accelerated rates, Section~\ref{sec:review} first reviews the semidefinite programming analysis technique of~\cite{Grimmer2023-long} based on the performance estimation problem techniques of~\cite{drori2012PerformanceOF,taylor2017smooth}.
Specifically, the proof of our accelerated convergence rates utilize the ``straightforward'' property of stepsize patterns.
Section~\ref{sec:proof} proves the claimed convergence rate assuming that certain finite-length blocks within our nonperiodic stepsize pattern are straightforward.
Section~\ref{sec:certificate_part_1} shows that straightforwardness of a stepsize pattern can be certified by producing a feasible solution to an associated spectral set.
Finally, we close the loop and show that appropriate blocks within our stepsize pattern are straightforward by constructing such certificates in Section~\ref{sec:certificate}. Appendices~\ref{sec:lambdadRowColSum} and~\ref{sec:lambdaM} verify the necessary conditions on our certificates. Several symbolically intense calculations or simplifications are deferred to the associated Mathematica\cite{mathematica} notebook available at the Github repository \url{https://github.com/ootks/GDLongSteps}. This same Github repository also contains Julia code that computes our step size sequences and our associated certificates.

\subsection{The Proposed Nonconstant Nonperiodic Stepsizes}

We build our overall sequence of $h = (h_0,h_1,h_2,\dots )$ from several finite-length patterns of lengths $2^{k+1}-1$ for $k\in\mathbb{N}$.\footnote{Here and throughout, we adopt the convention $\mathbb{N} = \set{0,1,2,\dots}$ contains zero.} We denote the building block pattern associated with each $k$ by some
$ \mathfrak{h}^{(k)} = (\mathfrak{h}^{(k)}_0,\mathfrak{h}^{(k)}_1,\dots \mathfrak{h}^{(k)}_{2^{k+1}-2}) . $
Below, we first define these building blocks and then explain how they are concatenated and rescaled to produce an accelerated stepsize sequence for gradient descent satisfying~\eqref{eq:main-rate}.

\subsubsection{Defining the $\mathfrak{h}^{(k)}$ Building Blocks}

We will define $\mathfrak{h}^{(k)} \in \R^{2^{k+1}-1}$ in terms of three sequences: $\mu_i$, $\alpha_i$ and $\beta_i$.
For all $i\geq 0$, define
\begin{align}\label{eq:beta}
	\beta_i &\coloneqq 1 + (1+\sqrt{2})^{i-1}.
\end{align}
We define $\alpha_i$ and $\mu_i$ inductively.
For $i \geq 0$, define
\begin{align*}
	\mu_i \coloneqq 2\sum_{\ell=0}^{i-1} \alpha_{\ell} + \sum_{\ell=0}^{i-2} 2(2^{i-\ell-1}-1) \beta_{\ell} + 2.
\end{align*}
Here, we say that the empty sum is 0, so that $\mu_0 = 2$, and $\mu_1 = 2\alpha_0 + 2$.
In general, $\mu_i$ only depends on $\alpha_\ell$ for $\ell \leq i - 1$.
We may then inductively define
\begin{align}
	\alpha_i &\coloneqq \text{the unique root larger than $1$ of}\nonumber\\
	&\qquad q_i(x) \coloneqq  2(x-1)^2 + (\mu_i - 1) (x-1) - (\beta_{i+1}-1)(\mu_i-1).\label{eq:alpha}
\end{align}
Note that $q_i(1) = - (\beta_{i+1}-1)(\mu_i-1) < 0$, so that $q_i$ has a unique root larger than $1$ and $\alpha_i$ is well-defined.

For $i \in \mathbb{N}$, we let $\nu(i)$ be the largest $k$ so that $2^k$ divides $i$ with the convention that $\nu(0)=\infty$. This is sometimes also known as the $2$-adic valuation.
For $\ell\geq 0$, let $\pi^{(\ell)} \in \R^{2^{\ell}-1}$ be the vector where $\pi^{(\ell)}_i = \beta_{\nu(i)}$.
We list the first few vectors $\pi^{(\ell)}$ for concreteness:
\begin{align*}
	\pi^{(0)} &= \emptyset \text{ is the empty vector},\\
	\pi^{(1)} &= [\beta_0],\\
	\pi^{(2)} &= [\beta_0, \beta_1, \beta_0],\text{ and}\\
	\pi^{(3)} &= [\beta_0, \beta_1, \beta_0,\beta_2,\beta_0,\beta_1,\beta_0].
\end{align*}

The building block $\mathfrak{h}^{(k)}$ will be a pattern of length $t^{(k)} = 2^{k+1} -1$. Although this pattern has exponentially many stepsizes, it will contain only $2k$ distinct values.
This $k$th building block stepsize pattern takes the form
\begin{align}
	\label{eq:BuildingBlockPattern}
	\begin{aligned}
		\mathfrak{h}^{(k)} = [&\alpha_0, \pi^{(0)}, \alpha_1, \pi^{(1)}, \dots,
		\alpha_{k-1}, \pi^{(k-1)},
		\ \mu_k,\ 
		\pi^{(k-1)}, \alpha_{k-1}, \dots, \pi^{(1)}, \alpha_1, \pi^{(0)}, \alpha_0].
	\end{aligned}
\end{align}
Here, if $v_1, \dots, v_k$ are vectors or numbers, then the notation $[v_1, \dots, v_k]$ denotes the concatenation of these vectors.
Note that the middle entry in this pattern equals the total of all the other entries in $\mathfrak{h}^{(k)}$ plus two. 
Also note that $\pi^{(\ell)}$ is symmetric about its middle entry, and so is $\mathfrak{h}^{(k)}$.

This construction was arrived at through substantial computer-search over patterns with the necessary properties (see our Theorem~\ref{thm:straightforward_from_rank_one}). Although the above pattern may seem somewhat cryptic, given the values in $\mathfrak{h}^{(k-1)}$, to produce the next pattern $\mathfrak{h}^{(k)}$ of the form~\eqref{eq:BuildingBlockPattern} just requires specifying three new numbers $\beta_{k-2}$, $\alpha_{k-1}$ and $\mu_k$. The values of the $\beta$ sequence follow a nice exponential pattern~\eqref{eq:beta}. Once this is set, the values for $\alpha_{k-1}$ and $\mu_k$ are then determined entirely by a system of two equations in two variables that simply imposes two \emph{necessary} conditions for straightforwardness of $\mathfrak{h}^{(k)}$ (see Section~\ref{subsec:tightness}).

Following this construction, the first four building block patterns are, for example, given by
\begin{align*}
	\mathfrak{h}^{(0)} &= \left[ 2 \right] ,\\
	\mathfrak{h}^{(1)} &= \left[ \frac{3}{2},  5,  \frac{3}{2}\right] ,\\
	\mathfrak{h}^{(2)} &= \left[ \frac{3}{2},  1+\sqrt{2}, \sqrt{2}, \ 7+4\sqrt{2},  \sqrt{2},\  1+\sqrt{2}, \frac{3}{2}\right] ,\\
	\mathfrak{h}^{(3)} &= \bigg[ \frac{3}{2}, 1+\sqrt{2}, \sqrt{2},\ -\frac{1}{2}-\sqrt{2}+\frac{3\sqrt{5}}{2}+\sqrt{10}, \ \sqrt{2},2,\sqrt{2},\\&\qquad	
	1 + (3 + 2 \sqrt{2}) (3 + \sqrt{5}),\\
	&  \qquad \sqrt{2},2,\sqrt{2},\ -\frac{1}{2}-\sqrt{2}+\frac{3\sqrt{5}}{2}+\sqrt{10},\ \sqrt{2}, 1+\sqrt{2}, \frac{3}{2}\bigg].
\end{align*}
The values of $\mathfrak{h}^{(4)}$, $\mathfrak{h}^{(5)}$, and $\mathfrak{h}^{(6)}$ are plotted in Figure~\ref{fig:example-stepsizes}, showcasing their symmetries and fractal nature.
Below we provide bounds on how the quantities $\alpha_k$, $\mu_k$, and $H_k := \sum_{i=0}^{t^{(k)-1}} \mathfrak{h}^{(k)}_i$ grow asymptotically (proof deferred in Appendix~\ref{app:proof-of-asymptotics}).

\begin{figure}
    \centering\begin{minipage}{0.5\linewidth}\includegraphics[width=\linewidth]{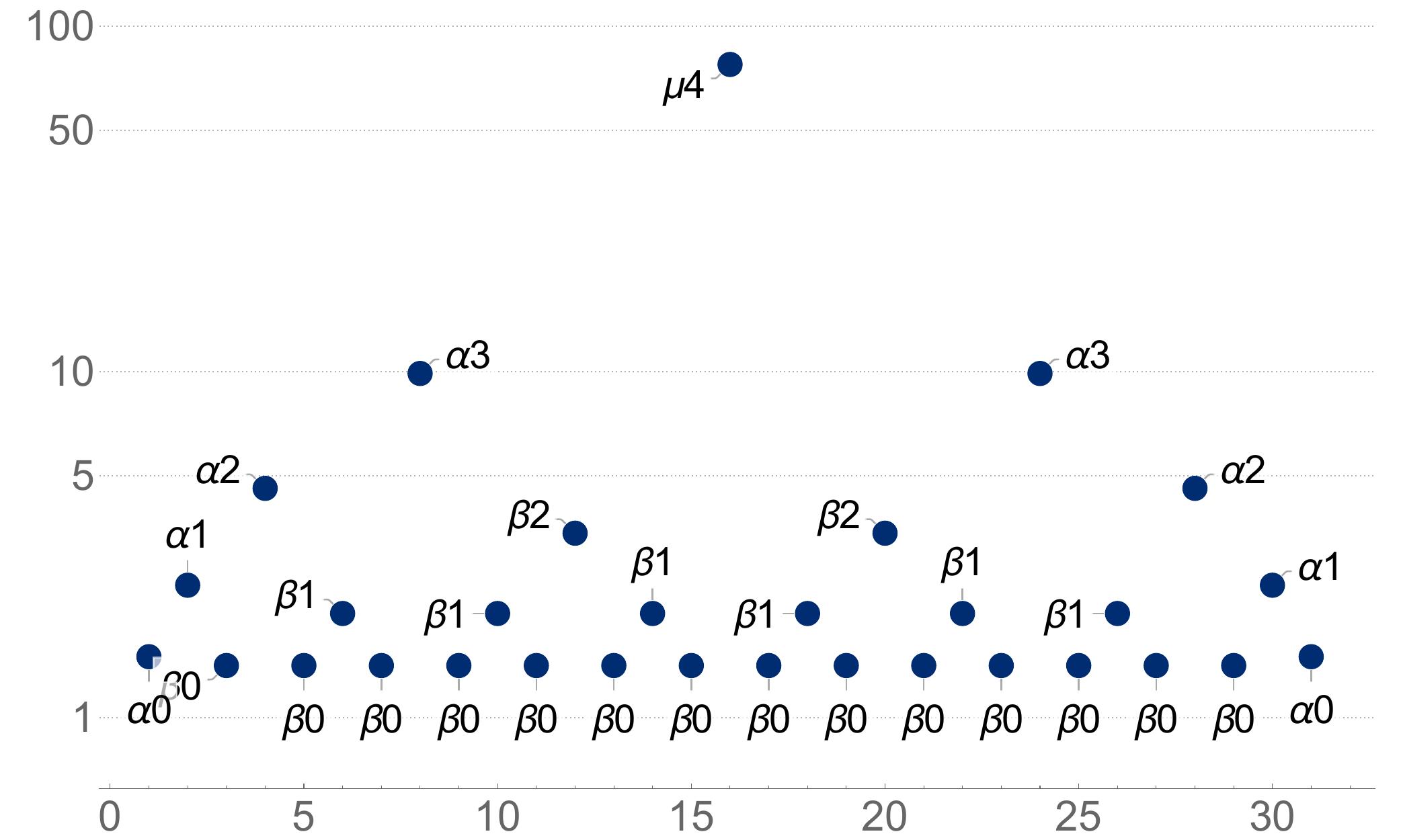}\end{minipage}\begin{minipage}{0.25\linewidth}\includegraphics[width=\linewidth]{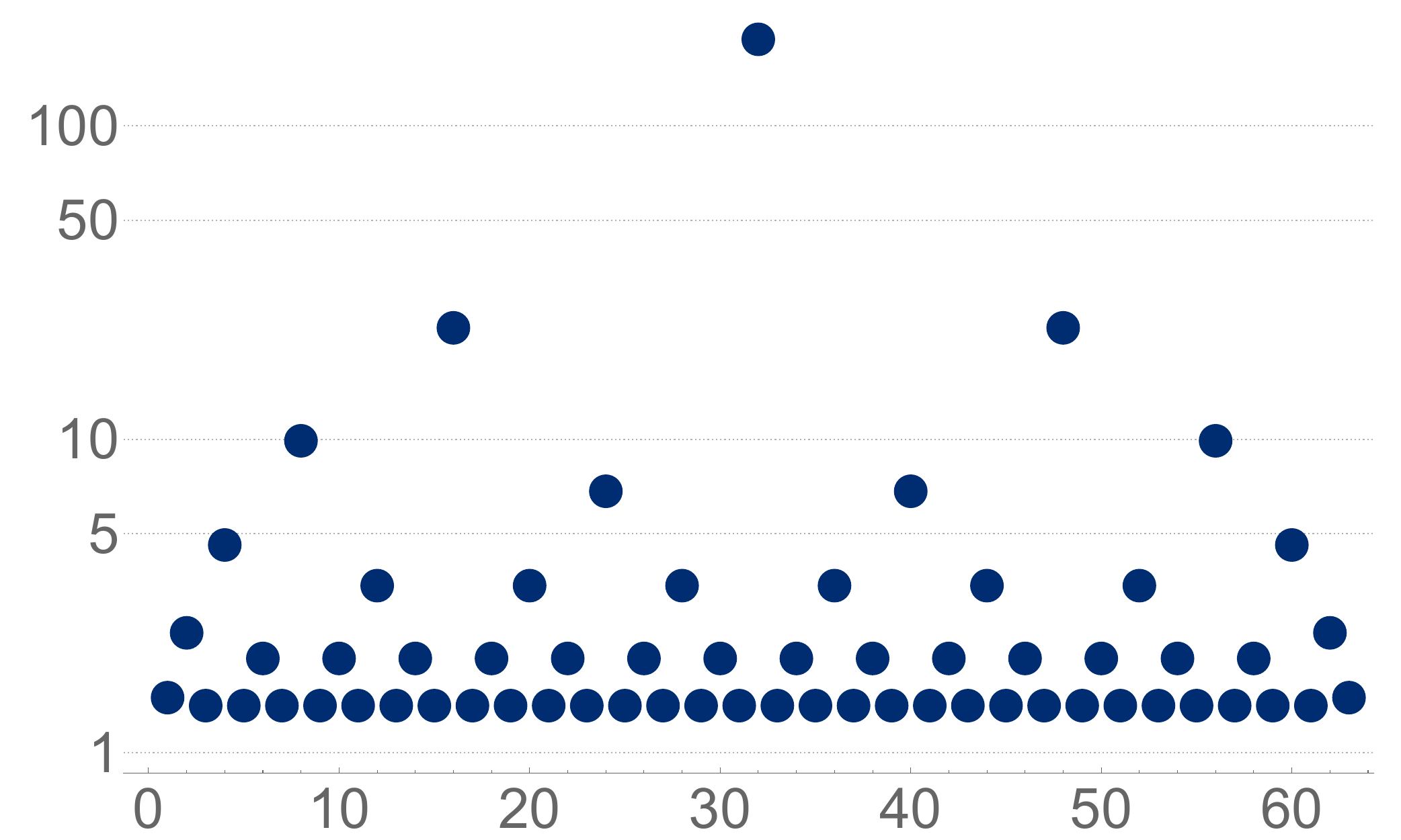}\\ \includegraphics[width=\linewidth]{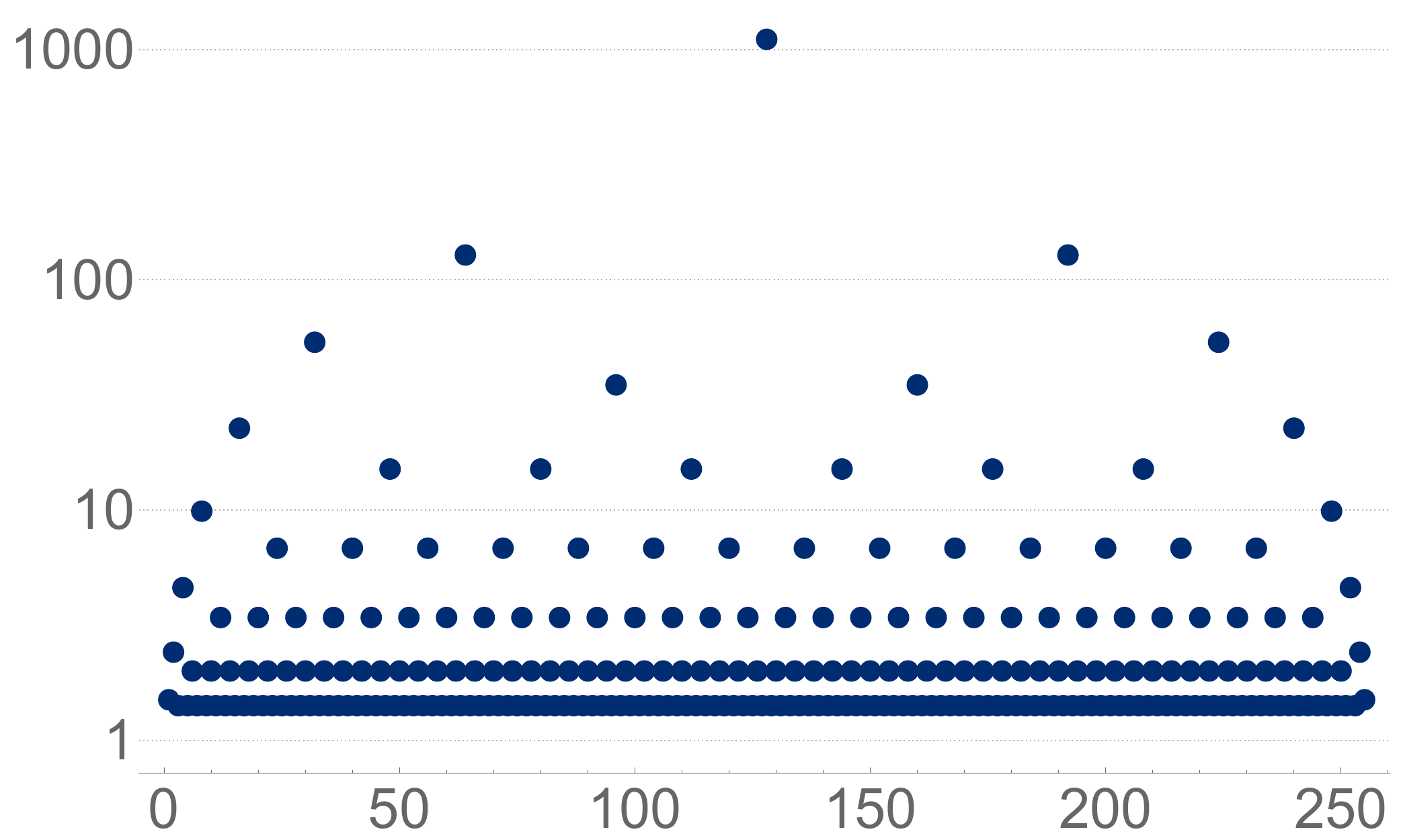}\end{minipage}\begin{minipage}{0.25\linewidth}\includegraphics[width=\linewidth]{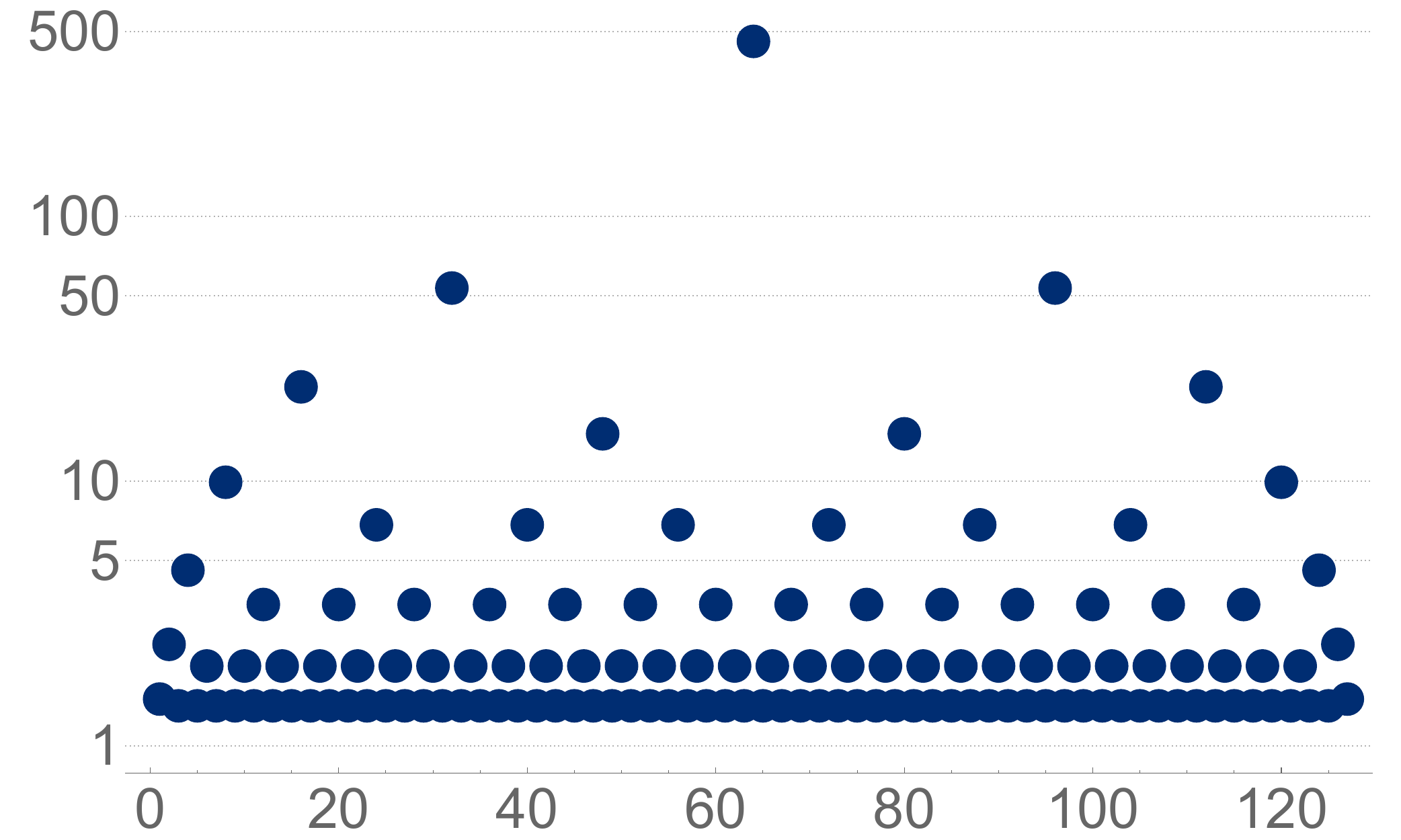}\\ \includegraphics[width=\linewidth]{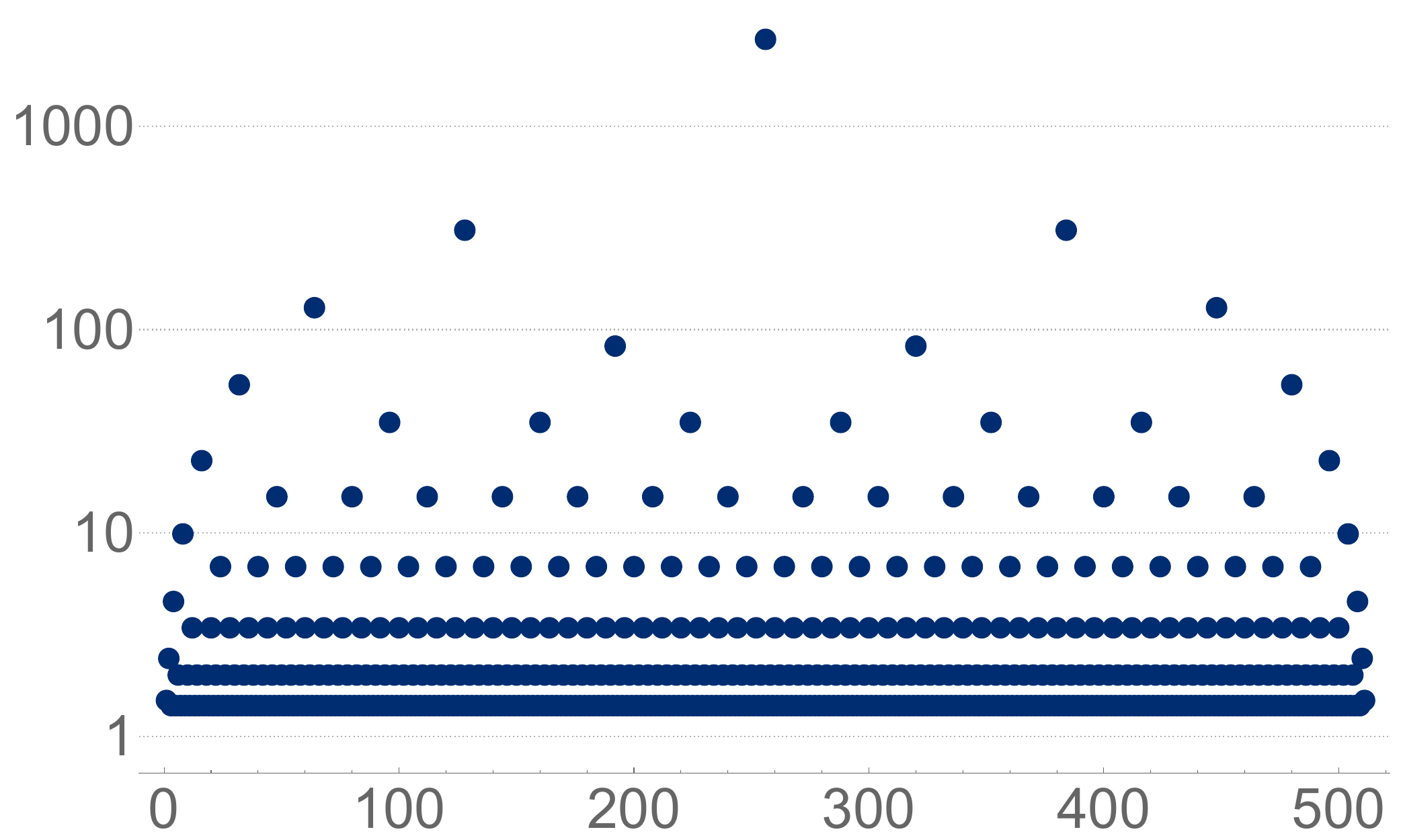}\end{minipage}
    \caption{Log-scale values of building block patterns $\mathfrak{h}^{(4)}$ labeled and unlabeled $\mathfrak{h}^{(5)},\mathfrak{h}^{(6)},\mathfrak{h}^{(7)},\mathfrak{h}^{(8)}$.}\label{fig:example-stepsizes}
\end{figure}
\begin{lemma} \label{lem:helpful-bounds-Hk-muk}
    For all $k\geq 1$, it holds that
    \begin{align*}
        \beta_k &\leq \alpha_k \leq \beta_{k+1},\\
        2\sqrt{2}(1+\sqrt{2})^k &\leq H_k \leq 4\sqrt{2} (1+\sqrt{2})^k,\\
        \sqrt{2}(1+\sqrt{2})^k &\leq \mu_k - 1 \leq 2\sqrt{2}(1+\sqrt{2})^k,\\
        (2-\sqrt{2})(1+\sqrt{2})^{k-1}&\leq \mu_k - \mu_{k-1}.
    \end{align*}
\end{lemma}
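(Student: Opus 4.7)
The plan is an induction on $k$, reduced to a single scalar inequality on $\mu_k - 1$.

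Two algebraic preliminaries do the bulk of the reduction. First, using the self-similar structure $\pi^{(k+1)} = [\pi^{(k)}, \beta_k, \pi^{(k)}]$, the sum $S_k := \sum_i \pi^{(k)}_i$ satisfies $S_{k+1} = 2 S_k + \beta_k$ with $S_0 = 0$; plugging in $\beta_k = 1 + (1+\sqrt{2})^{k-1}$ and using the silver-ratio identity $(1+\sqrt{2})^2 = 2(1+\sqrt{2}) + 1$, I expect the closed form $S_k = (1+\sqrt{2})^k - 1$ to fall out. Second, from the symmetric layout of $\mathfrak{h}^{(k)}$ one has $H_k = \mu_k + 2\sum_{\ell=0}^{k-1}\alpha_\ell + 2\sum_{\ell=0}^{k-1} S_\ell$; swapping order in the double sum over $S_\ell$ and matching with the defining formula for $\mu_k$ shows $2\sum_{\ell=0}^{k-1} S_\ell = \mu_k - 2 - 2\sum_{\ell=0}^{k-1}\alpha_\ell$, giving the identity $H_k = 2(\mu_k - 1)$. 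Together these yield the recurrence
\[
\mu_{k+1} - \mu_k \;=\; 2\alpha_k + 2\bigl((1+\sqrt{2})^k - 1\bigr).
\]

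Next I would induct on $k \geq 1$ to prove $\sqrt{2}(1+\sqrt{2})^k \leq \mu_k - 1 \leq 2\sqrt{2}(1+\sqrt{2})^k$, with base case $k = 1$ checked directly from $\mu_1 = 5$. The inductive step needs the companion bound $\beta_k \leq \alpha_k \leq \beta_{k+1}$, extracted from the defining quadratic $q_k$. The upper bound $\alpha_k \leq \beta_{k+1}$ is unconditional: $q_k(\beta_{k+1}) = 2(\beta_{k+1}-1)^2 > 0$ places $\beta_{k+1}$ outside the two roots of the convex quadratic $q_k$, and the other root is nonpositive. The lower bound $\alpha_k \geq \beta_k$ follows from computing $q_k(\beta_k) = (1+\sqrt{2})^{k-1}\bigl[2(1+\sqrt{2})^{k-1} - \sqrt{2}(\mu_k - 1)\bigr]$, which is nonpositive under the (weaker than) inductive lower bound on $\mu_k - 1$. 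Substituting these bounds on $\alpha_k$ into the recurrence advances the $\mu_k$-bound; the upper bound is in fact tight, reducing after simplification to the silver-ratio identity.

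The remaining two bounds are then immediate. The $H_k$-bound follows from $H_k = 2(\mu_k - 1)$. The bound $(2-\sqrt{2})(1+\sqrt{2})^{k-1} \leq \mu_k - \mu_{k-1}$ follows from the recurrence using only $\alpha_{k-1} \geq 1$, which already yields $\mu_k - \mu_{k-1} \geq 2(1+\sqrt{2})^{k-1}$, comfortably above the target since $2 > 2 - \sqrt{2}$. The main obstacle I anticipate is the bookkeeping needed to establish the two preliminary identities, especially $H_k = 2(\mu_k - 1)$, which equates two superficially different double sums indexed by the $\beta_j$'s; once these are in hand, the rest of the argument is short and driven almost entirely by the silver-ratio arithmetic $(1+\sqrt{2}) - 1 = \sqrt{2}$ and $(1+\sqrt{2})^2 = 2(1+\sqrt{2}) + 1$.
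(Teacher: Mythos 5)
Your proposal is correct, and it reaches the same four inequalities by a recognizably different route than the paper. The paper computes the closed form $H_k = 2\sqrt{2}\bigl((1+\sqrt{2})^k-1\bigr) - 4k + 4\sum_{i=0}^{k-1}\alpha_i + 2$ directly by counting the multiplicity of each $\beta_i$ in $\mathfrak{h}^{(k)}$, then bounds $\sum\alpha_i$ pointwise via $1 \leq \alpha_i \leq \beta_{i+1}$, reads off the $\mu_k$ bounds from $\mu_k - 1 = H_k/2$, and only afterward derives $\beta_k \leq \alpha_k$ by evaluating $q_k(\beta_k)$. You instead set up a genuine induction on $k$ driven by the one-step recurrence $\mu_{k+1} - \mu_k = 2\alpha_k + 2\bigl((1+\sqrt{2})^k - 1\bigr)$ (which is the paper's Lemma~\ref{lem:mu_inductive} plus $\beta_{k+1}-2 = (1+\sqrt{2})^k - 1$), and you use the inductive hypothesis on $\mu_k - 1$ to establish $\beta_k \leq \alpha_k$ \emph{inside} the step. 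Both proofs share the same essential ingredients — the identity $H_k = 2(\mu_k-1)$, the unconditional bound $\alpha_k \leq \beta_{k+1}$ from $q_k(\beta_{k+1}) = 2(\beta_{k+1}-1)^2 > 0$, and the observation that $q_k(\beta_k) = (1+\sqrt{2})^{k-1}\bigl[2(1+\sqrt{2})^{k-1} - \sqrt{2}(\mu_k-1)\bigr]$ — but your packaging keeps the $\alpha_k$ and $\mu_k$ bounds coupled in a single induction, whereas the paper's direct summation avoids induction entirely at the cost of the closed-form bookkeeping. Both are valid; yours is arguably cleaner to verify step by step, while the paper's gives an explicit formula for $H_k$. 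Two small points worth tightening in a write-up: the claim that the other root of $q_k$ is \emph{nonpositive} is not needed (it suffices that it is $< 1$, which follows from $q_k(1) < 0$), and the lower bound on $\mu_{k+1}-1$ actually only needs $\alpha_k \geq 1$ (the bound becomes tight), so $\alpha_k \geq \beta_k$ is not strictly required for that part of the induction, matching what the paper does.
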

From this, we see these building block patterns not only have exponentially large steps occur periodically, but also have exponentially large average stepsizes. In particular, they have
$ \mathrm{avg}(\mathfrak{h}^{(k)}) \geq \sqrt{2}\left(\frac{1+\sqrt{2}}{2}\right)^k$.
Despite containing exponentially large steps on average, in Section~\ref{sec:certificate}, we show that applying these patterns always yields an objective value decrease (by the end of the pattern).

\subsubsection{Building the Proposed Nonconstant, Nonperiodic Stepsize Sequence}
We construct our accelerated sequence of long stepsizes from rescaled versions of the stepsize building block patterns $\mathfrak{h}^{(k)}$ by some fixed scalar $\eta\in(0,1)$. 
We first apply $(1-\eta)\mathfrak{h}^{(0)}$ a fixed number of times, then apply the pattern $(1-\eta)\mathfrak{h}^{(1)}$ a fixed number of times, and so on. Each rescaled pattern $(1-\eta)\mathfrak{h}^{(k)}$ will be applied
\begin{equation}\label{eq:definition-H_k}
	R_k = \left\lceil \frac{1}{(1-\eta)\mathrm{avg}(\mathfrak{h}^{(k)})(2^{k+1}-1)\Delta^{(k+1)}} \right\rceil
\end{equation}
times where the associated parameter $\Delta^{(k)}$ is defined as in Lemma~\ref{lem:straightforward}.
\begin{remark}
    We do not believe the choice of $\Delta^{(k)}$ therein is as large as possible. Improvements on that parameter directly would improve our guarantees as fewer applications of each pattern would be needed.
    In fact, we propose a conjecture following our \cref{lem:straightforward} on how $\Delta^{(k)}$ should scale with $k$ (see \cref{conj:delta_silver_squared}). This conjecture is supported numerically and a proof of it would directly lead to an $O(1/T^{1.119})$ convergence rate guarantee.
\end{remark}

As a result, the proposed nonconstant, nonperiodic stepsize sequence is
\begin{equation}\label{eq:whole-sequence}
	h = (1-\eta) \left(\underbrace{\mathfrak{h}^{(0)}, \mathinner{\cdotp\mkern-2mu\cdotp\mkern-2mu\cdotp} ,\mathfrak{h}^{(0)}}_{\text{\small $R_0$ times}}, \ \underbrace{\mathfrak{h}^{(1)}, \mathinner{\cdotp\mkern-2mu\cdotp\mkern-2mu\cdotp} ,\mathfrak{h}^{(1)}}_{\text{\small $R_1$ \  times}}, \ \underbrace{\mathfrak{h}^{(2)}, \mathinner{\cdotp\mkern-2mu\cdotp\mkern-2mu\cdotp} ,\mathfrak{h}^{(2)}}_{\text{\small $R_2$ \  times}}, \mathinner{\cdotp\mkern-2mu\cdotp\mkern-2mu\cdotp} ,\ \underbrace{\mathfrak{h}^{(k)}, \mathinner{\cdotp\mkern-2mu\cdotp\mkern-2mu\cdotp} ,\mathfrak{h}^{(k)}}_{\text{\small $R_k$ \  times}}, \mathinner{\cdotp\mkern-2mu\cdotp\mkern-2mu\cdotp}\right) .   
\end{equation}
We denote the first iteration where stepsizes are drawn from the pattern $\mathfrak{h}^{(k)}$ by
\begin{equation}\label{eq:definition-I_k}
	I_k = \sum_{l=0}^{k-1} (2^{l+1}-1)R_l .
\end{equation}
Note the value of $\Delta^{(k)}$ shrinks geometrically. As a result, the iteration counts where we switch to the next building block stepsize pattern $I_k$ grows geometrically.  For example, setting $\eta=1/2$, the proposed sequences of stepsizes would be of the form
\begin{align*}
    h &= \Bigg[\underbrace{ 1,1,1, \ \dots}_{\text{\small $R_0$ iterations}}\ ,\underbrace{\frac{3}{4}, \frac{5}{2},\frac{3}{4},\frac{3}{4}, \frac{5}{2},\frac{3}{4},\frac{3}{4}, \frac{5}{2},\frac{3}{4},\ \dots}_{\text{\small $3R_1$ iterations}}\ , \\
    &\quad\qquad\qquad\underbrace{\frac{3}{4},  \frac{1+\sqrt{2}}{2}, \frac{\sqrt{2}}{2}, \frac{7+4\sqrt{2}}{2},  \frac{\sqrt{2}}{2}, \frac{1+\sqrt{2}}{2}, \frac{3}{4}, \ \dots}_{\text{\small $7R_2$ iterations}}  \Bigg] .    
\end{align*}
     \section{Performance Estimation and Straightforwardness} \label{sec:review}
Our proof machinery is built upon the performance estimation problem (PEP) ideas of~\cite{drori2012PerformanceOF,taylor2017interpolation,Grimmer2023-long}. We first introduce this PEP line of work~\cite{drori2012PerformanceOF,taylor2017interpolation} and associated semidefinite programs applied to our particular setting. Then, we introduce the improved semidefinite programming technique of~\cite{Grimmer2023-long}, identifying a class of stepsize patterns, dubbed straightforward, for which the effects of long steps can be analyzed.

\subsection{Performance Estimation Problems}
Give a fixed smoothness constant $L$, distance bound $D$, stepsize pattern $\mathfrak{h}\in\mathbb{R}^t$ and problem dimension $n$, one can consider the worst possible final objective gap $f(x_t)-f(x_\star)$ as a function of the initial objective gap $f(x_0)-f(x_\star)\leq \delta$. We denote this by the infinite-dimensional optimization problem
\begin{equation} \label{eq:value-function}
    p_{L,D}(\delta) := \begin{cases}
    \max_{x_0,x_\star,f} & f(x_t) - f(x_\star) \\
    \mathrm{s.t.} & f \text{ is convex, } L\text{-smooth}\\
    & \|x_0-x_\star\|_2 \leq D\\
    & f(x_0) - f(x_\star) \leq \delta \\
    & \nabla f(x_\star)=0 \\
    & x_{i+1} = x_i - \frac{\mathfrak{h}_i}{L} \nabla f(x_k) \quad \forall i=0,\dots, t-1 \ .
    \end{cases}
\end{equation}

The performance estimation problem (PEP) results of~\cite{drori2012PerformanceOF,taylor2017interpolation} establish that this problem can be relaxed (often tightly instead as a reformulation) to a finite-dimensional semidefinite minimization problem. Their PEP process of reformulations is carried out below, following the notation used in Grimmer~\cite{Grimmer2023-long}, to introduce the needed notations here and for completeness. 

\noindent {\bf Step 1: A QCQP reformulation.} First, as proposed by Drori and Teboulle~\cite{drori2012PerformanceOF}, one can discretize the infinite-dimensional problem defining $p_{L,D}(\delta)$ over all possible objective values $f_k$ and gradients $g_k$ at the points $x_k$ with $k\in I_t^\star := \{\star,0,1,\dots t\}$ as done below. Using the interpolation theorem of Taylor et al.~\cite{taylor2017interpolation}, this gives an exact reformulation rather than a relaxation, giving
\begin{equation}
p_{L,D}(\delta) = \begin{cases}
    \max_{x_0,f,g} & f_t - f_\star\\
    \mathrm{s.t.} & f_i \geq f_j + g_j^{\intercal}(x_i-x_j) + \frac{1}{2L}\|g_i-g_j\|^2_2 \qquad \forall i,j\in I_t^\star \\
    & \|x_0-x_\star\|^2_2 \leq D^2\\
    & f_0 - f_\star \leq \delta \\
    & x_\star=0,f_\star=0, g_\star=0 \\
    & x_{i+1} = x_i - \frac{\mathfrak{h}_i}{L} g_i \qquad \forall i=0,\dots, t-1
\end{cases}\label{eq:QCQP-form}
\end{equation}
where, without loss of generality, we have fixed $x_\star=0,f_\star=0, g_\star=0$. \\[-8pt]

\noindent {\bf Step 2: An SDP relaxation.} Second, one can relax the nonconvex problem~\eqref{eq:QCQP-form} to the following SDP as done in~\cite{taylor2017interpolation,taylor2017smooth,Drori2018EfficientFM}. Define
\begin{align*}
H & :=[x_{0}\mid g_{0}\mid g_{1}\mid\ldots\mid g_{t}]\in\mathbb{R}^{d\times(t+2)}\ ,\\
G & :=H^{\intercal}H\in\mathbb{S}_{+}^{t+2}\ ,\\
F & :=[f_{0}\mid f_{1}\mid\ldots\mid f_{t}]\in\mathbb{R}^{1\times(t+1)} \ , 
\end{align*}
with the following notation for selecting columns and elements of $H$ and $F$: 
\begin{align*}
\mathbf{g}_{\star} & :=0\in\mathbb{R}^{t+2},\;\mathbf{g}_{i}:=e_{i+2}\in\mathbb{R}^{t+2},\quad i\in[0:t]\\
\mathbf{x}_{0} & :=e_{1}\in\mathbb{R}^{t+2},\;\mathbf{x}_{\star}:=0\in\mathbb{R}^{t+2}\ ,\\
\mathbf{x}_{i} & :=\mathbf{x}_{0}-\frac{1}{L}\sum_{j=0}^{i-1}\mathfrak{h}_j\mathbf{g}_{j}\in\mathbb{R}^{t+2},\quad i\in[1:t]\\
\mathbf{f}_{\star} & :=0\in\mathbb{R}^{t+1},\;\mathbf{f}_{i}:=e_{i+1}\in\mathbb{R}^{t+1},\quad i\in[0:t] \ .
\end{align*}
This notation ensures
$
x_{i}=H\mathbf{x}_{i}$, $g_{i}=H\mathbf{g}_{i}$, and $f_{i}=F\mathbf{f}_{i}.
$
Furthermore,
for $i,j\in I_{t}^{\star}$, define 
\begin{align*}
A_{i,j}(\mathfrak{h})& :=\mathbf{g}_{j}\odot(\mathbf{x}_{i}-\mathbf{x}_{j})\in\mathbb{S}^{t+2}\ ,\\
B_{i,j}(\mathfrak{h})& :=(\mathbf{x}_{i}-\mathbf{x}_{j})\odot(\mathbf{x}_{i}-\mathbf{x}_{j})\in\mathbb{S}_{+}^{t+2}\ ,\\
C_{i,j}& :=(\mathbf{g}_{i}-\mathbf{g}_{j})\odot(\mathbf{g}_{i}-\mathbf{g}_{j})\in\mathbb{S}_{+}^{t+2}\ ,\\
a_{i,j}& :=\mathbf{f}_{j}-\mathbf{f}_{i}\in\mathbb{R}^{t+1}
\end{align*}
where $x\odot y= \frac{1}{2}(xy^{\intercal} + yx^{\intercal})$ denotes the symmetric outer product. This
notation is defined so that $ g_{j}^{\intercal}(x_{i}-x_{j}) =\tr GA_{i,j}(\mathfrak{h})$, $\|x_{i}-x_{j}\|^{2}_2=\tr GB_{i,j}(\mathfrak{h})$, and $\|g_{i}-g_{j}\|^{2}_2=\tr GC_{i,j}$ for any $i,j\in I_{t}^{\star}$.
Then the QCQP formulation~\eqref{eq:QCQP-form} can be relaxed to
\begin{align}
p_{L,D}(\delta)
\leq & \begin{cases}
\max_{F,G} & F\mathbf{f}_{t}\\
\textrm{s.t.} & Fa_{i,j}+\tr GA_{i,j}(\mathfrak{h})+\frac{1}{2L}\tr GC_{i,j}\leq0,\quad i,j\in I_{t}^{\star}:i\neq j \\
& -G\preceq 0\\
& \tr GB_{0,\star}\leq D^{2}\\
& F\mathbf{f}_{0} \leq \delta \ .
\end{cases}\label{eq:SDP-form}
\end{align}
Under an additional rank condition (that the problem dimension $n$ exceeds $t+2$), the QCQP~\eqref{eq:QCQP-form} and SDP~\eqref{eq:SDP-form} are actually equivalent. However, this is not needed for our analysis, so we make no such assumption. 

\noindent {\bf Step 3: The upper bounding dual SDP.} Third, note the maximization SDP~\eqref{eq:SDP-form} is bounded above by its dual minimization SDP by weak duality, giving
\begin{equation}
p_{L,D}(\delta) \leq
\begin{cases}
    \min_{\lambda,v,w,Z} & D^{2} v + \delta w \\
\textrm{s.t.} &
\sum_{i,j\in I_{t}^{\star}:i\neq j}\lambda_{i,j}a_{i,j}= a_{\star, t} - w a_{\star, 0} \\
&v B_{0,\star} +\sum_{i,j\in I_{t}^{\star}:i\neq j}\lambda_{i,j}\left(A_{i,j}(\mathfrak{h})+\frac{1}{2L}C_{i,j}\right)=Z \\
&Z\succeq0 \\
&v,w\geq0,\;\lambda_{i,j}\geq0,\quad i,j\in I_{t}^{\star}:i\neq j \ .
\end{cases}\label{eq:dual-form}
\end{equation}
Although it is not needed for our analysis, equality holds here as well (i.e., strong duality holds) due to~\cite[Theorem 6]{taylor2017interpolation}.

The matrix $Z$ above is entirely determined by its equality constraint, so one can consider it as being a function of $\lambda\in\mathbb{R}^{(t+2)\times(t+2)}$, $v\in\mathbb{R}$, and $\mathfrak{h}\in\mathbb{R}^{t}$,
\begin{align*}
    Z(\lambda,v,\mathfrak{h}) = v B_{0,\star} +\sum_{i,j\in I_{t}^{\star}:i\neq j}\lambda_{i,j}\left(A_{i,j}(\mathfrak{h})+\frac{1}{2L}C_{i,j}\right).
\end{align*}
The dependence of this function on each parameter is made clear by considering $Z$ broken into the following blocks
$$ Z(\lambda, v, \mathfrak{h}) = \begin{bmatrix} v & m(\lambda)^{\intercal} \\ m(\lambda) & M(\lambda, \mathfrak{h}) \end{bmatrix} . $$
This first entry only depends on $v$ and $v$ only occurs in this first entry. The remainder of the first column and row are linear functions of only $\lambda$, denoted by $m(\lambda)$. The remaining $(t+1)\times (t+1)$ block of $Z$ is linear in $\lambda$ and affine in $\mathfrak{h}$, denoted by $M(\lambda, \mathfrak{h})$.

\subsection{Straightforward Stepsize Patterns}
The performance estimation technique defined above does not provide a mechanism to give convergence rates for gradient descent as it only considers a fixed number of iterations $t$. To enable these PEP semidefinite programs to provide convergence rate theorems, Grimmer~\cite{Grimmer2023-long} proposed considering stepsize patterns where this worst-case function is bounded above by
$$ p_{L,D}(\delta) \leq \delta - \frac{\sum_{i=0}^{t-1} \mathfrak{h}_i}{LD^2}\delta^2  $$
for all $\delta \in [0, LD^2\Delta]$ for some $\Delta>0$. Any stepsize pattern $\mathfrak{h}\in\mathbb{R}^t$ where this holds is called {\it straightforward} with parameter $\Delta$. Let $\delta_{i} = f(x_i)-f(x_\star)$. Repeated application of such patterns can be analyzed inductively by the descent recurrence relation $\delta_{(s+1)t} \leq \delta_{st} - \frac{\sum_{i=0}^{t-1} \mathfrak{h}_i}{LD^2}\delta^2_{st} $.

Theorem~3.2 of~\cite{Grimmer2023-long} showed that a stepsize pattern is straightforward with parameter $\Delta$ if the following spectral set is nonempty
$$ \mathcal{S}_{\mathfrak{h},\Delta} = \left\{ (\lambda,\gamma)\in\mathbb{R}^{(t+2)\times (t+2)}\times\mathbb{R}^{(t+2)\times (t+2)} \mid \begin{array}{l}
    \sum_{i,j\in I_{t}^{\star}:i\neq j}\lambda_{i,j}a_{i,j}= a_{\star, t} - a_{\star, 0}\\
    \sum_{i,j\in I_{t}^{\star}:i\neq j}\gamma_{i,j}a_{i,j}= 2\sum_{i=0}^{t-1} \mathfrak{h}_i a_{\star, 0}\\
    m(\lambda)=0 \\
    \lambda \geq 0, \lambda + \Delta\gamma \geq 0\\
    \begin{bmatrix} \sum_{i=0}^{t-1} \mathfrak{h}_i & m(\gamma)^{\intercal} \\ m(\gamma) & M(\lambda, \mathfrak{h}) \end{bmatrix}\succeq 0\\
    \begin{bmatrix} \sum_{i=0}^{t-1} \mathfrak{h}_i & m(\gamma)^{\intercal} \\ m(\gamma) & M(\lambda+\Delta\gamma, \mathfrak{h}) \end{bmatrix}\succeq 0 
    \end{array}\right\} .$$
Hence proving a pattern is straightforward amounts to identifying a feasible solution to a semidefinite program. Grimmer used this to automate the search for long straightforward patterns, generating their constant factor convergence rate improvements for periodic stepsize sequences. 

This straightforward structure is critical to our proof development as well. We will show that any rescaling $\eta\in(0,1)$ of our building block patterns $(1-\eta)\mathfrak{h}^{(k)}$ is straightforward. However, in contrast to this prior work, our proof of this will be entirely analytic and hence apply for all $k$. The move from computer-generated certificates to exact algebraic formulas was essential to move the resulting performance-bound gains from being constant factor improvements to our accelerated big-O convergence.

     \section{Accelerated Convergence Rate Analysis}\label{sec:proof}

Our convergence rate analysis relies on three lemmas, (i) showing each rescaled building block pattern is straightforward, (ii) guaranteeing progress is made after $R_k$ applications of each pattern, and (iii) bounding the total number of steps in each pattern. Our first lemma requires substantial and nontrivial constructions and verification to prove. This is deferred to Sections~\ref{sec:certificate_part_1} and beyond.
\begin{lemma}\label{lem:straightforward}
	Each (scaled) building block pattern $(1-\eta) \mathfrak{h}^{(k)}$ is straightforward. In particular, $\mathfrak{h}^{(0)}/2$ has parameter $\Delta=1/2$ and for $k>0$, $\mathfrak{h}^{(k)}/2$ has parameter
	$$ \Delta^{(k)} = \frac{1}{768768 \sqrt{2}}\frac{1}{(1+\sqrt{2})^{4k}} > 9\times 10^{-7} (33.98)^{-k} >0 . $$ 
\end{lemma}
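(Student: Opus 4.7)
The plan is to exhibit, for each $k \geq 0$, explicit dual multipliers $(\lambda^{(k)}, \gamma^{(k)})$ that lie in the spectral set $\mathcal{S}_{\mathfrak{h}^{(k)}/2,\,\Delta^{(k)}}$ from Section~\ref{sec:review}, and then to reduce the $(1-\eta)$-rescaled statement to this canonical case via a scaling/monotonicity argument. Because the constraints defining $\mathcal{S}_{\mathfrak{h},\Delta}$ are linear in $(\lambda,\gamma)$ and track $\mathfrak{h}$ in a controlled polynomial way, a feasible certificate for $\mathfrak{h}^{(k)}/2$ should be convertible (with at most a tracked change in $\Delta$) into a certificate for any $(1-\eta)\mathfrak{h}^{(k)}$ with $\eta \in (0,1)$. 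The substantive content is thus the explicit $\Delta^{(k)}$ bound for the single rescaled pattern $\mathfrak{h}^{(k)}/2$.

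The base $k = 0$ case is a direct computation: the pattern is $[1]$, the multipliers are indexed by $\{\star, 0, 1\}$, and both semidefinite constraints reduce to $3\times 3$ PSD checks that yield $\Delta = 1/2$ after a short calculation. For $k \geq 1$, I would exploit the palindromic self-similar structure of $\mathfrak{h}^{(k)}$ in~\eqref{eq:BuildingBlockPattern} to build $\lambda^{(k)}$ inductively, nesting $\lambda^{(k-1)}$-like blocks symmetrically around the central long step $\mu_k$. My goal is to make the matrix $M(\lambda^{(k)}, \mathfrak{h}^{(k)}/2)$ rank one (or otherwise trivially PSD), following the rank-one strategy indicated by the reference to Theorem~\ref{thm:straightforward_from_rank_one}. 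Crucially, the defining relations~\eqref{eq:beta} for $\beta_i$, the quadratic $q_i$ in~\eqref{eq:alpha} having $\alpha_i$ as its large root, and the implicit definition of $\mu_i$ should be precisely the algebraic conditions that force the relevant Schur complements to vanish at each layer of the recursion, so that $M(\lambda^{(k)}, \mathfrak{h}^{(k)}/2) = v v^{\intercal}$ for an explicit vector $v$ built layer by layer from $\alpha_i,\beta_i,\mu_i$.

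The main obstacle will be the second PSD constraint $M(\lambda + \Delta \gamma, \mathfrak{h}^{(k)}/2) \succeq 0$ together with the sign condition $\lambda + \Delta \gamma \geq 0$: once $\lambda^{(k)}$ is tuned so that the first matrix is rank-deficient, the admissible perturbation $\Delta$ is controlled by a spectral gap in an exponentially large matrix. I would pick $\gamma^{(k)}$ to encode a principled perturbation aligned with the null direction of $M(\lambda^{(k)}, \mathfrak{h}^{(k)}/2)$, and then bound the spectral gap by the same recursion used to build $\lambda^{(k)}$: each doubling of the pattern should cost a factor of $(1+\sqrt 2)^{-4}$ in the tolerable $\Delta$, which together with the growth rates for $\alpha_k$, $\mu_k$, and $H_k$ from Lemma~\ref{lem:helpful-bounds-Hk-muk} produces the claimed exponential rate $\Delta^{(k)} \propto (1+\sqrt 2)^{-4k}$. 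Pinning down the universal constant $1/(768768 \sqrt 2)$ requires careful bookkeeping of combinatorial prefactors accumulating across the $k$ levels of recursion, and I expect this to lean heavily on the Mathematica-assisted verifications promised in Section~\ref{sec:certificate} and the associated appendices.
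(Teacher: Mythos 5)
Your high-level plan matches the paper's: both aim to exhibit a multiplier matrix $\lambda^{(k)}$ with $M(\lambda^{(k)},\mathfrak{h}^{(k)})$ nonnegative and rank one, then invoke a sufficient-condition theorem (Theorem~\ref{thm:straightforward_from_rank_one}) to conclude straightforwardness of $(1-\eta)\mathfrak{h}^{(k)}$ for all $\eta\in(0,1)$, with the $k=0$ case done by hand. Two substantive points diverge from what the paper actually does, and one of them is a genuine gap in the proposal.

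First, the gap: you propose to bound $\Delta$ by ``a spectral gap in an exponentially large matrix'' (namely $M$, with $\gamma$ aligned to its null direction). But $M(\lambda^{(k)},\mathfrak{h}^{(k)})$ is by design rank one, so its spectral gap is just its nonzero eigenvalue and does not control $\Delta$. The paper's quantitative bound (the second half of Theorem~\ref{thm:straightforward_from_rank_one}) instead decomposes $M=W_1+W_2$ and runs a Schur-complement argument whose bottleneck is the \emph{second-smallest eigenvalue of $W_2(\lambda^{(k)})$}, a weighted graph Laplacian. Bounding that eigenvalue (Section~\ref{subsec:lambda2_bound}) is the technical heart of the $\Delta^{(k)}\propto(1+\sqrt 2)^{-4k}$ rate: the paper dominates $W_2(\lambda^{(k)})$ by the Laplacian of an explicit ``caterpillar'' graph, contracts the path vertices, and applies Cauchy interlacing. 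The final rate $\Delta^{(k)}\leq\min\bigl(\min_i\lambda^{(k)}_{i,i+1}/H_k,\ \mathfrak{L}/(21H_k^3)\bigr)$ then emerges from $H_k\sim(1+\sqrt 2)^k$, $\mathfrak{L}\sim(1+\sqrt 2)^{-k}$, and $\min_i\lambda^{(k)}_{i,i+1}\sim(1+\sqrt 2)^{-2k}$. Your proposal gives no mechanism for obtaining this kind of bound, and the naive ``spectral gap of $M$'' idea does not work.

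Second, your picture of $\lambda^{(k)}$ as built by ``nesting $\lambda^{(k-1)}$-like blocks symmetrically around the central long step'' is a reasonable heuristic but does not match the actual construction: $\lambda^{(k)}$ is defined non-recursively, row by row, with five cases keyed to the binary expansion of the row index (via the 2-adic valuation $\nu$, the bit-count $p$, and the top-bit position $z$). Self-similarity is present but surfaces only as scaling identities between entries in different blocks (Lemmas~\ref{lem:recurrence} and~\ref{lem:case_2_scaling}), not as a wholesale embedding of $\lambda^{(k-1)}$. Likewise, the rank-one factorization $M(\lambda^{(k)},\mathfrak{h}^{(k)})=\tfrac12\phi^{(k)}(\phi^{(k)})^\intercal$ is established by a long entry-by-entry computation (Theorem~\ref{thm:Mvalue}) that does lean on the $\alpha_i$, $\beta_i$, $\mu_i$ identities you mention, but not through a layered Schur-complement recursion. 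Finally, a small but real discrepancy: the paper's sufficient-condition theorem takes a $\lambda$ built against $\mathfrak{h}^{(k)}$ itself and produces certificates for $(1-\eta)\mathfrak{h}^{(k)}$ directly; your framing of producing a certificate for $\mathfrak{h}^{(k)}/2$ first and then converting to general $\eta$ runs the rescaling logic in the opposite direction from the paper's star-convexity-motivated setup, which would require a separate argument you do not supply.
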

Note these parameters $\Delta^{(k)}$ are chosen very conservatively. Any at most exponentially decaying lower bound suffices to give a rate strictly faster than $O(1/T)$. The slack in our above bound can be seen by numerically computing the largest $\Delta$ such $\mathcal{S}_{\mathfrak{h}^{(k)}/2,\Delta}$ is nonempty (implying straightforwardness for $\mathfrak{h}^{(k)}/2$) for $k=1,\dots 5$. The resulting numerical values are given below
\begin{align*}
	k=1 \implies \quad &\Delta^{(1)}_{\mathtt{numerical}} \approx 9.33\times 10^{-2} > \Delta^{(1)}\approx 2.70\times 10^{-8} , \\
	k=2 \implies \quad &\Delta^{(2)}_{\mathtt{numerical}} \approx 1.28\times 10^{-2} > \Delta^{(2)}\approx 7.97\times 10^{-10} , \\
	k=3 \implies \quad &\Delta^{(3)}_{\mathtt{numerical}} \approx 2.03\times 10^{-3} > \Delta^{(3)}\approx 2.34\times 10^{-11} , \\
	k=4 \implies \quad &\Delta^{(4)}_{\mathtt{numerical}} \approx 3.34\times 10^{-4} > \Delta^{(4)}\approx 6.90\times 10^{-13} , \\
	k=5 \implies \quad &\Delta^{(5)}_{\mathtt{numerical}} \approx 5.63\times 10^{-5} > \Delta^{(5)}\approx 2.03\times 10^{-14} .
\end{align*}
The successive ratios in the numerical values of $\Delta^{(k)}$ are decreasing and seem to approach $(1+\sqrt{2})^2\approx 5.83$. For example, $\Delta^{(4)}_{\mathtt{numerical}}/\Delta^{(5)}_{\mathtt{numerical}} \approx 5.93$. This suggests the following conjecture:
\begin{conjecture}
    \label{conj:delta_silver_squared}
There exists $c>0$ such that for all $k\geq 1$, the building block $\mfhk/2$ is straightforward with $\Delta^{(k)} \geq c (1+\sqrt{2})^{-2k}$.
\end{conjecture}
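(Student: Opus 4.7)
The plan is to strengthen the explicit certificate construction underlying \cref{lem:straightforward} so as to close the gap between the proven $\Delta^{(k)} \sim (1+\sqrt{2})^{-4k}$ and the numerically observed $(1+\sqrt{2})^{-2k}$ scaling. Since $H_k \sim (1+\sqrt{2})^k$ by \cref{lem:helpful-bounds-Hk-muk}, the conjectured scale $1/H_k^2$ is the canonical quadratic scale in the problem, matching the amount of objective-gap decrease one expects a single pattern to extract from a $\delta$-suboptimal point. The fact that the ratios $\Delta^{(k)}_{\mathtt{numerical}}/\Delta^{(k+1)}_{\mathtt{numerical}}$ approach $(1+\sqrt{2})^2$ from above foreshadows the central difficulty: producing a uniform $c>0$ requires not just the correct asymptotic ratio but controlled, telescoping behavior of the rescaled quantity $(1+\sqrt{2})^{2k}\Delta^{(k)}$.

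First, I would revisit the rank-one-based construction of \cref{sec:certificate} and re-parameterize the certificate $(\lambda^{(k)}, \gamma^{(k)}) \in \mathcal{S}_{\mathfrak{h}^{(k)}/2, \Delta^{(k)}}$ using the recursive structure of $\mathfrak{h}^{(k)}$. Because $\mathfrak{h}^{(k)}$ is obtained by sandwiching the new middle entry $\mu_k$ between two mirrored copies of $[\alpha_0, \pi^{(0)}, \dots, \alpha_{k-1}, \pi^{(k-1)}]$, the natural ansatz is to express $\lambda^{(k)}$ and $\gamma^{(k)}$ via the analogous certificates for $\mathfrak{h}^{(k-1)}$ plus additive corrections at the new middle coordinate depending only on $\alpha_{k-1}, \beta_{k-1}, \mu_k$. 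The defining quadratic relation~\eqref{eq:alpha} for $\alpha_{k-1}$ should make these corrections algebraically clean and tie the recursion directly to the silver ratio.

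Second, the binding constraint is $M(\lambda^{(k)} + \Delta^{(k)}\gamma^{(k)}, \mathfrak{h}^{(k)}) \succeq 0$, so I would separately lower-bound the smallest nonzero eigenvalue of $M(\lambda^{(k)}, \mathfrak{h}^{(k)})$ after rescaling by $H_k$ and upper-bound the operator norm of $M(\gamma^{(k)}, \mathfrak{h}^{(k)})$. Since $\gamma$ encodes a linear (rather than quadratic) sensitivity in the objective gap, one expects its natural scale to be a single factor of $H_k$, while the spectral gap should likewise scale like $H_k^{-1}$ after rescaling; the existing proof of \cref{lem:straightforward} presumably compounds these into $H_k^{-2}$ on both sides, where the improved accounting would keep them as $H_k^{-1}$ separately and thereby recover the missing $(1+\sqrt{2})^{2k}$ factor. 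Tracking these two quantities through the recursion is the main computational task.

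The main obstacle will be producing spectral bounds that are uniform in $k$. Two candidate strategies are an eigenvalue interlacing argument exploiting the block-recursive form of $M(\lambda^{(k)}, \mathfrak{h}^{(k)})$, and exhibiting an explicit approximate eigenbasis adapted to the $2$-adic valuation indexing of $\pi^{(\ell)}$; both approaches require delicate algebraic identities among $\alpha_i, \beta_i, \mu_i$ that ultimately reduce to properties of $1+\sqrt{2}$. Because the numerical ratios converge to but do not equal $(1+\sqrt{2})^2$, the induction must absorb a bounded nonzero multiplicative slack at each level rather than yielding an exact recursion, and verifying that these slacks telescope to a strictly positive constant $c$, rather than decaying (even polynomially) in $k$, is where the accounting will have to be most careful.
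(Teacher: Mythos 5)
The statement is \cref{conj:delta_silver_squared}, which the paper explicitly labels a \emph{conjecture} and leaves open; it is supported numerically but not proven. What \cref{lem:straightforward} establishes is only the weaker $\Delta^{(k)}\geq\tfrac{1}{768768\sqrt 2}(1+\sqrt 2)^{-4k}$. There is therefore no proof in the paper against which to check your argument, and your own write-up is, as you acknowledge, a research plan rather than a proof: you defer both the uniform-in-$k$ spectral bound and the verification that the per-level multiplicative slacks telescope to a positive constant $c$, which are precisely the two points at which any such argument would have to be made rigorous.

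One concrete obstruction worth flagging. Your plan envisions tightening the accounting inside the paper's sufficient-condition machinery, but the quantitative half of \cref{thm:straightforward_from_rank_one} only certifies $\Delta$ up to $\mathfrak L/(21H^3)$ with $\mathfrak L\in(0,1]$, hence up to $O(H^{-3})$ regardless of the certificate $\lambda$. Since $H_k\sim(1+\sqrt 2)^k$, that theorem can never certify $\Delta^{(k)}\gtrsim(1+\sqrt 2)^{-2k}=H_k^{-2}$, no matter how cleverly $\lambda^{(k)}$ is chosen. The cubic loss comes from the operator-norm estimate $\norm{Q_3'-\tfrac{2(t+2)}{H}q_3q_3^\intercal}_{\mathrm{op}}\lesssim H^3$ (the Schur-complement correction $\tfrac2H\norm{Q_3\mathbf 1}_2^2$ contributes the $H^3$), while the paper's lower bound $\mathfrak L\gtrsim(1+\sqrt 2)^{-k}$ on the caterpillar Laplacian's spectral gap is already driven by exponentially small leaf-edge weights. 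Proving the conjecture would therefore require replacing \cref{thm:straightforward_from_rank_one} with a sharper sufficient condition that avoids these crude norm bounds, or working directly with $\mathcal S_{\mathfrak h,\Delta}$ — not merely improving constants or reconstructing the certificate $\lambda^{(k)}$ within the existing framework. Your recursive-certificate ansatz is a plausible starting point for that redesign, but it is not a proof, and your diagnosis of where the $(1+\sqrt2)^{2k}$ factor is lost (``compounding into $H_k^{-2}$ on both sides'') does not match the actual $\mathfrak L/H^3$ accounting in the paper.
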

As discussed previously, proving this conjecture would directly lead to an $O(1/T^{1.119})$ convergence rate guarantee. We also mention that if one could show the \emph{even stronger} bound of $\Delta^{(k)} =\Omega((1+\sqrt{2})^{-k})$, then our steplength schedule $h$ would have the nice property that each block $\mathfrak{h}^{(k)}$ is repeated only constant number of times and the resulting convergence rate guarantee, $O(1/T^{1.2715})$, would match the rate achieved by Altschuler and Parrilo~\cite{altschuler2023acceleration} with their modified gradient descent algorithm.

Our second lemma analyzes the objective gap of gradient descent with the stepsize sequence~\eqref{eq:whole-sequence} after completing $R_k$ applications of $(1-\eta)\mathfrak{h}^{(k)}$. This lemma follows directly from \cref{lem:straightforward}. Recall $I_{k+1}$ denotes the iteration of gradient descent just after applications of $\mathfrak{h}^{(k)}$ has completed. 
\begin{lemma}\label{lem:inductive-progress}
	Each $k\geq 0$ has $f(x_{I_k}) - f(x_\star) \leq LD^2 \Delta^{(k)}$.
\end{lemma}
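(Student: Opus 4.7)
The plan is to prove \cref{lem:inductive-progress} by a straightforward induction on $k$, combining \cref{lem:straightforward} with the defining choice of $R_k$ from~\eqref{eq:definition-H_k}. The base case $k=0$ asks for $f(x_0)-f(x_\star)\leq LD^2/2$ since $I_0=0$ (empty sum) and $\Delta^{(0)}=1/2$; this is the standard smoothness-plus-distance bound $f(x_0)-f(x_\star)\leq \tfrac{L}{2}\|x_0-x_\star\|_2^2\leq LD^2/2$ that uses $\nabla f(x_\star)=0$.

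For the inductive step, assume $f(x_{I_k})-f(x_\star)\leq LD^2\Delta^{(k)}$. Between iteration $I_k$ and $I_{k+1}$ the scaled block $(1-\eta)\mathfrak{h}^{(k)}$ is applied exactly $R_k$ times. Let $\delta_s$ denote the objective gap at the start of the $(s{+}1)$-th such application, so $\delta_0=f(x_{I_k})-f(x_\star)$ and $\delta_{R_k}=f(x_{I_{k+1}})-f(x_\star)$. Since $(1-\eta)\mathfrak{h}^{(k)}$ is straightforward with parameter $\Delta^{(k)}$, whenever $\delta_s\leq LD^2\Delta^{(k)}$ and the current iterate lies within $D$ of $x_\star$, one gets the per-application bound
\begin{equation*}
\delta_{s+1}\leq \delta_s - \frac{(1-\eta)H_k}{LD^2}\,\delta_s^2,\qquad H_k=\sum_i\mathfrak{h}^{(k)}_i.
\end{equation*}
Both prerequisites propagate automatically: the bound above implies $\delta_{s+1}\leq \delta_s$, so inductively $\delta_s\leq\delta_0\leq LD^2\Delta^{(k)}$; and since the objective value at the start of each application is no larger than $f(x_0)$, the iterate sits in the sublevel set $\{f\leq f(x_0)\}$, which has radius at most $D$ about $x_\star$ by definition.

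The remainder is a standard $1/T$-style telescoping. Dividing the per-application bound by $\delta_s\delta_{s+1}$ and using $\delta_{s+1}\leq\delta_s$ yields
\begin{equation*}
\frac{1}{\delta_{s+1}}\geq \frac{1}{\delta_s}+\frac{(1-\eta)H_k}{LD^2},
\end{equation*}
and summing over $s=0,\dots,R_k-1$ gives $1/\delta_{R_k}\geq R_k(1-\eta)H_k/(LD^2)$. Rewriting $H_k=\mathrm{avg}(\mathfrak{h}^{(k)})(2^{k+1}-1)$ and invoking the defining inequality $R_k\geq 1/\bigl((1-\eta)\,\mathrm{avg}(\mathfrak{h}^{(k)})(2^{k+1}-1)\,\Delta^{(k+1)}\bigr)$ from~\eqref{eq:definition-H_k} yields $\delta_{R_k}\leq LD^2\Delta^{(k+1)}$, closing the induction.

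There is no substantial obstacle in this lemma: the only mildly subtle point is justifying the distance bound $\|\cdot-x_\star\|\leq D$ at the start of every reapplication so that \cref{lem:straightforward} can be invoked, but this is immediate once one observes that the straightforward inequality itself forces monotone decrease of the objective gap across complete pattern applications. All of the real work is concentrated in \cref{lem:straightforward}, whose certificate construction is deferred to subsequent sections.
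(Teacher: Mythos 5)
Your proof is correct and follows essentially the same route as the paper's: base case via the smoothness bound, per-application descent from straightforwardness, solve the $\delta_{s+1}\leq\delta_s-C\delta_s^2$ recurrence, and invoke the defining lower bound on $R_k$. The only departure is that you spell out the telescoping explicitly and take care to verify that the prerequisites for invoking straightforwardness (gap $\leq LD^2\Delta^{(k)}$ and the iterate in the sublevel set $\{f\leq f(x_0)\}$) hold at the start of every reapplication; the paper leaves these checks implicit, so this is a welcome clarification rather than a different argument.
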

\begin{proof}
	Note this trivially holds for $k=0$ since $I_0=0$ and $f(x_0)-f(x_\star)\leq \frac{1}{2}LD^2$. We prove this inductively by showing that if $\delta_{I_{k}} \leq LD^2 \Delta^{(k)}$, then $\delta_{I_{k+1}} \leq LD^2\Delta^{(k+1)}$.
	Suppose $\delta_{I_{k}}\leq LD^2\Delta^{(k)}$. Then the straightforwardness of $(1-\eta)\mathfrak{h}^{(k)}$ ensures that the objective gap decreases with each application of the pattern $(1-\eta)\mathfrak{h}^{(k)}$. Namely, for any $s\geq 0$, we have
	$$ \delta_{I_{k} + (2^{k+1}-1)(s+1)} \leq \delta_{I_{k} + (2^{k+1}-1)s} - \frac{(1-\eta)\mathrm{avg}(\mathfrak{h}^{(k)})(2^{k+1}-1)}{LD^2} \delta_{I_{k} + (2^{k+1}-1)s}^2. $$
	Solving this recurrence relation (of the standard form $\delta_{s+1}\leq\delta_{s}-C\delta_{s}^2$) ensures that for all $s\geq 0$
	\begin{equation}\label{eq:pattern-recurrence-solve}
		\delta_{I_{k} + (2^{k+1}-1)s} \leq \frac{LD^2}{(1-\eta)\mathrm{avg}(\mathfrak{h}^{(k)})(2^{k+1}-1) s}.
	\end{equation}
	Hence iteration $I_{k+1}$, after $s=R_k$ applications of $(1-\eta)\mathfrak{h}^{(k)}$, has $\delta_{I_{k+1}} \leq LD^2\Delta^{(k+1)}$.
\end{proof}
To convert this bound to a convergence rate guarantee, we need a bound on $I_k$, given below. 
\begin{lemma}\label{lem:I_k-bounds}
	Let $c= \frac{1}{768768 \sqrt{2}}$ and $d=(1+\sqrt{2})^4$. If $\eta=1/2$, then $I_0=0$ and for $k>0$,
	$$I_k \leq
	\frac{(1+10^{-8})(1+\sqrt{2})}{c\left(1 - \frac{1+\sqrt{2}}{2d}\right)} \left(\frac{2 d}{1+\sqrt{2}}\right)^{k} . $$
\end{lemma}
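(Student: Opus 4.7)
The plan is to prove the bound by induction on $k \geq 1$ with hypothesis $I_k \leq C\rho^k$, where $\rho := 2d/(1+\sqrt{2})$ and $C := (1+10^{-8})\cdot 2d/(c(\rho-1))$. Using $\rho - 1 = (2d-(1+\sqrt{2}))/(1+\sqrt{2})$ one checks that this $C$ equals the stated coefficient $(1+10^{-8})(1+\sqrt{2})/[c(1-(1+\sqrt{2})/(2d))]$. The required per-block estimate comes from \cref{lem:helpful-bounds-Hk-muk} and the formula for $\Delta^{(\ell+1)}$ in \cref{lem:straightforward}.

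First, setting $\eta = 1/2$ in~\eqref{eq:definition-H_k}, I would write $R_\ell = \lceil 2/(H_\ell \Delta^{(\ell+1)})\rceil$ with $H_\ell := \mathrm{avg}(\mathfrak{h}^{(\ell)})(2^{\ell+1}-1)$. The bound $H_\ell \geq 2(1+\sqrt{2})^\ell$ holds uniformly in $\ell$: for $\ell \geq 1$ it follows from \cref{lem:helpful-bounds-Hk-muk} (which gives the stronger $H_\ell \geq 2\sqrt{2}(1+\sqrt{2})^\ell$), and for $\ell = 0$ it is direct since $\mathfrak{h}^{(0)} = [2]$. Combining this with $\lceil x\rceil \leq x+1$, $\Delta^{(\ell+1)} = c/d^{\ell+1}$, and the identity $d/(1+\sqrt{2}) = (1+\sqrt{2})^3$, one obtains the per-block estimate
\begin{align*}
(2^{\ell+1}-1)R_\ell \leq (2^{\ell+1}-1) + \frac{(2^{\ell+1}-1)\,d^{\ell+1}}{c(1+\sqrt{2})^\ell} \leq 2^{\ell+1} + \frac{2d}{c}\rho^\ell.
\end{align*}

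The base case $k=1$ asserts $I_1 = R_0 \leq C\rho$. Since $R_0 \leq 1 + d/c < 2d/c$ and $C\rho = (1+10^{-8})\cdot 2d\rho/(c(\rho-1))$, it suffices to check $2d/c \leq 2d\rho(1+10^{-8})/(c(\rho-1))$, which reduces to $\rho - 1 \leq \rho(1+10^{-8})$ and is trivial. For the inductive step, combining $I_k \leq C\rho^k$ with the per-block estimate yields $I_{k+1} \leq C\rho^k + 2^{k+1} + (2d/c)\rho^k$, and this is at most $C\rho^{k+1}$ exactly when $C(\rho-1) \geq 2d/c + 2^{k+1}/\rho^k$. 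Substituting $C(\rho-1) = (1+10^{-8})\cdot 2d/c$ collapses the required inequality to $10^{-8} \geq (c/d)(2/\rho)^k$. With $c \approx 9.19\times 10^{-7}$, $d = 17+12\sqrt{2} \approx 33.97$, and $\rho = 14 + 10\sqrt{2} \approx 28.14$, the worst case is $k = 1$, giving $(c/d)(2/\rho) \approx 1.93\times 10^{-9} < 10^{-8}$, so the induction closes.

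The only mildly delicate point is this final numerical check: the constant $(1+10^{-8})$ is tuned just large enough to absorb the lower-order $2^{k+1}$ terms into the leading $\rho^k$, and the argument would fail with a much smaller slack such as $(1+10^{-9})$, where the worst-case requirement $10^{-9} \geq 1.93\times 10^{-9}$ is violated. Otherwise the proof is elementary bookkeeping, with no genuinely hard step.
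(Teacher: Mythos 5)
Your proof is correct, but it takes a genuinely different route from the paper. The paper absorbs the ``$+1$'' from the ceiling \emph{multiplicatively} up front: it shows that the quantity inside the ceiling, $d^{l+1}/(c(1+\sqrt{2})^l)$, is at least $10^{7}$ for all $l\geq 0$ (worst case $l=0$ gives $d/c\approx 3.7\times 10^7$), so $R_l \leq (1+10^{-7})\,d^{l+1}/(c(1+\sqrt{2})^l)$, and then sums the resulting geometric series in closed form. You instead keep the ``$+1$'' as an additive $2^{\ell+1}$ term and run an induction, absorbing that lower-order term into the slack factor at each step. The trade-off is instructive: your inductive bookkeeping requires $10^{-8}\geq (c/d)(2/\rho)^k$, which holds for all $k\geq 1$ (worst case $k=1$ gives $\approx 1.9\times 10^{-9}$), so you actually prove the lemma with the $(1+10^{-8})$ constant as stated. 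The paper's multiplicative absorption, by contrast, would need $d/c \geq 10^8$, which fails at $l=0$, so the paper's proof as written only establishes the bound with $(1+10^{-7})$; indeed that is the constant the proof's final display and the downstream Theorem~\ref{thm:main-result} both use, revealing a small inconsistency between the lemma statement and its own proof that your approach quietly repairs. Either constant is of course harmless for the asymptotics.
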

\begin{proof}
	Trivially $I_0=0$. Recall from Lemma~\ref{lem:straightforward} that the straightforwardness parameter is given by
	$ \Delta^{(0)} = 1/2 $ and $\Delta^{(k)}=c/d^{k}$ if $k\geq 1$. Then for $l\geq 0$,
	\begin{align*}
		R_l = \left\lceil \frac{1}{(1-\eta)\mathrm{avg}(\mathfrak{h}^{(l)})(2^{l+1}-1)\Delta^{(l+1)}} \right\rceil
		\leq \left\lceil \frac{d^{l+1}}{c(1+\sqrt{2})^l} \right\rceil
		& \leq (1+10^{-7})\left( \frac{d^{l+1}}{c(1+\sqrt{2})^l} \right),
	\end{align*}
	using that $(1-\eta)\mathrm{avg}(\mathfrak{h}^{(l))})(2^{l+1}-1) \geq (1+\sqrt{2})^l$ and that $\frac{d^{l+1}}{c(1+\sqrt{2})^l} \geq 10^{7}$ for all $l\geq 0$.
	Then one can bound $I_{k}$ for $k>0$ as
	\begin{align*}
		I_{k} = \sum_{l=0}^{k-1} (2^{l+1}-1)R_l & <  \sum_{l=0}^{k-1} \frac{2(1+10^{-7})d}{c}\left(\frac{2d}{1+\sqrt{2}}\right)^l\\
		&=  \frac{(1+10^{-7})(1+\sqrt{2})}{c\left(1 - \frac{1+\sqrt{2}}{2d}\right)} \left(\frac{2 d}{1+\sqrt{2}}\right)^{k}. \qedhere
	\end{align*}
\end{proof}

From this, our accelerated convergence guarantee for gradient descent is immediate.
\begin{theorem}\label{thm:main-result}
	For any target accuracy $0<\epsilon<\frac{1}{2}LD^2$ and rescaling $\eta=1/2$, gradient descent with stepsize sequence~\eqref{eq:whole-sequence} has $f(x_T)-f(x_\star)\leq \epsilon$ by some iteration 
	$$ T \leq  7.00809\left(\frac{LD^2}{\epsilon}\right)^{0.976091} .$$
\end{theorem}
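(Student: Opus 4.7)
The plan is to combine Lemmas~\ref{lem:inductive-progress} and~\ref{lem:I_k-bounds} in the obvious way: pick the smallest $k^*$ for which the certified progress $LD^2\Delta^{(k^*)}$ already dips below $\epsilon$, then upper bound the iteration count $T$ by $I_{k^*}$. The assumption $\epsilon<LD^2/2=LD^2\Delta^{(0)}$ forces $k^*\geq 1$, so Lemma~\ref{lem:straightforward} gives $\Delta^{(k^*)}=c/d^{k^*}$ with $c=1/(768768\sqrt{2})$ and $d=(1+\sqrt{2})^4$, and the defining inequality solves to $k^*=\lceil\log_d(cLD^2/\epsilon)\rceil$. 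Lemma~\ref{lem:inductive-progress} then immediately certifies $f(x_{I_{k^*}})-f(x_\star)\leq LD^2\Delta^{(k^*)}\leq\epsilon$, so $T=I_{k^*}$ suffices.

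Applying Lemma~\ref{lem:I_k-bounds} with $r\coloneqq 2d/(1+\sqrt{2})=2(1+\sqrt{2})^3$ bounds $I_{k^*}\leq C_1\,r^{k^*}$, where $C_1$ is the explicit prefactor from that lemma. The ceiling bound $k^*\leq 1+\log_d(cLD^2/\epsilon)$ then yields
\[
T\leq I_{k^*}\leq C_1\,r\cdot\left(\frac{cLD^2}{\epsilon}\right)^{\log_d r}.
\]
The key payoff is that
\[
\log_d r=\frac{\log 2+3\log(1+\sqrt{2})}{4\log(1+\sqrt{2})}=\frac{3}{4}+\frac{\log 2}{4\log(1+\sqrt{2})}\approx 0.9466<1,
\]
which is a genuinely sub-linear dependence on $1/\epsilon$ and corresponds precisely to the $O(1/T^{1.0564})$ rate headlined in~\eqref{eq:main-rate}.

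The remaining work is purely arithmetic: absorb the factor $c^{\log_d r}$ into the prefactor $C_1\,r$ to produce an absolute constant multiplying $(LD^2/\epsilon)^{\log_d r}$, and evaluate at the stated precision. I do not expect any real obstacle here, since all the mathematical content of the theorem has been packaged into the prior lemmas, with the heavy lifting offloaded to Lemma~\ref{lem:straightforward} (whose proof occupies the later sections). The only mild subtlety is the bookkeeping around the ceiling and confirming that the hypothesis $\epsilon<LD^2/2$ cleanly excludes the degenerate $k^*=0$ case; both are immediate from the monotonicity of $\Delta^{(k)}$ in $k$.
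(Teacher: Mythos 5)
Your skeleton---fix the smallest $k^\ast$ with $\Delta^{(k^\ast)}\leq\epsilon/LD^2$, invoke Lemma~\ref{lem:inductive-progress} to certify $\epsilon$-accuracy at iteration $I_{k^\ast}$, then bound $I_{k^\ast}$ via Lemma~\ref{lem:I_k-bounds}---follows the paper's high-level logic, and the exponent $\log_d(2d/(1+\sqrt{2}))=1-\log_d((1+\sqrt{2})/2)\approx 0.9466$ you extract is indeed the one behind~\eqref{eq:main-rate}. But the bound $T\leq I_{k^\ast}$ is substantially coarser than what the paper uses, and the residual work is not ``purely arithmetic.'' The quantity $I_{k^\ast}$ depends on $\epsilon$ only through the step function $k^\ast$, whose jumps occur at thresholds $LD^2\Delta^{(k)}$ spaced by factors of $d=(1+\sqrt{2})^4\approx 34$, so when $\epsilon$ sits near the top of its admissible interval the bound overshoots by roughly that factor. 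This is fatal at $k^\ast=1$, i.e.\ $\epsilon\in[LD^2c/d,\,LD^2/2)$: there $I_1=R_0=\lceil 2/\Delta^{(1)}\rceil=\lceil 2d/c\rceil\approx 7.4\times 10^7$ independently of $\epsilon$, while the claimed bound at $\epsilon$ near $LD^2/2$ is only about $14$. Your ceiling inequality $k^\ast\leq 1+\log_d(cLD^2/\epsilon)$ also breaks on that range (the right side goes negative once $\epsilon>cLD^2$), so the ``mild subtlety'' you wave off is exactly where the argument collapses. The paper sidesteps all of this: it sets $k(\epsilon)=k^\ast-1$ and returns to the within-block recurrence~\eqref{eq:pattern-recurrence-solve}, which shows only $\lceil LD^2/((1+\sqrt{2})^{k(\epsilon)}\epsilon)\rceil$ repetitions of $(1-\eta)\mathfrak{h}^{(k(\epsilon))}$ are needed---a count that tracks $\epsilon$ continuously. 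Even in the regime where your estimate does apply, absorbing constants as you propose gives a prefactor $C_1\,r\,c^{\log_d r}\approx 148$, versus the paper's $\approx 10.33$.

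Separately, neither your exponent nor the one the paper's own proof concludes with (its final display reads $10.3280\,(LD^2/\epsilon)^{0.94662}$) matches the statement's $0.976091$, and $10.3280\,x^{0.94662}\leq 7.00809\,x^{0.976091}$ holds only for astronomically large $x=LD^2/\epsilon$. The stated pair $(7.00809,\,0.976091)$ appears to be a leftover from an earlier draft; the exponent consistent with the abstract, with~\eqref{eq:main-rate}, with the paper's own computation, and with your calculation is $0.94662$. You should have flagged this mismatch rather than asserting the prefactor evaluation would be trouble-free.
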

\begin{proof}
	As in Lemma~\ref{lem:I_k-bounds}, let $c= \frac{1}{768768 \sqrt{2}}$ and $d=(1+\sqrt{2})^4$ for which $\Delta^{(k)}=c/d^{k}$ if $k\geq 1$. Let $k(\epsilon)$ be the last pattern $k$ with $\Delta^{(k)} > \epsilon/LD^2$. Since $\Delta^{(k)}$ monotonically decreases to zero, $k(\epsilon)$ is finite.
	Lemma~\ref{lem:I_k-bounds} bounds the number of steps before building block pattern $k(\epsilon)$ is used by
	$$I_{k(\epsilon)} = \begin{cases}
		0 & \text{ if } k(\epsilon)=0\\
		\frac{(1+10^{-7})(1+\sqrt{2})}{c\left(1 - \frac{1+\sqrt{2}}{2d}\right)} \left(\frac{2 d}{1+\sqrt{2}}\right)^{k(\epsilon)} & \text{ if } k(\epsilon)>0.
	\end{cases}$$
	A bound on the number of steps using pattern $\mathfrak{h}^{(k(\epsilon))}$ before an $\epsilon$-minimizer follows from the convergence guarantee~\eqref{eq:pattern-recurrence-solve}. Namely, the number of applications of the pattern $(1-\eta)\mathfrak{h}^{k(\epsilon)}$ needed is at most 
	$$ \frac{LD^2}{(1+\sqrt{2})^{k(\epsilon)}\epsilon}.$$
	Hence, if $k(\epsilon)>0$ and so $k(\epsilon) = \lfloor\log_d(c LD^2/\epsilon)\rfloor$, an $\epsilon$-minimizer is found by iteration
	\begin{align*}
		& \frac{(1+10^{-7})(1+\sqrt{2})}{c\left(1 - \frac{1+\sqrt{2}}{2d}\right)} \left(\frac{2 d}{1+\sqrt{2}}\right)^{k(\epsilon)} + (2^{k(\epsilon)+1}-1)\frac{LD^2}{(1+\sqrt{2})^{k(\epsilon)}\epsilon} \\
		&\leq \left(\frac{(1+10^{-7})(1+\sqrt{2})}{c^{\log_d((1+\sqrt{2})/2)}\left(1 - \frac{1+\sqrt{2}}{2d}\right)} + \frac{1+\sqrt{2}}{c^{\log_d((1+\sqrt{2})/2)}}\right)\left(\frac{LD^2}{\epsilon}\right)^{1-\log_d((1+\sqrt{2})/2)} \\
		& \leq 10.3280 \left(\frac{LD^2}{\epsilon}\right)^{0.94662}.
	\end{align*}
	Otherwise if $k(\epsilon)=0$, then an $\epsilon$-minimizer is found within $LD^2/\epsilon$ iterations. Since $k(\epsilon)=0$ requires $\epsilon > LD^2 c/d$, one can verify $LD^2/\epsilon < 10.3280 \left(\frac{LD^2}{\epsilon}\right)^{0.94662}$.
\end{proof}
This convergence rate improves if stronger lower bounds on $\Delta^{(k)} = c/d^{k}$ are provided. Convergence theory matching the conjectured optimal rate of~\cite{altschuler2023acceleration} would follow immediately if one could show a bound with $d=1+\sqrt{2}$. This would give a rate of $O(1/\epsilon^{\log_{1+\sqrt{2}}(2)})$ and has the nice property that $R_k$ would become a constant. Such an idealized, potentially optimal, accelerated stepsize pattern would just apply each $\mathfrak{h}^{(k)}$ a constant number of times.

\subsection{Accelerated Linear Convergence for Strongly Convex Optimization}
Lemma~\ref{lem:straightforward} further enables direct analysis of gradient descent for strongly convex minimization. Let $D_k = \sup\{\|x -x_\star\| \mid f(x)\leq f(x_k)\}$ and $t^{(k)} = 2^{k+1}-1$. Note that $\mu$-strong convexity of $f$ (defined as $f-\frac{\mu}{2}\|\cdot\|^2$ being convex) ensures $\frac{\mu}{2}D_k^2 \leq \delta_k$.\footnote{In fact, this is the only property of strong convexity used in our analysis here. So our convergence guarantee presented in Theorem~\ref{thm:strongly-convex} holds more generally for any problem satisfying only a quadratic growth bound.}
Observe that if $\delta_{st^{(k)}} \leq LD_k^2\Delta^{(k)}$, the objective gap contracts after applying the pattern $(1-\eta)\mathfrak{h}^{(k)}$ with
$$\delta_{(s+1)t^{(k)}} \leq \left(1 -  \frac{\mu\sum_{i=0}^{t^{(k)}-1} (1-\eta)\mathfrak{h}^{(k)}_i}{2L}\right)\delta_{st^{(k)}}. $$
Conversely, if $\delta_{st^{(k)}} > LD_k^2\Delta^{(k)}$, straightforwardness ensures that
$$\delta_{(s+1)t^{(k)}} \leq \left(1 -  \Delta\sum_{i=0}^{t^{(k)}-1} (1-\eta)\mathfrak{h}^{(k)}_i \right)\delta_{st^{(k)}}. $$
Hence, every application of this pattern yields a contraction of at least
\begin{equation} \label{eq:contraction}
	\delta_{(s+1)t^{(k)}} \leq \left(1 -  \min\left\{\Delta, \frac{\mu}{2L}\right\}\sum_{i=0}^{t^{(k)}-1} (1-\eta)\mathfrak{h}^{(k)}_i \right)\delta_{st^{(k)}} .
\end{equation}
Given the condition number $\kappa = L/\mu$ and fixing $\eta=1/2$, one can select the stepsize pattern giving the best contraction. Consider $k(\kappa) = \sup\{k \mid \Delta^{(k)} \geq \frac{1}{2\kappa}\}$. Supposing $k(\kappa)>0$, we have $\Delta^{(k(\kappa))}=c/d^{k(\kappa)}$, giving bounds of $\frac{1}{2\kappa} \leq \Delta^{(k(\kappa))} \leq \frac{d}{2\kappa}$. Noting $\sum_{i=0}^{t^{(k)}-1} \mathfrak{h}^{(k(\kappa))}_i \geq 2(1+\sqrt{2})^k$, one has $\sum_{i=0}^{t^{(k)}} \mathfrak{h}^{(k)}_i \geq 2(2c\kappa/d)^{\log_{d}(1+\sqrt{2})}$. Then the guarantee~\eqref{eq:contraction} gives a contraction factor after applying the pattern of stepsizes of
$1 - (2c\kappa/d)^{\log_{d}(1+\sqrt{2})}\kappa^{-1}.$
Since this contraction is attained after $t^{(k(\kappa))} = 2^{k(\kappa)+1}-1 \leq 2(2c\kappa/d)^{\log_{d}(2)}$ iterations, when amortized, the per-iteration contraction factor is
\begin{align*}
	(1 - (2c\kappa/d)^{\log_{d}(1+\sqrt{2})}\kappa^{-1})^{1/t^{(k(\kappa))}} &\leq  1 - 2^{-1}(2c\kappa/d)^{\log_{d}(1+\sqrt{2})}\kappa^{-1}(2c\kappa/d)^{-\log_{d}(2)}\\
	& \leq 1- 0.204652\times \kappa^{-0.94662}.
\end{align*}
This gives the following convergence theorem. Note this accelerated rate is slower than that concurrently developed by~\cite{altschuler2023acceleration}. If one could improve the value of $d$ above to $1+\sqrt{2}$, our rate would improve to match theirs. 
\begin{theorem}\label{thm:strongly-convex}
	For any target accuracy $\epsilon>0$ and $L$-smooth, $\mu$-strongly convex $f$, gradient descent repeating stepsizes $\mathfrak{h}^{(k(L/\mu))}$ has $x_T$ as an $\epsilon$-minimizer for all $T=st^{(k(L/\mu))}$ with $s\in\mathbb{N}$ and
	$$ T \geq 4.88633(L/\mu)^{0.94662}\log\left(\frac{f(x_0)-f(x_\star)}{\epsilon}\right). $$
\end{theorem}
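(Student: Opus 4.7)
The plan is to convert the per-iteration contraction factor of $1 - 0.204652\,\kappa^{-0.94662}$ derived immediately before the theorem statement into an iteration complexity bound via the standard $-\log(1-x)\geq x$ inequality. The bulk of the work is already done: \cref{lem:straightforward} gives straightforwardness of each scaled block $(1-\eta)\mathfrak{h}^{(k)}/2$ with parameter $\Delta^{(k)}$, and the pre-theorem discussion combines this with $\mu$-strong convexity (through $\tfrac{\mu}{2}D_k^2\leq\delta_k$) to obtain the dichotomy~\eqref{eq:contraction}, namely
\[
\delta_{(s+1)t^{(k)}}\leq \Bigl(1 - \min\bigl\{\Delta^{(k)},\tfrac{\mu}{2L}\bigr\}\!\!\sum_{i=0}^{t^{(k)}-1}\!(1-\eta)\mathfrak{h}^{(k)}_i\Bigr)\delta_{st^{(k)}}.
\]
So the proposal is simply to choose $k=k(\kappa)$ optimally, amortize over the block length, and iterate.

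First, I would handle the trivial case $k(\kappa)=0$ separately (for sufficiently small $\kappa$, where the classical $\kappa\log(1/\epsilon)$ bound already beats the asserted rate), so that we can focus on $k(\kappa)\geq 1$. In that regime, \cref{lem:straightforward} gives $\Delta^{(k(\kappa))} = c/d^{k(\kappa)}$, and by definition of $k(\kappa)$ one has $\tfrac{1}{2\kappa}\leq \Delta^{(k(\kappa))}\leq \tfrac{d}{2\kappa}$, which in turn pins down $d^{k(\kappa)}$ up to a constant. Combining with the lower bound $\sum_i \mathfrak{h}^{(k(\kappa))}_i\geq 2(1+\sqrt{2})^{k(\kappa)}$ from \cref{lem:helpful-bounds-Hk-muk} and the upper bound $t^{(k(\kappa))}\leq 2\cdot 2^{k(\kappa)}$ yields the per-block contraction $1 - (2c\kappa/d)^{\log_d(1+\sqrt{2})}\kappa^{-1}$ over $t^{(k(\kappa))}\leq 2(2c\kappa/d)^{\log_d 2}$ iterations, producing the amortized per-iteration factor $1 - 0.204652\,\kappa^{-0.94662}$ stated in the discussion.

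Finally, I would conclude by chaining these contractions: after $T = s\,t^{(k(\kappa))}$ iterations,
\[
f(x_T)-f(x_\star)\;\leq\;\bigl(1 - 0.204652\,\kappa^{-0.94662}\bigr)^{T}\bigl(f(x_0)-f(x_\star)\bigr),
\]
so ensuring the right-hand side is at most $\epsilon$ requires $T\log\!\bigl(1/(1 - 0.204652\,\kappa^{-0.94662})\bigr)\geq \log((f(x_0)-f(x_\star))/\epsilon)$. Using $-\log(1-x)\geq x$ for $x\in(0,1)$ yields the claimed $T\geq (1/0.204652)\,\kappa^{0.94662}\log((f(x_0)-f(x_\star))/\epsilon) \approx 4.88633\,\kappa^{0.94662}\log((f(x_0)-f(x_\star))/\epsilon)$. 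There is no real obstacle here beyond the careful bookkeeping of constants; the heavy lifting is entirely contained in \cref{lem:straightforward} and in the algebra already performed in~\eqref{eq:contraction} and the subsequent two displays. The only subtle point is that the theorem asserts the bound only along the subsequence $T = s\,t^{(k(\kappa))}$, which is exactly what the blockwise recursion delivers, so no interpolation between block boundaries is needed.
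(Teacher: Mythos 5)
Your proposal is correct and is essentially the proof the paper intends: the displayed derivation of the amortized per-iteration contraction factor $1-0.204652\,\kappa^{-0.94662}$ immediately preceding the theorem statement does all the heavy lifting, and the theorem follows from chaining these contractions and applying $-\log(1-x)\geq x$ exactly as you describe. One small point deserves tightening: you dismiss the $k(\kappa)=0$ regime as ``trivial,'' but this regime actually covers all $\kappa$ up to about $d/(2c)\approx 1.85\times 10^7$, and there one only gets the contraction factor $1-\tfrac{1}{2\kappa}$ per iteration, hence an iteration bound of $2\kappa\log(\delta_0/\epsilon)$. To subsume this under the stated $4.88633\,\kappa^{0.94662}\log(\delta_0/\epsilon)$ you must verify $4.88633\,\kappa^{0.94662}\geq 2\kappa$ on that range, i.e.\ $\kappa\leq(4.88633/2)^{1/0.05338}$, and this threshold agrees with $d/(2c)$ only up to the third significant digit --- the constants are tuned so that the inequality just barely holds rather than being an easy coarse comparison. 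So the case split you flag is real and the resolution is a calculation, not a handwave; everything else in your argument, including the direction and use of $-\log(1-x)\geq x$ and the blockwise restriction $T=s\,t^{(k(\kappa))}$, is right.
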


\subsection{Tight Bounds on Straightforward Patterns} \label{subsec:tightness}

Some insight into the structure of straightforward stepsize sequences follows from considering particular ``bad'' problem instances. Since straightforward patterns always yield a descent, showing a failure to achieve a descent on any instance suffices to prove a pattern is not straightforward. Three elementary bounds of this form are given below.
\begin{proposition}\label{prop:product-rule}
	If $\mathfrak{h}\in\mathbb{R}^{t}$ is straightforward, then 
	$$-1 < \prod_{i=0}^{t-1} (\mathfrak{h}_i - 1) < 1 . $$
\end{proposition}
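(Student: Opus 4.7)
The plan is to derive the inequality directly from the straightforwardness definition by testing it against a one-dimensional convex quadratic, which is the natural ``bad instance'' for gradient descent. On such an instance, gradient descent reduces to a scalar linear recurrence, and the product $\prod_{i=0}^{t-1}(\mathfrak{h}_i - 1)$ appears immediately as the iterate contraction factor, so the target bound is forced by the quadratic descent inequality required by straightforwardness.

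Concretely, I would fix $L = D = 1$ (the inequality defining straightforwardness is invariant under the rescaling $x \mapsto x/D$, $f \mapsto f/(LD^2)$, so this loses no generality) and consider the test instance $f(x) = \tfrac{1}{2}x^2$ with $x_\star = 0$ and $x_0 = r$ for any fixed $r \in (0, \sqrt{2\Delta}\,]$. This is a feasible triple for the problem $p_{1,1}(\delta)$ at $\delta := f(x_0) - f(x_\star) = r^2/2 \in (0, \Delta]$, since $f$ is convex and $1$-smooth, $\|x_0 - x_\star\| = r \leq 1$, and $\nabla f(x_\star) = 0$. Because $\nabla f(x) = x$, the iterates obey $x_{i+1} = (1 - \mathfrak{h}_i)x_i$, giving $x_t = \bigl[\prod_{i=0}^{t-1}(1-\mathfrak{h}_i)\bigr]r$ and hence $f(x_t) - f(x_\star) = \bigl[\prod_{i=0}^{t-1}(1-\mathfrak{h}_i)\bigr]^2 \delta$. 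Since this is a lower bound on $p_{1,1}(\delta)$, straightforwardness yields
\[
\Bigl[\prod_{i=0}^{t-1}(1-\mathfrak{h}_i)\Bigr]^2\delta \;\leq\; \delta \;-\; \Bigl(\sum_{i=0}^{t-1}\mathfrak{h}_i\Bigr)\delta^2.
\]
Dividing by $\delta > 0$ and using that each $\mathfrak{h}_i > 0$ (so $\sum_i \mathfrak{h}_i > 0$), the right-hand side is strictly less than $1$, giving $\bigl|\prod_{i=0}^{t-1}(1-\mathfrak{h}_i)\bigr| < 1$. Noting $\prod_{i=0}^{t-1}(1-\mathfrak{h}_i) = (-1)^t\prod_{i=0}^{t-1}(\mathfrak{h}_i - 1)$, this is precisely $-1 < \prod_{i=0}^{t-1}(\mathfrak{h}_i - 1) < 1$.

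I do not expect a genuine obstacle: once the quadratic test instance is identified, the proof is a one-line computation. The only care needed is to pick $\delta$ in the valid range $(0, LD^2\Delta]$ so that the straightforwardness inequality applies; the strictness of the final bound then comes for free from the positivity of $\delta$ and of the $\mathfrak{h}_i$.
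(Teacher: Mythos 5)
Your proof uses the same test instance (the scalar quadratic $f(x)=\tfrac12 x^2$) and the same key observation (the iterate contracts by the factor $\prod_{i}(1-\mathfrak{h}_i)$) as the paper, so the core argument is the same. The only difference is one of care: the paper fixes $x_0=1$, giving $\delta = 1/2$, and appeals to the general principle that straightforward patterns always descend, whereas you shrink the test instance so that $\delta = r^2/2$ lies in $(0,\Delta]$ and then invoke the quadratic descent inequality directly, which immediately yields the strict inequality from the $-\bigl(\sum_i\mathfrak{h}_i\bigr)\delta^2$ term. This is actually the cleaner way to handle the range restriction in the straightforwardness definition. One tiny nit: to also satisfy the distance constraint $\|x_0 - x_\star\|\leq D = 1$ you should take $r\in\bigl(0,\min(1,\sqrt{2\Delta})\,\bigr]$ rather than $r\in(0,\sqrt{2\Delta}\,]$ (which could exceed $1$ when $\Delta>1/2$); since any sufficiently small $r>0$ works, this does not affect the conclusion.
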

\begin{proof}
	Consider the one-dimensional objective function $f(x)=\frac{1}{2}x^2$ with $x_0=1$. Then each gradient descent step is $x_{k+1} = (1-h_k)x_k$. Hence $x_t = \prod_{i=0}^{t-1} (1-\mathfrak{h}_i)$. To achieve any descent by the end of this pattern, this must be between one and minus one.
\end{proof}
\begin{proposition}\label{prop:sum-rule}
	If $\mathfrak{h}\in\mathbb{R}^{t}$ is straightforward, then for all $i\in \{0,\dots,t-1\}$
	$$\mathfrak{h}_i < \sum_{j\neq i} \mathfrak{h}_j+2 .$$
\end{proposition}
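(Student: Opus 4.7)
My approach mimics that of Proposition~\ref{prop:product-rule}: to rule out patterns with $\mathfrak{h}_i\geq\sum_{j\neq i}\mathfrak{h}_j+2$, I would construct a one-dimensional $L$-smooth convex bad instance on which gradient descent with the pattern fails to decrease the objective, contradicting straightforwardness. A natural choice of instance is the ``smoothed absolute value'' $f(x) = Lx^2/2$ for $|x|\leq\epsilon$ and $f(x) = L\epsilon|x|-L\epsilon^2/2$ for $|x|\geq\epsilon$, with minimizer $x_\star = 0$. This $f$ is convex and $L$-smooth, with gradient equal to $Lx$ on the quadratic tip and $L\epsilon\,\mathrm{sign}(x)$ on the linear tails. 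The critical feature is that in the linear tails the gradient descent recursion collapses to a pure translation $x_{k+1} = x_k - \mathfrak{h}_k\epsilon\,\mathrm{sign}(x_k)$.

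Assuming $\mathfrak{h}_i\geq\sum_{j\neq i}\mathfrak{h}_j+2$ for contradiction, I would pick $x_0 = (1+\sum_{j<i}\mathfrak{h}_j)\epsilon$ and track the iterates in stages. For $k=0,\dots,i$ they traverse the positive linear tail, giving $x_k=(1+\sum_{j<i}\mathfrak{h}_j-\sum_{j<k}\mathfrak{h}_j)\epsilon$, with $x_i = \epsilon$ landing exactly at the boundary. Step $i$ overshoots to $x_{i+1}=(1-\mathfrak{h}_i)\epsilon\leq -\epsilon$ (since $\mathfrak{h}_i\geq 2$). For $i<k\leq t-1$ the iterates satisfy $x_k = (1-\mathfrak{h}_i+\sum_{i<j<k}\mathfrak{h}_j)\epsilon$, and the hypothesis guarantees $x_k\leq -\epsilon$ throughout (the binding case being $k=t-1$), keeping the trajectory in the negative linear tail where the translation formula continues to apply.

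A direct calculation then gives $|x_t|=(\mathfrak{h}_i-1-\sum_{j>i}\mathfrak{h}_j)\epsilon\geq(1+\sum_{j<i}\mathfrak{h}_j)\epsilon=|x_0|$ by invoking the hypothesis once more. Since both $|x_0|$ and $|x_t|$ exceed $\epsilon$, the function $f$ depends on them only through the common linear formula $L\epsilon|\cdot| - L\epsilon^2/2$, so $f(x_t)\geq f(x_0)$. This contradicts the strict descent $f(x_t) < f(x_0)$ that any straightforward pattern must guarantee at $\delta = f(x_0) - f(x_\star) > 0$, yielding $\mathfrak{h}_i < \sum_{j\neq i}\mathfrak{h}_j + 2$.

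The main technical obstacle is the bookkeeping in the third stage: one must verify that no iterate between $x_{i+1}$ and $x_{t-1}$ drifts back into the quadratic tip, which would invalidate the constant-gradient formula used above. The assumption is invoked at precisely the worst such index ($k=t-1$), which is also exactly why the constant ``$+2$'' in the bound is tight.
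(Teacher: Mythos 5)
Your proof is correct and takes essentially the same approach as the paper: both construct a one‑dimensional Huber‑type function, start the iterate deep in one linear tail so that it reaches the boundary of the quadratic cap exactly at step $i$, and show the long step $\mathfrak{h}_i$ overshoots so far into the opposite tail that the remaining steps cannot bring $|x_t|$ below $|x_0|$. The paper's version simply fixes $L=1$ and threshold $1$ and starts from the negative side; your parametrization by $L$ and $\epsilon$ and more explicit verification that the iterates stay in the linear tails are cosmetic differences, not a different argument.
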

\begin{proof}
	Consider the one-dimensional Huber objective function $f(x)=\frac{1}{2}x^2$ if $|x|\leq 1$ and $f(x)=|x|-\frac{1}{2}$ otherwise. Letting $x_0 = -\sum_{j<i}\mathfrak{h}_j -1$, one has $x_i = -1$. Supposing $\mathfrak{h}_i \geq  \sum_{j\neq i} \mathfrak{h}_j+2$, one has $x_{i+1} \geq \sum_{j\neq i}\mathfrak{h}_j +1$. Consequently, gradient descent failed to descend as $x_t \geq \sum_{j < i}\mathfrak{h}_j +1 $.
\end{proof}
\begin{proposition}\label{prop:mixed-rule}
	If $\mathfrak{h}\in\mathbb{R}^{t}$ is straightforward, then for all $i\in \{0,\dots,t-1\}$ and $i_+ = (i+1) \mathrm{\ mod\ } t$,
	$$\sum_{j\not\in \{i,i_+\}} \mathfrak{h}_j +1 > (1-\mathfrak{h}_i)(1-\mathfrak{h}_{i_+}).$$
\end{proposition}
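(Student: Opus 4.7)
The plan is to follow the template established in the proofs of \cref{prop:product-rule} and \cref{prop:sum-rule}, exhibiting an explicit one-dimensional $1$-smooth convex objective and initial point on which running gradient descent with the stepsizes $\mathfrak{h}$ achieves no descent whenever the mixed-rule inequality is violated, contradicting the descent guaranteed by straightforwardness. The design goal is to arrange that the two consecutive iterates $x_i$ and $x_{i+1}$ both sit in a purely quadratic regime while every other iterate traverses a linear regime, so that the composition of the two quadratic steps produces precisely the factor $(1-\mathfrak{h}_i)(1-\mathfrak{h}_{i_+})$ while the surrounding linear steps contribute the sum $\sum_{j \notin \{i,i_+\}} \mathfrak{h}_j$.

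Concretely, I would take an \emph{asymmetric} Huber objective that is quadratic on $[-1, M_2]$ and linear with matching slopes $-1$ and $M_2$ on the two unbounded pieces, for some $M_2 \geq \max\{1, \mathfrak{h}_i - 1\}$; this $f$ is convex and $1$-smooth with unique minimizer at $0$. Setting $x_0 = -(1 + \sum_{j<i}\mathfrak{h}_j)$ funnels the iterate through the left linear piece (where $\nabla f \equiv -1$) so that $x_i = -1$; the next step lands $x_{i+1} = \mathfrak{h}_i - 1$ inside the quadratic region by the choice of $M_2$, and the subsequent step therefore gives $x_{i+2} = -(1-\mathfrak{h}_i)(1-\mathfrak{h}_{i_+})$. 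Assuming for contradiction that the mixed rule fails, $x_{i+2}$ is forced deep enough into the left linear piece that all remaining iterates also live there and each merely adds $\mathfrak{h}_k$; a routine computation of $f(x_t) - f(x_0) = |x_t| - |x_0|$ then yields exactly $(1-\mathfrak{h}_i)(1-\mathfrak{h}_{i_+}) - 1 - \sum_{j \notin \{i,i_+\}}\mathfrak{h}_j$, which is nonnegative under the contradictory hypothesis, giving the desired contradiction with straightforwardness. The complementary case in which $\mathfrak{h}_i \leq 1$ or $\mathfrak{h}_{i_+} \leq 1$ makes the product at most $1$ and the inequality automatic.

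The step requiring the most care is the choice of $M_2$: using the symmetric Huber of \cref{prop:sum-rule} would force $x_{i+1} = \mathfrak{h}_i - 1$ out of the quadratic region as soon as $\mathfrak{h}_i > 2$, destroying the product structure we rely on. Widening only the right-hand portion of the quadratic interval asymmetrically preserves $1$-Lipschitz continuity of $\nabla f$, since the new right linear piece has gradient exactly $M_2$, matching the quadratic gradient at the transition point. A minor remaining subtlety is the cyclic wrap-around index $i = t-1$ with $i_+ = 0$, where no $(i+1)$-th step exists within a single application of $\mathfrak{h}$; this case is most naturally handled by invoking the cyclic/repeated-pattern interpretation of the stepsize sequence so that the critical pair $(\mathfrak{h}_{t-1}, \mathfrak{h}_0)$ appears consecutively across the pattern boundary.
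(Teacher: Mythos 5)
Your approach matches the paper's: construct a one-dimensional Huber-type objective on which a single pass of $\mathfrak{h}$ achieves no descent whenever the mixed inequality fails, and the bookkeeping you sketch is correct for $0\leq i<t-1$. One cosmetic remark: the parameter $M_2$ is superfluous. The paper's $f$ has an \emph{unbounded} quadratic region (quadratic on $(-\infty,1]$, linear with slope $1$ on $[1,\infty)$); in your sign convention the analogue is quadratic on $[-1,\infty)$ with slope $-1$ below $-1$, so $x_{i+1}=\mathfrak{h}_i-1\geq-1$ lies in the quadratic region automatically no matter how large $\mathfrak{h}_i$ is, and nothing needs to be tuned.

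The genuine gap is the wrap-around case $(i,i_+)=(t-1,0)$, which you flag but whose proposed resolution does not actually work: concatenating two copies of $\mathfrak{h}$ so that $(\mathfrak{h}_{t-1},\mathfrak{h}_0)$ becomes consecutive makes the ``sum of the other steps'' range over $2t-2$ indices, yielding only $\sum_{j}\mathfrak{h}_j+\sum_{j\neq 0,t-1}\mathfrak{h}_j+1>(1-\mathfrak{h}_{t-1})(1-\mathfrak{h}_0)$, strictly weaker than the claim. A correct handling stays within a single application of the pattern but uses a different initial point: with the paper's $f$, assume $\mathfrak{h}_0,\mathfrak{h}_{t-1}>1$ (the only nontrivial case) and take $x_0=-\bigl(1+\sum_{j=1}^{t-2}\mathfrak{h}_j\bigr)/(\mathfrak{h}_0-1)$. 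The first step sends $x_1=1+\sum_{j=1}^{t-2}\mathfrak{h}_j$ into the linear region, the middle steps walk it down to $x_{t-1}=1$, and the last step lands $x_t=1-\mathfrak{h}_{t-1}$ back on the quadratic side; comparing $f(x_t)=\tfrac{1}{2}(1-\mathfrak{h}_{t-1})^2+\tfrac{1}{2}$ with $f(x_0)=\tfrac{1}{2}x_0^2+\tfrac{1}{2}$ yields exactly the wrap-around inequality. (The paper's ``without loss of generality $i=0$'' also elides this case.)
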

\begin{proof}
    Without loss of generality, $i=0$ and $i_+=1$. Consider the one-dimensional objective function $\frac{1}{2}x^2+\frac{1}{2}$ for $x\leq 1$ and $x$ otherwise. Suppose $(1-\mathfrak{h}_i)(1-\mathfrak{h}_{i_+}) \geq \sum_{j\not\in \{i,i_+\}} \mathfrak{h}_j +1 \geq 0$. Letting $x_0=1$, we then have $x_1=(1-\mathfrak{h}_0)\leq 1$ and $x_2=(1-\mathfrak{h}_1)(1-\mathfrak{h}_0)$. By our assumption, we then have $x_2,\dots,x_t\geq 1$. So no descent is achieved by applying the pattern and hence it is not straightforward.
\end{proof}

The first two of these bounds on how large straightforward patterns are actually tight in all of our proposed building block patterns $\mathfrak{h}^{(k)}$. The selection of the middle stepsize $\mu_k$ is exactly the sum of all the other stepsizes plus two, matching Proposition~\ref{prop:sum-rule}. The selection of the one-quarter and three-quarters stepsize $\alpha_{k-2}$ is exactly the choice making the product in Proposition~\ref{prop:product-rule} equal one (Appendix~\ref{app:product-tightness} verifies this).

Arguments based on these bounds can show that our first two building block patterns have the longest total length possible among all straightforward patterns. We believe this is true for all of our proposed patterns but only provide proof for the global maximality for $k=0,1$.
\begin{theorem}
	For $t=1$, no straightforward pattern has $\sum_{i=0}^{t-1}\mathfrak{h_i}\geq 2$.\\ (Hence the rescaled patterns $(1-\eta)\mathfrak{h}^{(0)}$ approach having maximum length.)
\end{theorem}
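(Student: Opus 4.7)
The plan is to deduce this immediately from Proposition~\ref{prop:product-rule}, which is the strongest of the three elementary obstructions stated just above the theorem. Since the theorem concerns the trivial case $t=1$, a single-step pattern $\mathfrak{h} = [\mathfrak{h}_0]$, the sum in question is simply $\sum_{i=0}^{t-1} \mathfrak{h}_i = \mathfrak{h}_0$, so the statement reduces to showing $\mathfrak{h}_0 < 2$ for every straightforward one-step pattern.

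First I would specialize Proposition~\ref{prop:product-rule} to $t=1$. The product collapses to a single factor, giving the necessary condition
\[
-1 < \mathfrak{h}_0 - 1 < 1,
\]
equivalently $0 < \mathfrak{h}_0 < 2$. In particular, the upper bound $\mathfrak{h}_0 < 2$ is exactly the claim $\sum_{i=0}^{t-1} \mathfrak{h}_i < 2$, so the theorem follows with no further work. The parenthetical remark about $(1-\eta)\mathfrak{h}^{(0)} = 2(1-\eta)$ approaching $2$ as $\eta \to 0^+$ is then immediate: the rescaled building block pattern $\mathfrak{h}^{(0)} = [2]$ saturates the bound in the limit, showing the bound in the theorem cannot be improved and that no stepsize pattern of length one is ever straightforward at sum exactly $2$ (since Proposition~\ref{prop:product-rule} uses a strict inequality, coming from the one-dimensional quadratic witness $f(x) = \tfrac{1}{2}x^2$ with $x_0 = 1$, which yields $x_1 = 1 - \mathfrak{h}_0$ and hence fails to achieve descent unless $|1 - \mathfrak{h}_0| < 1$).

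There is no real obstacle here since the proof is a one-line specialization; the only thing worth double-checking is that the convention ``$\sum_{i=0}^{t-1}$'' with $t=1$ indeed means $\mathfrak{h}_0$ alone, and that Proposition~\ref{prop:product-rule} applies with a single-factor product, both of which are immediate from the definitions given in Section~\ref{sec:review}. Thus the proof is complete in a single invocation.
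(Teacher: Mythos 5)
Your proof is correct, but it invokes a different proposition than the paper does. The paper deduces the theorem in one line from Proposition~\ref{prop:sum-rule} (the sum-rule, coming from the one-dimensional Huber witness), specialized to $i=0$, $t=1$: $\mathfrak{h}_0 < \sum_{j\neq 0}\mathfrak{h}_j + 2 = 2$. You instead invoke Proposition~\ref{prop:product-rule} (the product-rule, coming from the quadratic witness $f(x)=\tfrac12 x^2$), which for $t=1$ collapses to $|\mathfrak{h}_0-1|<1$, i.e., $0<\mathfrak{h}_0<2$. For $t=1$ the two routes give the identical upper bound and are equally immediate; your route happens to throw in the extra lower bound $\mathfrak{h}_0>0$, which the sum-rule does not. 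One small caveat: your opening claim that the product-rule is ``the strongest of the three elementary obstructions'' is not accurate in general — the three propositions constrain different things and are incomparable for $t\geq 3$ (indeed the paper's Theorem on $t=3$ needs all three) — but nothing in your argument actually relies on that ranking, so the proof stands.
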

\begin{proof}
	This is an immediate consequence of Proposition~\ref{prop:sum-rule} with $i=0$.
\end{proof}
\begin{theorem}
	For $t=3$, no symmetric straightforward pattern has $\sum_{i=0}^{t-1}\mathfrak{h_i}\geq 8$.\\ (Hence the rescaled patterns $(1-\eta)\mathfrak{h}^{(1)}$ approach having maximum length.)
\end{theorem}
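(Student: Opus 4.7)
The plan is to assume for contradiction that a symmetric pattern $\mathfrak{h} = (a, b, a)$ is straightforward with $2a + b \geq 8$, and then combine Propositions~\ref{prop:product-rule},~\ref{prop:sum-rule}, and~\ref{prop:mixed-rule} to derive a contradiction. First, the sum rule at $i=1$ yields $b < 2a + 2$, which together with $2a + b \geq 8$ forces $a > 3/2$ and $b \geq 8 - 2a$. The mixed rule at $(i, i_+) = (2, 0)$ (where the single index outside $\{0,2\}$ is the middle one) gives $b + 1 > (1 - a)^2$, i.e., $b > (a-1)^2 - 1$.

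Next the argument splits on the sign of $b - 1$. The easy case is $b < 1$: the bound $b \geq 8 - 2a$ then forces $a > 7/2$, and the mixed-rule lower bound gives $b > (5/2)^2 - 1 > 1$, a contradiction.

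The main case is $b \geq 1$. Here Proposition~\ref{prop:product-rule} gives $(a-1)^2(b-1) < 1$, so $b < 1 + 1/(a-1)^2$. For this upper bound on $b$ to be compatible with the two lower bounds on $b$, both of
\[
    (7 - 2a)(a-1)^2 < 1 \qquad \text{and} \qquad (a-1)^4 - 2(a-1)^2 - 1 < 0
\]
must hold. The first rearranges to $2a^3 - 11a^2 + 16a - 6 > 0$, which factors as $(2a - 3)(a^2 - 4a + 2) > 0$; for $a > 3/2$ this requires $a > 2 + \sqrt{2}$. The second, substituting $u = (a-1)^2$, becomes $u^2 - 2u - 1 < 0$, i.e., $u < 1 + \sqrt{2}$, giving $a < 1 + \sqrt{1 + \sqrt{2}}$. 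Since $1 + \sqrt{1 + \sqrt{2}} < 2 + \sqrt{2}$, no $a > 3/2$ satisfies both inequalities, which is the desired contradiction.

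The main obstacle is identifying the right combination of the three necessary conditions: the product rule alone is tight at $\mathfrak{h}^{(1)} = (3/2, 5, 3/2)$ but too weak to rule out patterns with very large $a$ and $b$ close to $1$, and the mixed rule at $(2, 0)$ closes exactly this gap. Once that combination is identified, everything reduces to routine polynomial manipulation and the factoring of $2a^3 - 11a^2 + 16a - 6$.
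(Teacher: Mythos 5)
Your proof is correct and rests on exactly the same ingredients as the paper's: it applies Propositions~\ref{prop:product-rule}, \ref{prop:sum-rule}, and~\ref{prop:mixed-rule} to $(a,b,a)$ and shows the resulting region excludes $2a+b \geq 8$. The difference is that the paper's proof simply asserts this and delegates the verification to a Mathematica notebook, whereas you identify the specific constraint instances that matter (sum rule at $i=1$, mixed rule at $(i,i_+)=(2,0)$, the upper half of the product rule) and carry the polynomial manipulation through by hand, including the factoring $2a^3 - 11a^2 + 16a - 6 = (2a-3)(a^2-4a+2)$ and the substitution $u = (a-1)^2$. I checked the details: your two derived conditions force $a > 2+\sqrt{2}$ and simultaneously $a < 1 + \sqrt{1+\sqrt{2}}$, and since $1 + \sqrt{1+\sqrt{2}} < 2+\sqrt{2}$ these are indeed incompatible; the short $b<1$ branch is also handled correctly. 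The net gain of your version is a fully human-verifiable argument where the paper offers only a black-box computer check, at the cost of some case analysis that a computer-algebra system renders unnecessary.
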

\begin{proof}
Maximizing $\sum_{i=0}^2 h_i$ over the region constrained by the inequalities of Propositions~\ref{prop:product-rule}, \ref{prop:sum-rule}, and~\ref{prop:mixed-rule} ensures no pattern $h=(a,b,a)$ has $2a+b \geq 8$ (See \nextmathematica).
\end{proof}

\section{A Spectral Certificate for Straightforwardness}\label{sec:certificate_part_1}

All that remains to complete our analysis is the proof of Lemma~\ref{lem:straightforward}. The remainder of this paper completes the substantial technical work needed to verify this. As an overview, the main result in this section (\cref{thm:straightforward_from_rank_one}) gives a sufficient condition for straightforwardness that is more practical to verify than the nonemptiness of $\mathcal{S}_{\mathfrak{h},\Delta}$. Then Section~\ref{sec:certificate} constructs certificates $\lambda^{(k)}$ showing that the sufficient conditions of \cref{thm:straightforward_from_rank_one} are met for each $\mathfrak{h}^{(k)}$. Finally, Sections~\ref{sec:lambdadRowColSum} and~\ref{sec:lambdaM} do the heavy algebraic work explicitly verifying these sufficient conditions are met.

This section's main result, \cref{thm:straightforward_from_rank_one}, considers patterns $\mathfrak{h}$ that may themselves not be straightforward, but for which $(1-\eta)\mathfrak{h}$ is straightforward for any $\eta\in(0,1)$.
Below, we show the set of straightforward stepsize patterns is star-convex with respect to the all zero stepsize pattern, justifying the search for such a rescaling theorem. \begin{proposition}
Suppose $\cS_{\mathfrak{h}, \Delta}$ is nonempty and $\Delta\geq 0$. Then, $\cS_{\theta \mathfrak{h}, \omega \Delta}$ is nonempty for all $\theta\in(0,1]$ and $\omega\in[0,1]$.
\end{proposition}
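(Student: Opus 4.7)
The plan is to take a feasible $(\lambda,\gamma)\in\cS_{\mathfrak{h},\Delta}$ and show that the rescaled pair $(\lambda',\gamma'):=(\lambda,\,\theta\gamma)$ lies in $\cS_{\theta\mathfrak{h},\omega\Delta}$. The affine conditions are essentially by inspection: $\sum \lambda'_{i,j}\,a_{i,j}=a_{\star,t}-a_{\star,0}$ carries over unchanged, $\sum \gamma'_{i,j}\,a_{i,j}=\theta\cdot 2\sum \mathfrak{h}_i\,a_{\star,0}=2\sum(\theta\mathfrak{h})_i\,a_{\star,0}$ matches the right-hand side demanded by the scaled stepsize pattern, and $m(\lambda')=m(\lambda)=0$. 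The two coordinatewise nonnegativity requirements fall out similarly: $\lambda'\geq 0$ is unchanged, and $\lambda'+\omega\Delta\gamma'=(1-\omega\theta)\lambda+\omega\theta(\lambda+\Delta\gamma)\geq 0$ since $\omega\theta\in[0,1]$ and each of $\lambda$ and $\lambda+\Delta\gamma$ is nonnegative.

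The real work is the two PSD conditions, and the key observation I would exploit is a decomposition $M(\lambda,\mathfrak{h})=M_A(\lambda,\mathfrak{h})+M_C(\lambda)$, where $M_A$ is jointly linear in $\lambda$ and $\mathfrak{h}$ and $M_C(\lambda):=\tfrac{1}{2L}\sum_{i,j}\lambda_{i,j}C_{i,j}$ collects the $\mathfrak{h}$-independent PSD contributions. This decomposition is read off the definitions: the $\mathbf{x}_0=e_1$ piece of each $\mathbf{x}_i-\mathbf{x}_j$ contributes to $A_{i,j}(\mathfrak{h})=\mathbf{g}_j\odot(\mathbf{x}_i-\mathbf{x}_j)$ only in row and column $1$, so it is absorbed into $m(\lambda)$, leaving $M_A$ genuinely \emph{linear} (not merely affine) in $\mathfrak{h}$. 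Consequently
\[
M(\lambda,\theta\mathfrak{h})-\theta M(\lambda,\mathfrak{h})=(1-\theta)M_C(\lambda)\succeq 0,
\]
because $\theta\in(0,1]$, $\lambda\geq 0$, and $C_{i,j}\succeq 0$.

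With this in hand, the first PSD condition follows from $m(\gamma')=\theta m(\gamma)$, $\sum(\theta\mathfrak{h})_i=\theta H$, and the chain
\[
\begin{bmatrix}\theta H & \theta m(\gamma)^\intercal \\ \theta m(\gamma) & M(\lambda',\theta\mathfrak{h})\end{bmatrix}
\succeq
\theta\begin{bmatrix}H & m(\gamma)^\intercal \\ m(\gamma) & M(\lambda,\mathfrak{h})\end{bmatrix}
\succeq 0.
\]
For the second PSD condition, the very same bound $M(\lambda'+\omega\Delta\gamma',\theta\mathfrak{h})\succeq\theta M(\lambda+\omega\theta\Delta\gamma,\mathfrak{h})$ applies, using that $\lambda+\omega\theta\Delta\gamma$ is nonnegative. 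Linearity of $M$ in its first argument then gives $M(\lambda+\omega\theta\Delta\gamma,\mathfrak{h})=(1-\omega\theta)M(\lambda,\mathfrak{h})+\omega\theta M(\lambda+\Delta\gamma,\mathfrak{h})$, which exhibits the corresponding $2\times 2$ block matrix as a nonnegative combination of the two PSD matrices guaranteed by feasibility of $(\lambda,\gamma)\in\cS_{\mathfrak{h},\Delta}$, completing the argument after the same $\theta$-scaling step as above.

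I do not anticipate a real obstacle: once one has the $M=M_A+M_C$ decomposition with $M_A$ linear in $\mathfrak{h}$, everything else is convex combinations and a single Loewner-monotonicity step on the bordered matrix. The only place one must be a little careful is verifying that the $e_1$-part of each $\mathbf{x}_i-\mathbf{x}_j$ indeed does not leak into the $M$ block of $Z$; this is where the PEP bookkeeping from Section~\ref{sec:review} does the work for us.
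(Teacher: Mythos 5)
Correct, and essentially the paper's own argument. Your decomposition $M=M_A+M_C$ is exactly the paper's $M(\lambda,\mathfrak{h})=W_1(\lambda,\mathfrak{h})+W_2(\lambda)$, and the observation that $M_A$ is jointly linear (so $M(\xi,\theta\mathfrak{h})=\theta M(\xi,\mathfrak{h})+(1-\theta)M(\xi,0)$ with $M(\xi,0)\succeq 0$ for $\xi\geq 0$) is precisely the key fact the paper invokes; the only cosmetic difference is that the paper first reduces to $\omega=1$ and exhibits membership in $\cS_{\theta\mathfrak{h},\Delta/\theta}$, while you handle $\omega$ inline via the convex combination $\lambda+\omega\theta\Delta\gamma=(1-\omega\theta)\lambda+\omega\theta(\lambda+\Delta\gamma)$.
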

\begin{proof}
    It suffices to prove the proposition with $\omega=1$ as the constraints defining $\cS_{\theta \mathfrak h, \omega\Delta}$ only relax as $\omega$ decreases.
    The case $\theta=1$ follows by definition.
    Let $\theta\in(0,1)$ and let $(\lambda,\gamma)\in \cS_{\mathfrak{h},\Delta}$. We claim that $(\lambda, \theta \gamma) \in \cS_{\theta  \mathfrak{h},\Delta/\theta }$.
    The first four constraints defining $\cS_{\theta \mathfrak{h}, \Delta/\theta }$ hold.
    We will need the following fact to verify the remaining constraints: for any fixed $\xi\geq 0$, $\theta \mapsto M(\xi,\theta \mathfrak{h})$ is an affine function in $\theta $ with a PSD constant term, $M(\xi, 0)$.
    Then, 
    \begin{align*}
        \begin{pmatrix}
        \theta \sum_i \mathfrak{h}_i
         & \theta m(\gamma)^\intercal\\
        \theta m(\gamma) & M(\lambda, \theta  \mathfrak{h})
        \end{pmatrix} = \begin{pmatrix}
            \theta \sum \mathfrak{h}_i & \theta  m(\gamma)^\intercal\\
            \theta m(\gamma) & \theta  M(\lambda, \mathfrak{h})+ (1-\theta )M(\lambda,0)
            \end{pmatrix}\succeq 0.
    \end{align*}
    The other constraint holds similarly. We deduce that $(\lambda,\theta\gamma)\in \cS_{\theta  \mathfrak{h}, \Delta/\theta }\subseteq \cS_{\theta \mathfrak{h},\Delta}$. 
\end{proof}
This rescaling result motivates the following theorem capable of certifying that a pattern $\mathfrak{h}$ has $(1-\eta)\mathfrak{h}$ straightforward for all $\eta\in (0,1)$. This result is especially useful when compared to Theorem 3.2 of~\cite{Grimmer2023-long} as only $\lambda\in\mathbb{R}^{(t+2)\times(t+2)}$ needs to be provided; the existence of $\gamma$ and $\Delta>0$ for which $(\lambda,(1-\eta)\gamma)\in\mathcal{S}_{(1-\eta)\mathfrak{h},\Delta}$ can be deduced from $\lambda$. We also provide quantitative bounds on $\Delta$ in terms of a matrix $W_2(\lambda)$ that will be defined in the following subsection.

\begin{theorem}
    \label{thm:straightforward_from_rank_one}
    Suppose $\mathfrak{h}\in\R^{t}$, $\lambda\in\R^{(t+2)\by(t+2)}$ satisfy the following properties:
    \begin{align*}
        \begin{cases}
            H\coloneqq \sum_{i=0}^{t-1} \mathfrak{h}_i >0\\
            \lambda \text{ is zero on its } \star\text{th row and column}\\
            \lambda\geq 0\\
            \lambda_{i,i+1}>0,\,\forall i = 0,\dots,t-1\\
            \sum_{\substack{i,j=0\\ i\neq j}}^t\lambda_{i,j}a_{i,j} = a_{\star,t} - a_{\star,0}\\
            M(\lambda,\mathfrak{h}) \text{ is nonnegative and rank-one}.
        \end{cases}
    \end{align*}
    Then, there exists $\gamma\in\R^{(t+2)\by(t+2)}$ so that for all $\eta\in (0,1)$ there exists $\Delta>0$ so that $(\lambda,(1-\eta)\gamma)\in\cS_{(1-\eta)\mathfrak{h},\Delta}$.
    Here, $\Delta>0$ may depend on $\eta$.

    If we additionally fix $\eta=1/2$ and assume that $t\geq 3$, $H\geq 8$, and $\mathfrak{L}\in(0,1]$ is a lower-bound on the second-smallest eigenvalue of $W_2(\lambda)$, then we may take any $\Delta>0$ satisfying
    \begin{align*}
        \Delta \leq \min\left(\frac{\min_{i\in[0,t-1]}\lambda_{i,i+1}}{H},\, \frac{\mathfrak{L}}{21H^3}\right).
    \end{align*}
    \end{theorem}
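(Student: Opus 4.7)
The plan hinges on using the rank-one structure of $M(\lambda,\mathfrak{h})$ together with the PSD-shift afforded by the rescaling $\mathfrak{h}\mapsto (1-\eta)\mathfrak{h}$. Writing $M(\lambda,\mathfrak{h}) = vv^\intercal$ for some $v\in\R^{t+1}$, and mimicking the proof of the rescaling proposition above,
\begin{equation*}
C_\eta := M(\lambda, (1-\eta)\mathfrak{h}) = (1-\eta)\, vv^\intercal + \eta\, M(\lambda, 0), \qquad M(\lambda,0)\succeq 0,
\end{equation*}
so $C_\eta$ automatically picks up a PSD component $\eta M(\lambda,0)$ even though $\mathfrak{h}$ itself is not assumed straightforward. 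This is the structural gain that enables the whole argument.

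The core work is constructing $\gamma\in\R^{(t+2)\times(t+2)}$, independent of $\eta$, satisfying both the linear constraint $\sum_{i\neq j}\gamma_{i,j}a_{i,j} = 2H a_{\star,0}$ and a rank-one alignment $m(\gamma) = cv$ for some scalar $c$ with $c^2\leq H$. A convenient starting point is $\gamma^{(0)}_{\star,0} = 2H$ with all other entries zero: this meets the linear constraint trivially but generally produces an $m(\gamma^{(0)})$ not aligned with $v$. One then adds a correction $\gamma'$ in the kernel of the map $\gamma\mapsto\sum_{i\neq j}\gamma_{i,j}a_{i,j}$---a rich subspace containing in particular every symmetric pair $\gamma'_{i,j}=\gamma'_{j,i}$---to rotate $m(\gamma^{(0)}+\gamma')$ onto $\spann(v)$. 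Showing that such a $\gamma'$ exists is the key linear-algebraic step and is where the hypotheses on $\lambda$ (strict positivity $\lambda_{i,i+1}>0$ and the structure forcing rank-one $M(\lambda,\mathfrak{h})$) play their role.

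Once $\gamma$ is in place, the first bordered PSD condition reduces to a Schur complement: with $b := (1-\eta)m(\gamma) = (1-\eta)cv$, the domination $C_\eta \succeq (1-\eta)vv^\intercal$ yields $v^\intercal C_\eta^\dagger v\leq 1/(1-\eta)$, so
\begin{equation*}
(1-\eta)H \;-\; b^\intercal C_\eta^\dagger b \;\geq\; (1-\eta)H - (1-\eta) c^2 \;\geq\; 0 \quad\text{whenever } c^2\leq H.
\end{equation*}
Entrywise positivity $\lambda + \Delta(1-\eta)\gamma\geq 0$ is clear for small $\Delta$ since $\lambda_{i,i+1}>0$ strictly. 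For the perturbed second bordered matrix, $C_\eta$ is replaced by $C_\eta + \Delta(1-\eta)M(\gamma,(1-\eta)\mathfrak{h})$: strict slack in the first bordered matrix (whenever $c^2<H$) is preserved for all sufficiently small $\Delta>0$ by continuity. This finishes the qualitative claim.

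For the quantitative $\eta=1/2$ bound $\Delta\leq \min(\min_i\lambda_{i,i+1}/H,\ \mathfrak{L}/(21H^3))$ I would track constants explicitly. The first term controls entrywise nonnegativity of $\lambda + \tfrac{1}{2}\Delta\gamma$, with the entries of $\gamma$ scaling like $H$. The second term controls PSD-ness of the second bordered matrix in the directions transverse to $v$: the second-smallest eigenvalue of $W_2(\lambda)$ (defined in the subsequent development) lower-bounds the relevant ``cross-direction'' eigenvalues of $C_{1/2}$, and the perturbation to absorb scales like $H^3$ (an $H^2$ factor from $\|m(\gamma)\|^2$ entering the Schur complement and an extra $H$ from $\opnorm{M(\gamma,\mathfrak{h})}$). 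The hard part by far is the explicit construction of $\gamma$ and the careful arithmetic needed to extract the constant $21$; modulo those, the architecture is dictated by rank-one reduction plus Schur complement plus continuity.
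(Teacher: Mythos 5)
Your high-level architecture---split $M(\lambda,(1-\eta)\mathfrak{h})$ into a rank-one piece plus $\eta W_2(\lambda)$, build $\gamma$ aligned with the rank-one direction, and invoke Schur complements---is exactly right, and your Schur-complement observation that $C_\eta\succeq(1-\eta)vv^\intercal$ forces $v^\intercal C_\eta^\dagger v\leq 1/(1-\eta)$ is a clean way to handle the first PSD constraint. However, there are two genuine gaps.

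First, the slack $c^2\leq H$ you want to exploit does not exist. Only the $\star$-row entries $\gamma_{\star,i}$ enter $m(\gamma)$, and the linear constraint $\sum_{i\neq j}\gamma_{i,j}a_{i,j}=2Ha_{\star,0}$ forces $\sum_i \gamma_{\star,i}=2H$. Writing $M(\lambda,\mathfrak{h})=\tfrac{1}{2}\phi\phi^\intercal$ with $\phi\geq 0$, the essential identity (which your sketch never establishes) is $\mathbf 1^\intercal\phi=\sqrt{2H}$; combined with the alignment $m(\gamma)\propto\phi$ it pins down $c^2=H$ exactly, with no room for strict inequality. So the bordered matrix $\mathfrak A=\bigl(\begin{smallmatrix}H & m(\gamma)^\intercal\\ m(\gamma) & M\end{smallmatrix}\bigr)$ is exactly rank one and exactly on the PSD boundary.

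Second, and more seriously, the ``strict slack preserved by continuity'' step cannot work as written. Even disregarding the forced equality $c^2=H$, the unperturbed matrix $(1-\eta)Q_1+\eta Q_2$ is never positive definite: $W_2(\lambda)$ is a graph Laplacian and always has $\mathbf 1$ in its kernel, so $(1-\eta)Q_1+\eta Q_2$ is singular with kernel $\R\mathbf 1_{t+2}$. Naive continuity against a singular PSD matrix does not give PSD of the perturbed matrix; one must verify the perturbation $\Delta Q_3$ points the right way in the kernel direction. Concretely, what's missing is the computation $\mathbf 1^\intercal Q_3\mathbf 1=(1-\eta)H>0$ (which requires evaluating both $\mathbf 1^\intercal W_1(\gamma,\mathfrak h)\mathbf 1=0$ and $\mathbf 1^\intercal W_2(\gamma)\mathbf 1=H$). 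Only with that positivity in hand does the Schur-complement-through-the-kernel-direction argument yield a valid $\Delta>0$, and it is also what drives the $\mathfrak L/H^3$ scaling in the quantitative bound (the lone small eigenvalue of $\frac{1}{2}Q_1+\frac{1}{2}Q_2$ orthogonal to $\mathbf 1$ is $\Theta(\mathfrak L)$, and $Q_3$'s operator norm and its action on $\mathbf 1$ both scale like $H^2$).

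Finally, your $\gamma$-construction is left as an existence claim. The paper gives an explicit $\gamma$ (namely $\gamma_{\star,i}=\sqrt{2H}\phi_i$ and $\gamma_{i,i+1}=\sum_{j\le i}\gamma_{\star,j}-2H$) whose feasibility reduces precisely to $\mathbf 1^\intercal\phi=\sqrt{2H}$; your abstract ``baseline plus kernel correction'' route would need to rediscover this identity to show the correction exists.
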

    
    \subsection{Proof of \cref{thm:straightforward_from_rank_one}}
    We divide our proof into three parts. First in Section~\ref{subsubsec:gamma-construction}, we construct $\gamma$ from $\lambda$ satisfying the needed linear equality constraint. Then Section~\ref{subsubsec:Delta-positive} shows a positive $\Delta>0$ exists with $(\lambda,(1-\eta)\gamma)\in\cS_{(1-\eta)\mathfrak{h},\Delta}$. Finally, Section~\ref{subsubsec:Delta-bound} improves this, providing a quantitative lower bound on $\Delta$.
    
    To prove \cref{thm:straightforward_from_rank_one}, we require some additional notation. Let
    \begin{align*}
        W_1(\lambda, \mathfrak{h}) &\coloneqq \sum_{i\neq j} \lambda_{i,j} A_{i,j}(\mathfrak{h})_{0:t,0:t},\\
        W_2(\lambda) &\coloneqq \sum_{i\neq j} \frac{\lambda_{i,j}}{2} (C_{i,j})_{0:t,0:t}.
    \end{align*}
    With this notation, we may decompose $M(\lambda,\mathfrak{h}) = W_1(\lambda, \mathfrak{h}) + W_2(\lambda)$.
    Note that $W_1$ is bilinear in its arguments and $W_2$ is linear in $\lambda$.

    \subsubsection{Construction of Associated $\gamma$}\label{subsubsec:gamma-construction}
As $M(\lambda,\mathfrak{h})$ is both rank-one and nonnegative, we may write $M(\lambda,\mathfrak{h})=\frac{\phi\phi^\intercal}{2}$ where $\phi\in\R^{t+1}_+$ is indexed by $[0,t]$. 
    We will construct $\gamma\in\R^{(t+2)\by(t+2)}$ from $\phi$ as follows.
    First, set $\gamma_{\star,i} = \sqrt{2H}\phi_i$ for all $i = 0,\dots,t$.
    Then, for $i = 0,\dots,t-1$, set $\gamma_{i,i+1} = \sum_{j=0}^i \gamma_{\star,j} - 2H$.
    All other entries in $\gamma$ are set as zero.

    Below, we verify that the second constraint defining $\mathcal{S}_{\mathfrak{h},\Delta}$, i.e., the linear constraint on $\gamma$, is satisfied for our construction.
    The lemma below explains what the first two linear constraints in the definition of $\mathcal{S}_{\mathfrak{h},\Delta}$ require of $\lambda$ and $\gamma$. Its proof is immediate from expanding the definition of $a_{i,j}$.
    \begin{lemma}
    \label{lem:lin_constraints}
    The equation $\sum_{i,j=0}^t\lambda_{i,j}a_{i,j} = a_{\star,t} - a_{\star,0}$ holds if and only if
    \begin{itemize}
        \item The sum of the zeroth row of $\lambda$ is one larger than the sum of the zeroth column of $\lambda$.
        \item For $i = 1,\dots,t-1$, the sum of the $i^{\mathrm{th}}$ row of $\lambda$ is equal to the sum of the $i^{\mathrm{th}}$ column of $\lambda$.
        \item The sum of the $t^{\mathrm{th}}$ row of $\lambda$ is one less than the sum of the $t^{\mathrm{th}}$ column of $\lambda$.
    \end{itemize}
    Suppose the support of $\gamma$ is contained in $\set{(\star,i):\, i\in[0,t]}\cup\set{(i,i+1):\, i\in[0,t-1]}$, then the equation $\sum_{i\neq j} \gamma_{i,j}a_{i,j} = 2 \sum_{i=0}^{t-1}\mathfrak{h}_ia_{\star,0}$ holds if and only if
    \begin{align}
        \label{eq:gamma_lin_constraints}
        \begin{cases}
            \gamma_{\star,0} - \gamma_{0,1} - 2H = 0\\
            \gamma_{\star,i} - \gamma_{i,i+1} + \gamma_{i-1,i} = 0,\,\forall i = 1,\dots,t-1\\
            \gamma_{\star,t} + \gamma_{t-1,t} = 0.
        \end{cases}
    \end{align}
    \end{lemma}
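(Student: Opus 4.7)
The plan is to unpack the definition $a_{i,j}=\mathbf{f}_j-\mathbf{f}_i$ in the standard basis of $\R^{t+1}$ and match coefficients on both sides. Recall that $\mathbf{f}_\star = 0$ and $\mathbf{f}_i = e_{i+1}$ for $i\in[0,t]$, so in particular $a_{i,j} = e_{j+1}-e_{i+1}$ for $i,j\in[0,t]$, $a_{\star,k}=e_{k+1}$, and $a_{k,\star}=-e_{k+1}$. Since the first equation in the lemma only sums over $i,j\in[0,t]$ and since $a_{i,i}=0$, the only non-trivial bookkeeping is a standard coefficient comparison.

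For the first equivalence, I would expand
\begin{align*}
    \sum_{i,j=0}^t \lambda_{i,j}a_{i,j} = \sum_{i,j=0}^t \lambda_{i,j}(e_{j+1}-e_{i+1}) = \sum_{k=0}^t \left(\sum_{i=0}^t \lambda_{i,k} - \sum_{j=0}^t \lambda_{k,j}\right) e_{k+1},
\end{align*}
so that the coefficient of $e_{k+1}$ on the left is (sum of column $k$ of $\lambda$) minus (sum of row $k$ of $\lambda$). The right-hand side $a_{\star,t}-a_{\star,0} = e_{t+1}-e_1$ has coefficient $-1$ on $e_1$, coefficient $+1$ on $e_{t+1}$, and $0$ elsewhere. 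Matching coefficients for $k=0$, $k\in[1,t-1]$, and $k=t$ gives the three stated row-versus-column-sum conditions, and the equivalence is a standard basis argument.

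For the second equivalence, the support assumption on $\gamma$ means only the terms with $(i,j)\in\{(\star,k):k\in[0,t]\}\cup\{(k,k+1):k\in[0,t-1]\}$ contribute. Substituting the basis expressions for these $a_{i,j}$ yields
\begin{align*}
    \sum_{i\neq j}\gamma_{i,j}a_{i,j} = \sum_{k=0}^t \gamma_{\star,k}e_{k+1} + \sum_{k=0}^{t-1} \gamma_{k,k+1}(e_{k+2}-e_{k+1}).
\end{align*}
Collecting coefficients of $e_{k+1}$ on the left gives $\gamma_{\star,0}-\gamma_{0,1}$ for $k=0$, $\gamma_{\star,k}-\gamma_{k,k+1}+\gamma_{k-1,k}$ for $k\in[1,t-1]$, and $\gamma_{\star,t}+\gamma_{t-1,t}$ for $k=t$. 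Since the right-hand side $2H\,a_{\star,0}=2H\,e_1$ has coefficient $2H$ on $e_1$ and $0$ elsewhere, equating coefficients produces exactly the system \eqref{eq:gamma_lin_constraints}.

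There is no real obstacle here; both directions of each ``if and only if'' follow simultaneously from the fact that $\{e_1,\dots,e_{t+1}\}$ is a basis, so equality of the two sides is equivalent to equality of each of the $t+1$ coefficients. The only mild care needed is to ensure diagonal terms $i=j$ and off-support entries of $\gamma$ are correctly ignored (they contribute zero), and that the support hypothesis on $\gamma$ is used so that no further constraints (beyond the three displayed) are forced by the equation.
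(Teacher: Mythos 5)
Your proposal is correct and takes essentially the approach the paper has in mind: the paper simply states that the proof is ``immediate from expanding the definition of $a_{i,j}$,'' and your coefficient-matching argument in the basis $\{e_1,\dots,e_{t+1}\}$ is exactly that expansion carried out explicitly, with the appropriate care about the diagonal terms and the support of $\gamma$.
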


    Comparing \cref{lem:lin_constraints} with our construction of $\gamma$, we see that the first $t$ constraints in \eqref{eq:gamma_lin_constraints} are satisfied. To show that the last constraint is satisfied as well, it is enough to show that
    the sum of all left-hand side expressions in \eqref{eq:gamma_lin_constraints} is zero, i.e., $2H = \sum_{i=0}^t \gamma_{\star,i} = \sqrt{2H} \mb 1^\intercal \phi$. This is done in the following lemma.
    \begin{lemma}
    \label{lem:z_l1}
    It holds that $\mb 1^\intercal \phi = \sqrt{2H}$.
    \end{lemma}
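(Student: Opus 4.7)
The plan is to compute $\mb 1^\intercal M(\lambda,\mathfrak h) \mb 1$ in two different ways. On one hand, since $M(\lambda,\mathfrak h) = \frac{\phi\phi^\intercal}{2}$ by construction, we have $\mb 1^\intercal M(\lambda,\mathfrak h) \mb 1 = \frac{(\mb 1^\intercal \phi)^2}{2}$. So to get the claim, it suffices to show that $\mb 1^\intercal M(\lambda,\mathfrak h) \mb 1 = H$, and then take the positive square root (which is valid because $\phi \geq 0$).

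To evaluate $\mb 1^\intercal M(\lambda,\mathfrak h)\mb 1$, I would use the decomposition $M(\lambda,\mathfrak h) = W_1(\lambda,\mathfrak h) + W_2(\lambda)$ and handle the two pieces separately. For the $W_2$ piece, the key observation is that for $i,j\in[0,t]$, the restricted gradient difference $(\mathbf g_i - \mathbf g_j)_{0:t} = e_i - e_j$ is a zero-sum vector, so $\mb 1^\intercal (C_{i,j})_{0:t,0:t}\mb 1 = 0$. The only terms $A_{i,j}, C_{i,j}$ that could violate this involve $\star$, but those are killed by the assumption that $\lambda$ vanishes on its $\star$-th row and column. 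Hence $\mb 1^\intercal W_2(\lambda)\mb 1 = 0$.

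For the $W_1$ piece, set $S_i \coloneqq \sum_{k<i}\mathfrak h_k$, so that $S_0 = 0$, $S_t = H$, and $\mb 1^\intercal (\mathbf x_i)_{0:t} = -S_i$ for $i\in[0,t]$ (taking $L=1$ as in the surrounding section). Since $\mb 1^\intercal (\mathbf g_j)_{0:t} = 1$ for $j\in[0,t]$, expanding the symmetric outer product gives $\mb 1^\intercal A_{i,j}(\mathfrak h)_{0:t,0:t}\mb 1 = S_j - S_i$. Summing against $\lambda$ (again only $i,j\in[0,t]$ matter), and exchanging the order of summation, yields $\mb 1^\intercal W_1(\lambda,\mathfrak h)\mb 1 = -\sum_{i=0}^t S_i(r_i - c_i)$, where $r_i,c_i$ are the row and column sums of $\lambda$ restricted to $[0,t]$. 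By \cref{lem:lin_constraints}, $r_i - c_i$ equals $+1$ at $i=0$, $-1$ at $i=t$, and $0$ otherwise, so this sum collapses to $-(S_0 - S_t) = H$.

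Combining the two pieces gives $\mb 1^\intercal M(\lambda,\mathfrak h)\mb 1 = H$, and comparing with $(\mb 1^\intercal \phi)^2/2$ yields the claim. There is no real obstacle: the proof is an accounting argument, and the only mild subtlety is tracking the $\star$-row/column hypothesis to eliminate the $W_2$ contribution and restrict the $W_1$ sum to indices in $[0,t]$.
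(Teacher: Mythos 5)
Your proposal is correct and follows essentially the same strategy as the paper: evaluate $\mb 1^\intercal M(\lambda,\mathfrak h)\mb 1$ two ways, annihilate the $W_2$ contribution via the $\star$-row/column hypothesis, and reduce $\mb 1^\intercal W_1(\lambda,\mathfrak h)\mb 1$ to the row/column-sum differences supplied by \cref{lem:lin_constraints}. The only difference is a cosmetic reorganization of the Abel summation for the $W_1$ term — you sum against partial sums $S_i$ and differences $r_i-c_i$, while the paper swaps the summation order so the coefficient of each $\mathfrak h_\ell$ telescopes to one — but both are the same accounting argument and land on $\mb 1^\intercal W_1\mb 1 = H$.
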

    \begin{proof}
    We compute $\tfrac{1}{2}(\mb 1^\intercal \phi)^2 = \mb 1^\intercal M(\lambda,\mathfrak{h}) \mb 1
        = \mb 1^\intercal(W_1(\lambda, \mathfrak{h}) + W_2(\lambda))\mb 1
        =\mb 1^\intercal W_1(\lambda, \mathfrak{h})\mb 1$
    where the last equality follows as $\lambda$ is zero on its $\star^{\mathrm{th}}$ row and column so that $\mb 1 \in\ker\left((C_{i,j})_{0:t,0:t}\right)$ for any $(i,j)$ satisfying $\lambda_{i,j}>0$. We continue by writing out the definition of $W_1(\lambda,\mathfrak{h})$ and rearranging indices of the resulting summation:
    \begin{align*}
        \frac{1}{2}(\mb 1^\intercal \phi)^2 &= \sum_{\substack{i,j=0\\i\neq j}}^{t}\lambda_{i,j} \mb 1^\intercal (A_{i,j}(\mathfrak{h}))_{2:t+2,2:t+2} \mb 1\\
        &= \sum_{\substack{i,j=0\\i<j}}^{t}\sum_{\ell=i}^{j-1}\lambda_{i,j}  \mathfrak{h}_\ell - \sum_{\substack{i,j=0\\i>j}}^{t}\sum_{\ell=j}^{i-1}\lambda_{i,j}  \mathfrak{h}_\ell\\
        &= \sum_{\ell=0}^{t-1}\left(\sum_{i=0}^\ell\sum_{j=\ell+1}^{t}\lambda_{i,j} - \sum_{i=\ell+1}^{t}\sum_{j=0}^\ell\lambda_{i,j}\right) \mathfrak{h}_\ell.
    \end{align*}
    Now, we claim that for each $\ell \in[0,t-1]$, that the term in parentheses above is exactly equal to one. Indeed, the difference of the two sums is equal to the difference between then sum of all entries in rows $0$ through $\ell$ with the sum of all entries in columns $0$ through $\ell$. Thus, \cref{lem:lin_constraints} states that the term in parentheses is exactly equal to one so that $1^\intercal \phi = \sqrt{2H}$.
    \end{proof}
    
\subsubsection{Existence of Associated $\Delta>0$}\label{subsubsec:Delta-positive}
The first three defining constraints of $\cS_{(1-\eta)\mathfrak{h},\Delta}$ are satisfied regardless of $\Delta$ and $\eta$. Similarly, $\lambda\geq 0$ does not depend on $\Delta$ or $\eta$.
    Next, $\lambda + \Delta(1-\eta)\gamma$ is nonnegative for all $\Delta>0$ small enough by the assumption that $\lambda_{i,i+1}>0$ for all $i\in[0,t-1]$ and the observation that those are the only negative entries of $\gamma$.
    It remains to check that the two PSD constraints defining $\cS_{(1-\eta)\mathfrak{h},\Delta}$ hold for all $\Delta>0$ small enough.
    
    For the first one, we compute
    \begin{align*}
        \begin{pmatrix}
        (1-\eta) H &  (1-\eta) m(\gamma)^\intercal\\
        (1-\eta)m(\gamma) & W_1(\lambda,(1-\eta)\mathfrak{h}) + W_2(\lambda)
        \end{pmatrix}&=
        (1-\eta)\mathfrak{A} + \eta \mathfrak{B},
    \end{align*}
    where
    \[
        \mathfrak{A} = 
        \begin{pmatrix}
            H &  m(\gamma)^\intercal\\
            m(\gamma) & W_1(\lambda,\mathfrak{h}) + W_2(\lambda)
            \end{pmatrix} = \begin{pmatrix}
            H &  -\sqrt{H/2}\,\phi^\intercal\\
            -\sqrt{H/2}\,\phi & \frac{\phi\phi^\intercal}{2}
            \end{pmatrix}, 
    \]
    and 
    \[
        \mathfrak{B} = \begin{pmatrix} 0 &\\ & W_2(\lambda) \end{pmatrix}.
    \]
    $\mathfrak{A}$ is PSD by construction (in fact, rank-one).
    $\mathfrak{B}$ is PSD as $W_2$ maps nonnegative matrices to PSD matrices. We deduce that the first PSD constraint in the definition of $\cS_{(1-\eta)\mathfrak{h},\Delta}$ holds regardless of $\Delta$ and $\eta$.
    
    For the second constraint, we compute
    \begin{align*}
        &\begin{pmatrix}
        (1-\eta) H & (1-\eta)m(\gamma)^\intercal\\
        (1-\eta)m(\gamma)&  W_1(\lambda + \Delta(1-\eta)\gamma,(1-\eta)\mathfrak{h}) + W_2(\lambda + \Delta(1-\eta)\gamma)
        \end{pmatrix}\\
        &\qquad= (1-\eta)\begin{pmatrix}
        H & m(\gamma)^\intercal\\
        m(\gamma) & W_1(\lambda,\mathfrak{h}) + W_2(\lambda)
        \end{pmatrix}
        + \eta \begin{pmatrix}
        0&\\&W_2(\lambda)
        \end{pmatrix}\\
        &\qquad\qquad +
        \Delta\begin{pmatrix}
        0 & \\ & (1-\eta)^2 W_1(\gamma,\mathfrak{h}) + (1-\eta) W_2(\gamma)
        \end{pmatrix}.
    \end{align*}
    For convenience, let $(1-\eta)Q_1 + \eta Q_2 + \Delta Q_3$ denote the three parts of the right-hand-side expression.
    First, we claim that for any $\eta\in(0,1)$, the expression $(1-\eta)Q_1 + \eta Q_2$ has rank $t+1$.
    To see this, recall that $\lambda_{i,i+1}>0$ for all $i\in[0,t-1]$ so that $W_2(\lambda) \succeq L$, where $L$ is a positive multiple of the Laplacian of a path graph on $t+1$ vertices and has rank $t$.
    Then, the claim follows as $Q_1$ has rank one with nontrivial support on its first column and $Q_2$ has rank $t$ on the subspace orthogonal to the first coordinate.
    We additionally deduce that $\ker((1-\eta)Q_1 + \eta Q_2) = \R\mb 1$.
    
    We next evaluate $\mb 1^\intercal W_1(\gamma,\mathfrak{h})\mb 1$ and $\mb 1^\intercal W_2(\gamma)\mb 1$. For the first expression,
    \begin{align*}
        \mb 1^\intercal W_1(\gamma,\mathfrak{h}) \mb 1 &= \sum_{i=0}^t \gamma_{\star,i}\left(\sum_{j=0}^{i-1} \mathfrak{h}_j\right) + \sum_{i=0}^{t-1} \gamma_{i,i+1} \mathfrak{h}_i\\
        &= \sum_{i=0}^t\sum_{j=0}^{i-1} \gamma_{\star,i} \mathfrak{h}_j
        + \sum_{j=0}^{t-1}\sum_{i=0}^j \gamma_{\star,i} \mathfrak{h}_j - 2H^2\\
        &= \left(\sum_{i=0}^t \gamma_{\star,i}\right)\left(\sum_{j=0}^{t-1}\mathfrak{h}_j\right) - 2H^2
        = 0.
    \end{align*}
    For the second expression, we have
    $\mb 1^\intercal W_2(\gamma) \mb 1 = \frac{1}{2}\sum_{i=0}^t \gamma_{\star,i} = H$.

    We deduce that $\mb 1^\intercal Q_3\mb 1 = (1-\eta)H >0$, or equivalently, the matrix $Q_3$ has a positive component in the kernel of $(1-\eta)Q_1 + \eta Q_2$. 
    Thus, the Schur complement lemma shows the existence of a positive $\Delta$ satisfying the theorem statement.

    \subsubsection{A Quantitative Lower Bound on $\Delta$}\label{subsubsec:Delta-bound}
    For the second part of the theorem statement, we will assume $t\geq 3$, $H\geq 8$, $\eta = 1/2$.
    Additionally, we will assume that $\mathfrak{L}\in(0,1]$ lower bounds the second-smallest eigenvalue of $W_2(\lambda)$.

    We will now repeat portions of the proof of the first claim more quantitatively to derive explicit bounds on $\Delta$.
    By the above arguments, it suffices to pick $\Delta$ so that $\lambda+\frac{\Delta}{2}\gamma\geq 0$ and
    $\frac{1}{2}Q_1+\frac{1}{2}Q_2+\Delta Q_3\succeq0$.
    
    First, as the superdiagonal entries of $\gamma$ are defined as $\gamma_{i,i+1}=\sum_{j=0}^i\gamma_{\star,j}-2H = \sqrt{2H}\sum_{j=0}^i\phi_i - 2H$, we have that
    each of these entries is bounded in magnitude by $2H$ (see \cref{lem:z_l1}).
    In particular, the requirement that $\lambda+\frac{\Delta}{2}\gamma\geq 0$ is satisfied as long as
    \begin{align*}
        \Delta \leq \frac{\min_{i\in[0,t-1]}\lambda_{i,i+1}}{H}.
    \end{align*}
    This is the first term in our bound on $\Delta$.

    We now turn to the constraint $\frac{1}{2}Q_1 + \frac{1}{2}Q_2 + \Delta Q_3\succeq 0$.

    Fix an ordered orthonormal basis of $\R^{t+2}$ of the form
    \[
        \set{\tfrac{(t+1, -\mb1_{t+1})}{\sqrt{(t+1)^2 + (t+1)}}, \frac{\xi}{\|\xi\|_2}, \dots, \tfrac{\mb1_{t+2}}{\sqrt{t+2}}},
    \]
    where $\xi$ is the projection of $(\sqrt{H},-\phi/\sqrt{2})$ onto the orthogonal complement of $\tfrac{(t+1, -\mb1_{t+1})}{\sqrt{(t+1)^2 + (t+1)}}$.
    To see that this is possible, note that $\tfrac{(t+1, -\mb1_{t+1})}{\sqrt{(t+1)^2 + (t+1)}},$ $\frac{\xi}{\|\xi\|_2}$ and $\tfrac{\mb1_{t+2}}{\sqrt{t+2}}$ are orthogonal and unit norm.
    Note that $(\sqrt{H},-\phi/\sqrt{2})$ is in the span of the first two basis vectors.
    Also note that the first and last vectors in this basis span the kernel of $Q_2$.
    
    We define and bound $\zeta$ as follows:
    \begin{align*}
        \zeta \coloneqq \frac{1}{\sqrt{(t+1)^2 + (t+1)}}\begin{pmatrix}
            t+1\\
            -\mb1_{t+1}
        \end{pmatrix} ^\intercal \begin{pmatrix}
        \sqrt{H}\\-\phi/\sqrt{2}
        \end{pmatrix} &= \frac{\sqrt{t+2}\sqrt{H}}{\sqrt{t+1}}
        \geq \sqrt{H},
    \end{align*}
    where the last inequality follows from our assumption $t\geq 3$.

    Abusing notation, we will also write $Q_i$ to denote the matrix $Q_i$ written in this new basis.
    Then,
$\frac{1}{2}Q_1+\frac{1}{2}Q_2$ can be bounded below by 
    \begin{align*}
        \frac{1}{2}Q_1 + \frac{1}{2}Q_2\succeq \frac{1}{2}\begin{pmatrix}
        \zeta^2  & \zeta\norm{\xi}_2\\
        \zeta\norm{\xi}_2 & \norm{\xi}^2_2 \\
        && 0_{(t-1)\by(t-1)}\\
        & & & 0 
        \end{pmatrix} + \frac{\mathfrak{L}}{2}\begin{pmatrix}
        0 &\\
        & 1 \\ && I_{(t-1)\by(t-1)}\\
        &&& 0
        \end{pmatrix}.
    \end{align*}

    We bound the minimum eigenvalue of the top-left two-by-two submatrix here as the determinant divided by the trace of the submatrix:
    \begin{align*}
        \frac{\zeta^2\mathfrak{L}/2}{\zeta^2 + \norm{\xi}_2^2 + \mathfrak{L}} &= \frac{\zeta^2\mathfrak{L}/2}{(H + \norm{\phi}^2_2/2)  + \mathfrak{L}}\\
        &\geq \frac{H\mathfrak{L}}{2(2H + \mathfrak{L})}\\
        &\geq \frac{4\mathfrak{L}}{17}.
    \end{align*}
    Here, the first line follows as $\zeta^2+\norm{\xi}_2^2$ and $H+\norm{\phi}_2^2/2$ both measure the $\ell_2$-norm of $(\sqrt{H},\phi/\sqrt{2})$. The second line follows from our lower bound on $\zeta$ and \cref{lem:z_l1} (implying $\norm{\phi}_2^2\leq \norm{\phi}_1^2 = 2H$). The final line uses our bounds on $\mathfrak{L}$ and $H$ which we assumed at the start of this argument.
    It is also clear that the middle $(t-1)\by(t-1)$ block has minimum eigenvalue $\mathfrak{L}/2$.

    Plugging back into our lower bound on $\frac{1}{2}Q_1 + \frac{1}{2}Q_2$ gives
    \begin{align*}
        \frac{1}{2}Q_1 + \frac{1}{2}Q_2 &\succeq \frac{4\mathfrak{L}}{17}\begin{pmatrix}
            I_{(t+1)\times(t+1)}\\
            & 0
            \end{pmatrix}.
    \end{align*}
    We may also write $Q_3$ in this basis in block form by first letting $q_3$ be the orthogonal projection of $Q_3 \tfrac{\mb1_{t+2}}{\sqrt{t+2}}$ onto the subspace orthogonal to $\mb 1_{t+1}$ and writing
    \[
        Q_3 = \begin{pmatrix}
            Q_3' & q_3\\
            q_3^\intercal &  \frac{1}{t+2}\mb1_{t+2}^{\intercal}Q_3\mb1_{t+2}
        \end{pmatrix}.
    \]
    Note that $\mb1_{t+2}^{\intercal}Q_3\mb1_{t+2} = \frac{1}{2}H$ from our previous section.

    We may bound
    \[
        \frac{1}{2}Q_1 + \frac{1}{2}Q_2 + \Delta Q_3 \succeq \begin{pmatrix}
            \frac{4\mathfrak{L}}{17}I_{(t+1)\times(t+1)} + \Delta Q_3' & \Delta q_3\\
            \Delta q_3^\intercal & \frac{\Delta H}{2(t+2)}
        \end{pmatrix}.
    \]
    We now apply the Schur complement lemma to the bottom right entry of this second matrix, which yields that this matrix is PSD if and only if
    \[
        \frac{4\mathfrak{L}}{17}I_{(t+1)\times(t+1)} + \Delta \left(Q_3' - \frac{2(t+2)}{H}  q_3q_3^{\intercal}\right) \succeq 0.
    \]

    This is PSD if
    \[
        \Delta \le  \frac{4\mathfrak{L}}{17\norm{Q_3' - \frac{2(t+2)}{H}  q_3q_3^{\intercal}}_\textup{op} }.
    \]

    It remains to give an upper bound on
    \[
        \norm{Q_3' - \frac{2(t+2)}{H}  q_3q_3^{\intercal}}_\textup{op} \le 
        \norm{Q_3}_\textup{op} + \frac{2(t+2)}{H}\norm{q_3q_3^{\intercal}}_\textup{op} \leq 
        \norm{Q_3}_\textup{op} + \frac{2}{H}\norm{Q_3 \mb1_{t+2}}_2^2.
    \]
    The second inequality follows because $q_3$ is the orthogonal projection of $Q_3 \tfrac{\mb1_{t+2}}{\sqrt{t+2}}$ onto a subspace.
    
    We now bound $\norm{Q_3}_\textup{op}$ and $\norm{Q_3\mb1_{t+2}}_2$ separately.

    For this, we apply the triangle inequality to break the summation over all entries in $\gamma$ into summations over just the $\star$th row and the first superdiagonal:
    \begin{align*}
        \norm{Q_3}_\textup{op} &= \norm{W_1(\gamma,\mathfrak{h})/4 + W_2(\gamma)/2}_\textup{op}\\
        &\leq \frac{1}{4}\norm{\sum_{i=0}^{t}\gamma_{\star,i}\left((A_{\star,i}(\mathfrak{h}))_{0:t,0:t} + (C_{\star,i})_{0:t,0:t}\right)}_\textup{op}\\
        &\qquad + \frac{1}{4}\norm{\sum_{i=0}^{t-1} \gamma_{i,i+1}\left((A_{i,i+1}(\mathfrak{h}))_{0:t,0:t}+(C_{i,i+1})_{0:t,0:t}\right)}_\textup{op}
    \end{align*}
    and
    \begin{align*}
        \norm{Q_3 \mb 1_{t+2}}_2 &\leq \norm{Q_3 \mb 1_{t+2}}_1\\
        &\leq \frac{1}{4}\norm{\sum_{i=0}^{t}\gamma_{\star,i}\left((A_{\star,i}(\mathfrak{h}))_{0:t,0:t} + (C_{\star,i})_{0:t,0:t}\right)\mb 1_{t+1}}_1\\
        &\qquad + \frac{1}{4}\norm{\sum_{i=0}^{t-1} \gamma_{i,i+1}\left((A_{i,i+1}(\mathfrak{h}))_{0:t,0:t}+(C_{i,i+1})_{0:t,0:t}\right)\mb 1_{t+1}}_1.
    \end{align*}
    In both cases, we will deal with the first term using an $\ell_1$-$\ell_\infty$ bound: note that $\sum_{i=0}^t \abs{\gamma_{\star,i}} = 2H$. On the other hand, for any $i\in[0,t]$, we have $\norm{(A_{\star,i}(\mathfrak{h}))_{0:t,0:t} + (C_{\star,i})_{0:t,0:t}}_\textup{op} \leq 1+\norm{\mathfrak{h}}_2 \leq 2H$ and $\norm{\left((A_{\star,i}(\mathfrak{h}))_{0:t,0:t} + (C_{\star,i})_{0:t,0:t}\right)\mb 1_{t+1}}_\textup{1} \leq 1 + H \leq 2H$.
    
    For the second term in the bound on $\norm{Q_3}_\textup{op}$, note that for all $i \in[0,t-1]$, we have
    \begin{gather*}
        (A_{i,i+1}(\mathfrak{h}))_{0:t,0:t} = \mathfrak{h}_i (e_{i+1} \odot e_i)\\
        (C_{i,i+1})_{0:t,0:t} = (e_i - e_{i+1}) \odot (e_i - e_{i+1}).
    \end{gather*}
    We deduce that
    \begin{align*}
        \sum_{i=0}^{t-1} \gamma_{i,i+1}\left((A_{i,i+1}(\mathfrak{h}))_{0:t,0:t}+(C_{i,i+1})_{0:t,0:t}\right)
    \end{align*}
    is a tridiagonal matrix. Using the bounds $\abs{\gamma_{i,i+1}}\leq 2H$ and $\mathfrak{h}_i \leq H$, we have that all entries in this tridiagonal matrix have magnitude bounded by $2(2H)(1+H/2) \leq 3H^2$. We may bound the operator norm of this matrix as the sum of the operator norms of each diagonal. Thus,
    \begin{align*}
        \norm{\sum_{i=0}^{t-1} \gamma_{i,i+1}\left((A_{i,i+1}(\mathfrak{h}))_{0:t,0:t}+(C_{i,i+1})_{0:t,0:t}\right)}_\textup{op}\leq 9H^2
    \end{align*}

    For the second term in our bound on $\norm{Q_3 \mb 1_{t+2}}_1$, a direct calculation shows
    \begin{align*}
        \norm{\sum_{i=0}^{t-1} \gamma_{i,i+1}\left((A_{i,i+1}(\mathfrak{h}))_{0:t,0:t}+(C_{i,i+1})_{0:t,0:t}\right)\mb 1_{t+1}}_1 &=
        \frac{1}{2}\norm{\sum_{i=0}^{t-1} \gamma_{i,i+1}h_i(e_i + e_{i+1})}_1.
    \end{align*}
    Thus, we may bound this quantity above by $2H^2$.

    Putting all the pieces together gives
    \begin{align*}
        \norm{Q_3}_\textup{op} \leq \frac{13}{4}H^2\\
        \norm{Q_3 \mb 1_{t+2}}_2 \leq \frac{3}{2}H^2
    \end{align*}

    This implies that our final expression is PSD as long as
    \[
        \Delta \le  \frac{4\mathfrak{L}}{17\left(\frac{13}{4}H^2 + \frac{9}{2}H^3\right).
}.
    \]
    Invoking our bound on $H$, we have that $(\lambda, \gamma/2)\in \mathcal{S}_{\mathfrak{h}/2,\Delta}$ for any
    \begin{align*}
        \Delta\leq \frac{\mathfrak{L}}{21H^3}.
    \end{align*}

     \section{Proof of Lemma~\ref{lem:straightforward} and Construction of Certificates}\label{sec:certificate}
First for $k=0$, we address the straightforwardness of $h^{(0)}/2 = [1]$ with parameter $\Delta^{(0)}=1/2$ individually. One can verify this by noting the below values of $(\lambda,\gamma)$ are a member of $\mathcal{S}_{[1], 1/2}$ (see \nextmathematica):
$$ \lambda = 
    \begin{pmatrix}
            0 & 0 & 0 \\
            0 & 0 & 1 \\
            0 & 0 & 0 \\
    \end{pmatrix}, \qquad \gamma = 
    \begin{pmatrix}
            0 & 1 & 1 \\
            0 & 0 & -1 \\
            0 & 0 & 0 \\
    \end{pmatrix}.$$
For each $k\geq 1$, in light of \Cref{thm:straightforward_from_rank_one}, Lemma~\ref{lem:straightforward} follows if we can demonstrate certificates $\lambda^{(k)}$ satisfying the sufficient conditions on straightforwardness therein for each $\mathfrak{h}^{(k)}$. We do this in four parts. First, we provide a construction for our claimed certificates $\lambda^{(k)}$, which satisfy the first three conditions in \Cref{thm:straightforward_from_rank_one}. Next,
we will derive lower bounds on $\lambda_{i,i+1}$
and the second-smallest eigenvalue of $W_2(\lambda)$:
\begin{gather}
    \min_{i\in[0,2^{k+1}-2]}\lambda^{(k)}_{i,i+1} \geq 
    \frac{2-\sqrt{2}}{8\sqrt{2}}(1+\sqrt{2})^{-2k+1}\label{eq:bounds_on_lk_superdiagonal}\\
    \text{the second-smallest eigenvalue of }W_2(\lambda^{(k)}) \geq \frac{1}{286}(1+\sqrt{2})^{-k}.\label{eq:bounds_on_lk_lambda2}
\end{gather}
The first bound simply requires checking the relevant entries in our construction of $\lambda^{(k)}$ and is deferred to Appendix~\ref{app:Superdiagonal-bound}.
The second bound is proved in \cref{subsec:lambda2_bound}.
Finally, we verify the last two conditions in \cref{thm:straightforward_from_rank_one}, i.e., that $\sum_{i\neq j}\lk_{i,j}a_{i,j} = a_{\star,2^{k+1}-1} - a_{\star,0}$ and that $M(\lambda,\mathfrak{h})$ is a nonnegative rank-one matrix. For the second claim, we will explicitly show that $M(\lambda^{(k)},\mathfrak{h}^{(k)}) = \frac{\phi^{(k)}(\phi^{(k)})^{\intercal}}{2}$ where $\phi^{(k)}\in\mathbb{R}^{2^{k+1}}$ is defined as $\phi^{(0)} = \left[\sqrt{\mu_0 - 1}, w_0\right]$ and
\begin{align}
    \phi^{(k)} = \left[0_{2^k - 1}, \sqrt{\mu_k - 1}, w_k\right]\qquad\forall k\geq 1.\label{eq:phiDef}
\end{align}
Here, $w_k$ is a vector that will be constructed shortly.
The proofs of these two facts are relatively tedious but ultimately straightforward. We defer the proofs of these two statements to \cref{sec:lambdadRowColSum,sec:lambdaM}.

The proof of \cref{lem:straightforward} will then be a direct application of \cref{thm:straightforward_from_rank_one} and the bounds stated in \eqref{eq:bounds_on_lk_superdiagonal} and \eqref{eq:bounds_on_lk_lambda2}.

The remainder of this section provides our construction for the certificates $\lambda^{(k)}$ and verifies the lower bound on the second smallest eigenvalue of $W_2(\lambda)$.

\subsection{Example Certificates $\lambda^{(k)}$}
Although our construction of $\lambda^{(k)}$ only uses elementary arithmetic operations, it is still quite complicated.
We first present as examples our certificates $\lambda^{(k)}$ for $k = 1,2$. Together with Theorem~\ref{thm:straightforward_from_rank_one}, this proves the straightforwardness of each $(1-\eta)\mathfrak{h}^{(k)}$ for $k=1,2$.

    \[
        \lambda^{(1)} = 
        \begin{pmatrix}
            0 & 0 & 0 & 0 & 0 \\
            0 & 0 & 2 & 0 & 0 \\
            0 & 1 & 0 & \frac{1}{2} & \frac{1}{2} \\
            0 & 0 & 0 & 0 & \frac{1}{2} \\
            0 & 0 & 0 & 0 & 0 \\
        \end{pmatrix},
    \]
    
    \[
        \lambda^{(2)} = 
        \begin{pmatrix}
            0 & 0 & 0 & 0 & 0 & 0 & 0 & 0 & 0 \\
            0 & 0 & 2 & 0 & 0 & 0 & 0 & 0 & 0 \\
            0 & 1 & 0 & \sqrt{2} & 1 & 0 & 0 & 0 & 0 \\
            0 & 0 & 0 & 0 & 2 \left(\sqrt{2}+1\right) & 0 & 0 & 0 & 0 \\
            0 & 0 & \sqrt{2} & \sqrt{2}+2 & 0 & \frac{3}{\sqrt{2}}-2 & 1-\frac{1}{\sqrt{2}} & 1-\frac{1}{\sqrt{2}} & 1-\frac{1}{\sqrt{2}} \\
            0 & 0 & 0 & 0 & 0 & 0 & \frac{1}{\sqrt{2}}-\frac{1}{2} & 0 & 0 \\
            0 & 0 & 0 & 0 & 0 & \frac{3}{2}-\sqrt{2} & 0 & \frac{1}{\sqrt{2}}-\frac{1}{2} & \frac{1}{\sqrt{2}}-\frac{1}{2} \\
            0 & 0 & 0 & 0 & 0 & 0 & 0 & 0 & \frac{1}{2} \\
            0 & 0 & 0 & 0 & 0 & 0 & 0 & 0 & 0 \\
        \end{pmatrix}.
    \]
\subsection{Preliminary Definitions}
We will begin with some auxilliary definitions that will be used in our definition of $\lambda$.
For $i\geq 0$, we let $p(i)$ denote the number of one's in the binary expansion of $i+1$ and we let $z(i) = \lfloor \log_2(i+1) \rfloor$.

At times, it will be convenient to index entries of $\lk$ backwards from the bottom-right instead of top-left. We define $\rev_k(t) \coloneqq 2^{k+1} - 2 - t$. The value of $k$ will always be clear from context and we will simply write $\rev(t)$. \Cref{lem:rev_identities} lists some useful relationships between $\nu(r+1)$, $z(r)$, $p(r)$ and $\nu(\rev(r+1))$, $z(2^{k+2}-2-\rev(r))$ and $p(\rev(r))$.

For $k\geq 1$, recursively define $\sigma^{(k)}$ to be a vector of length $2^{k-1} -1$ as follows: $\sigma^{(1)} = \emptyset$ is the empty vector, and for $k \geq 2$,
\[
    \sigma^{(k)} = \frac{1}{2}(1+\sqrt{2})^{-2(k-1)}\left[\pi^{(k-2)},\beta_k,2(1+\sqrt{2})^{2(k-1)}\sigma^{(k-1)}\right].
\]
For $k \geq 0$, let $\rho_k$ be a vector of length $\lfloor 2^{k-1}\rfloor+1+2^k$ as follows: $\rho_0 = [0, 1]$ and for $k\geq 1$,
\[
    \rho_{k} = \left[(1+\sqrt{2})^{k-2},(1+\sqrt{2})^{2k-1}\sigma^{(k)}-\frac{\pi^{(k-1)}}{2(1+\sqrt{2})},0,\pi^{(k)},1\right].
\]
For $k \geq 0$, let $w_k$ be the vector of length $2^k$ as follows:
$w_0 = [1]$ and for $k\geq 1$,
\begin{align*}
    w_k = \left[\frac{\pi^{(k-1)}}{\sqrt{\mu_k - 1}},\frac{\beta_k}{\sqrt{\mu_k - 1}},w_{k-1}\right].
\end{align*}

\subsection{Construction of Certificate $\lambda^{(k)}$ for each $\mathfrak{h}^{(k)}$}
\label{subsec:lk_construction}
We now present our construction for $\lambda^{(k)}$.
Throughout this section, fix $k\geq 1$. 
The matrix $\lambda^{(k)}$ is a $(2^{k+1}+1)\times(2^{k+1}+1)$ matrix that we construct below. We will index the rows and columns of $\lambda^{(k)}$ by $\{\star,0,1,\dots,2^{k+1}-1\}$.

We will define the entries of $\lk$ row by row, and denote by $\lk_i$ the $i^{th}$ row of $\lk$.
We first describe the support of $\lk$, i.e. the set of entries $(i,j)$ so that $\lk_{i,j} \neq 0$. We set $\lk_\star=\lk_{2^{k+1}-1}=0$ to be the all-zeros row.
Now consider any $i\in[0,2^{k+1} -2]$ and let $\ell = \nu(i+1)$.
The support of $\lk_i$ is given by $[i-\lfloor2^{\ell-1}\rfloor, i+2^{\ell}]\setminus \{i\}$.
Equivalently, $\lk_{ij} \neq 0$ if and only if $i > j$ and $i - j \le 2^{\nu(i+1)}$ or $j > i$ and $j - i \le 2^{\nu(i+1)-1}$.
Given $v \in \R^{\lfloor2^{\ell-1}\rfloor+1+2^{\ell}}$, we will write $\lk_{i} \sim v$ to denote that $\lk_{i j} = v_{j -i + \lfloor2^{\ell-1} \rfloor}$ for all $j \in [i-\lfloor2^{\ell-1}\rfloor, i+2^{\ell}]\setminus \{i\}$ (and 0 for all other values of $j$).

There are five cases for $\lk_i$ depending on $i$. See Figure~\ref{fig:lambda_rows} for a depiction of the different cases.

\textbf{Case 1}: $i+1 <2^k$ and $i+1$ is not a power of 2.

\[
    \lk_i \sim \frac{\mu_{z(i)+1}-1}{(1+\sqrt{2})^{2(z(i)-p(i))+3}} \rho_{\nu(i+1)}.
\]

\textbf{Case 2}: $i+1 > 2^k$ and $2^{k+1}-1 - i$ is not a power of 2.

\[
    \lk_i \sim \frac{(1+\sqrt{2})^{2(p(i) + z(2^{k+1}-2-i) - k)-1}}{\mu_{z(2^{k+1}-2-i)+1}-1} \rho_{\nu(i+1)}.
\]

\textbf{Case 3}: $i = 2^{\ell}-1$ with $\ell < k$.

In this case, we let $\lk_0 \sim [0,2]$ if $\ell=0$
and for $\ell \geq 1$, let
\[
    \lk_i \sim \left[\frac{1}{2}(\mu_{\ell}-1)(1+\sqrt{2})^{-(\ell-1)}-1,(\mu_{\ell}-1)\sigma^{(\ell)},0,\frac{\mu_{\ell+1}-1}{2(1+\sqrt{2})^{2\ell}}\pi^{(\ell)},\frac{\mu_{\ell+1}-1}{2(1+\sqrt{2})^{2\ell}}\right].
\]

\textbf{Case 4}: $i = 2^k-1$.

In this case, let
\[
    \lk_i \sim \left[\frac{1}{2}(\mu_{k}-1)(1+\sqrt{2})^{-(k-1)}-1,(\mu_{k}-1)\sigma^{(k)},0,\frac{w_k}{\sqrt{\mu_k-1}}\right].
\]

\textbf{Case 5}: $i = 2^{k+1}-2^{\ell}-1$ with $\ell < k$.

In this case, we let $\lk_{2^{k+1} - 2} = \left[0,\frac{1}{2}\right] = \left[0,\left(\frac{1}{\sqrt{\mu_0 - 1}} - \frac{1}{\sqrt{\mu_1 - 1}}\right) w_0\right]$ for $\ell=0$
and for $\ell\geq 1$, let
\begin{align*}
    &\lk_i \sim\\
    &\left[\frac{(1+\sqrt{2})^{\ell-1}}{\mu_{\ell+1}-1},\frac{1}{\mu_{\ell+1}-1}\left(\!(1+\sqrt{2})^{2\ell}\sigma^{(\ell)}-\frac{\pi^{(\ell-1)}}{2}\!\right)\!,0,\left(\frac{1}{\sqrt{\mu_{\ell}-1}} - \frac{1}{\sqrt{\mu_{\ell+1}-1}} \right)w_{\ell}\right]\!.
\end{align*}

\begin{figure}
    \centering
    \begin{tikzpicture}[scale=0.35]
        \draw [step=1.0,black,thin] (-1,-16) grid (16,1);

        \foreach \a/\b in {0/1,
            1/0,1/2,1/3,
            3/1,3/2,3/4,3/5,3/6,3/7
            } 
            \draw (\b+0.5,-\a-0.5) node[regular polygon,regular polygon sides=4,fill=blue!50,scale=1] {};
        \foreach \a/\b in {
            7/3,7/4,7/5,7/6,7/8,7/9,7/10,7/11,7/12,7/13,7/14,7/15
            } 
            \draw (\b+0.5,-\a-0.5) node[regular polygon,regular polygon sides=4,fill=green!50,scale=1.0] {};
        \foreach \a/\b in {
            11/9,11/10,11/12,11/13,11/14,11/15,
            13/12,13/14,13/15,
            14/15
            } 
            \draw (\b+0.5,-\a-0.5) node[regular polygon,regular polygon sides=4,fill=red!50,scale=1.0] {};
            \foreach \a/\b in {
            2/3,
            4/5,
            5/4,5/6,5/7,
            6/7
            } 
            \draw (\b+0.5,-\a-0.5) node[regular polygon,regular polygon sides=4,fill=magenta!50,scale=1.0] {};
        \foreach \a/\b in {
            8/9,
            9/8,9/10,9/11,
            10/11,
            12/13
            } 
            \draw (\b+0.5,-\a-0.5) node[regular polygon,regular polygon sides=4,fill=cyan!50,scale=1.0] {};
      \end{tikzpicture}\hspace{0.5em}
      \begin{tikzpicture}[scale=0.18]
        \draw [step=1.0,black,very thin] (-1,-32) grid (32,1);
        \foreach \a/\b in {0/1,
            1/0,1/2,1/3,
            3/1,3/2,3/4,3/5,3/6,3/7,
            7/3,7/4,7/5,7/6,7/8,7/9,7/10,7/11,7/12,7/13,7/14,7/15
            } 
            \draw (\b+0.5,-\a-0.5) node[regular polygon,regular polygon sides=4,fill=blue!50,scale=0.5] {};
        \foreach \a/\b in {
            15/7,15/8,15/9,15/10,15/11,15/12,15/13,15/14,15/16,15/17,15/18,15/19,15/20,15/21,15/22,15/23,15/24,15/25,15/26,15/27,15/28,15/29,15/30,15/31
            } 
            \draw (\b+0.5,-\a-0.5) node[regular polygon,regular polygon sides=4,fill=green!50,scale=0.5] {};
        \foreach \a/\b in {
            23/19,23/20,23/21,23/22,23/24,23/25,23/26,23/27,23/28,23/29,23/30,23/31,
            27/25,27/26,27/28,27/29,27/30,27/31,
            29/28,29/30,29/31,
            30/31
            } 
            \draw (\b+0.5,-\a-0.5) node[regular polygon,regular polygon sides=4,fill=red!50,scale=0.5] {};
            \foreach \a/\b in {
            2/3,
            4/5,
            5/4,5/6,5/7,
            6/7,
            8/9,
            9/8,9/10,9/11,
            10/11,
            11/9,11/10,11/12,11/13,11/14,11/15,
            12/13,
            13/12,13/14,13/15,
            14/15
            } 
            \draw (\b+0.5,-\a-0.5) node[regular polygon,regular polygon sides=4,fill=magenta!50,scale=0.5] {};
        \foreach \a/\b in {
            16/17,
            17/16,17/18,17/19,
            18/19,
            19/17,19/18,19/20,19/21,19/22,19/23,
            20/21,
            21/20,21/22,21/23,
            22/23,
            24/25,
            25/24,25/26,25/27,
            26/27,
            28/29
            } 
            \draw (\b+0.5,-\a-0.5) node[regular polygon,regular polygon sides=4,fill=cyan!50,scale=0.5] {};
      \end{tikzpicture}
      \caption{Supports of $\lambda^{(3)}$ and $\lambda^{(4)}$. The rows of $\lambda^{(k)}$ are defined according to Case 1 (magenta), Case 2 (cyan), Case 3 (blue), Case 4 (green), Case 5 (red).}
      \label{fig:lambda_rows}
\end{figure}
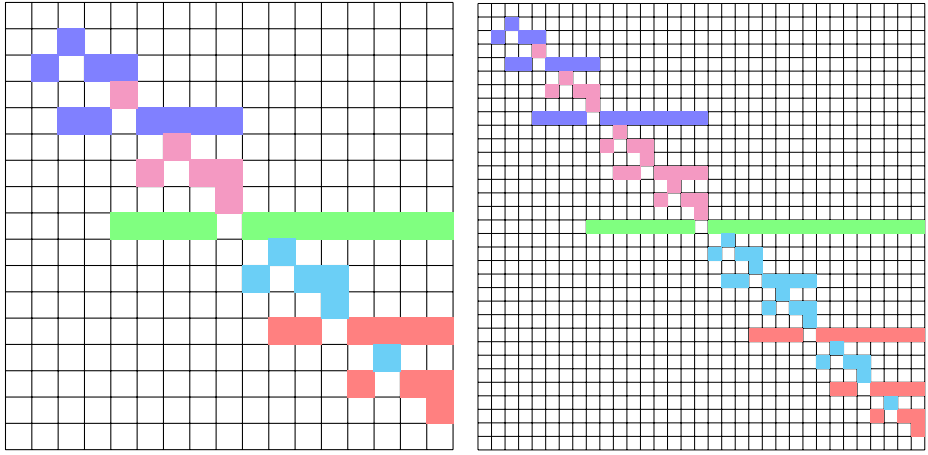

While we find this vector concatenation notation to be more compact and easier to read than specifying each entry of $\lambda$ separately, we will also give $\lambda$ entry by entry.

\textbf{Case 1}: $i+1 <2^k$ and $i+1$ is not a power of 2.

Let $z = z(i)$, $\ell = \nu(i+1)$, and $p = p(i)$, then
\[
    \lk_{ij} = \begin{cases}
                    \frac{\mu_{z+1}-1}{(1+\sqrt{2})^{2(z-p)-\ell+5}}& \text{ if } j = i - 2^{\ell - 1},\\
                    \frac{\mu_{z+1}-1}{(1+\sqrt{2})^{2(z-p)+3}}(\frac{1}{2}(1+\sqrt{2})^{2(\ell-a)-3}\beta_{a+2} - \frac{\beta_{a}}{2(1+\sqrt{2})}) & \text{ if } j = i - 2^{a}
                    \vspace{-0.5em}\\
                    &\qquad\quad\text{for }0 \le a < \ell-1,\\
                    \frac{\mu_{z+1}-1}{2(1+\sqrt{2})^{2(z-p)+4}}(((1+\sqrt{2})^{2(\ell-a)-2} - 1)\beta_{\nu(j+1)}) & \text{ if } i - 2^{a} > j > i - 2^{a+1}
                    \vspace{-0.5em}\\
                    &\qquad\quad\text{for }0 \le a < \ell-1,\\
                    \frac{\mu_{z+1}-1}{(1+\sqrt{2})^{2(z-p)+3}}\beta_{\nu(j + 1)} & \text{ if } i + 2^{\ell} > j > i,\\
                    \frac{\mu_{z+1}-1}{(1+\sqrt{2})^{2(z-p)+3}} & \text{ if } j = i + 2^{\ell},\\
                    0 & \text{ otherwise.}
                \end{cases}
\]

\textbf{Case 2}: $i+1 > 2^k$ and $2^{k+1}-1 - i$ is not a power of 2.

Let $z = z(2^{k+1}-2-i)$, $\ell = \nu(i+1)$, and $p = p(i)$, then
\[
    \lk_{ij} = \begin{cases}
                    \frac{(1+\sqrt{2})^{2(p + z-k)-3+\ell}}{\mu_{z+1}-1} & \text{ if } j = i - 2^{\ell - 1},\\
                    \frac{(1+\sqrt{2})^{2(p + z - k)-1}}{2(\mu_{z+1}-1)} 
                    ((1+\sqrt{2})^{2(\ell-a)-3}\beta_{a+2} - \frac{\beta_{a}}{(1+\sqrt{2})})& \text{ if } j = i - 2^{a}
                    \vspace{-0.5em}\\
                    &\qquad\quad\text{ for }0 \le a < \ell-1,\\
                    \frac{(1+\sqrt{2})^{2(p + z - k-1)}}{2(\mu_{z+1}-1)}((1+\sqrt{2})^{2(\ell-a)-2} - 1)\beta_{\nu(j+1)}
                    & \text{ if } i - 2^{a+1} < j < i - 2^{a}
                    \vspace{-0.5em}\\
                    &\qquad\quad\text{for }0 \le a < \ell-1,\\
                    \frac{(1+\sqrt{2})^{2(p + z - k)-1}}{\mu_{z+1}-1} \beta_{\nu(j + 1)} & \text{ if } i + 2^{\ell} > j > i,\\
                    \frac{(1+\sqrt{2})^{2(p + z - k)-1}}{\mu_{z+1}-1} & \text{ if } j = i + 2^{\ell},\\
                    0 & \text{ otherwise.}
                \end{cases}
\]

\textbf{Case 3}: $i = 2^{\ell}-1$ with $\ell < k$.
\[
    \lk_{ij} = \begin{cases}
                    \frac{1}{2}(\mu_{\ell}-1)(1+\sqrt{2})^{-(\ell-1)}-1& \text{ if } j = 2^{\ell - 1} - 1,\\
                    \frac{1}{2}(\mu_{\ell}-1)(1+\sqrt{2})^{-2(a+1)}\beta_{a+2}& \text{ if } j = i - 2^{a}\text{ for }0 \le a < \ell-1,\\
                    \frac{1}{2}(\mu_{\ell}-1)(1+\sqrt{2})^{-2(a+1)}\beta_{\nu(j+1)} & \text{ if } i - 2^{a} > j > i - 2^{a+1}
                    \vspace{-0.5em}\\
                    &\qquad\quad\text{for }0 \le a < \ell-1,\\
                    \frac{\mu_{\ell+1}-1}{2(1+\sqrt{2})^{2\ell}}\beta_{\nu(j + 1)} & \text{ if } 2^{\ell+1}-1 > j > 2^{\ell}-1,\\
                    \frac{\mu_{\ell+1}-1}{2(1+\sqrt{2})^{2\ell}} & \text{ if } j = 2^{\ell+1}-1,\\
                    0 & \text{ otherwise.}
                \end{cases}
\]

\textbf{Case 4}: $i = 2^k-1$.
\[
    \lk_{ij} = \begin{cases}
                    \frac{1}{2}(\mu_{k}-1)(1+\sqrt{2})^{-(k-1)}-1& \text{ if } j = 2^{k - 1} - 1,\\
                    \frac{1}{2}(\mu_{k}-1)(1+\sqrt{2})^{-2(a+1)}\beta_{a+2}& \text{ if } j = i - 2^{a}\text{ for }0 \le a < k-1,\\
                    \frac{1}{2}(\mu_{k}-1)(1+\sqrt{2})^{-2(a+1)}\beta_{\nu(j+1)} & \text{ if } i - 2^{a} > j > i - 2^{a+1}\vspace{-0.5em}\\
                    &\qquad\quad \text{for }0 \le a < k-1,\\
                    \frac{1}{\sqrt{\mu_{k}-1}\sqrt{\mu_{a+1}-1}}\beta_{a+1} & \text{ if } j = \rev(2^{a} - 1) \text{ for }0  \le a < k,\\
                    \frac{1}{\sqrt{\mu_{k}-1}\sqrt{\mu_{a+1}-1}}\beta_{\nu(j+1)} & \text{ if } \rev(2^{a} -1) > j > \rev(2^{a+1} -1)
                    \vspace{-0.5em}\\
                    &\qquad\quad\text{for }0  \le a < k,\\
                    0 & \text{ otherwise.}
                \end{cases}
\]

\textbf{Case 5}: $i = 2^{k+1}-2^{\ell}-1$ with $\ell < k$.
\[
    \lk_{ij} = \begin{cases}
                    \frac{(1+\sqrt{2})^{\ell-1}}{\mu_{\ell+1}-1}& \text{ if } j = i-2^{\ell-1},\\
                    \frac{1}{2(\mu_{\ell+1}-1)} \left((1+\sqrt{2})^{2(\ell-a-1)}\beta_{a+2}-\beta_a\right)  & \text{ if } j = i - 2^{a}\text{ for }0 \le a < \ell-1,\\
                    \frac{1}{2(\mu_{\ell+1}-1)}\left((1+\sqrt{2})^{2(\ell-a-1)}-1\right)\beta_{\nu(j+1)} & \text{ if } i - 2^{a} > j > i - 2^{a+1}
                    \vspace{-0.5em}\\
                    &\qquad\quad\text{for }0 \le a < k-1,\\
                    \left(\frac{1}{\sqrt{\mu_{\ell}-1}}-\frac{1}{\sqrt{\mu_{\ell+1}-1}}\right)\frac{\beta_{a+1}}{\sqrt{\mu_{a+1}-1}} & \text{ if } j = \rev(2^{a} - 1)\text{ for }0  \le a < \ell,\\
                    \left(\frac{1}{\sqrt{\mu_{\ell}-1}}-\frac{1}{\sqrt{\mu_{\ell+1}-1}}\right)\frac{\beta_{\nu(j+1)}}{\sqrt{\mu_{a+1}-1}} & \text{ if } \rev(2^{a} -1) > j > \rev(2^{a+1} -1)
                    \vspace{-0.5em}\\
                    &\qquad\quad\text{ for }0  \le a < \ell,\\
                    0 & \text{ otherwise.}
                \end{cases}
\]

\subsection{Lower bound on second smallest eigenvalue of $W_2(\lambda)$}
\label{subsec:lambda2_bound}

In the remainder of this section, we fix $k\geq 1$ and let $\lambda = \lk$.
In this subsection only, we will let $\mathfrak{L}(\cdot)$ denote the second-smallest eigenvalue of its argument.

Recall that $W_2(\lambda)$ is
\begin{align*}
    W_2(\lambda) = \sum_{i\neq j}\lambda_{i,j} (e_i - e_j) \odot (e_i-e_j).
\end{align*}
We recognize this as the Laplacian of the weighted graph on the vertices $[0,2^{k+1} - 1]$ where the vertices $i$ and $j$ are connected with an edge with weight $\lambda_{i,j} + \lambda_{j,i}$.
We will lower-bound $\mathfrak{L}(W_2(\lambda))$ by identifying a simpler weighted graph that is dominated by our original graph and bounding its second-smallest eigenvalue instead.

The following lemma computes some lower bounds on the entries of $\lambda$. This will allow us to identify our simpler graph. Its proof simply requires checking the relevant entries of $\lambda$ and is deferred to \cref{app:edge_weight_bound}.

\begin{lemma}
    \label{lem:lower_bound_edge_weights}
Let $k\geq 1$.
If $\ell \in[1,k]$, then
\begin{align*}
    \lambda_{2^\ell - 1, 2^{\ell-1}-1}\geq \frac{1}{\sqrt{2}}.
\end{align*}
If $\ell\in[1,k]$ and $j\in(2^{\ell-1}-1, 2^\ell -1)$, then
\begin{align*}
    \lambda_{2^\ell - 1, j}\geq \frac{(1+\sqrt{2})^2}{\sqrt{2}} (1+\sqrt{2})^{-\ell}.
\end{align*}
If $j\in(2^k-1, 2^{k+1}-1]$, then
\begin{align*}
    \lambda_{2^k-1, j} \geq  \frac{1}{2\sqrt{2}}(1+\sqrt{2})^{-k}.
\end{align*}
\end{lemma}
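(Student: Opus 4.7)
The plan is a direct case analysis. Each of the three claimed lower bounds concerns an entry of $\lambda^{(k)}$ in the row $i = 2^\ell - 1$ for some $\ell \in [1, k]$, which falls under Case 3 of the construction in Section~\ref{subsec:lk_construction} (if $\ell < k$) or Case 4 (if $\ell = k$). For each bound I plan to read off the explicit formula from the construction and bound it using the growth estimates on $\mu_\ell$ from \Cref{lem:helpful-bounds-Hk-muk}, namely $\sqrt{2}(1+\sqrt{2})^\ell \leq \mu_\ell - 1 \leq 2\sqrt{2}(1+\sqrt{2})^\ell$, together with the closed form $\beta_i = 1 + (1+\sqrt{2})^{i-1}$, which in particular gives $\beta_0 = \sqrt{2}$ and $\beta_{a+2} \geq (1+\sqrt{2})^{a+1}$.

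For the first bound, Cases 3 and 4 both yield the same formula $\lambda^{(k)}_{2^\ell-1,\,2^{\ell-1}-1} = \tfrac{1}{2}(\mu_\ell-1)(1+\sqrt{2})^{-(\ell-1)} - 1$. Substituting the lower bound $\mu_\ell - 1 \geq \sqrt{2}(1+\sqrt{2})^\ell$ and simplifying yields exactly $\tfrac{\sqrt{2}}{2}(1+\sqrt{2}) - 1 = \tfrac{1}{\sqrt{2}}$. For the second bound, with $i = 2^\ell - 1$ and $j \in (2^{\ell-1}-1, 2^\ell-1)$, the construction yields either $\lambda_{ij} = \tfrac{1}{2}(\mu_\ell-1)(1+\sqrt{2})^{-2(a+1)}\beta_{a+2}$ at $j = i - 2^a$, or $\lambda_{ij} = \tfrac{1}{2}(\mu_\ell-1)(1+\sqrt{2})^{-2(a+1)}\beta_{\nu(j+1)}$ for $j$ strictly between $i - 2^{a+1}$ and $i - 2^a$, where $a \in \{0,\dots,\ell-2\}$. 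Plugging in the lower bounds above, the two subcases yield $\lambda_{ij} \geq \tfrac{\sqrt{2}}{2}(1+\sqrt{2})^{\ell-a-1}$ and $\lambda_{ij} \geq (1+\sqrt{2})^{\ell-2(a+1)}$, respectively. The worst case over $a$ is $a = \ell - 2$, giving $\tfrac{\sqrt{2}(1+\sqrt{2})}{2}$ and $(1+\sqrt{2})^{2-\ell}$, both of which exceed the target $\tfrac{(1+\sqrt{2})^{2-\ell}}{\sqrt{2}}$ for all $\ell \geq 1$.

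For the third bound, with $i = 2^k - 1$ and $j \in (2^k-1,\, 2^{k+1}-1]$, the Case 4 formulas give $\lambda_{ij} = \tfrac{\beta}{\sqrt{\mu_k-1}\sqrt{\mu_{a+1}-1}}$ for some $a \in \{0,\dots,k-1\}$ with $\beta \in \{\beta_{a+1}, \beta_{\nu(j+1)}\}$, plus the endpoint $j = 2^{k+1}-1$ where the vector description $w_k/\sqrt{\mu_k-1}$ (with last coordinate $1$) gives $\lambda_{ij} = 1/\sqrt{\mu_k-1}$. Applying the upper bounds $\mu_k - 1 \leq 2\sqrt{2}(1+\sqrt{2})^{k}$ and $\mu_{a+1} - 1 \leq 2\sqrt{2}(1+\sqrt{2})^{a+1}$ together with $\beta \geq \sqrt{2}$ yields $\lambda_{ij} \geq \tfrac{1}{2}(1+\sqrt{2})^{-(k+a+1)/2}$, whose worst case at $a = k-1$ gives $\tfrac{1}{2}(1+\sqrt{2})^{-k} > \tfrac{1}{2\sqrt{2}}(1+\sqrt{2})^{-k}$. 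The endpoint case is dispatched by the even stronger bound $1/\sqrt{\mu_k-1} \geq (2\sqrt{2})^{-1/2}(1+\sqrt{2})^{-k/2}$.

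The main obstacle is not any nontrivial inequality but the pure bookkeeping of matching each $(i,j)$ pair to the correct clause of the multi-part Case 3 and Case 4 formulas, including checking that the endpoint $j = 2^{k+1}-1$ (which is not covered by the $\text{rev}$-based subcases in the entry-by-entry description) is correctly read off from the vector form. Once the right formula is located, each bound collapses to an elementary consequence of \Cref{lem:helpful-bounds-Hk-muk} after pessimizing over $a$.
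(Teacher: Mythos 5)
Your proposal is correct, and the underlying calculations check out. The route you take is somewhat different from the paper's, though not dramatically so. For the second and third bounds, the paper's proof does not directly invoke the entry-by-entry formulas at all: it works from the vector description $\lk_{2^\ell-1} \sim [\dots, (\mu_\ell - 1)\sigma^{(\ell)}, 0, \dots]$ (resp.\ $[\dots, 0, w_k/\sqrt{\mu_k-1}]$) and proves, by a short induction on $\ell$, a uniform lower bound on the entries of $\sigma^{(\ell)}$ (namely $\geq \tfrac{1}{2}(1+\sqrt{2})^{-2(\ell-1)}$) and of $w_\ell$ (namely $\geq 1/\sqrt{\mu_\ell-1}$). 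Combined with the lower (resp.\ upper) bound on $\mu_\ell-1$ from \cref{lem:helpful-bounds-Hk-muk}, each of those claims collapses to the desired inequality in a single line with no case split. Your version bypasses the inductions entirely and instead enumerates the clauses of the Case~3/Case~4 entry-by-entry description, pessimizes over the parameter $a$, and handles the endpoint $j=2^{k+1}-1$ separately. Both approaches are valid; the paper's is shorter and dovetails with the recursive definitions of $\sigma^{(\ell)}$ and $w_\ell$, while yours is more self-contained (no auxiliary induction) at the cost of a case analysis. You were right to flag the endpoint $j=2^{k+1}-1$ --- it is indeed the only index in the support of row $2^k-1$ that the entry-by-entry description of Case~4 does not cover, and your reading $\lambda_{2^k-1,\,2^{k+1}-1}=1/\sqrt{\mu_k-1}$ from the final coordinate of $w_k$ is exactly right. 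One place to be slightly more explicit when writing this up: in the third bound you pessimize over $a\in[0,k-1]$ using $\mu_{a+1}-1\le 2\sqrt{2}(1+\sqrt{2})^{a+1}$, which is the $k\geq 1$ statement of \cref{lem:helpful-bounds-Hk-muk} applied at index $a+1\ge 1$; and in the second bound your ``worst case $a=\ell-2$'' analysis implicitly uses that the relevant clauses only occur for $\ell\ge 2$ (for $\ell=1$ the interval $(2^{\ell-1}-1,2^\ell-1)$ is empty, so there is nothing to prove).
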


We are now ready to prove a lower bound on $\mathfrak{L}(W_2(\lambda))$.

\begin{theorem}
Let $k \geq 1$. Then, the second smallest eigenvalue of $W_2(\lambda)$ is at least $\frac{1}{286}(1+\sqrt{2})^{-k}$.
\end{theorem}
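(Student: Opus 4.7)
The plan is to reduce to a simpler tree using Lemma~\ref{lem:lower_bound_edge_weights}, then bound the algebraic connectivity by an orthogonal decomposition. Recognizing $W_2(\lambda)$ as the Laplacian of the edge-weighted graph on $\{0,\dots,2^{k+1}-1\}$ described in the excerpt, I would use the entrywise lower bounds in Lemma~\ref{lem:lower_bound_edge_weights} to dominate it below in Loewner order by the Laplacian $L_H$ of a spanning tree $H$ with: a backbone path $v_0 \text{--} v_1 \text{--} \cdots \text{--} v_k$ (where $v_\ell := 2^\ell - 1$) of edge weight $1/\sqrt{2}$; for each $\ell \in [2,k]$, a star of interior leaves $S_\ell := \{2^{\ell-1}, \ldots, 2^\ell - 2\}$ attached to $v_\ell$ with uniform edge weight $(1+\sqrt{2})^{2-\ell}/\sqrt{2}$; and right leaves $T := \{2^k, \ldots, 2^{k+1} - 1\}$ attached to $v_k$ with uniform edge weight $w^T := (1+\sqrt{2})^{-k}/(2\sqrt{2})$. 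Since $W_2(\lambda) \succeq L_H$, it suffices to show $\mathfrak{L}(L_H) \ge \tfrac{1}{286}(1+\sqrt{2})^{-k}$.

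For any $f : V \to \mathbb{R}$ with $\sum_v f_v = 0$, I would orthogonally decompose $f = g + h$, where $g$ is constant on each leaf cluster ($S_\ell$ and $T$) and arbitrary on backbone vertices, while $h$ vanishes on backbone vertices and has mean zero on each leaf cluster. Because every edge of $H$ is either a backbone edge (depending only on $g$) or a star edge $v_\ell \text{--} j$, expanding $(f(v_\ell) - f(j))^2$ and summing $j$ over the star cancels the cross term by the mean-zero condition on $h$. Hence the Dirichlet form and squared norm both split: $f^{\intercal} L_H f = Q_g(g) + Q_h(h)$ and $\|f\|_2^2 = \|g\|_2^2 + \|h\|_2^2$, giving $\mathfrak{L}(L_H) \ge \min(\lambda_g, \lambda_h)$. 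The bound $\lambda_h = w^T = (1+\sqrt{2})^{-k}/(2\sqrt{2})$ is immediate since $Q_h$ is a diagonal quadratic form with star weights as coefficients and $w^T$ is the smallest.

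The main obstacle is bounding $\lambda_g$, the generalized spectral gap of the ``cluster graph''---a caterpillar on $2k+1$ vertices with vertex multiplicities $m(v_\ell)=1$, $m(S_\ell) = 2^{\ell-1}-1$, and $m(T) = 2^k$, with edge weights equal to the star weights times cluster sizes and backbone weights $1/\sqrt{2}$. The bottleneck is the $v_k \text{--} T$ edge (weight $w^T |T| = (2/(1+\sqrt{2}))^k / (2\sqrt{2})$, coupling the heavy vertex $T$ of mass $2^k$ to the rest), suggesting $\lambda_g \sim (1+\sqrt{2})^{-k}$; but a direct Cheeger bound loses a square root and a naive canonical-paths bound loses a factor of $k$, so both are inadequate. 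I expect the proof to proceed by a further orthogonal decomposition on the cluster graph---isolating the ``$T$ versus rest'' Fiedler-like mode and handling the residual caterpillar Laplacian separately via a one-dimensional Poincar\'e inequality on the backbone (whose weights $1/\sqrt{2}$ are independent of $k$)---yielding $\lambda_g \ge c (1+\sqrt{2})^{-k}$ with an explicit constant. Combining with $\lambda_h = (1+\sqrt{2})^{-k}/(2\sqrt{2})$ then produces the conservative bound $1/286$.
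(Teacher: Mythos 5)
Your setup is sound and correctly identifies the key reduction: dominating $W_2(\lambda)$ below in Loewner order by a caterpillar tree Laplacian via \cref{lem:lower_bound_edge_weights}. The orthogonal decomposition $f = g + h$ (cluster-constant plus cluster-mean-zero) is also valid — the cross terms cancel because $g$ is constant on each leaf cluster, so $f^\intercal L_H f = Q_g(g) + Q_h(h)$ and $\|f\|^2 = \|g\|^2 + \|h\|^2$, giving $\mathfrak{L}(L_H) \ge \min(\lambda_g, \lambda_h)$, and your computation of $\lambda_h$ is correct.

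However, there is a genuine gap: the bound $\lambda_g \ge c(1+\sqrt{2})^{-k}$ for the vertex-weighted cluster caterpillar is the heart of the problem, and you do not prove it. You accurately diagnose why Cheeger and canonical-paths arguments are too lossy, but your proposed fix — ``isolating the $T$-versus-rest Fiedler-like mode'' and handling the residual by a Poincaré inequality — is not carried out, and it is not evident that it closes. Unlike the $g$/$h$ split, the ``$T$-versus-rest'' indicator is not an eigenvector of the cluster Laplacian, so the Dirichlet form and $m$-weighted norm will not decompose along that direction without substantial additional argument. As written, the proposal is a plan, not a proof.

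The paper takes a different route for exactly this step. It argues by contradiction: assuming a unit, mean-zero near-minimizer $x$ with $x^\intercal \mathcal{L} x \le \Xi$, the backbone edge inequalities force all path-vertex values to lie within $O(k\sqrt{\Xi})$ of each other; contracting the path to a single super-vertex $o$ of multiplicity $k+1$ then gives a test vector $\tilde x$ on a weighted star whose Rayleigh quotient is a small perturbation of that of $x$. The punchline is a direct spectral computation: $\Diag(\mu)^{-1/2}\mathcal{L}_{\mathrm{star}}\Diag(\mu)^{-1/2}$ is a bordered diagonal matrix whose second-smallest eigenvalue is at least $\tfrac{1}{2\sqrt{2}}(1+\sqrt{2})^{-k}$ by Cauchy interlacing, yielding the contradiction. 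This contraction-and-perturbation argument sidesteps the need for a clean second orthogonal split of the cluster graph, which is precisely where your proposal stalls. (The paper also uses a single uniform leaf weight rather than your per-level weights; since the $T$ cluster dominates the bound in either case, the finer weights do not buy anything here.)
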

\begin{proof}
We describe a weighted graph on the vertices $[0,2^{k+1} - 1]$ that we will refer to throughout this proof as the \emph{caterpillar graph}.
Begin with a path graph on the vertices $\set{2^0-1, 2^1 - 1, 2^2 - 1, \dots, 2^k-1}$ with edge weights $\frac{1}{\sqrt{2}}$. We refer to the vertices on this path as the \emph{path vertices}.
Next, for every $\ell\in[2,k]$, add an edge from $2^{\ell}-1$ to every $j\in(2^{\ell-1} - 1, 2^\ell - 1)$ with edge weight $\frac{1}{2\sqrt{2}}(1+\sqrt{2})^{-k}$. Also add an edge from $2^k - 1$ to every $j\in(2^k - 1, 2^{k+1} - 1]$. We refer to these vertices as \emph{leaf vertices}.
There are $N\coloneqq 2^{k+1} - (k+1)$ leaf vertices.
One may check that $N\geq 2$ for all $k\geq 1$ and that every vertex in $[0,2^{k+1} - 1]$ is either a leaf vertex or a path vertex.

Let $\mathcal{L}$ denote the Laplacian of the caterpillar graph.
\cref{lem:lower_bound_edge_weights} implies that
$\mathfrak{L}(W_2(\lambda))\geq \mathfrak{L}(\mathcal{L})$.

Now, suppose for the sake of contradiction that $\mathfrak{L}(W_2(\lambda))\leq \frac{1}{286}(1+\sqrt{2})^{-k}$. For simplicity, let $\Xi = \frac{1}{286}(1+\sqrt{2})^{-k}$ within this proof.
We will deduce from this assumption that the ``vertex-weighted'' star graph that arises from contracting all path vertices in the caterpillar graph to a single vertex has small algebraic connectivity, from which we will derive a contradiction.

By assumption, there exists a nonzero vector $x\in\R^{2^{k+1}}$ indexed by $[0,2^{k+1}-1]$ for which
\begin{align*}
    x^\intercal \mathcal{L} x \leq \Xi,
    \qquad
    x^\intercal x = 1,
    \qquad
    \mb 1^\intercal x =0.
\end{align*}
From this, we deduce that for each $i\in[1,k]$, that $\frac{1}{\sqrt{2}}(x(2^i - 1) - x(2^{i-1} - 1))^2 \leq \Xi$. Chaining these inequalities, for any pair of path vertices, the difference in $x$ values is bounded above by $k 2^{1/4}\Xi^{1/2}$.

Now, define $\tilde x\in \R^{1+N}$ to be a vector that is indexed by a new vertex $o$ and all leaf vertices. We set $\tilde x$ to agree with $x$ on all leaf vertices and to take the value $\tilde x (o) = \frac{1}{k+1}\sum_{\ell=0}^k x(2^\ell - 1)$ on the new vertex.
Define $\mu(o) = k+1$ and $\mu(i) = 1$ for all leaf vertices $i$.
Our construction ensures that
\begin{align*}
    \mu(o)\tilde x(o) + \sum_{\textup{leaf vtx } i}\mu(i) \tilde x(i) = \mb 1^\intercal x = 0.
\end{align*}

Next, let $\mathcal{L}_\textup{star}$ denote the weighted Laplacian on the leaf vertices and the vertex $o$, that contains an edge of weight $\frac{1}{2\sqrt{2}}(1+\sqrt{2})^{-k}$ from $o$ to each leaf vertex. Given a leaf vertex $i$, let $\textup{parent}(i)$ denote the path vertex that $i$ was attached to in the caterpillar graph. Then,
\begin{align*}
    \tilde x^\intercal \mathcal{L}_\textup{star}\tilde x 
    &=\frac{1}{2\sqrt{2}}(1+\sqrt{2})^{-k} \sum_{\textup{leaf vtx } i} (\tilde x(i) - \tilde x(o))^2\\
    &\leq \frac{1}{2\sqrt{2}}(1+\sqrt{2})^{-k} \sum_{\textup{leaf vtx } i} 2(x(i) - x(\textup{parent}(i)))^2 + 2(x(\textup{parent}(i)) - \tilde x(o))^2\\
    &\leq 2 \Xi + 2k^2\Xi \left(\frac{2}{1+\sqrt{2}}\right)^k.
\end{align*}
Here, on the second line, we have used the inequality $(a-b)^2 \leq 2(a-c)^2 + 2(c-b)^2$.

We also have
\begin{align*}
    \mu(o)\tilde x(o)^2 + \sum_{\textup{leaf vtx } i}\mu(i) \tilde x(i)^2 &=
    \sum_{\text{path vtx } i}x(i)^2 + (\tilde x(o)^2 - x(i)^2) + \sum_{\textup{leaf vtx } i}x(i)^2\\
    &\geq 1 - 4k(k+1)\sqrt{\Xi}
\end{align*}
Here, on the second line, we have used the inequality $\abs{a^2 - b^2} \leq \abs{a-b}\abs{a+b}$. We have also used the fact that $\norm{x}_2 = 1$ so that $\abs{\tilde x(0)}, \abs{x(i)} \leq 1$.

Now, note that $\tilde x$ is a feasible solution to the following problem
\begin{align}
    \label{eq:var_char_lambda2}
    \min_{y\in\R^{N+1}}\set{\frac{y^\intercal \mathcal{L}_{\textup{star}}y}{y^\intercal \Diag(\mu) y}:\, \begin{array}{l}
    y^\intercal \Diag(\mu) \mb 1_{N+1} = 0
    \end{array}},
\end{align}
with objective value at most 
\begin{align*}
    \frac{2 \Xi + 2k^2\Xi \left(\frac{2}{1+\sqrt{2}}\right)^k}{1 - 4k(k+1)\sqrt{\Xi}} &= \frac{1}{2\sqrt{2}}(1+\sqrt{2})^{-k}\left(\frac{\frac{4\sqrt{2}}{286} + \frac{4\sqrt{2}}{286} k^2 \left(\frac{2}{1+\sqrt{2}}\right)^k}{1 - 4k(k+1)\sqrt{\frac{1}{286}(1+\sqrt{2})^{-k}}}\right).
\end{align*}
One may check (see \nextmathematica) that the expression in parentheses is $<1$ for all $k \geq 1$.

On the other hand, \eqref{eq:var_char_lambda2} is the variational characterization of the second smallest eigenvalue of
\begin{align*}
    &\Diag(\mu)^{-1/2} \mathcal{L}_\textup{star}\Diag(\mu)^{-1/2}\\
    &\qquad=
    \frac{1}{2\sqrt{2}}(1+\sqrt{2})^{-k}\left(\begin{smallmatrix}
    k+1\\
    & 1\\
    && 1\\
    &&&\ddots\\
    &&&&1
    \end{smallmatrix}\right)^{-1/2}\left(\begin{smallmatrix}
    N & -1 & - 1 & \dots & -1\\
    -1 & 1\\
    -1 &  & 1\\
    \vdots & & & \ddots\\
    -1 & &  && 1
    \end{smallmatrix}\right)\left(\begin{smallmatrix}
        k+1\\
        & 1\\
        && 1\\
        &&&\ddots\\
        &&&&1
    \end{smallmatrix}\right)^{-1/2}\\
    &\qquad= \frac{1}{2\sqrt{2}}(1+\sqrt{2})^{-k} \left(\begin{smallmatrix}
        \tfrac{N}{k+1} & -1/\sqrt{k+1} & - 1/\sqrt{k+1} & \dots & -1/\sqrt{k+1}\\
    -1/\sqrt{k+1} & 1\\
    -1/\sqrt{k+1} &  & 1\\
    \vdots & & & \ddots\\
    -1/\sqrt{k+1} & &  && 1
    \end{smallmatrix}\right).
\end{align*}
The identity block in the bottom-right is $N\by N$ where $N\geq 2$. Thus, the second smallest eigenvalue of $\Diag(\mu)^{-1/2} \mathcal{L}_\textup{star}\Diag(\mu)^{-1/2}$ is at least $\frac{1}{2\sqrt{2}}(1+\sqrt{2})^{-k}$ by Cauchy's Interlacing Theorem, a contradiction.\qedhere

\end{proof}

    \noindent {\bf Acknowledgements.} Benjamin Grimmer's work was supported in part by the Air Force Office of Scientific Research under award number FA9550-23-1-0531.
    {\small
    
    \bibliographystyle{unsrt}

    }

    \appendix
    \renewcommand{\nextmathematica}{\stepcounter{mathematicaref}{\color{red}\texttt{Mathematica proof \Alph{section}.\arabic{mathematicaref}}}}

\section{Deferred Proofs and Calculations}

\subsection{Proof of Lemma~\ref{lem:helpful-bounds-Hk-muk}}\label{app:proof-of-asymptotics}
    First we verify $\alpha_k \leq \beta_{k+1}$, then we prove our bounds relating $H_k$ and $\mu_k$, and finally we show $\beta_k\leq \alpha_k$.
    The defining equation of $\alpha_k$ is that $\alpha_k$ is the unique root larger than $1$ of $q_k$. It is clear that $\beta_{k+1}\geq 1$, thus to show $\alpha_k \leq \beta_{k+1}$ suffices to show that $q_k(\beta_{k+1}) > 0$. We compute
    \[
        q_k(\beta_{k+1}) =  2(\beta_{k+1}-1)^2 + (\mu_k - 1) (\beta_{k+1}-1) - (\beta_{k+1}-1)(\mu_{k}-1) = 2(\beta_{k+1}-1)^2>0.
    \]

    To bound $H_k$, we first claim that the sum of all $\beta_i$'s in $\mathfrak{h}^{(k)}$ is given by $\sqrt{2}((1+\sqrt{2})^{k}-1)-2k$. This follows from noting each  $\beta_i$ appears in $\mathfrak{h}^{(k)}$ a total of $2(2^{k-i-1}-1)$ times and so the total value of $\beta_i$ terms in $\mathfrak{h}^{(k)}$ is
    \begin{align*}
        \sum_{i=0}^{k-2} 2(2^{k-i-1}-1)(1+(1+\sqrt{2})^{i-1}) = \sqrt{2}((1+\sqrt{2})^{k}-1)-2k.
    \end{align*}   
    Recall also that $\mu_k$ is the sum of all other entries in $\mathfrak{h}^{(k)}$ plus two. Thus,
    \begin{align*}
        H_k = 2\sqrt{2}((1+\sqrt{2})^k - 1)-4k + 4\sum_{i=0}^{k-1}\alpha_i + 2.
    \end{align*}
    To get an upper bound on this quantity, note that $\alpha_i\leq \beta_{i+1}= 1+(1+\sqrt{2})^i$. Thus,
    \begin{align*}
        H_k &\leq 2\sqrt{2}((1+\sqrt{2})^k - 1) + 4\sum_{i=0}^{k-1}(1+\sqrt{2})^i + 2\\
        &= 2\sqrt{2}((1+\sqrt{2})^k - 1) + 2\sqrt{2}((1+\sqrt{2})^k - 1) + 2\\
        &= 2 + 4\sqrt{2}((1+\sqrt{2})^k - 1)\\
        &\leq 4\sqrt{2} (1+\sqrt{2})^k . 
    \end{align*} 
    To get a lower bound, observe that $\alpha_i \geq 1$ for all $i$. Thus $H_k\geq 2\sqrt{2}((1+\sqrt{2})^k - 1)$.
    The first set of bounds for $\mu_k$ follows from the identity $\mu_k - 1 = H_k/2$.
    The final claimed inequality follows directly as $\mu_{k}-\mu_{k-1} \geq \sqrt{2}(1+\sqrt{2})^k - 2\sqrt{2}(1+\sqrt{2})^{k-1}=(2-\sqrt{2})(1+\sqrt{2})^{k-1}$.

    To conclude, we show $\beta_k\leq \alpha_k$ by showing $q_k(\beta_k)\leq 0$. We compute
    \[
        q_k(\beta_{k}) =  2(\beta_{k}-1)^2 + (\mu_k - 1) (\beta_{k}-\beta_{k+1}) = 2(\beta_{k}-1)^2 - \sqrt{2}(\mu_k - 1) (\beta_{k}-1)\leq 0
    \]
    where the inequality exactly follows from our lower bound on $\mu_k-1$.

\subsection{Attainment of Proposition~\ref{prop:product-rule}'s Bound by $\mathfrak{h}^{(k)}$} \label{app:product-tightness}
    We previously claimed in Section~\ref{subsec:tightness} that $\prod_{i=0}^{t-1} (\mathfrak{h}^{(k)}_i - 1) = 1$. We show this by induction below. The following lemma is useful in this calculation.
    \begin{lemma}
    \label{lem:mu_inductive}
        $\mu_{i+1} = \mu_i + 2(\alpha_{i}+\beta_{i+1}-2)$.
    \end{lemma}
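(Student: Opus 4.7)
The plan is to verify the identity by a direct calculation starting from the closed-form definition of $\mu_i$. First, I would subtract the formulas for $\mu_i$ and $\mu_{i+1}$. The $\alpha$-part contributes only the new term $2\alpha_i$. For the $\beta$-part, the sum in $\mu_{i+1}$ runs over $\ell = 0,\dots,i-1$ whereas the sum in $\mu_i$ runs over $\ell = 0,\dots,i-2$. I would peel off the new top index $\ell = i-1$ (contributing $2(2^1 - 1)\beta_{i-1} = 2\beta_{i-1}$) and use $(2^{i-\ell}-1) - (2^{i-\ell-1}-1) = 2^{i-\ell-1}$ on the common range $\ell = 0,\dots,i-2$. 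Observing that $2\beta_{i-1} = 2^{i-(i-1)}\beta_{i-1}$ lets me fold it back into the resulting sum, giving
\[
    \mu_{i+1} - \mu_i \;=\; 2\alpha_i + \sum_{\ell=0}^{i-1} 2^{\,i-\ell}\beta_\ell.
\]

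Second, I would evaluate the remaining sum in closed form. Writing $s \coloneqq 1+\sqrt{2}$ and substituting $\beta_\ell = 1 + s^{\ell-1}$ separates it into two geometric sums. The first, $\sum_{\ell=0}^{i-1} 2^{i-\ell}$, is the elementary $2^{i+1} - 2$. The second, $\sum_{\ell=0}^{i-1} 2^{i-\ell}s^{\ell-1} = \tfrac{1}{s}\sum_{\ell=0}^{i-1} 2^{i-\ell}s^{\ell}$, collapses using the silver-ratio identity $(1+\sqrt{2})(\sqrt{2}-1) = 1$, equivalently $s(s-2) = 1$, which makes the denominator in the standard geometric-sum formula equal to one. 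A short computation yields $2 s^i - 2^{i+1}$ for this second piece. Adding the two pieces, the $2^{i+1}$ terms cancel and one is left with $2 s^i - 2 = 2(1+\sqrt{2})^i - 2 = 2\beta_{i+1} - 4$.

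Combining the two steps produces $\mu_{i+1} - \mu_i = 2\alpha_i + 2\beta_{i+1} - 4 = 2(\alpha_i + \beta_{i+1} - 2)$, which is the claim. There is no real obstacle: the argument is a bookkeeping calculation whose only substantive ingredient is the silver-ratio cancellation $s(s-2) = 1$, which is precisely why such a clean closed form exists for these building-block patterns and is consistent with the silver ratio recurring throughout the paper's analysis. Degenerate cases ($i = 0$ and $i = 1$, where some sums are empty) are covered by the empty-sum convention adopted in the paper and can be cross-checked directly: at $i = 0$ one recovers $\mu_1 - \mu_0 = 2\alpha_0 = 2(\alpha_0 + \beta_1 - 2)$ since $\beta_1 = 2$.
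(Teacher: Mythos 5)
Your proof is correct and reaches the same intermediate identity
\(\mu_{i+1}-\mu_i = 2\alpha_i + \sum_{\ell=0}^{i-1}2^{i-\ell}\beta_\ell\)
as the paper (since \(2\sum_{j}\pi^{(i)}_j = \sum_{\ell=0}^{i-1}2^{i-\ell}\beta_\ell\)), but evaluates the remaining sum differently. The paper treats that sum as \(2\sum_j \pi^{(i)}_j\) and cites \cref{lem:sumpi}, which is itself established by an induction exploiting the symmetric recursive structure \(\pi^{(k)} = [\pi^{(k-1)}, \beta_{k-1}, \pi^{(k-1)}]\); the net content there is the identity \(2(\beta_k-2)+\beta_{k-1} = \beta_{k+1}-2\). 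You instead substitute \(\beta_\ell = 1+(1+\sqrt 2)^{\ell-1}\) and compute the weighted geometric sum in closed form, with the silver-ratio identity \((1+\sqrt 2)(\sqrt 2-1)=1\) producing the telescoping cancellation. Both are short and correct; the paper's route reuses a lemma it needs anyway elsewhere, whereas yours is self-contained and makes the role of the silver ratio explicit at exactly this step. Your boundary checks at \(i=0,1\) are fine and match the paper's empty-sum convention.
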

    \begin{proof}
    It is clear that $\mu_{i+1} = \mu_i + 2 \alpha_i + 2\sum_{j=1}^{2^{i}-1} \pi^{(i)}_j,$
    which we have seen is equal to
    \[
        \mu_{i+1} = \mu_i + 2(\alpha_i + 2\beta_{i+1}-2). \qedhere
    \]
    \end{proof}

    As a base case, when $k = 1$, note that $\alpha_0$ is the positive root of the polynomial 
    \[
        2(x-1)^2 + (\mu_0 - 1) (x-1) - (\beta_{1}-1)(\mu_0-1) = x(2x-3),
    \]
    so that $\alpha_0 = \frac{3}{2}$, and $h^{(1)} = (\frac{3}{2}, 5, \frac{3}{2})$, which satisfies this equation.
    Now, assume that the equation holds for $\mathfrak{h}^{(k-1)}$, i.e. $\prod_{i=0}^{2^{k-1}-1} (\mathfrak{h}^{(k-1)}_i - 1) = 1$. Expand this expression to be
    \[
        \prod_{i=0}^{2^{k}-1} (\mathfrak{h}^{(k-1)}_i - 1) = \prod_{i=0}^{k-2} (\alpha_i - 1)^2 \prod_{i=0}^{k-3} (\beta_i - 1)^{2(2^{k-i-2}-1)} (\mu_{k-1} - 1) = 1.
    \]
    Computing the product of all of the $\beta_i - 1$, this simplifies to
    \[
        \prod_{i=0}^{k-2} (\alpha_i - 1)^2 = \frac{(1+\sqrt{2})^{(k-1)(k-2)}}{\mu_{k-1} - 1}.
    \]
    We similarly can expand
    \[
        \prod_{i=0}^{2^{k}-1} (\mathfrak{h}^{(k)}_i - 1) = \prod_{i=0}^{k-1} (\alpha_i - 1)^2 (1+\sqrt{2})^{-k(k-1)}  (\mu_k-1).
    \]
    Combining our previous expressions, we obtain by Lemma~\ref{lem:mu_inductive} that
    \begin{align*}
        \prod_{i=0}^{2^{k}-1} (\mathfrak{h}^{(k)}_i - 1) &= (\alpha_{k-1} - 1)^2 (1+\sqrt{2})^{-2(k-1)} \frac{\mu_{k}-1}{\mu_{k-1}-1}\\
                                              &= (\alpha_{k-1} - 1)^2 (1+\sqrt{2})^{-2(k-1)} \frac{\mu_{k-1}+2\alpha_{k-1}+2\beta_{k}-5}{\mu_{k-1}-1}.
    \end{align*}
    Now, we note that by the defining equation of $\alpha_{k-1}$ that 
    \[
        (\alpha_{k-1}-1)^2 = \frac{1}{2}\left((\beta_{k}-1)(\mu_{k-1}-1) -  (\mu_{k-1} - 1) (\alpha_{k-1}-1)\right).
    \]
    This gives
    \begin{align*}
        \prod_{i=0}^{2^{k}-1} (\mathfrak{h}^{(k)}_i - 1) &= (\alpha_{k-1} - 1)^2 (1+\sqrt{2})^{-2(k-1)} \frac{\mu_{k}-1}{\mu_{k-1}-1}\\
                                              &= \frac{1}{2}(1+\sqrt{2})^{-2(k-1)}(\mu_{k-1}+2\alpha_{k-1}+2\beta_{k}-5)(\beta_k - \alpha_{k-1})\\
                                              &= \frac{1}{2}(1+\sqrt{2})^{-2(k-1)}(2(\beta_{k}-1)^2 - q_{k-1}(\alpha_{k-1}))\\
                                              &= 1.
    \end{align*}

\subsection{Bounding the Superdiagonal Entries of $\lk$} \label{app:Superdiagonal-bound}
    In this section, we fix a $k\geq 1$ and let $\lambda$ denote the construction $\lambda^{(k)}$ given in \cref{subsec:lk_construction}. Our goal is to bound $\min_{i\in[0,t-1]}\lambda_{i,i+1}$
    below for use in proving \cref{lem:straightforward} via \cref{thm:straightforward_from_rank_one}.
    
    \begin{proposition}
        \label{prop:superdiagonal_bound}
    For all $k\geq 1$, it holds that
    \begin{align*}
        \min_{i\in[0,2^{k+1}-2]}\lambda^{(k)}_{i,i+1} \geq 
        \frac{2-\sqrt{2}}{8\sqrt{2}}(1+\sqrt{2})^{-2k+1}.
    \end{align*}
    \end{proposition}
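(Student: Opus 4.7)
The proof proceeds by case analysis: for each row index $i \in [0, 2^{k+1} - 2]$, we identify which of the five cases in the construction of Section~\ref{subsec:lk_construction} applies, locate which subcase in the piecewise definition the column index $j = i+1$ falls into, extract the explicit formula for $\lambda^{(k)}_{i, i+1}$, and bound it from below using Lemma~\ref{lem:helpful-bounds-Hk-muk}. Taking the minimum over all five cases will then yield the claimed uniform bound.

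\textbf{Identifying the subcase of $j = i+1$.} In each case, let $\ell = \nu(i+1)$. The key uniform observation is: either $\ell = 0$, in which case $j = i+1 = i + 2^\ell$ lands in the ``$j = i + 2^\ell$'' subcase; or $\ell \geq 1$, in which case $i$ is odd, so $i+2$ is odd, so $\nu(i+2) = 0$, and $j = i+1$ lies in the ``$i < j < i + 2^\ell$'' subcase with $\beta_{\nu(j+1)} = \beta_0 = \sqrt{2}$. An analogous observation handles Cases 3, 4, and 5, where the relevant subcase involves $\textrm{rev}(2^a - 1)$ instead; here $j = i+1$ lands in the subcase with $a = \ell - 1$ where $\ell = \nu(i+1)$ (or uses a direct formula, as in $\ell = 0$).

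\textbf{Bounding each case.} Applying the relevant subcase formula with $\beta_0 = \sqrt{2}$ and the bounds $\mu_{z+1} - 1 \geq \sqrt{2}(1+\sqrt{2})^{z+1}$, $\mu_z - 1 \leq 2\sqrt{2}(1+\sqrt{2})^z$ from Lemma~\ref{lem:helpful-bounds-Hk-muk}, I expect:
\begin{itemize}
\item In Case 1, the bound simplifies to $\sqrt{2}(1+\sqrt{2})^{2p - z - 2}$ times a constant, bounded below by $(1+\sqrt{2})^{1-k}$ scale;
\item In Case 2, the bound becomes $(1+\sqrt{2})^{2p + z - 2k - 2}$ times a constant, which for the smallest admissible $p \geq 2$ and $z \geq 1$ in this case (since $i+1$ is strictly greater than $2^k$, forcing $p \geq 2$, and $2^{k+1} - 1 - i \geq 2$ is not a power of two, forcing $z \geq 1$) is at least $(1+\sqrt{2})^{3-2k}/(2\sqrt{2})$;
\item In Case 3, a direct substitution yields $\sqrt{2}(\mu_{\ell+1} - 1)/(2(1+\sqrt{2})^{2\ell}) \geq (1+\sqrt{2})^{1-\ell}$, at least $(1+\sqrt{2})^{2-k}$;
\item In Case 4, a direct computation yields $\sqrt{2}/(\mu_k - 1) \geq (1+\sqrt{2})^{-k}/2$;
\item In Case 5 with $\ell = 0$, $\lambda_{i,i+1} = 1/2$; for $\ell \geq 1$, rationalizing $\frac{1}{\sqrt{\mu_\ell-1}} - \frac{1}{\sqrt{\mu_{\ell+1}-1}}$ and invoking the lemma bound $\mu_{\ell+1} - \mu_\ell \geq (2-\sqrt{2})(1+\sqrt{2})^\ell$ yields a bound of order $(1+\sqrt{2})^{-k}$.
\end{itemize}

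\textbf{Combining and the main obstacle.} Among these, Case 2 with $\ell = 0$ gives the smallest bound, of scale $(1+\sqrt{2})^{-2k+3}$, which still exceeds the claimed $\frac{2-\sqrt{2}}{8\sqrt{2}}(1+\sqrt{2})^{-2k+1}$ by a factor of order $(1+\sqrt{2})^2$. The main obstacle is not conceptual but bookkeeping: correctly matching $j = i+1$ to a specific subcase in each of the five piecewise definitions, then carefully tracking the exponents of $(1+\sqrt{2})$ and verifying that each factor of $\beta_i$, $\mu_z - 1$, or power of $(1+\sqrt{2})$ cleanly simplifies. The rationalization step in Case 5 is the one algebraic manoeuvre with any subtlety: one writes $\lambda_{i,i+1} = \sqrt{2}(\mu_{\ell+1} - \mu_\ell) \big/ \bigl[(\mu_\ell - 1)\sqrt{\mu_{\ell+1} - 1}(\sqrt{\mu_\ell - 1} + \sqrt{\mu_{\ell+1} - 1})\bigr]$ and then pairs the numerator's $(2-\sqrt{2})(1+\sqrt{2})^\ell$ lower bound with the denominator's upper bound $16 (1+\sqrt{2})^{2\ell + 1}$ (using $\mu_j - 1 \leq 2\sqrt{2}(1+\sqrt{2})^j$ repeatedly).
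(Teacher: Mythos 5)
Your approach mirrors the paper's proof in Appendix~\ref{app:Superdiagonal-bound}: the same five-case split on $i$, the same use of Lemma~\ref{lem:helpful-bounds-Hk-muk} to bound the $\mu$-terms, and the same rationalization maneuver in Case 5 producing the binding constant $\tfrac{2-\sqrt{2}}{8\sqrt{2}}(1+\sqrt{2})^{-\ell-1}$. One small slip: in Case 3 with $\ell\geq 1$, the column $j=i+1=2^{\ell}$ lands in the $\pi^{(\ell)}$-type subcase $2^{\ell+1}-1>j>2^{\ell}-1$ (yielding $\beta_{\nu(j+1)}=\beta_0=\sqrt{2}$) rather than a $\rev(2^a-1)$-type subcase, but the value $\sqrt{2}(\mu_{\ell+1}-1)/(2(1+\sqrt{2})^{2\ell})$ you extract is nonetheless correct.
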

    \begin{proof}
    We prove this by bounding $\lk_{i,i+1}$ for any $i\in[0,2^{k+1}-2]$ separately across our five cases. We will also make use of the easy that $\mu_{z(i)} - 1 \ge (1+\sqrt{2})^{z(i)+1}$
    
    \textbf{Case 1}: $i+1 <2^k$ and $i+1$ is not a power of 2.

    We use the definition of $\lk_i$ and the fact that $p(i)\geq 2$ and $z(i)\leq k-1$, and $\beta_{\ell} \ge 1$ for each $\ell$. Thus,
    \begin{align*}
        \lk_{i,i+1} &= \frac{\mu_{z(i)+1}-1}{(1+\sqrt{2})^{2(z(i)-p(i))+3}}\beta_{\nu(i+2)}\\
        &\geq \frac{\mu_{z(i)+1}-1}{(1+\sqrt{2})^{2z(i)-1}}\\
        &\geq \sqrt{2}\frac{(1+\sqrt{2})^{z(i)+1}}{(1+\sqrt{2})^{2z(i)-1}}\\
        &\geq \sqrt{2}(1+\sqrt{2})^3(1+\sqrt{2})^{-k}.
    \end{align*}

    \textbf{Case 2}: $i+1 > 2^k$ and $2^{k+1}-1 - i$ is not a power of 2. 
    We use the fact that $p(i)\geq 1$, $\beta_{\ell} \ge 1$ for all $\ell$, and $z(\rev(i))\leq k-1$.
    Thus,
    \begin{align*}
        \lk_{i,i+1} &= \frac{(1+\sqrt{2})^{2(p(i)+z(\rev(i))-k)-1}}{\mu_{z(\rev(i))+1}-1}\beta_{\nu(i+2)}\\
        &\geq \frac{(1+\sqrt{2})^{2z(\rev(i))}}{\mu_{z(\rev(i))+1}-1}(1+\sqrt{2})^{1-2k}\\
        &\geq \frac{1}{2\sqrt{2}}\frac{(1+\sqrt{2})^{2z(\rev(i))}}{(1+\sqrt{2})^{z(\rev(i))}} (1+\sqrt{2})^{-2k}\\
        &\geq \frac{1}{2\sqrt{2}}(1+\sqrt{2})^{-2k}.
    \end{align*}
    
    \textbf{Case 3}: $i = 2^{\ell}-1$ with $\ell < k$.
    
    In this case, we have $\lk_{0,1}=2$. Now, suppose $\ell\geq1$. Then,
    \begin{align*}
        \lk_{i,i+1} &= \frac{\mu_{\ell+1}-1}{2(1+\sqrt{2})^{2\ell}}\sqrt{2}\\
        &\geq (1+\sqrt{2})^{1-\ell}\\
        &\geq (1+\sqrt{2})^{-k}.
    \end{align*}
    
    \textbf{Case 4}: $i = 2^k-1$.
    If $k=0$, then this entry is $\beta_1/(\mu_k-1)$. Otherwise, this entry is $\beta_0/(\mu_k-1)$. As $\beta_1\geq\beta_0$, we may bound the general case as
    \begin{align*}
        \lk_{i,i+1} &\geq \frac{\sqrt{2}}{\mu_k-1}\\
        &\geq \frac{1}{2}(1+\sqrt{2})^{-k}.
    \end{align*}
    
    \textbf{Case 5}: $i = 2^{k+1}-2^{\ell}-1$ with $\ell < k$.
    First, suppose $\ell = 0$. Then, $\lk_{i,i+1}= 1/2$. Now, suppose $\ell\geq 1$. Then, the leftmost entry of $w_\ell$ is either
    $\beta_0/\sqrt{\mu_\ell-1}$ or $\beta_1/\sqrt{\mu_{\ell}-1}$ depending on if $\ell =0$, $\ell=1$, or $\ell>1$.
    In either case, we may bound
    \begin{align*}
        \lk_{i,i+1}&\geq \left(\frac{1}{\sqrt{\mu_{\ell}-1}} - \frac{1}{\sqrt{\mu_{\ell+1}-1}} \right)\frac{\sqrt{2}}{\sqrt{\mu_\ell - 1}}\\
        &= \frac{\sqrt{2}}{(\mu_\ell - 1)}\frac{\mu_{\ell+1} - \mu_\ell}{\sqrt{\mu_{\ell+1}-1}\left(\sqrt{\mu_{\ell+1}-1} + \sqrt{\mu_{\ell}-1}\right)}\\
        &\geq \frac{\mu_{\ell+1} - \mu_\ell}{\sqrt{2}(\mu_{\ell} - 1)(\mu_{\ell+1} - 1)}\\
        &\geq \frac{(2-\sqrt{2})(1+\sqrt{2})^{\ell}}{8\sqrt{2}(1+\sqrt{2})^{\ell}(1+\sqrt{2})^{\ell + 1}}\\
        &\geq \frac{(2-\sqrt{2})}{8\sqrt{2}}(1+\sqrt{2})^{-\ell-1}\\
        &\geq \frac{(2-\sqrt{2})}{8\sqrt{2}}(1+\sqrt{2})^{-k}. \qedhere
    \end{align*}
    \end{proof}

    \subsection{Bounding the Edge Weights in the Caterpillar graph}
    \label{app:edge_weight_bound}
    In \cref{subsec:lambda2_bound}, we required lower bounds on specific entries of $\lambda^{(k)}$. These lower bounds were stated in \cref{lem:lower_bound_edge_weights} and are proved below.
    \begin{proof}[Proof of \cref{lem:lower_bound_edge_weights}]
        Fix $k\geq 1$ and let $\lambda=\lambda^{(k)}$.
    
        First, suppose $\ell \in[1,k]$. We expand the definition of $\lambda_{2^\ell - 1, 2^{\ell-1}-1}$ and use \cref{lem:helpful-bounds-Hk-muk}:
        \begin{align*}
            \lambda_{2^\ell - 1, 2^{\ell-1}-1} &= \frac{(\mu_\ell - 1)}{2(1+\sqrt{2})^{(\ell-1)}} - 1\\
            &\geq \frac{\sqrt{2}(1+\sqrt{2})^\ell}{2(1+\sqrt{2})^{(\ell-1)}} - 1\\
            & = \frac{1}{\sqrt{2}}.
        \end{align*}
        Next, suppose $j\in (2^{\ell-1}-1, 2^{\ell}-1)$.
        We begin by claiming that all entries in $\sigma^{(\ell)}$ are at least $\frac{1}{2(1+\sqrt{2})^{2(\ell-1)}}$. 
        We do so by induction. First, $\sigma^{(1)}$ is empty so that there is nothing to show. 
        Next, for $\ell> 1$, the last $2^{\ell-2}-1$ entries of $\sigma^{(\ell)}$ coincide with the entries of $\sigma^{(\ell-1)}$. By induction the minimum value in those entries is at least $\frac{1}{2(1+\sqrt{2})^{2(\ell-2)}}$.
        Noting that all $\beta$ values are at least one, we deduce that the remaining entries in $\sigma^{(\ell)}$ are bounded below by $\frac{1}{2(1+\sqrt{2})^{2(\ell-1)}}$. This shows the claim.
        
        Now, we expand the definition of $\lambda_{2^\ell - 1, 2^{\ell-1}-1}$ and use \cref{lem:helpful-bounds-Hk-muk}:
        \begin{align*}
            \lambda_{2^\ell - 1, j} &= (\mu_{\ell}-1)\sigma^{(\ell)}_{j- (2^{\ell-1}-1)}\\
            &\geq \frac{\sqrt{2}(1+\sqrt{2})^{\ell}}{2(1+\sqrt{2})^{2(\ell-1)}}\\
            &=\frac{(1+\sqrt{2})^2}{\sqrt{2}} (1+\sqrt{2})^{-\ell}.
        \end{align*}
        
        Finally, suppose $j\in(2^k-1, 2^{k+1}-1]$. We begin by claiming that the entries in $w_\ell$ are at least $\frac{1}{\sqrt{\mu_\ell - 1}}$. We do this by induction. The base case holds as $w_0=[1]$ and $\mu_0 = 2$. For $\ell \geq 1$, note that the last $2^{\ell-1}$ entries of $w_\ell$ coincide with $w_{\ell-1}$. By induction the values of these entries are at least $\frac{1}{\sqrt{\mu_{\ell-1}-1}}$. Noting that all $\beta$ values are at least one, we deduce that the remaining entries in $w_\ell$ are at least $\frac{1}{\sqrt{\mu_\ell - 1}}$. Thus,
        \begin{align*}
            \lambda_{2^{k}-1, j} &= \frac{(w_k)_{j - (2^{k}-1)}}{\sqrt{\mu_k-1}} \geq \frac{1}{\mu_k-1}\geq \frac{1}{2\sqrt{2}(1+\sqrt{2})^k}.\qedhere
        \end{align*}
        \end{proof}
        
\section{Useful Supporting Identities and Properties}\label{sec:funFacts}
\subsection{Algebraic Properties of $\lk$}
Here are two recurrence relations for $\lk$ that are useful in various calculations.
They say that certain entries (or rows) of $\lk$ are simply scalar multiples of other entries (or rows).
\begin{lemma}
    \label{lem:recurrence}
    Let $2^{z} - 1< i, j < 2^{z+1}-1 \in \N$ with $0\leq z < k$. If $k > z' > z$, then 
    \[
        \lk_{2^{z'}+i, 2^{z'}+j} = (1+\sqrt{2})^{2(z - z' + 1)}\frac{\mu_{z'+1}-1}{\mu_{z+1}-1}\lk_{i,j}.
    \]
\end{lemma}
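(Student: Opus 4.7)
The lemma is a purely combinatorial/arithmetic identity about the explicit construction of $\lk$ in Section~\ref{subsec:lk_construction}, and the plan is to verify it by direct inspection; no induction or nontrivial algebra is required. The key observation is that both rows $i$ and $i + 2^{z'}$ are of ``Case~1'' type, and that every nonzero entry in Case~1 factors as a ``prefactor'' depending only on $(z(i), p(i))$ multiplied by a ``local factor'' depending only on the offset $j-i$ and on $\nu(j+1)$.

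First I would verify the case assignment. From $2^z - 1 < i < 2^{z+1}-1$, the integer $i+1$ lies strictly between $2^z$ and $2^{z+1}$, so $i+1$ is not a power of two, and $i+1 < 2^{z+1} \le 2^k$, placing row $i$ in Case~1. An analogous argument, using $2^{z+1} \le 2^{z'}$, shows $2^{z'} < i + 2^{z'} + 1 \le 2^{z+1}-1 + 2^{z'} < 2^{z'+1} \le 2^k$, so that $(i+2^{z'})+1$ is not a power of two either; hence row $i + 2^{z'}$ is in Case~1 as well.

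Next I would tabulate how the relevant invariants transform under the shift $i \mapsto i + 2^{z'}$. Since $i + 1 < 2^{z+1} \le 2^{z'}$, the binary expansions of $i+1$ and $2^{z'}$ are disjoint, yielding
\[
    \nu(i + 2^{z'} + 1) = \nu(i+1) =: \ell,\qquad z(i + 2^{z'}) = z',\qquad p(i + 2^{z'}) = p(i) + 1.
\]
The support of row $i$ in Case~1 is $[i - 2^{\ell-1},\, i + 2^\ell]\setminus\{i\}$, which shifts exactly to the support of row $i + 2^{z'}$; thus the identity holds trivially ($0=0$) outside the support. For $j$ inside the support, the same disjointness gives $\nu(j + 2^{z'} + 1) = \nu(j+1)$, and the sub-case of Case~1 (parametrized by the offset $j-i$ and by $\nu(j+1)$) is preserved.

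Finally, inspecting the five sub-cases of Case~1, each nonzero formula takes the form
\[
    \lk_{i,j} = \frac{\mu_{z(i)+1} - 1}{(1+\sqrt{2})^{2(z(i) - p(i)) + c}}\, F(\ell, a, \nu(j+1)),
\]
where the constant $c$ (one of $-\ell+5$, $3$, or $4$ across the five sub-cases) and the local factor $F$ are independent of $(z(i), p(i))$. Taking the ratio of the shifted to the unshifted entry, $F$ cancels and the prefactors give
\[
    \frac{\mu_{z'+1}-1}{\mu_{z+1}-1}\cdot (1+\sqrt{2})^{2(z-p(i)) - 2(z'-p(i)-1)} \;=\; \frac{\mu_{z'+1}-1}{\mu_{z+1}-1}\cdot (1+\sqrt{2})^{2(z-z'+1)},
\]
matching the claim exactly. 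I expect the only real obstacle to be the careful case-by-case bookkeeping needed to confirm that the prefactor separation is correct in all five sub-cases---the structure is visible from the explicit formulas but tedious to double-check term by term.
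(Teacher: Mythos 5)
Your proposal is correct and takes essentially the same approach as the paper: both arguments observe that the shift $i \mapsto i + 2^{z'}$ preserves $\nu(i+1)$ (hence the support and the local $\rho$-entries), changes $z(i)$ to $z'$, and increments $p(i)$ by one, so the Case~1 prefactor transforms by exactly the claimed ratio. The only cosmetic difference is that the paper uses the compact $\rho_{\nu(i+1)}$ notation to absorb your factor $F$, while you re-derive the same separation by inspecting the five sub-cases of the entrywise formula.
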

\begin{proof}
    The claim holds if $i=j$. In the remainder assume $i\neq j$.
    Note that $\lk_{i,j} \neq 0$ if and only if $-2^{\nu(i+1) - 1} \le j - i \le 2^{\nu(i+1)}$.
    As $(2^{z'}+j) - (2^{z'}+i) = j-i$ and $\nu(2^{z'} + i+1) = \nu(i+1)$ we deduce that $\lk_{i,j} \neq 0$ and and only if $\lk_{2^{z'}+i,2^{z'}+j} \neq 0$.

    If $\lk_{i,j} = 0$, then the claim holds. In the remainder, assume $\lk_{i,j} \neq 0$.
    In this case, our definitions imply that
    \[
        \lk_{i,j} = \frac{\mu_{z+1}-1}{(1+\sqrt{2})^{2(z-p(i))+3}} (\rho_{\nu(i+1)})_{2^{\nu(i+1)-1} - i  + j},
    \]
    and since $\nu(2^{z'} + i + 1) = \nu(i + 1)$, and the number of one's in the binary expansion of $2^{z'} + i$ is exactly one more than the number of one's in the binary expansion of $i$, we see that 
    \[
        \lk_{2^{z'} + i,2^{z'} + j} = \frac{\mu_{z'+1}-1}{(1+\sqrt{2})^{2(z'-p(i)-1)+3}} (\rho_{\nu(i+1)})_{2^{\nu(i+1)-1} - i  + j}.
    \]
    Comparing the two expressions yields our result.
\end{proof}

\begin{lemma}
    \label{lem:case_2_scaling}
Suppose either:
\begin{itemize}
    \item $2^{z}-1< r, \ell < 2^{z+1}-1$ for some $z\in[0,k)$, or
    \item $2^{z}-1= r < \ell < 2^{z+1}-1$ for some $z\in[0,k)$.
\end{itemize}
Let $k>z'>z$, $\ell'= \ell+ 2^{z'}$ and $r'= r + 2^{z'}$, then
\begin{align*}
    \lk_{\rev(r'),\rev(\ell')} = (1+\sqrt{2})^{2(z'-z-1)}\frac{\mu_{z+1}-1}{\mu_{z'+1}-1}\lk_{\rev(r),\rev(\ell)}.
\end{align*}
\end{lemma}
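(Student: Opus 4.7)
The plan is to mirror the proof of Lemma~\ref{lem:recurrence} but on the "lower half" rows of $\lambda^{(k)}$, where the defining formula is Case 2, or (at the boundary $r=2^z-1$) Case 5, of the construction in Section~\ref{subsec:lk_construction}. The argument rests on three elementary identities about the index shift $r\mapsto r' = r + 2^{z'}$: \emph{(i)} $\rev(\ell')-\rev(r') = r-\ell = \rev(\ell)-\rev(r)$, so the signed column offset within a row is preserved; \emph{(ii)} $\nu(\rev(r)+1) = \nu(\rev(r')+1)$; and \emph{(iii)} $p(\rev(r')) = p(\rev(r))-1$. Since $\rev(r)+1 = 2^{k+1}-1-r$ is the bitwise complement of $r$ in $k+1$ bits and $r < 2^{z'}$, passing from $r$ to $r'$ simply clears bit $z'$ of $\rev(r)+1$; a quick case check shows that the lowest set bit of $\rev(r)+1$ sits strictly below $z'$ (below $z$ in the first bullet, exactly at $z<z'$ in the second), which gives (ii), and then (iii) is just popcount bookkeeping. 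Identity (i) additionally implies that the two entries in question share support and land at the same offset of a $\rho_{\nu(\rev(r)+1)}$ vector.

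In the first bullet case both rows $\rev(r)$ and $\rev(r')$ meet the hypotheses of Case 2: $r+1$ and $r'+1$ lie in $(2^z,2^{z+1})$ and $(2^{z'},2^{z'+1})$ respectively, so neither is a power of two. Plugging into the Case 2 row template
\[
    \lambda^{(k)}_i \sim \frac{(1+\sqrt{2})^{2(p(i)+z(\rev(i))-k)-1}}{\mu_{z(\rev(i))+1}-1}\,\rho_{\nu(i+1)},
\]
and using $z(r)=z$, $z(r')=z'$, identity (iii), and the common offset from (i), the desired ratio $\lambda^{(k)}_{\rev(r'),\rev(\ell')}/\lambda^{(k)}_{\rev(r),\rev(\ell)}$ collapses to $(1+\sqrt{2})^{2(z'-z-1)}(\mu_{z+1}-1)/(\mu_{z'+1}-1)$, as claimed.

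The main obstacle is the second bullet case, where $r = 2^z-1$ forces $r+1 = 2^z$ to be a power of two, so row $\rev(r)$ is governed by the Case 5 template while $\rev(r')$ is still Case 2; these two templates look quite different and must be reconciled. The saving observation is that the admissible columns $\rev(\ell)$ (with $\ell\in (r,2^{z+1}-1)$) hit only the first $2^{z-1}$ offsets of the Case 5 row, and that on those offsets the Case 5 entries may be rewritten as a uniform scalar times the initial block of $\rho_z$: pulling out a factor of $1+\sqrt{2}$, the identities
\[
    \frac{(1+\sqrt{2})^{z-1}}{\mu_{z+1}-1} = \frac{1+\sqrt{2}}{\mu_{z+1}-1}\cdot (1+\sqrt{2})^{z-2}
\]
and
\[
    \frac{(1+\sqrt{2})^{2z}\sigma^{(z)}-\pi^{(z-1)}/2}{\mu_{z+1}-1} = \frac{1+\sqrt{2}}{\mu_{z+1}-1}\left((1+\sqrt{2})^{2z-1}\sigma^{(z)}-\frac{\pi^{(z-1)}}{2(1+\sqrt{2})}\right)
\]
show that this initial block of Case 5 coincides with $\tfrac{1+\sqrt{2}}{\mu_{z+1}-1}\rho_z$ on the relevant offsets. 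With Case 5 rewritten in this form, the comparison against the Case 2 expression for row $\rev(r')$ proceeds exactly as in the first bullet, now using $p(\rev(r)) = k+1-z$ and $p(\rev(r')) = k-z$ read off from the binary expansions of $\rev(r)+1 = 2^{k+1}-2^z$ and $\rev(r')+1 = 2^{k+1}-2^z-2^{z'}$, and delivers the same scaling factor.
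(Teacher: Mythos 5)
Your proposal is correct and takes essentially the same route as the paper: in the first bullet both rows are Case 2 and share the same $\rho_{\nu(r+1)}$ template, so the claim reduces to comparing scalar prefactors via $p(\rev(r'))=p(\rev(r))-1$, $z(r')=z'$; in the second bullet you rewrite the pre-diagonal block of the Case 5 row $\rev(r)$ as $\tfrac{1+\sqrt{2}}{\mu_{z+1}-1}$ times the corresponding block of $\rho_z$ (which is exactly what the paper does by factoring out $(1+\sqrt{2})$) and then compare against the Case 2 coefficient for $\rev(r')$, using $p(\rev(r))=k+1-z$, $p(\rev(r'))=k-z$. All three preliminary identities you rely on are sound, and the support/offset bookkeeping (both entries sitting at the same offset of the same $\rho$-vector, or both vanishing) is handled the same way as in the paper.
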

\begin{proof}
In the first case, we have that both $\lk_{\rev(r)}$ and $\lk_{\rev(r')}$
are defined according to Case 2 and that $\nu(\rev(r')+1)= \nu(r'+1) = \nu(r+1)= \nu(\rev(r)+1)$. 
Thus, these two rows (after the natural re-indexing) are scalar multiples of $\rho_{\nu(r+1)}$ (and hence of each other).
Using the identities \cref{lem:rev_identities}, we have
\begin{gather*}
    \lk_{\rev(r)}\sim \frac{(1+\sqrt{2})^{2(z-p(r)-\nu(r+1))+3}}{\mu_{z+1}-1}\rho_{\nu(r+1)},\\
    \lk_{\rev(r')}\sim \frac{(1+\sqrt{2})^{2(z'-p(r)-\nu(r+1))+1}}{\mu_{z'+1}-1}\rho_{\nu(r+1)}.
\end{gather*}
Comparing the two coefficients proves the claim.

In the second case, we have that $\ell>r$ so that $\rev(\ell)<\rev(r)$ and $\rev(\ell') < \rev(r')$. We have that $\lk_{\rev(r)}$ is defined according to Case 5 and $\lk_{\rev(r')}$ is defined according to Case 2. The nonzero portion left of the diagonal of each row is given by
\begin{align*}
    \text{left portion of }\lk_{\rev(r)} &= \left[\frac{(1+\sqrt{2})^{z - 1}}{\mu_{z+1} - 1},\frac{1}{\mu_{z+1} - 1}\left((1+\sqrt{2})^{2z}\sigma^{(z)} - \frac{\pi^{(z-1)}}{2}\right)\right]\\
    &=\frac{1}{\mu_{z+1} - 1}\left[(1+\sqrt{2})^{z - 1},(1+\sqrt{2})^{2z}\sigma^{(z)} - \frac{\pi^{(z-1)}}{2}\right],\\
    \text{left portion of }\lk_{\rev(r')} &= \frac{(1+\sqrt{2})^{2(z'-z)-1}}{\mu_{z'+1} - 1}\left[(1+\sqrt{2})^{z-2},(1+\sqrt{2})^{2z-1}\sigma^{(z)}-\frac{\pi^{(z-1)}}{2(1+\sqrt{2})}\right]\\
     &= \frac{(1+\sqrt{2})^{2(z'-z-1)}}{\mu_{z'+1} - 1}\left[(1+\sqrt{2})^{z-1},(1+\sqrt{2})^{2z}\sigma^{(z)}-\frac{\pi^{(z-1)}}{2}\right].
\end{align*}
Comparing the two coefficients proves the claim.
\end{proof}

\subsection{Algebraic Properties of $\mu$}
There are various algebraic properties of $\mu$ that we will use in this paper.

\begin{lemma}
    \label{lem:sqrtmu}
    For all $\ell\geq0$, it holds that
    \begin{gather*}
        \mu_{\ell} = 2\frac{(\alpha_{\ell}-1)^2}{\beta_{\ell+1}-\alpha_{\ell}} + 1,\\
       \mu_{\ell+1} = 2\frac{(\beta_{\ell+1}-1)^2}{\beta_{\ell+1}-\alpha_\ell}+1.
    \end{gather*}
    In particular, $\frac{\sqrt{\mu_{\ell}-1}}{\alpha_{\ell}-1} =  \frac{\sqrt{\mu_{\ell+1}-1}}{\beta_{\ell+1}-1}$.
\end{lemma}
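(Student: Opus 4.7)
The plan is to prove the two displayed identities in sequence and then deduce the ``in particular'' claim by direct comparison. The first identity is essentially a restatement of the defining equation for $\alpha_\ell$, and the second reduces to the first via the additive recurrence for $\mu$, so the whole argument should only take a few lines.

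I would begin by unpacking $q_\ell(\alpha_\ell)=0$, which after collecting the $(\mu_\ell-1)$ terms reads $2(\alpha_\ell-1)^2 = (\mu_\ell-1)\bigl((\beta_{\ell+1}-1)-(\alpha_\ell-1)\bigr) = (\mu_\ell-1)(\beta_{\ell+1}-\alpha_\ell)$. Since $\alpha_\ell>1$ and $\mu_\ell>1$, this forces $\beta_{\ell+1}>\alpha_\ell$ (also given by \cref{lem:helpful-bounds-Hk-muk}), so dividing yields the first identity.

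Next I would invoke \cref{lem:mu_inductive}, i.e.\ the recurrence $\mu_{\ell+1}-1 = (\mu_\ell-1) + 2(\alpha_\ell-1) + 2(\beta_{\ell+1}-1)$, and substitute the first identity for $(\mu_\ell-1)$. Placing everything over the common denominator $\beta_{\ell+1}-\alpha_\ell$, the numerator is $2(\alpha_\ell-1)^2 + 2(\alpha_\ell-1)(\beta_{\ell+1}-\alpha_\ell) + 2(\beta_{\ell+1}-1)(\beta_{\ell+1}-\alpha_\ell)$. The first two terms collapse to $2(\alpha_\ell-1)(\beta_{\ell+1}-1)$, and pulling the common factor $2(\beta_{\ell+1}-1)$ out of this and the third term leaves $2(\beta_{\ell+1}-1)\bigl[(\alpha_\ell-1)+(\beta_{\ell+1}-\alpha_\ell)\bigr] = 2(\beta_{\ell+1}-1)^2$, which proves the second identity.

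The \emph{in particular} conclusion is then immediate: the first identity rewrites as $\sqrt{\mu_\ell-1}/(\alpha_\ell-1) = \sqrt{2/(\beta_{\ell+1}-\alpha_\ell)}$ and the second rewrites analogously as $\sqrt{\mu_{\ell+1}-1}/(\beta_{\ell+1}-1) = \sqrt{2/(\beta_{\ell+1}-\alpha_\ell)}$, and these are manifestly equal. I do not anticipate any obstacle: the only ingredient external to the immediate algebra is the additive recurrence for $\mu$, which is already proved as \cref{lem:mu_inductive} (a short induction using $\pi^{(\ell+1)} = [\pi^{(\ell)},\beta_\ell,\pi^{(\ell)}]$ and the closed form $\beta_i = 1+(1+\sqrt{2})^{i-1}$).
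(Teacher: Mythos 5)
Your proof is correct and follows essentially the same route as the paper: unpack $q_\ell(\alpha_\ell)=0$ to get the first identity, feed it into the additive recurrence of \cref{lem:mu_inductive} to get the second, and compare. The only cosmetic difference is in the algebra for the second identity (you factor the common-denominator numerator term by term, the paper collapses it via a difference of squares); both are two-line computations and the key ingredients are identical.
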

\begin{proof}
    The defining equation of $\alpha_{\ell}$ is that
    \begin{align*}
        q_i(\alpha_{\ell}) &=  2(\alpha_{\ell}-1)^2 + (\mu_{\ell} - 1) (\alpha_{\ell}-1) - (\beta_{\ell+1}-1)(\mu_{\ell}-1)\\
        &= 2(\alpha_{\ell}-1)^2 - (\mu_{\ell}-1)(\beta_{\ell+1} - \alpha_{\ell}) = 0.
    \end{align*}
    Solving this equation for $\mu_{\ell}$ shows that
    \[
        \mu_{\ell} = 2\frac{(\alpha_{\ell}-1)^2}{\beta_{\ell+1}-\alpha_{\ell}} + 1.
    \]
    Recall that $\mu_{\ell+1} = \mu_{\ell} + 2((\alpha_{\ell}-1) + (\beta_{\ell+1}-1)) $, so
    \begin{align*}
        \mu_{\ell+1} &= 2\frac{(\alpha_{\ell}-1)^2}{(\beta_{\ell+1}-1)-(\alpha_{\ell}-1)} + 2((\alpha_{\ell}-1) + (\beta_{\ell+1}-1))+ 1\\
        &= 2\frac{(\alpha_{\ell}-1)^2 + \left((\beta_{\ell+1}-1)^2 - (\alpha_{\ell}-1)^2\right)}{(\beta_{\ell+1}-1)-(\alpha_{\ell}-1)} + 1\\
        &= 2\frac{(\beta_{\ell+1}-1)^2}{\beta_{\ell+1}-\alpha_{\ell}} + 1.
    \end{align*}
    It follows that 
    \[
        \frac{\mu_{\ell}-1}{(\alpha_{\ell}-1)^2} = 
        \frac{\mu_{\ell+1}-1}{(\beta_{\ell+1}-1)^2},
    \]
    and taking the square root of both sides implies the last claim.
\end{proof}

\begin{lemma}
\label{lem:relating_muprev_to_mu_and_beta}
    Suppose $k \geq 1$, then $2(\beta_k-1)+ \sqrt{(\mu_{k-1} - 1)(\mu_k - 1)} = \mu_k - 1$, or equivalently, $2(1+\sqrt{2})^{k-1}+ \sqrt{(\mu_{k-1} - 1)(\mu_k - 1)} = \mu_k - 1$.
\end{lemma}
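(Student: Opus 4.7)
The plan is to reduce the claim to the explicit formulas for $\mu_{k-1}-1$ and $\mu_k-1$ provided by \cref{lem:sqrtmu}, and then verify the identity by direct algebraic manipulation.

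First I would note that the two forms of the identity are equivalent, since the definition \eqref{eq:beta} gives $\beta_k - 1 = (1+\sqrt{2})^{k-1}$. So it suffices to prove the first form. Next, I would invoke \cref{lem:sqrtmu} (with $\ell = k-1$) to write
\begin{align*}
    \mu_{k-1} - 1 = \frac{2(\alpha_{k-1}-1)^2}{\beta_k - \alpha_{k-1}}, \qquad \mu_k - 1 = \frac{2(\beta_k - 1)^2}{\beta_k - \alpha_{k-1}}.
\end{align*}
Both factors are positive (as $\alpha_{k-1} < \beta_k$ by \cref{lem:helpful-bounds-Hk-muk}), so taking a square root yields
\begin{align*}
    \sqrt{(\mu_{k-1} - 1)(\mu_k - 1)} = \frac{2(\alpha_{k-1}-1)(\beta_k - 1)}{\beta_k - \alpha_{k-1}}.
\end{align*}

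On the other hand, directly from the formula for $\mu_k - 1$ above,
\begin{align*}
    \mu_k - 1 - 2(\beta_k - 1) &= \frac{2(\beta_k - 1)^2 - 2(\beta_k - 1)(\beta_k - \alpha_{k-1})}{\beta_k - \alpha_{k-1}} = \frac{2(\beta_k - 1)(\alpha_{k-1} - 1)}{\beta_k - \alpha_{k-1}},
\end{align*}
which matches the expression for $\sqrt{(\mu_{k-1}-1)(\mu_k-1)}$ obtained above. Rearranging gives the desired identity.

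There is no real obstacle here; the only subtlety is ensuring the sign when taking the square root, which is handled by the inequalities $1 < \alpha_{k-1} < \beta_k$ from \cref{lem:helpful-bounds-Hk-muk} (so that both factors under the square root are positive and $\mu_k - 1 - 2(\beta_k - 1) > 0$). The identity can therefore be read off essentially mechanically from \cref{lem:sqrtmu} once both quantities are placed over the common denominator $\beta_k - \alpha_{k-1}$.
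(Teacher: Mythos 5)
Your proof is correct, but it takes a different route from the paper's. The paper squares the identity, substitutes the recurrence $\mu_{k-1} = \mu_k - 2(\alpha_{k-1}+\beta_k-2)$ from \cref{lem:mu_inductive}, and reduces to the vanishing of the defining polynomial $q_{k-1}(\alpha_{k-1}) = 0$. You instead invoke \cref{lem:sqrtmu}, which already packages that same algebra into closed forms for $\mu_{k-1}-1$ and $\mu_k-1$ over the common denominator $\beta_k - \alpha_{k-1}$, and then compute the square root and the difference $\mu_k - 1 - 2(\beta_k-1)$ term by term. What your route buys is that it never needs to square: the paper's opening claim that the identity is ``equivalent to'' its squared form silently relies on both sides being nonnegative, whereas you make the positivity of $\alpha_{k-1}-1$, $\beta_k-1$, and $\beta_k-\alpha_{k-1}$ (hence of the relevant square root) explicit, with the small caveat that for $k=1$ one needs $\alpha_0 < \beta_1$ by direct computation since \cref{lem:helpful-bounds-Hk-muk} only covers $\alpha_k$ for $k \ge 1$. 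Both arguments ultimately trace back to the defining quadratic for $\alpha_{k-1}$, so they are equivalent in substance; yours is the more self-contained presentation once \cref{lem:sqrtmu} is in hand.
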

\begin{proof}
     The claimed identity is equivalent to
    \[
         (\mu_{k-1}-1)(\mu_k-1) = (\mu_k-1-2(\beta_k-1))^2.
    \]
    By \cref{lem:mu_inductive}, we have $\mu_{k-1} = \mu_k - 2 (\alpha_{k-1} +\beta_k-2)$. Applying this identity and combining, we get that this is equivalent to
    \[
    2(\alpha_{k-1}-1)^2 + (\mu_i - 1) (\alpha_{k-1}-1) - (\beta_{i+1}-1)(\mu_i-1).
    \]
    which is the defining equation for $\alpha_{k-1}$.
\end{proof}

\subsection{Properties of the $\rev$ operation}
\begin{lemma}
\label{lem:rev_identities}
    Suppose $0\leq r \leq 2^k-1$. Then, $\rev(\rev(r))=r$ and 
    \begin{gather*}
        \nu(\rev(r)+1) =\nu(r+1),\\
        z(2^{k+1}-2-\rev(r))= z(\rev(\rev(r)))=z(r),\\
        p(\rev(r)) = k-p(r)-\nu(r+1) + 2.
    \end{gather*}
    In particular, $p(\rev(r))+ z(r) -k = z(r) -p(r) -\nu(r+1) + 2$.
\end{lemma}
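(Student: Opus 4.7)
The plan is to work entirely in the $(k+1)$-bit binary representation of $r$ and exploit the fact that $\rev(r)+1 = 2^{k+1}-1-r$ equals the bitwise complement of $r$ in that representation. Since $0\leq r \leq 2^k - 1$, the top bit (position $k$) of $r$ is $0$, so no boundary issues arise.

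First, the involution $\rev(\rev(r)) = 2^{k+1}-2 - (2^{k+1}-2-r) = r$ is immediate and simultaneously handles the chain $z(2^{k+1}-2-\rev(r)) = z(\rev(\rev(r))) = z(r)$, since $2^{k+1}-2-\rev(r)$ is literally $\rev(\rev(r))$.

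The key algebraic fact I will prove is that if $|r|_1$ denotes the number of $1$'s in the binary expansion of $r$, then
\[
|r|_1 = p(r) - 1 + \nu(r+1).
\]
To see this, write $r+1 = 2^{s}m$ with $m$ odd and $s = \nu(r+1)$. If $m=1$ then $r = 2^{s}-1$ has $s$ ones and $p(r)=1$, so the identity holds. If $m>1$, then subtracting $1$ from $r+1$ (whose lowest set bit is at position $s$) flips that bit to $0$ and sets positions $0,\dots,s-1$ to $1$, producing a net change of $s-1$ ones; thus $|r|_1 = p(r) + (s-1)$. In either case we additionally see that $r$ ends (in its $(k+1)$-bit representation) in exactly $s$ trailing ones followed (looking leftward) by at least one zero, since bit $k$ of $r$ is $0$.

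With this in hand, both remaining identities drop out. For $\nu(\rev(r)+1) = \nu(r+1)$: bitwise complement sends the trailing $s$ ones of $r$ (preceded by a $0$) to trailing $s$ zeros preceded by a $1$, so $\rev(r)+1$ has exactly $s$ trailing zeros. For the count of ones,
\[
p(\rev(r)) = |\rev(r)+1|_1 = (k+1) - |r|_1 = (k+1) - \bigl(p(r)-1+\nu(r+1)\bigr) = k + 2 - p(r) - \nu(r+1),
\]
which is the claimed formula. The final ``in particular'' identity is obtained by simply adding $z(r) - k$ to both sides of this equation. The only nonroutine step is the bit-counting identity for $|r|_1$; I expect no serious obstacle, as the computation is just a careful case split on whether $r+1$ is a power of two.
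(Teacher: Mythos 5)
Your proof is correct. The approach matches the paper's for the involution, the $z$ identity, and the $p$ identity (both count bits of the $(k+1)$-bit complement and use the relation between the number of ones in $r$ and in $r+1$ via the trailing-ones block). The one place you differ is the $\nu$ identity: the paper computes $\nu(\rev(r)+1)=\nu(2^{k+1}-(r+1))$ and invokes the ultrametric property of the $2$-adic valuation (since $\nu(r+1)\le k<k+1$), whereas you argue directly that bitwise complementation turns the $s=\nu(r+1)$ trailing ones of $r$ into $s$ trailing zeros of $\rev(r)+1$. Both are one-liners; the paper's route is slightly slicker, yours is more self-contained and consistent with the style of your $p$ argument. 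Your extra care in noting that bit $k$ of $r$ vanishes (so the trailing-ones block of $r$ is always followed by a zero in the $(k{+}1)$-bit window) correctly covers the boundary case $r=2^k-1$.
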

\begin{proof}
We recall that $\rev(r)= 2^{k+1}-2-r$.
For the first identity, note
\begin{align*}
    \nu(\rev(r)+1) &= \nu(2^{k+1}-2-r+1) = \nu(2^{k+1}-(r+1)).
\end{align*}
Then, recall that the 2-adic valuation for the sum or difference of two numbers with different 2-adic valuations is the smaller of two. As $\nu(r+1)\leq k$, we have that the above quantity is equal to $\nu(r+1)$.

Next, note
\begin{align*}
    z(2^{k+1}-2-\rev(r))&= z(\rev(\rev(r)))=z(r).
\end{align*}

Finally, $p(\rev(r))$ is the number of ones in the binary expansion of $2^{k+1}-1-r
$. This is equivalent to $k+1$ minus the number of ones in the binary expansion of $r$. Now, consider the binary expansion of $r+1$. The smallest position for which the binary expansion of $r+1$ is equal to one, i.e., $\nu(r+1)$, is the same as the smallest position for which the binary expansion of $r$ is equal to zero.
The difference in the number of ones in their binary expansion is then $\nu(r+1)-1$. We have deduced that $p(\rev(r))= k+1 - p(r-1) = k+1- (p(r) +\nu(r+1)-1) = k-p(r)-\nu(r+1) + 2$.
\end{proof}

\section{The Support of $\lk$}
\label{sec:supportFacts}
The support of $\lk$ has a rich combinatorial structure, which we need to make  use of extensively in our computations. We record some facts about this support and their proofs here.
For now, let us fix $k$, and let $\lambda$ refer to $\lk$.

From our definition of $\lambda$, $\lambda_{i,j} \neq 0$ if and only if $i > j > i - 2^{\nu(i+1)-1}$ or $i + 2^{\nu(i+1)} > j > i$.

It is useful to us to understand for a fixed $j$, which are the $i$ where $\lambda_{i,j} \neq 0$.
For a given $j \le 2^{k+1}-1$, we let 
\[
    S_j^- = \{i < j : \lambda_{i,j} \neq 0\}
    \qquad\text{and}\qquad
    S_j^+ = \{i > j : \lambda_{i,j} \neq 0\}.
\]

\begin{lemma}
    Suppose that $j\in[1,2^k - 1]$ has the binary expansion $j = \sum_{a=0}^{z} b_{a} 2^{a}$, where $b_i \in \{0,1\}$ and $b_z = 1$, then
    \[
        S_j^- = \set{\sum_{a = r}^{z} b_a 2^a - 1 : r \in [0,z]}.
    \]
\end{lemma}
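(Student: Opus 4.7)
The plan is to directly translate the defining support condition for $\lk$ into a binary-digit condition on $j$.

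First, I would unfold the definition from Section~\ref{subsec:lk_construction}: row $i$ of $\lk$ is supported in $[i-\lfloor 2^{\ell-1}\rfloor, i+2^{\ell}]\setminus\{i\}$, where $\ell = \nu(i+1)$. For $i<j$ this means $\lambda_{i,j}\neq 0$ iff $0<j-i\le 2^{\nu(i+1)}$. I would then parametrize $i\in S_j^-$ by writing $i+1 = 2^{\ell}m$ with $m$ odd and $\ell = \nu(i+1)$, so that the condition becomes
\[
    2^{\ell}m \le j \le 2^{\ell}m + 2^{\ell}-1.
\]

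The second step is to translate this into the binary expansion of $j$. The inequality $2^{\ell}m \le j < 2^{\ell}(m+1)$ says exactly that $m = \lfloor j/2^{\ell}\rfloor$, and the oddness of $m$ is equivalent to $b_{\ell}=1$ in the expansion $j=\sum_{a=0}^{z}b_a 2^a$. Moreover, when $b_{\ell}=1$ we have $m = \lfloor j/2^{\ell}\rfloor = \sum_{a=\ell}^{z}b_a 2^{a-\ell}$, hence
\[
    i+1 = 2^{\ell}m = \sum_{a=\ell}^{z}b_a 2^a,
\]
and one must also check $i\ge 0$ (automatic since $b_z=1$ so $i+1\ge 2^{\ell}$) and $i<j$ (automatic unless $\ell=0$, where $i=j-1$). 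This shows
\[
    S_j^- = \set{\sum_{a=\ell}^{z}b_a 2^a - 1 : \ell\in[0,z],\ b_{\ell}=1}.
\]

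The final step is cosmetic: I would observe that enlarging the index set from $\{\ell\in[0,z]:b_{\ell}=1\}$ to all of $[0,z]$ leaves the set unchanged, because if $b_r=0$ then $\sum_{a=r}^{z}b_a 2^a = \sum_{a=r+1}^{z}b_a 2^a$ and the associated value of $i$ already appears for index $r+1$ (iterating if needed until the next nonzero bit). This yields the claimed form $S_j^- = \{\sum_{a=r}^{z}b_a 2^a - 1 : r\in[0,z]\}$. There is no real obstacle here; the only thing to be careful about is the off-by-one in the support (in particular the asymmetry between $\lfloor 2^{\ell-1}\rfloor$ on the left and $2^{\ell}$ on the right) and the handling of $\ell=0$, both of which fall out cleanly once the condition $j-i\le 2^{\nu(i+1)}$ is written down.
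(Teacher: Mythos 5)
Your proof is correct. The key reformulation — writing $i+1 = 2^\ell m$ with $m$ odd and recognizing that the support condition $0 < j-i \le 2^{\nu(i+1)}$ becomes exactly $2^\ell m \le j < 2^\ell(m+1)$, i.e., $m = \lfloor j/2^\ell\rfloor$ — gives both inclusions at once and cleanly identifies the oddness of $m$ with the bit $b_\ell = 1$. The paper instead argues the two inclusions separately: the forward inclusion (each $\sum_{a=r}^z b_a 2^a - 1$ with $b_r=1$ satisfies the support condition) is verified directly, while the reverse inclusion is shown by writing $i+1$ in binary and running a contradiction on the largest index where the digits of $i+1$ disagree with those of $j$ above $\nu(i+1)$. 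Both proofs ultimately live in the same binary-digit world, but your floor-function characterization of $m$ eliminates the case analysis in the paper's reverse direction and makes the bijection $\{\ell : b_\ell = 1\} \leftrightarrow S_j^-$ transparent. One small caveat worth noting: the displayed "Equivalently, $\lk_{ij}\ne 0$ if and only if $i>j$ and $i-j\le 2^{\nu(i+1)}$ or $j>i$ and $j-i\le 2^{\nu(i+1)-1}$" in Section~\ref{subsec:lk_construction} has the two exponents swapped relative to the interval description $[i-\lfloor 2^{\ell-1}\rfloor, i+2^\ell]$ and the explicit entry formulas; you correctly used $j - i \le 2^{\nu(i+1)}$ for $j>i$, as does the paper's own proof.
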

\begin{proof}
    We begin by showing that if $i = \sum_{a = r}^{z} b_a 2^a - 1 $ for some $r \in [0,z]$ where $b_r = 1$, then $i\in S_j^-$. Note that $\nu(i+1) = r$. This implies that
    \[
        i <\sum_{a = 0}^{z} b_a 2^a = j,
    \] and that 
    \begin{align*}
        i + 2^{\nu(i+1)}&\ge \sum_{a = \nu(i+1)}^{z} b_a 2^a + (2^{\nu(i+1)} - 1)\\
        &= \sum_{a = \nu(i+1)}^{z} b_a 2^a + \sum_{a=0}^{\nu(i+1)-1} 2^a \\
        &\ge \sum_{a = 0}^{z} b_a 2^a\\
        &= j.
    \end{align*}

    Now, we show the reverse direction, i.e., if $i < j$ and $i + 2^{\nu(i+1)} \ge j$, then $i = \sum_{a = \nu(i+1)}^{z} b_a 2^a - 1$.
Note that $z(i+1) \le z$ since $i < j$.
    This implies that the binary expansion of $i+1$ can be expressed as
    \[
        i+1 = \sum_{a = \nu(i+1)}^{z} b_a' 2^a,
    \]
    so that
    \[
        i = \sum_{a = \nu(i+1)}^{z} b_a' 2^a - 1.
    \]
    For the sake of contradiction, suppose that $b_a' \neq b_a$ for some $a \ge \nu(i+1)$. In this case, let $a^*$ be the largest $a$ so that $b_a' \neq b_a$. 
    If $b_{a^*}' = 0$ while $b_{a^*} = 1$, then 
    \[
        i = \sum_{a = \nu(i+1)}^{a^*-1} b_a' 2^a + \sum_{a = a^*+1}^{z} b_a 2^a - 1,
    \]
    so that 
    \[
        i + 2^{\nu(i+1)} \le 2^{a^*} + \sum_{a = a^*+1}^{z} b_a 2^a - 1 < j.
    \]
     If $b_{a^*}' = 1$, while $b_{a^*} = 0$, then 
    \[
        i \ge \sum_{a = a^*+1}^{z} b_a 2^a + 2^{a^*} - 1 =  \sum_{a = a^*+1}^{z} b_a 2^a + \sum_{a=0}^{a^*-1}2^a \ge j.
    \]
  
    In either case, we reach a contradiction.
\end{proof}

\begin{lemma}
    Suppose $2^z - 1 \leq j < 2^{z+1} - 1$ with $z<k$.
    Then, $i\in S_j^+$ if and only if $i\in [j+1, 2^{k+1}-2]$ and  $i - j \le 2^{\nu(i+1)-1}$.
    In particular, if $i \in S_j^+$, then $j < i \le 2^{z+1}-1$.
    If in addition $j = 2^z-1$, then $S_j^+$ is the singleton set $\{2^{z+1}-1\}$.
\end{lemma}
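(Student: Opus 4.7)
The plan is to invert the row-wise description of the support of $\lk$ into a column-wise one, and then do a short case split on $\nu(i+1)$.

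For the first claim, I would simply read off the support of $\lk_i$, which is the interval $[i - \lfloor 2^{\nu(i+1)-1}\rfloor,\, i + 2^{\nu(i+1)}] \setminus \{i\}$, together with the fact that $\lk_{2^{k+1}-1} = 0$ excludes $i = 2^{k+1}-1$ as a possible row index. Restricting to $i > j$ translates this immediately into $i \in [j+1, 2^{k+1}-2]$ together with $i - j \leq 2^{\nu(i+1)-1}$, where the latter inequality implicitly forces $\nu(i+1) \geq 1$ since $i - j \geq 1$.

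For the second claim, I set $\ell = \nu(i+1)$ and write $i+1 = 2^\ell q$ with $q$ odd. The support condition rearranges to $j \geq 2^{\ell-1}(2q-1) - 1$, and combining with the hypothesis $j \leq 2^{z+1}-2$ yields
\[
    2^{\ell-1}(2q-1) \leq 2^{z+1} - 1.
\]
A three-way split on $\ell$ finishes the argument. If $\ell \geq z+2$ the left side is already $\geq 2^{z+1}$, a contradiction. If $\ell = z+1$ the inequality forces $2q-1 = 1$, so $q = 1$ and $i = 2^{z+1} - 1$. If $\ell \leq z$, dividing by $2^{\ell-1}$ and then using that $q$ is odd while $2^{z+1-\ell}$ is even sharpens the bound to $q \leq 2^{z+1-\ell} - 1$, whence $i = 2^\ell q - 1 \leq 2^{z+1} - 2^\ell - 1 \leq 2^{z+1} - 1$.

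For the third claim I would repeat the same argument, but specialize $j = 2^z - 1$ to strengthen the controlling inequality to $2^{\ell-1}(2q-1) \leq 2^z$. The case $\ell = z+1$ again forces $q = 1$ and $i = 2^{z+1}-1$, while the case $\ell \leq z$ is now entirely excluded: the same parity tightening gives $i \leq 2^z - 2^\ell - 1 < j$ for $\ell < z$ and $i = j$ for $\ell = z$, each contradicting $i > j$. The only real subtlety anywhere in the proof is the parity step that upgrades $q \leq 2^{z+1-\ell}$ to $q \leq 2^{z+1-\ell}-1$; without it, case $\ell \leq z$ in the second claim would only give $i \leq 2^{z+1} + 2^{\ell-1} - 1$, one factor too weak to conclude $i \leq 2^{z+1}-1$.
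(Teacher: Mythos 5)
Your proof is correct and fills in a real argument that the paper elides: the paper's own proof is literally the one sentence ``This is clear from the support of $\lambda$,'' so there is no substantive approach to compare against. Your translation of the row-indexed support into the column-indexed statement is the right move, the case split on $\ell = \nu(i+1)$ is clean, and the conclusions in all three parts check out.

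One quibble with your closing remark, though. You identify the parity sharpening $q \leq 2^{z+1-\ell} - 1$ as ``the only real subtlety,'' claiming that without it case $\ell \leq z$ would only yield $i \leq 2^{z+1} + 2^{\ell-1} - 1$. That mis-attributes which step is load-bearing. After dividing $2^{\ell-1}(2q-1) \leq 2^{z+1}-1$ by $2^{\ell-1}$ and taking the floor (which you already do, since $2q-1$ is an integer), one gets $2q-1 \leq 2^{z+2-\ell} - 1$, hence $q \leq 2^{z+1-\ell}$, and this \emph{unsharpened} bound already gives $i = 2^\ell q - 1 \leq 2^{z+1} - 1$. Likewise for the third claim, $q \leq 2^{z-\ell}$ without parity already yields $i \leq 2^z - 1 = j$, contradicting $i > j$. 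The genuinely critical ingredient is the floor after division, i.e., the integrality of $2q-1$ combined with the fact that $2^{\ell-1}$ cannot divide $2^{z+1}-1$; the oddness of $q$ merely tightens $i \leq 2^{z+1}-1$ to $i \leq 2^{z+1} - 2^\ell - 1$, which is more than is needed. Your written estimate ``$i \leq 2^{z+1}+2^{\ell-1}-1$'' is what one gets by dropping \emph{both} the floor and parity, not by dropping parity alone.
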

\begin{proof}
    This is clear from the support of $\lambda$.
\end{proof}

\begin{lemma}
    Suppose that $2^z - 1 \le j < 2^{z+1}-1$ with $z < k$. Let $z<\ell<k$.
    We then have that 
    \[
        S_{j + 2^{\ell}}^+ = \{2^{\ell} + i : i \in S_j^+\} \cup \{2^{\ell+1}-1\}.
    \]
\end{lemma}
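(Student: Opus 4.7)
The plan is to reduce both inclusions to the arithmetic characterization of $S_j^+$ given by the previous lemma: $i\in S_j^+$ if and only if $i\in[j+1,2^{k+1}-2]$ and $i-j\leq 2^{\nu(i+1)-1}$. The entire argument will hinge on tracking how $\nu(i+1)$ and $\nu(i'+1)$ relate under the shift $i\mapsto i'=2^\ell+i$, plus locating $j'\coloneqq j+2^\ell$ in a dyadic interval so that the previous lemma applies to it.

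First I would locate $j'$. Since $2^z-1\leq j<2^{z+1}-1$ and $z<\ell$, we have $2^{z+1}\leq 2^\ell$, so $j<2^\ell$ and $j'\in[2^\ell+2^z-1,\,2^\ell+2^{z+1}-1)\subseteq[2^\ell-1,\,2^{\ell+1}-1)$. Applying the previous lemma with $z$ replaced by $\ell$ gives $S_{j'}^+\subseteq(j',2^{\ell+1}-1]$. Moreover, checking $i'=2^{\ell+1}-1$ directly: $\nu(i'+1)=\ell+1$, and $i'-j'=2^\ell-1-j\leq 2^\ell=2^{\nu(i'+1)-1}$, so $2^{\ell+1}-1\in S_{j'}^+$. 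This takes care of that special element in both inclusions.

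For the inclusion $\{2^\ell+i:i\in S_j^+\}\subseteq S_{j'}^+$, take any $i\in S_j^+$. By the previous lemma $j<i\leq 2^{z+1}-1\leq 2^\ell$, so $i+1\leq 2^\ell$. If $i+1<2^\ell$, then the binary expansion of $i+1$ uses bits below position $\ell$ only, so adding $2^\ell$ gives $\nu(i'+1)=\nu(i+1)$. If $i+1=2^\ell$ (only possible when $\ell=z+1$ and $i=2^{z+1}-1$), then $i'+1=2^{\ell+1}$ and $\nu(i'+1)=\ell+1>\nu(i+1)$. In either case $\nu(i'+1)\geq\nu(i+1)$ and $i'-j'=i-j\leq 2^{\nu(i+1)-1}\leq 2^{\nu(i'+1)-1}$, while $i'>j'$ and $i'\leq 2^{\ell+1}-1\leq 2^{k+1}-2$ (using $\ell<k$), so $i'\in S_{j'}^+$. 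For the reverse inclusion, take any $i'\in S_{j'}^+\setminus\{2^{\ell+1}-1\}$; then $j'<i'<2^{\ell+1}-1$, so writing $i=i'-2^\ell$ we have $0<i<2^\ell-1$, hence $i+1<2^\ell$ and $\nu(i+1)=\nu(i'+1)$ by the same bit argument. The inequalities $i-j=i'-j'\leq 2^{\nu(i'+1)-1}=2^{\nu(i+1)-1}$ and $i>j$ and $i<2^{k+1}-2$ now give $i\in S_j^+$, so $i'=2^\ell+i$ belongs to the right-hand set.

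The only real subtlety is the boundary case $i=2^\ell-1$, which occurs only when $\ell=z+1$ and $i=2^{z+1}-1$; there $2^\ell+i=2^{\ell+1}-1$ and the two sets on the right overlap, but this is harmless for set equality. Beyond that, the argument is purely a bookkeeping exercise on 2-adic valuations, and no new ideas are required.
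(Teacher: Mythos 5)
Your proof is correct and follows the same approach as the paper: both reduce the two inclusions to the arithmetic characterization of $S_j^+$ from the previous lemma and track how $\nu(i+1)$ behaves under the shift $i \mapsto 2^\ell + i$. In fact you are somewhat more careful than the paper's own argument, which omits the direct verification that $2^{\ell+1}-1 \in S_{j+2^\ell}^+$ and asserts $\nu(i+1)<\ell$ in the forward direction without addressing the boundary case $\ell=z+1$, $i=2^{z+1}-1$ (where $\nu(i+1)=\ell$); you handle both explicitly.
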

\begin{proof}
    First, suppose that $i \in S_{2^{\ell} + j}^+$. Because $2^{\ell} - 1 \le j < 2^{\ell+1}-1$, we then have that $2^{\ell} + j < i \le 2^{\ell+1}-1$ by the previous lemma.

    Suppose that $i \neq 2^{\ell+1}-1$, then let $i' = i - 2^{\ell}$, so $\nu(i'+1) = \nu(i+1)$.
    Our previous lemma implies that $i - (2^{\ell}+j) \le 2^{\nu(i+1)-1}$, so $i' - j \le 2^{\nu(i+1)-1} =  2^{\nu(i'+1)-1}$.
    Therefore, $i' \in S_j^+$.

    Now, suppose that $i \in S_j^+$, then let $i' = 2^{\ell}+i$.
    As before, $\nu(i+1) < \ell$, so $\nu(i'+1) = \nu(i+1)$ and 
    \[
        i' - (2^{\ell} + j) = i - j \le 2^{\nu(i+1) - 1}.
    \]
    This implies that $i' \in S_{j+2^{\ell}}^+$.
\end{proof}

     \section{Proof of \Cref{thm:rowcolconstraint}} \label{sec:lambdadRowColSum}
In this section, we will show that $\lk$ satisfies the first main condition of \Cref{thm:straightforward_from_rank_one}.

\begin{theorem}
    \label{thm:rowcolconstraint}
    Suppose $k\geq 1$, then
    \[
        \sum_{\substack{i,j=0\\ i\neq j}}^t\lk_{i,j}a_{i,j} = a_{\star,2^{k+1}-1} - a_{\star,0}.
    \]
    Equivalently,
    \begin{itemize}
        \item The sum of the zeroth row of $\lk$ is one larger than the sum of the zeroth column of $\lk$.
        \item For $i = 1,\dots,2^{k+1}-2$, the sum of the $i^{\mathrm{th}}$ row of $\lk$ equals the sum of the $i^{\mathrm{th}}$ column of $\lk$.
        \item The sum of the $2^{k+1}-1$ row of $\lk$ is one less than the sum of the $2^{k+1}-1$ column of $\lk$.
    \end{itemize}
\end{theorem}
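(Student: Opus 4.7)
The plan is to verify the three balance conditions by computing each row sum and each column sum of $\lk$ explicitly from the case definitions in \cref{subsec:lk_construction} and comparing them. First I would derive closed-form expressions for the three scalar quantities $\Sigma_\rho(\ell) := \mathbf{1}^\intercal \rho_\ell$, $\Sigma_w(\ell) := \mathbf{1}^\intercal w_\ell$, and $\Sigma_\sigma(\ell) := \mathbf{1}^\intercal \sigma^{(\ell)}$, using their recursive definitions together with the $\mu$--$\alpha$--$\beta$ identities of \cref{lem:sqrtmu} and \cref{lem:relating_muprev_to_mu_and_beta}. Since every row of $\lk$ is a concatenation of scalar multiples of $\rho$, $w$, $\sigma$, and $\pi$ blocks, each row sum reduces to an explicit scalar combination of these quantities together with elementary $\pi$-sums. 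In particular, in Cases~1 and~2 the row is literally a scalar multiple of $\rho_{\nu(i+1)}$, so the row sum is a single scaled copy of $\Sigma_\rho$; in Cases~3--5 the row sum also includes a $\Sigma_\sigma$ piece, a $\Sigma_w$ piece, and an explicit $-1$ offset in the leftmost entry, and that $-1$ is precisely what will produce the $\pm 1$ discrepancies at $i=0$ and $i=2^{k+1}-1$.

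For the column sums I would use the combinatorial description of the support from \cref{sec:supportFacts}. Fixing a column index $j$ with $2^z - 1 \le j < 2^{z+1} - 1$, the set $S_j^-$ enumerates contributing row indices directly in terms of the binary expansion of $j+1$, which simultaneously pins down the 2-adic valuation $\nu(i+1)$ of each contributing row and hence which of the five cases it falls into. The scaling recurrences \cref{lem:recurrence} and \cref{lem:case_2_scaling} then allow me to express every Case~1 and Case~2 contribution to a given column as a scalar multiple of a single base contribution, so that the full column sum collapses into a geometric-style sum over $r \in [0,z]$ whose value is again expressible through $\Sigma_\rho$, $\Sigma_w$, and $\Sigma_\sigma$.

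The main obstacle I anticipate is not any one case but the bookkeeping when a single column receives contributions from rows of several different case types simultaneously, especially at the ``boundary'' columns $j = 2^\ell - 1$ and $j = \rev(2^\ell - 1)$, where Case~3, Case~4, or Case~5 rows contribute alongside Case~1 and Case~2 rows. The cleanest path through this bookkeeping is to exploit the reflection $\rev$: \cref{lem:case_2_scaling} combined with the identities of \cref{lem:rev_identities} suggests a duality relating the column-$j$ sum to the row-$\rev(j)$ sum up to an explicit rescaling by ratios of $\mu_\ell$'s and powers of $(1+\sqrt{2})$. Once that duality is verified, the three balance identities reduce to a short list of coefficient-matching statements in $\mu_\ell$, $\beta_\ell$, $\alpha_\ell$ which can be checked using \cref{lem:mu_inductive} and \cref{lem:relating_muprev_to_mu_and_beta} (and, for the most intricate boundary cases, with the symbolic computation already leveraged elsewhere in the paper).
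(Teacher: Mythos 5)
Your plan is structurally the same as the paper's: compute the building-block partial sums ($\Sigma_\rho$, $\Sigma_\sigma$, $\Sigma_w$, and the $\pi$-sums, which appear in the paper as \cref{lem:sumpi,lem:sumsigma,lem:rho_sum,lem:sum_of_wk}), use these to get the row sums case-by-case (\cref{lem:row_sums}), and then compute column sums via the support combinatorics and the scaling recurrences (\cref{lem:recurrence,lem:case_2_scaling,lem:rev_identities}), which is exactly what the paper's \cref{lem:colsums_above_left,lem:SplusSum,lem:telescoping,lem:sum_above_diagonal_case_2,lem:case2_below_diagonal_sum} do. Where you deviate is your proposed shortcut: a ``duality relating the column-$j$ sum to the row-$\rev(j)$ sum up to an explicit rescaling.'' No such clean duality exists. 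You can see this already in $\lambda^{(2)}$: the column-$0$ sum is $1$ and the row-$\rev(0)=6$ sum is $\tfrac12$ (ratio $2$); the column-$1$ sum is $2+\sqrt2$ and the row-$\rev(1)=5$ sum is $\tfrac12$ (ratio $4+2\sqrt2$); the column-$2$ sum is $2+2\sqrt2$ and the row-$\rev(2)=4$ sum is $\tfrac1{\sqrt2}-\tfrac12$ (ratio $12+8\sqrt2$). These ratios do not factor through a single simple $\mu$/$(1+\sqrt2)$ normalization, so the ``coefficient-matching'' reduction you envision would not materialize, and you would fall back to computing the column sums directly. That direct computation is the bulk of the work in the paper's proof: the $S_j^+$ column sum for $j<2^k$ is established by an induction on the number of ones in the binary expansion of $j+1$ (\cref{lem:SplusSum}), and the reversed columns require a second, parallel induction plus the telescoping lemma (\cref{lem:telescoping,lem:case2_below_diagonal_sum}). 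Your proposal correctly identifies all the ingredients that go into these lemmas but underestimates how much of the argument lives in them rather than in the final comparison step.
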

\begin{proof}
    The equivalence of the two statements follows from \cref{lem:lin_constraints}. Thus, it suffices to prove the three statements in the second claim.
    We show the first item in \Cref{lem:rowcol0}.
    We show the second item in \Cref{lem:rowcollow}, \Cref{lem:rowcolmid}, and \Cref{lem:rowcolhighrest}.
    We show the third item in \Cref{lem:rowcollast}.
\end{proof}

In the remainder of this section, we fix $k\geq 1$ and let $h = \mathfrak{h}^{(k)}$ and $\lambda = \lk$. Section~\ref{subsec:RowSums} computes the sums of each row of $\lambda$. Section~\ref{subsec:ColSums} computes the sums of each column of $\lambda$. Finally, Section~\ref{subsec:RowColSums} proves lemmas claimed above. Various algebraic identities involving the entries of $h$ will be used in this section and proven in \Cref{sec:funFacts}.

\subsection{Row Sums} \label{subsec:RowSums}
\subsubsection{Partial Row Sums}
\label{subsub:partial_rows}
Each row of $\lambda$ is composed of various components; we will enumerate their sums here.
\begin{lemma}
    \label{lem:sumpi}
    For $k\geq 0$,
    $\sum_{i=1}^{2^k-1} \pi^{(k)}_i = \beta_{k+1}-2$.
\end{lemma}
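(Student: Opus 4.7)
The plan is to proceed by induction on $k$, exploiting the recursive structure of the $\pi^{(\ell)}$ vectors. Unrolling the definition $\pi^{(\ell)}_i = \beta_{\nu(i)}$ and comparing adjacent lengths shows the recurrence $\pi^{(k+1)} = [\pi^{(k)}, \beta_k, \pi^{(k)}]$, since for $i < 2^k$ the index $\nu(i)$ is identical whether computed in $\pi^{(k)}$ or $\pi^{(k+1)}$, for $i = 2^k$ we have $\nu(i) = k$, and for $i = 2^k + j$ with $1 \le j \le 2^k - 1$ we have $\nu(i) = \nu(j)$. Consequently the partial sum satisfies
\begin{equation*}
    \sum_{i=1}^{2^{k+1}-1} \pi^{(k+1)}_i \;=\; 2\sum_{i=1}^{2^{k}-1} \pi^{(k)}_i + \beta_k.
\end{equation*}

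For the base case I would take $k=0$, where $\pi^{(0)}$ is empty so the sum is $0$, and check that $\beta_1 - 2 = (1 + (1+\sqrt{2})^{0}) - 2 = 0$. (Alternatively $k=1$ with $\pi^{(1)}=[\beta_0]$ gives $\beta_0 = \sqrt{2} = \beta_2 - 2$.) The inductive step then just requires verifying
\begin{equation*}
    2(\beta_{k+1}-2) + \beta_k \;=\; \beta_{k+2}-2,
\end{equation*}
which after substituting the closed form $\beta_i = 1 + (1+\sqrt{2})^{i-1}$ reduces to the identity $2(1+\sqrt{2})^k + (1+\sqrt{2})^{k-1} = (1+\sqrt{2})^{k+1}$, i.e.\ $(1+\sqrt{2})^{k-1}(3 + 2\sqrt{2}) = (1+\sqrt{2})^{k+1}$, which is immediate from $(1+\sqrt{2})^2 = 3 + 2\sqrt{2}$.

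There is essentially no obstacle here: the whole lemma is a routine consequence of the self-similar block structure of $\pi^{(\ell)}$ combined with the silver-ratio identity $(1+\sqrt{2})^2 = 3+2\sqrt{2}$. An equally clean alternative would be to count directly: group the indices $i \in \{1,\dots,2^k-1\}$ by the value of $\nu(i)$, observe that $\nu(i) = j$ occurs exactly $2^{k-j-1}$ times for $j \in \{0,\dots,k-1\}$, and then evaluate $\sum_{j=0}^{k-1} 2^{k-j-1} \beta_j$ via a geometric series, using $(1+\sqrt{2})(\sqrt{2}-1) = 1$ to telescope the answer to $(1+\sqrt{2})^k - 1 = \beta_{k+1} - 2$. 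I would present the inductive version since it is shorter and mirrors the recursive spirit of the broader construction.
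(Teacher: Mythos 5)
Your proof is correct and follows essentially the same route as the paper's: induct on $k$ using the self-similar block decomposition $\pi^{(k)} = [\pi^{(k-1)}, \beta_{k-1}, \pi^{(k-1)}]$, which gives the recurrence for the partial sums, and then verify the silver-ratio identity $2(\beta_k-2) + \beta_{k-1} = \beta_{k+1}-2$ (you index-shift the inductive step and check the algebra by hand, whereas the paper defers that last identity to a Mathematica cell, but the argument is the same).
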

\begin{proof}
    First, note that $\pi^{(0)} = \emptyset$ so that $\sum_{i=1}^0 \pi^{(0)}_i = 0$. We also have $\beta_1 - 2 = 0$.

    Next, note that 
    \[\sum_{i=1}^{2^k-1} \pi^{(k)}_i = \sum_{i=1}^{2^{k-1}-1} \pi^{(k)}_i + \beta_{k-1} + \sum_{i=2^{k}+1}^{2^k-1} \pi^{(k)}_i.
    \]
    Note that the first $2^{k-1}-1$ entries of $\pi^{(k)}$ are identical to those of $\pi^{(k-1)}$, and the same holds for the last $2^{k-1}-1$ entries. By induction, we may conclude that
    \[
        \sum_{i=1}^{2^k-1} \pi^{(k)}_i = 2 (\beta_k - 2) + \beta_{k-1} = \beta_{k+1}-2.
    \]
    See {\nextmathematica} for a proof of the second identity.
\end{proof}

\begin{lemma}
    \label{lem:sumsigma}
    For $k\geq 1$, the sum of the entries in $\sigma^{(k)}$ is 
    \[
        \frac{1}{2}\left(1-(1+\sqrt{2})^{1-k}\right).
    \]
\end{lemma}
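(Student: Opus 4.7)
The plan is to prove this by induction on $k$, using the recursive definition of $\sigma^{(k)}$ together with the formula for $\sum_{i=1}^{2^k-1}\pi^{(k)}_i$ from \cref{lem:sumpi}.

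For the base case $k=1$, the vector $\sigma^{(1)}$ is empty, so its sum is $0$, which agrees with $\tfrac{1}{2}(1-(1+\sqrt{2})^0) = 0$. For the inductive step, let $S_k$ denote the sum of the entries of $\sigma^{(k)}$. Expanding the concatenation in the definition of $\sigma^{(k)}$ and pulling out the common scalar $\tfrac{1}{2}(1+\sqrt{2})^{-2(k-1)}$, I would write
\begin{align*}
S_k = \frac{1}{2}(1+\sqrt{2})^{-2(k-1)}\left(\sum_{i=1}^{2^{k-2}-1}\pi^{(k-2)}_i + \beta_k + 2(1+\sqrt{2})^{2(k-1)} S_{k-1}\right).
\end{align*}

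The next step is to simplify the bracketed term. By \cref{lem:sumpi}, $\sum_{i}\pi^{(k-2)}_i = \beta_{k-1}-2$, so combined with $\beta_k = 1 + (1+\sqrt{2})^{k-1}$ and $\beta_{k-1}-1 = (1+\sqrt{2})^{k-2}$, the non-recursive contribution becomes
\begin{align*}
(1+\sqrt{2})^{k-2} + (1+\sqrt{2})^{k-1} = (1+\sqrt{2})^{k-2}(2+\sqrt{2}) = \sqrt{2}(1+\sqrt{2})^{k-1},
\end{align*}
using the identity $2+\sqrt{2} = \sqrt{2}(1+\sqrt{2})$. Substituting this back gives the clean recurrence
\begin{align*}
S_k = \frac{1}{\sqrt{2}}(1+\sqrt{2})^{-(k-1)} + S_{k-1}.
\end{align*}

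Finally, I would apply the inductive hypothesis $S_{k-1} = \tfrac{1}{2}(1-(1+\sqrt{2})^{2-k})$ and verify algebraically that
\begin{align*}
\frac{1}{\sqrt{2}}(1+\sqrt{2})^{1-k} = \frac{1}{2}\bigl((1+\sqrt{2})^{2-k}-(1+\sqrt{2})^{1-k}\bigr),
\end{align*}
which follows from factoring out $(1+\sqrt{2})^{1-k}$ and using $(1+\sqrt{2})-1 = \sqrt{2}$. This yields $S_k = \tfrac{1}{2}(1-(1+\sqrt{2})^{1-k})$, completing the induction. There is no serious obstacle here; the only slightly delicate step is recognizing the factorization $2+\sqrt{2} = \sqrt{2}(1+\sqrt{2})$ that collapses the non-recursive part into a single power of $1+\sqrt{2}$.
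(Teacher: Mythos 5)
Your proof is correct and follows essentially the same route as the paper: induction on $k$, invoking \cref{lem:sumpi} for the $\pi^{(k-2)}$ contribution, simplifying $(\beta_{k-1}-2)+\beta_k$ to a single power of $1+\sqrt{2}$, and closing the induction by a short algebraic identity. The only cosmetic difference is that the paper writes the collapsed non-recursive term as $(2+\sqrt{2})(1+\sqrt{2})^{k-2}$ while you factor it further as $\sqrt{2}(1+\sqrt{2})^{k-1}$; these are the same quantity.
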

\begin{proof}
    We show this by induction: note that the sum of the entries in $\sigma^{(1)}$ is 0, and so is this expression.

    For $k > 1$, the sum of the entries in $\sigma^{(k)}$ is 
    \[
        \frac{1}{2}(1+\sqrt{2})^{-2(k-1)}\left(\sum_{i=1}^{2^{k-2}-1}\pi^{(k-2)}_i + \beta_k\right) + \Sigma_{k-1},
    \]
    where $\Sigma_{k-1}$ is the sum of the entries in $\sigma^{(k-1)}$. 

    By \cref{lem:sumpi}, and the induction hypothesis, this is
    \[
        \frac{1}{2}(1+\sqrt{2})^{-2(k-1)}\left((\beta_{k-1} - 2) + \beta_k\right) + 
        \frac{1}{2}\left(1-(1+\sqrt{2})^{2-k}\right).
    \]
    Note that $(\beta_{k-1} - 2) + \beta_k = (2+\sqrt{2})(1+\sqrt{2})^{k-2}$, so that this becomes
    \begin{align*}
        \frac{1}{2}(2+\sqrt{2})(1+\sqrt{2})^{-k}+
        \frac{1}{2}\left(1-(1+\sqrt{2})^{2-k}\right)&= 
        \frac{1}{2}\left(1-(1+\sqrt{2})^{1-k}\right).\qedhere
    \end{align*}
\end{proof}

\begin{lemma}
\label{lem:rho_sum}
    For $k\geq 0$,
    the sum of the entries in $\rho_k$ is 
    \[
        \frac{\beta_{k+1}^2}{2(1+\sqrt{2})} + (1+\sqrt{2})^{k-2}.
    \]
\end{lemma}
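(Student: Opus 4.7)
The plan is to verify the identity by direct computation, using the closed forms of $\sum \pi^{(k)}$ and $\sum \sigma^{(k)}$ already established in Lemmas~\ref{lem:sumpi} and~\ref{lem:sumsigma}.

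For the base case $k=0$, since $\rho_0 = [0,1]$, the sum is $1$; one then checks algebraically (using $\beta_1 = 2$ and $(1+\sqrt{2})^{-2} = 3 - 2\sqrt{2}$) that $\frac{\beta_1^2}{2(1+\sqrt{2})} + (1+\sqrt{2})^{-2} = 2(\sqrt{2}-1) + (3 - 2\sqrt{2}) = 1$.

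For $k \geq 1$, I would unpack the definition of $\rho_k$ to write
\begin{align*}
    \textstyle\sum \rho_k = (1+\sqrt{2})^{k-2} + (1+\sqrt{2})^{2k-1}\textstyle\sum\sigma^{(k)} - \frac{\sum \pi^{(k-1)}}{2(1+\sqrt{2})} + \textstyle\sum \pi^{(k)} + 1.
\end{align*}
Substituting $\sum\pi^{(k)} = \beta_{k+1}-2$, $\sum\pi^{(k-1)} = \beta_k - 2$, and $\sum\sigma^{(k)} = \tfrac{1}{2}(1 - (1+\sqrt{2})^{1-k})$, and replacing $\beta_{k+1} = 1 + (1+\sqrt{2})^k$ and $\beta_k = 1 + (1+\sqrt{2})^{k-1}$, the expression reduces to a linear combination of powers $(1+\sqrt{2})^{j}$ for $j \in \{-1, k-2, k, 2k-1\}$. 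The target right-hand side expands via $\beta_{k+1}^2 = 1 + 2(1+\sqrt{2})^k + (1+\sqrt{2})^{2k}$ into a linear combination over powers $j \in \{-1, k-2, k-1, 2k-1\}$. Matching the two expressions reduces to the single algebraic identity
\begin{align*}
    (1+\sqrt{2})^{k-1} + \tfrac{1}{2}(1+\sqrt{2})^{k-2} - \tfrac{1}{2}(1+\sqrt{2})^{k} = 0,
\end{align*}
which factors as $(1+\sqrt{2})^{k-2}\bigl[(1+\sqrt{2}) + \tfrac{1}{2} - \tfrac{1}{2}(3+2\sqrt{2})\bigr] = 0$ and holds since $(1+\sqrt{2})^2 = 3+2\sqrt{2}$.

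There is no genuine obstacle here: once Lemmas~\ref{lem:sumpi} and~\ref{lem:sumsigma} are invoked, the lemma is a one-line bookkeeping exercise in collecting powers of $1+\sqrt{2}$. The only care needed is to keep track of the shifts in exponents (notably the $-\pi^{(k-1)}/(2(1+\sqrt{2}))$ term which produces a $(1+\sqrt{2})^{k-2}$ contribution cancelling half of the explicit $(1+\sqrt{2})^{k-2}$ prefix of $\rho_k$). If desired, the final algebraic simplification can be deferred to a Mathematica check, mirroring the style used elsewhere in this appendix.
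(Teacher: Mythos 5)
Your proposal is correct and follows the same approach as the paper: invoke Lemmas~\ref{lem:sumpi} and~\ref{lem:sumsigma} to reduce the sum to an explicit expression in powers of $1+\sqrt{2}$, then verify the algebraic identity (the paper delegates both the $k=0$ base case and the final simplification to a Mathematica check, whereas you carry out the algebra by hand, but the structure is identical). The bookkeeping you present is accurate, including the reduction to $(1+\sqrt{2})^{k-1} + \tfrac12(1+\sqrt{2})^{k-2} - \tfrac12(1+\sqrt{2})^k = 0$, which follows from the minimal polynomial $s^2 - 2s - 1 = 0$ of $s = 1+\sqrt{2}$.
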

\begin{proof}
    A simple calculation shows that this holds for $k = 0$ (see \nextmathematica).
    Now suppose $k \geq 1$. Using \Cref{lem:sumpi} and \Cref{lem:sumsigma}, we see that the sum of the entries in $\rho_k$ is
    \[
        (1+\sqrt{2})^{k-2}+\left(\frac{1}{2}(1+\sqrt{2})^{2k-1}\left(1-(1+\sqrt{2})^{1-k}\right)-\frac{\beta_k-2}{2(1+\sqrt{2})}\right)+\left(\beta_{k+1}-2\right)+1
    \]
    This is equal to the claimed expression (see \nextmathematica).
\end{proof}
\begin{lemma}
    \label{lem:sum_of_wk}
    For $k\geq 0$, the sum of the entries of $w_k$ is $\sqrt{\mu_k-1}$.
\end{lemma}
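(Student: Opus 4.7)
The plan is to prove the identity by induction on $k$.

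For the base case $k=0$, we have $w_0 = [1]$, so its sum is $1$, while $\mu_0 = 2$ gives $\sqrt{\mu_0 - 1} = 1$, matching.

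For the inductive step, I would assume $\sum w_{k-1} = \sqrt{\mu_{k-1} - 1}$ and use the recursive definition
\[
w_k = \left[\frac{\pi^{(k-1)}}{\sqrt{\mu_k - 1}}, \frac{\beta_k}{\sqrt{\mu_k - 1}}, w_{k-1}\right]
\]
to write
\[
\sum w_k = \frac{\sum_{i=1}^{2^{k-1}-1}\pi^{(k-1)}_i + \beta_k}{\sqrt{\mu_k - 1}} + \sqrt{\mu_{k-1} - 1}.
\]
Applying Lemma~\ref{lem:sumpi} (with $k$ replaced by $k-1$) gives $\sum \pi^{(k-1)}_i = \beta_k - 2$, so the numerator becomes $2\beta_k - 2 = 2(\beta_k - 1) = 2(1+\sqrt{2})^{k-1}$. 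The claim thus reduces to verifying
\[
\frac{2(1+\sqrt{2})^{k-1}}{\sqrt{\mu_k - 1}} + \sqrt{\mu_{k-1} - 1} = \sqrt{\mu_k - 1},
\]
or equivalently (after multiplying through by $\sqrt{\mu_k - 1}$),
\[
2(1+\sqrt{2})^{k-1} + \sqrt{(\mu_{k-1}-1)(\mu_k - 1)} = \mu_k - 1.
\]
This is precisely the content of Lemma~\ref{lem:relating_muprev_to_mu_and_beta}, which closes the induction.

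There is no real obstacle here: the proof is a direct two-step reduction (one use of the $\pi$-sum identity, one use of the $\mu$-recurrence identity from Lemma~\ref{lem:relating_muprev_to_mu_and_beta}). The only minor care needed is to verify the base case cleanly and to index $\pi^{(k-1)}$ correctly (noting that $\pi^{(k-1)}$ has length $2^{k-1} - 1$ and is empty when $k=1$, in which case $\sum \pi^{(0)} = 0 = \beta_1 - 2$, and the argument goes through unchanged).
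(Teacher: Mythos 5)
Your proof is correct and matches the paper's argument essentially line for line: induction with base case $k=0$, expansion of $w_k$ via \cref{lem:sumpi} to express the sum as $\frac{2(\beta_k-1)}{\sqrt{\mu_k-1}} + \sqrt{\mu_{k-1}-1}$, and closure via \cref{lem:relating_muprev_to_mu_and_beta}. The only cosmetic difference is that you explicitly rewrite $2\beta_k - 2$ as $2(1+\sqrt{2})^{k-1}$ before invoking the lemma, whereas the paper leaves the grouping implicit.
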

\begin{proof}
    We proceed by induction: the sum of the entries of $w_0$ is $1 = \sqrt{\mu_0-1}$.
    
    By expanding the definition and applying \Cref{lem:sumpi} and the inductive hypothesis, the sum of the entries of $w_k$ is $\frac{\beta_k-2}{\sqrt{\mu_k-1}} + \frac{\beta_k}{\sqrt{\mu_k-1}}+\sqrt{\mu_{k-1}-1}$.
    This is equivalent to the claimed expression by \cref{lem:relating_muprev_to_mu_and_beta}.
\end{proof}

\subsubsection{Computing Row Sums}
\begin{lemma}
    \label{lem:row_sums}
    We will give the sum of the entries in each row, dividing into the cases above.

\textbf{Case 1}: $i+1 <2^k$ and $i+1$ is not a power of 2. The sum of the entries of $\lambda_i$ is
\[
    \frac{\mu_{z(i)+1}-1}{(1+\sqrt{2})^{2(z(i)-p(i))+3}} \left(\frac{\beta_{\nu(i+1)+1}^2}{2(1+\sqrt{2})} + (1+\sqrt{2})^{\nu(i+1)-2}\right).
\]

\textbf{Case 2}: $i+1 > 2^k$ and $2^{k+1}-1 - i$ is not a power of 2. The sum of the entries of $\lambda_i$ is
\[
    \frac{(1+\sqrt{2})^{2(p(i)+z(2^{k+1} - 2- i)-k)-1}}{\mu_{z(2^{k+1} - 2- i)+1}-1} \left(\frac{\beta_{\nu(i+1)+1}^2}{2(1+\sqrt{2})} + (1+\sqrt{2})^{\nu(i+1)-2}\right).
\]

\textbf{Case 3}: The sum of the entries of $\lambda_0$ is 2. For $i = 2^{\ell}-1$ with $0 < \ell < k$, 
the sum of the entries in $\lambda_i$ is
\[
     \frac{1}{2}(\mu_{\ell}-1)-1 + \frac{\mu_{\ell+1}-1}{2(1+\sqrt{2})^\ell}.
\]

\textbf{Case 4}: $i = 2^k-1$. The sum of the entries of $\lambda_i$ is

\[
     \frac{\mu_k-1}{2}.
\]

\textbf{Case 5}: $i = 2^{k+1}-2^{\ell}-1$ with $\ell < k$. The sum of the entries of $\lambda_i$ is
\[
    \frac{\beta_{\ell+1}^2}{2(\mu_{\ell+1}-1)}.
\]
    
\end{lemma}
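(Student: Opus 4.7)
The plan is to reduce each row sum to a linear combination of the partial sums already computed in Section~\ref{subsub:partial_rows}. Each row $\lambda_i$ is defined in Section~\ref{subsec:lk_construction} as a concatenation of the named vectors $\rho$, $\sigma$, $\pi$, and $w$ (together with a few explicit scalar entries), so once one substitutes \cref{lem:sumpi}, \cref{lem:sumsigma}, \cref{lem:rho_sum}, and \cref{lem:sum_of_wk} for each block, the remaining work is purely algebraic simplification.

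For Cases~1 and~2, $\lambda_i$ is literally a scalar multiple of $\rho_{\nu(i+1)}$, so \cref{lem:rho_sum} immediately yields the claimed formulas (the scalar coefficient being exactly the prefactor appearing in each case). For Case~3 with $\ell\geq 1$, substituting the sums of $\sigma^{(\ell)}$ and $\pi^{(\ell)}$ and regrouping shows that the $(\mu_\ell-1)$-scaled pieces combine as
\[
    \tfrac{1}{2}(\mu_\ell-1)(1+\sqrt{2})^{-(\ell-1)} - 1 + \tfrac{1}{2}(\mu_\ell-1)\bigl(1 - (1+\sqrt{2})^{1-\ell}\bigr) = \tfrac{1}{2}(\mu_\ell - 1) - 1,
\]
while the $(\mu_{\ell+1}-1)$-scaled pieces combine into $\frac{(\mu_{\ell+1}-1)(\beta_{\ell+1}-1)}{2(1+\sqrt{2})^{2\ell}}$, which collapses to $\frac{\mu_{\ell+1}-1}{2(1+\sqrt{2})^\ell}$ after invoking the identity $\beta_{\ell+1} - 1 = (1+\sqrt{2})^\ell$ from the definition of $\beta$. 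Case~4 proceeds identically except the $\pi^{(k)}$ tail is replaced by $w_k/\sqrt{\mu_k-1}$, and \cref{lem:sum_of_wk} provides the clean cancellation of the $\sqrt{\mu_k-1}$ factors to the claimed $\frac{1}{2}(\mu_k-1)$. The two boundary subcases $\lambda_0$ and $\lambda_{2^{k+1}-2}$ are direct numerical checks.

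The main obstacle is Case~5 with $\ell\geq 1$. Substituting the relevant partial sums produces a polynomial-in-$(1+\sqrt{2})$ contribution (from the $\sigma^{(\ell)}$ and $\pi^{(\ell-1)}$ blocks with coefficient $1/(\mu_{\ell+1}-1)$) plus a cross term $1 - \sqrt{(\mu_\ell - 1)/(\mu_{\ell+1} - 1)}$ arising from the $w_\ell$ block via \cref{lem:sum_of_wk}. Collecting the polynomial part uses only $(1+\sqrt{2})^2 = 2(1+\sqrt{2}) + 1$ together with $\beta_\ell = 1 + (1+\sqrt{2})^{\ell-1}$, but eliminating the square root requires \cref{lem:relating_muprev_to_mu_and_beta}, which rewrites $\sqrt{(\mu_\ell - 1)(\mu_{\ell+1} - 1)}$ as $(\mu_{\ell+1} - 1) - 2(1+\sqrt{2})^{\ell-1}$. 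After this substitution, all terms collapse to the single expression $\beta_{\ell+1}^2/(2(\mu_{\ell+1} - 1))$. The $\ell = 0$ subcase is then an immediate numerical check using $\beta_1 = 2$ and $\mu_1 - 1 = 4$. Because the Case~5 simplification is algebraically tedious but mechanical, a computer algebra verification is appropriate to confirm the final cancellation.
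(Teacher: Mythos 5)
Your proof follows essentially the same route as the paper: Cases~1 and~2 via \cref{lem:rho_sum}, Cases~3 and~4 by substituting the partial sums from \cref{lem:sumpi}, \cref{lem:sumsigma}, and \cref{lem:sum_of_wk} and simplifying (where you show the cancellation by hand that the paper delegates to Mathematica), and Case~5 by the same reduction followed by \cref{lem:relating_muprev_to_mu_and_beta} to eliminate the square root. One minor slip: applying \cref{lem:relating_muprev_to_mu_and_beta} with $k=\ell+1$ gives $\sqrt{(\mu_\ell-1)(\mu_{\ell+1}-1)} = (\mu_{\ell+1}-1) - 2(1+\sqrt{2})^{\ell}$, not $2(1+\sqrt{2})^{\ell-1}$ as you wrote.
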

\begin{proof}
Cases 1 and 2 follow directly by definition and \cref{lem:rho_sum}.
    The expressions for Cases 3 and 4 follow by adding up the partial sums computed in the previous subsection. See \nextmathematica{} and \nextmathematica.

    For Case 5, we combine the expressions for the partial sums (see \nextmathematica) to get the row sum in the form 
    \begin{align*}
    \frac{\beta_{\ell+1}^2}{2(\mu_{\ell+1}-1)} + \frac{1}{2(\mu_{\ell+1}-1)}((1+\sqrt{2})^{\ell-1} - (1+\sqrt{2})^{\ell+1} -2 (1+\sqrt{2})^{\ell}) + \left(1 - \sqrt{\frac{\mu_{\ell}-1}{\mu_{\ell+1} - 1}}\right).
    \end{align*}
    Our goal is to show that the portion of this expression after the $\frac{\beta_{\ell+1}^2}{2(\mu_{\ell+1}-1)}$ term is zero.
    By \Cref{lem:relating_muprev_to_mu_and_beta}, the final term is
    \begin{align*}
        1- \sqrt{\frac{\mu_{\ell}-1}{\mu_{\ell+1}-1}}= \frac{2(\beta_{\ell+1}-1)}{\mu_{\ell+1}-1}.
    \end{align*}
    Combining these expressions proves the claim (see \nextmathematica).
\end{proof}

\subsection{Column Sums} \label{subsec:ColSums}
\subsubsection{The Support of the $j$th Column}
\label{subsub:col_support_lemmas}
For a given $j \le 2^{k+1}-1$, we let 
\[
    S_j^- = \{i < j : \lambda_{i,j} \neq 0\}
    \qquad\text{and}\qquad
    S_j^+ = \{i > j : \lambda_{i,j} \neq 0\}
\]
denote the indices above $j$ and indices below $j$ in the support of the $j$th column.
The following lemmas give computational descriptions of these sets that will be useful in computing the column sums. We will give their proofs in \Cref{sec:supportFacts}

\begin{lemma}
    \label{lem:nonzero_entries_above}
    Suppose that $j\in[1,2^k - 1]$ has the binary expansion $j = \sum_{a=0}^{z} b_{a} 2^{a}$, where $b_i \in \{0,1\}$ and $b_z = 1$, then
    \[
        S_j^- = \set{\sum_{a = r}^{z} b_a 2^a - 1 : r \in [0,z]}.
    \]
\end{lemma}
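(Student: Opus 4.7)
The plan is to deduce the claimed equality directly from the row-support description of $\lambda=\lk$ given immediately before the lemma: row $\lambda_i$ is supported on the interval $[i-\lfloor 2^{\nu(i+1)-1}\rfloor,\, i+2^{\nu(i+1)}]\setminus\{i\}$, so $i\in S_j^-$ is equivalent to $i<j$ together with $j-i\leq 2^{\nu(i+1)}$. Once this criterion is in hand, the lemma reduces to a combinatorial statement about binary expansions.

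For the inclusion $\supseteq$, I would fix $r\in[0,z]$ and set $i_r\coloneqq\sum_{a=r}^z b_a 2^a-1$. If $r\geq 1$ and $b_r=0$, then $i_r=i_{r+1}$, so it suffices to verify the support criterion in two nonredundant situations. If $r=0$, then $i_0=j-1$ and $j-i_0=1\leq 2^{\nu(j)}=2^{\nu(i_0+1)}$. If $r\geq 1$ and $b_r=1$, then $i_r+1=\sum_{a=r}^z b_a 2^a$ has $\nu(i_r+1)=r$, and a direct computation gives
\[
j-i_r \;=\; \sum_{a=0}^{r-1} b_a 2^a + 1 \;\leq\; 2^r \;=\; 2^{\nu(i_r+1)}.
\]
Either way, $i_r\in S_j^-$.

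For the reverse inclusion $\subseteq$, I would take an arbitrary $i\in S_j^-$ and set $r\coloneqq\nu(i+1)$. The support criterion then yields $0\leq j-(i+1)\leq 2^r-1$. Writing $i+1=\sum_{a\geq r}c_a 2^a$ with $c_r=1$, the key observation is that adding a nonnegative integer strictly smaller than $2^r$ to $i+1$ cannot change any bit in positions $\geq r$ (no carry can propagate past position $r-1$, since $i+1$ has zeros there), so the binary expansion of $j$ agrees with that of $i+1$ in all positions $\geq r$. Combined with $i+1\leq j<2^{z+1}$, which forces $r\leq z$, this yields $c_a=b_a$ for $r\leq a\leq z$, so $i+1=\sum_{a=r}^z b_a 2^a$ and $i$ lies in the claimed set.

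The main thing to keep straight is the interplay between the $2$-adic valuation $\nu(i+1)$ and the binary expansion of $j$. I do not expect any deep obstacle: once the row-support description is recorded, the argument is pure bookkeeping on binary representations, with the only minor subtleties being the edge case $r=0$ (absorbed by the trivial bound $1\leq 2^{\nu(j)}$) and the handling of duplicate values $i_r=i_{r+1}$ when some $b_r$ vanish.
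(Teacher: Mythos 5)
Your proof is correct and follows essentially the same route as the paper: both reduce the lemma to the support criterion ($i < j$ and $j - i \leq 2^{\nu(i+1)}$) and then argue via binary expansions in both directions. Your reverse inclusion, noting that $i+1$ has zero bits in positions below $r = \nu(i+1)$ so that adding $j-(i+1) < 2^r$ produces no carry into position $r$ or higher, is a slightly more direct substitute for the paper's contradiction argument about the largest bit at which the expansions of $j$ and $i+1$ disagree, but the underlying mechanism is the same.
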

\begin{lemma}
    \label{lem:recursive_support_case_2}
Suppose $2^z - 1 < j < 2^{z+1} -1$ with $z < k$. If $k>\ell= z+1$, then
\begin{align*}
    S^+_{\rev(j+2^\ell)} = \set{i-2^\ell:\, i\in S^+_{\rev(j)}}.
\end{align*}
On the other hand, if $k>\ell > z+1$, then
\begin{align*}
    S^+_{\rev(j+2^\ell)} = \set{\rev(2^\ell - 1)} \cup \set{i-2^\ell:\, i\in S^+_{\rev(j)}}.
\end{align*}
\end{lemma}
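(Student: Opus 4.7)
The plan is to prove the lemma by exhibiting a near-bijection $i \leftrightarrow i - 2^\ell$ between $S^+_{\rev(j)}$ and $S^+_{\rev(j+2^\ell)}$, separately accounting for the possible ``extra'' element $\rev(2^\ell - 1)$ on the right. Throughout I would use the support characterization $S^+_{j'} = \{i \in [j'+1, 2^{k+1}-2] : i - j' \leq \lfloor 2^{\nu(i+1)-1}\rfloor\}$ coming directly from the definition of $\lk$, along with the identity $\rev(j+2^\ell) = \rev(j) - 2^\ell$.

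The key technical ingredient I would establish first is the valuation bound: for any $i$ in either of $S^+_{\rev(j)}$ or $S^+_{\rev(j+2^\ell)}$, one has $\nu(i+1) \leq \ell$. If $\nu(i+1) \geq \ell + 1$, writing $i+1 = 2^{\nu(i+1)} m$ with $m$ odd and combining $j \leq 2^{z+1} - 2 < 2^\ell$ with the membership constraint $i - \rev(\cdot) \leq 2^{\nu(i+1)-1}$, I would derive $i+1 \geq 2^{k+1}$, contradicting $i \leq 2^{k+1} - 2$. With this bound in hand, I would verify that $\nu(i+1-2^\ell)$ is large enough to transfer the constraint: if $\nu(i+1) < \ell$, subtracting $2^\ell$ leaves the low bits untouched so $\nu(i+1-2^\ell) = \nu(i+1)$, while if $\nu(i+1) = \ell$ one has $i+1-2^\ell = 2^\ell(m-1)$ with $m-1$ even, giving $\nu(i+1-2^\ell) > \ell$. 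In either case the inequality $i - \rev(j) = (i - 2^\ell) - \rev(j+2^\ell) \leq 2^{\nu(i+1)-1} \leq 2^{\nu((i-2^\ell)+1)-1}$ transfers, proving the forward inclusion. The analogous bit analysis of $\nu((i'+2^\ell)+1)$ handles the reverse inclusion, provided $i' + 2^\ell \leq 2^{k+1} - 2$, i.e., $i' \leq \rev(2^\ell - 1) - 1$.

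Two remaining tasks complete the argument. First, a direct computation pins down when $\rev(2^\ell - 1) \in S^+_{\rev(j+2^\ell)}$: since $\nu(\rev(2^\ell - 1)+1) = \ell$ and $\rev(2^\ell - 1) - \rev(j+2^\ell) = j+1$, the membership condition becomes $j + 1 \leq 2^{\ell-1}$, which holds exactly when $\ell \geq z+2$ given $j \in (2^z-1, 2^{z+1}-1)$. Second, I would rule out any $i' \in S^+_{\rev(j+2^\ell)}$ strictly larger than $\rev(2^\ell - 1)$: writing $i' + 1 = 2^{k+1} - 2^\ell + t$ for some $t \in [1, 2^\ell - 1]$ yields $\nu(i'+1) = \nu(t) \leq \ell - 1$ and $i' - \rev(j+2^\ell) = j + 1 + t$, so the required inequality $j + 1 + t \leq 2^{\nu(t)-1}$ fails immediately because $t \geq 2^{\nu(t)} > 2^{\nu(t)-1}$.

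The main obstacle will be the bit-level bookkeeping to cleanly establish and apply the valuation bound $\nu(i+1) \leq \ell$; once that is in place, the rest of the proof reduces to routine case splits on $\nu(i+1) < \ell$ vs.\ $\nu(i+1) = \ell$, together with the separate handling of the distinguished element $\rev(2^\ell - 1)$ and indices $i'$ above it.
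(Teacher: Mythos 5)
Your proof is correct. Worth noting: the paper states this lemma in Appendix D alongside \cref{lem:nonzero_entries_above} and \cref{lem:support_below_recursive_NE} and asserts that ``We will give their proofs in \cref{sec:supportFacts}'', but the appendix \cref{sec:supportFacts} as written contains proofs of only the three surrounding support lemmas and no proof of \cref{lem:recursive_support_case_2} itself — so there is no paper proof to compare against, and your argument fills an actual gap.

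Your approach is the natural analog of the paper's proof of \cref{lem:support_below_recursive_NE}: translate the membership criterion $i - j' \leq \lfloor 2^{\nu(i+1)-1}\rfloor$ through the shift $\rev(j+2^\ell) = \rev(j) - 2^\ell$, and control how $\nu(i+1)$ changes under $i \mapsto i \pm 2^\ell$. The valuation bound $\nu(i+1)\leq \ell$ is the right key fact; from $j < 2^\ell$ the window $\bigl(\rev(j), \rev(j)+2^{\nu(i+1)-1}\bigr]$ cannot contain a multiple of $2^m$ with $m\geq\ell+1$ other than $2^{k+1}$ itself, which is excluded by $i\leq 2^{k+1}-2$; a parallel window argument handles $S^+_{\rev(j+2^\ell)}$ restricted to $i'\leq\rev(2^\ell-1)-1$. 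Your separate treatment of the distinguished index $\rev(2^\ell-1)$ (giving $\nu=\ell$, gap $j+1$, hence membership iff $j+1\leq 2^{\ell-1}$, i.e.\ iff $\ell\geq z+2$) and your exclusion of any $i'>\rev(2^\ell-1)$ (via $\nu(i'+1)=\nu(t)$ with $t\geq 2^{\nu(t)}>2^{\nu(t)-1}$) are both correct and complete the equality. One small caution when writing this up: the claim ``derive $i+1\geq 2^{k+1}$'' is the clean outcome for $i\in S^+_{\rev(j)}$, but for $i'\in S^+_{\rev(j+2^\ell)}$ with $\nu(i'+1)=\ell+1$ the corresponding window is too short even to contain $2^{k+1}$, so the correct statement is simply that no multiple of $2^m$ ($m\geq\ell+1$) lies in the window at all; the conclusion $\nu\leq\ell$ is the same but the intermediate inequality should be stated a little more carefully.
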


\begin{lemma}
    Suppose $2^z - 1 \leq j < 2^{z+1} - 1$ with $z<k$.
    Then, $i\in S_j^+$ if and only if $i\in [j+1, 2^{k+1}-2]$ and  $i - j \le 2^{\nu(i+1)-1}$.
    In particular, if $i \in S_j^+$, then $j < i \le 2^{z+1}-1$.
    If in addition $j = 2^z-1$, then $S_j^+$ is the singleton set $\{2^{z+1}-1\}$
\end{lemma}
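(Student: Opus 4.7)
The plan is to dispatch the three parts of the lemma in sequence, each as a short consequence of the combinatorial support description of $\lambda$ given in the construction of $\lambda^{(k)}$: for $i\in[0,2^{k+1}-2]$ the nonzero columns of row $\lambda_i$ lie in $[i-\lfloor 2^{\nu(i+1)-1}\rfloor,\, i+2^{\nu(i+1)}]\setminus\{i\}$, and $\lambda_{2^{k+1}-1}$ is the zero row.

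For the iff, I would simply unwind the definition $S_j^+=\{i>j:\lambda_{i,j}\neq 0\}$. The membership $i\in[j+1,2^{k+1}-2]$ comes from $i>j$ together with the zero last row, and the condition $i-j\leq 2^{\nu(i+1)-1}$ is exactly the left-support bound specialized to columns below the diagonal. Because $i>j$ are integers, this inequality forces $\nu(i+1)\geq 1$ automatically, so the floor can be dropped.

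For the upper bound $i\leq 2^{z+1}-1$, I would set $\ell=\nu(i+1)\geq 1$, assume for contradiction $i+1\geq 2^{z+1}+1$, and split on the size of $\ell$. If $\ell\leq z+1$, then $2^{z+1}$ is itself a multiple of $2^\ell$, so the next multiple of $2^\ell$ above $2^{z+1}$ is $2^{z+1}+2^\ell$; hence $i+1\geq 2^{z+1}+2^\ell$, which gives $i-j\geq 2^\ell+1>2^{\ell-1}$, contradicting the support bound. If instead $\ell\geq z+2$, then $2^{z+1}\leq 2^{\ell-1}$, and the bound $i\leq j+2^{\ell-1}$ yields $i+1\leq 2\cdot 2^{\ell-1}-1 < 2^\ell$, contradicting that $i+1$ is a positive multiple of $2^\ell$.

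For the singleton identity when $j=2^z-1$, I would combine $i+1\in[2^z+1,2^{z+1}]$ (from the upper bound just proved) with $i+1=2^\ell m$ for some odd $m$ and $i-j\leq 2^{\ell-1}$, then case on $\ell$. For $\ell\leq z$, the next multiple of $2^\ell$ strictly above $2^z$ is $2^z+2^\ell$, which already exceeds $2^z+2^{\ell-1}$, so no valid $i$ exists. For $\ell\geq z+2$, the smallest positive multiple of $2^\ell$ is $2^\ell>2^{z+1}$, violating the upper bound. Hence $\ell=z+1$, and the unique multiple of $2^{z+1}$ in $[2^z+1,2^{z+1}]$ is $2^{z+1}$ itself, giving $i=2^{z+1}-1$, which is easily checked to lie in $S_j^+$. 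I do not expect any serious obstacle here—the entire argument is just integer-divisibility bookkeeping against the support description—and the only minor subtlety, handling the floor $\lfloor 2^{\nu(i+1)-1}\rfloor$ when $\nu(i+1)=0$, is dispatched immediately by the fact that $i>j$ with both integers forces $\nu(i+1)\geq 1$.
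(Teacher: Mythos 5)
Your proof is correct and fills in the detail that the paper leaves entirely to the reader—the paper's own "proof" of this lemma is a single sentence ("This is clear from the support of $\lambda$"), so there is no alternative argument to compare against; your derivation from the row-support description $[\,i-\lfloor 2^{\nu(i+1)-1}\rfloor,\ i+2^{\nu(i+1)}\,]\setminus\{i\}$ together with the zero last row is exactly the intended route. The two small points worth highlighting as the real content—both of which you handle—are (a) that $i>j$ forces $\nu(i+1)\geq 1$, which dissolves the floor, and (b) the divisibility bookkeeping in the two case splits on $\nu(i+1)$ versus $z$. One tiny thing you could add for completeness is an explicit verification that $2^{z+1}-1$ actually \emph{is} in $S_j^+$ for $j=2^z-1$: with $\nu(2^{z+1})=z+1$ one has $(2^{z+1}-1)-(2^z-1)=2^z=2^{(z+1)-1}$, so the support condition holds with equality, and $2^{z+1}-1\leq 2^{k+1}-2$ because $z<k$.
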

\begin{lemma}
\label{lem:support_below_recursive_NE}
    Suppose that $2^z - 1 \le j < 2^{z+1}-1$ with $z < k$. Let $z<\ell<k$.
    We then have that 
    \[
        S_{j + 2^{\ell}}^+ = \{2^{\ell} + i : i \in S_j^+\} \cup \{2^{\ell+1}-1\}.
    \]
\end{lemma}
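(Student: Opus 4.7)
The plan is to verify both inclusions $\supseteq$ and $\subseteq$, using the characterization from the preceding lemma: $i \in S_{j'}^+$ if and only if $i \in [j'+1, 2^{k+1}-2]$ and $i - j' \leq 2^{\nu(i+1)-1}$. The only subtlety lies in tracking how $\nu(i+1)$ changes when $i$ is shifted by a power of two. Throughout, I will use the standard fact that for any $0 < m < 2^\ell$, $\nu(2^\ell + m) = \nu(m)$, since adding $2^\ell$ only sets a bit strictly above the trailing zeros of $m$.

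For $(\supseteq)$, I would handle the two kinds of elements separately. The singleton $2^{\ell+1}-1$ lies in $S_{j+2^\ell}^+$ because $\nu(2^{\ell+1}) = \ell + 1$ and so the distance condition $(2^{\ell+1}-1)-(j+2^\ell) = 2^\ell-1-j \leq 2^\ell$ is immediate, while the range condition is guaranteed by $j < 2^{z+1}-1 \leq 2^\ell - 1$ together with $\ell + 1 \leq k$. For an element of the form $2^\ell + i'$ with $i' \in S_j^+$, I would invoke the preceding lemma to get $i' \leq 2^{z+1}-1 \leq 2^\ell$, so generically $i'+1 < 2^\ell$ and hence $\nu(2^\ell + i' + 1) = \nu(i'+1)$; the distance inequality $i' - j \leq 2^{\nu(i'+1)-1}$ then transfers verbatim. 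The only edge case is $\ell = z+1$ and $i' = 2^\ell - 1$, where $2^\ell + i'$ collapses into the singleton $2^{\ell+1}-1$; in this situation $\nu(i+1) = \ell + 1 > \nu(i'+1)$, so the distance condition only becomes easier.

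For $(\subseteq)$, take $i \in S_{j+2^\ell}^+$. Since $2^\ell - 1 \leq j + 2^\ell < 2^\ell + 2^{z+1} - 1 \leq 2^{\ell+1} - 1$, applying the ``In particular'' clause of the preceding lemma at column $j + 2^\ell$ forces $j + 2^\ell < i \leq 2^{\ell+1} - 1$. If $i = 2^{\ell+1}-1$, we land in the singleton and are done. Otherwise $i < 2^{\ell+1}-1$, so setting $i' := i - 2^\ell$ gives $0 < i' < 2^\ell - 1$; the valuation identity then yields $\nu(i+1) = \nu(i'+1)$, and both the distance and range conditions defining membership of $i'$ in $S_j^+$ translate directly from those satisfied by $i$ in $S_{j+2^\ell}^+$.

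I do not anticipate a substantial obstacle: the proof is essentially bookkeeping once the valuation identity $\nu(2^\ell + m) = \nu(m)$ for $0 < m < 2^\ell$ is applied consistently, with only the collapse $\ell = z+1$, $i' = 2^\ell - 1$ (where $2^\ell + i'$ coincides with the singleton) requiring an extra sentence of care.
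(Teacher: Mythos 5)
Your proof is correct and follows essentially the same approach as the paper's: pass to the index characterization $i \in S_{j'}^+ \iff i \in [j'+1, 2^{k+1}-2]$ and $i - j' \le 2^{\nu(i+1)-1}$, then track $\nu$ under a shift by $2^\ell$ using $\nu(2^\ell + m) = \nu(m)$ for $0 < m < 2^\ell$. You are in fact a bit more careful than the paper in two small places: you verify directly that the singleton $2^{\ell+1}-1$ lies in $S_{j+2^\ell}^+$ (the paper never explicitly checks this containment), and you handle the degenerate case $\ell = z+1$, $i' = 2^\ell - 1$, where the paper's intermediate claim ``$\nu(i+1) < \ell$'' actually fails (here $\nu(i'+1) = \ell$ and $\nu(2^\ell + i' + 1) = \ell+1$); the set equality still holds because $2^\ell + i'$ coincides with the singleton, which is exactly what you observe.
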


\subsubsection{Computing Column Sums}

\begin{lemma}
\label{lem:colsums_above_left}
    Fix $1\leq j \leq 2^{k}-1$.
    First suppose $j+1$ is not a power of $2$ and let $z$ so that $2^z - 1 < j < 2^{z+1}-1$. Let $p$ denote the number of ones in the binary expansion of $j+1$.
    Then,
    \[
        \sum_{i \in S_j^-} \lambda_{i,j} = \frac{1}{2} (1+\sqrt{2})^{-5+2p+\nu(j+1)-2z}\beta_{\nu(j+1)+2}(\mu_{z+1}-1).
    \]
    On the other hand, if $j = 2^{z}-1$ for some $z=1,\dots, k$, then
    \[
        \sum_{i \in S_j^-} \lambda_{i,j} = \frac{1}{2}(\mu_{z}-1).
    \]
\end{lemma}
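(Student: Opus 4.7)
The plan is to enumerate $S_j^-$ explicitly via \Cref{lem:nonzero_entries_above}, identify which case of the row-of-$\lambda^{(k)}$ definitions in \cref{subsec:lk_construction} applies to each element, and then evaluate the resulting geometric sums using the identity $(1+\sqrt{2})^2 - 1 = 2(1+\sqrt{2})$.

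Write $j$ in binary with set bits at positions $d_1 < \dots < d_s = z$ and set $q \coloneqq 1+\sqrt{2}$ and $t \coloneqq \nu(j+1)$. Then \Cref{lem:nonzero_entries_above} gives $S_j^- = \set{i_m : m = 1,\dots,s}$ with $i_m = \sum_{a \ge d_m} b_a 2^a - 1$, and by inspection $\nu(i_m + 1) = d_m$, $z(i_m) = z$, and $p(i_m) = s - m + 1$. The element $i_s = 2^z - 1$ places row $i_s$ in Case~3 (using $z < k$, which holds because $j+1 < 2^k$ under the not-a-power-of-two hypothesis); every other row $i_m$ with $m < s$ falls in Case~1 since $i_m+1$ has at least two set bits. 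Comparing the binary expansions of $j$ and $j+1$ yields the key bookkeeping identity $s = p + t - 1$.

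For each $m < s$ I must decide whether $\lambda^{(k)}_{i_m, j}$ is given by the generic right-hand Case~1 formula (with a factor $\beta_{\nu(j+1)}$) or by the endpoint formula at $j = i_m + 2^{d_m}$ (without the $\beta$ factor). Since $j - i_m = 1 + \sum_{l<m} 2^{d_l}$, this endpoint condition forces $\set{d_1, \dots, d_{m-1}} = \set{0, \dots, d_m-1}$, which holds precisely for $m = 1, \dots, t$. Letting $\theta \coloneqq z - s + 1$, the sum accordingly splits as
\[
\sum_{i \in S_j^-} \lambda^{(k)}_{i,j} = (\mu_{z+1}-1)\left[\sum_{m=1}^{t}\frac{1}{q^{2\theta + 2m - 1}} + \beta_t \sum_{m=t+1}^{s-1}\frac{1}{q^{2\theta + 2m - 1}} + \frac{\beta_t}{2q^{2z}}\right].
\]
Applying $q^2 - 1 = 2q$ collapses both geometric sums in closed form; the exponent identity $2\theta + 2s - 2 = 2z$ makes the final term of the non-endpoint sum cancel the Case~3 term up to sign; and the relations $\beta_t - 1 = q^{t-1}$ and $\beta_{t+2} = 1 + q^{t+1}$ reduce the remainder to $\beta_{t+2}/(2 q^{2\theta + t + 1})$. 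Substituting $2\theta + t + 1 = 2z - 2p - t + 5$ then produces the claimed formula.

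The special case $j = 2^z - 1$ follows the same template: the set bits of $j$ are exactly $0, 1, \dots, z-1$, so every $i_m$ with $m < z$ is an endpoint Case~1 entry while $i_z = 2^{z-1} - 1$ is computed by the $j = 2^{\ell+1}-1$ sub-formula of Case~3 with $\ell = z - 1$, and the geometric sum telescopes via $q^2 - 1 = 2q$ to exactly $(\mu_z - 1)/2$. I expect the main obstacle to be the careful bookkeeping required to cleanly delineate endpoint from generic Case~1 entries and to align the exponents of $q$ after simplification; once this is set up correctly, the single identity $q^2 - 1 = 2q$ drives all of the subsequent algebra.
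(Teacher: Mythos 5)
Your proposal is correct and takes essentially the same approach as the paper's proof: both enumerate $S_j^-$ via \Cref{lem:nonzero_entries_above}, split the sum into the single $i=2^z-1$ term (Case~3), the endpoint Case~1 terms ($j-i_m=2^{\nu(i_m+1)}$, which you correctly identify as exactly the indices $m\le \nu(j+1)$, matching the paper's case~(iii) where $\ell<\nu(j+1)$), and the generic Case~1 terms (the paper's case~(ii)), then collapse the resulting geometric series using $(1+\sqrt2)^2-1=2(1+\sqrt2)$. The only cosmetic difference is that you index the sum by position $m$ in the sorted list of set bits of $j$ (with the bookkeeping identity $s=p+\nu(j+1)-1$), whereas the paper parameterizes directly by $p_i$ or the bit position $\ell$; the underlying computation is identical.
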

\begin{proof}
    We begin with the case where $j+1$ is not a power of $2$. Let $j+1 = \sum_{a=0}^{z} b_{a} 2^{a}$ be the binary expansion of $j+1$.
    Since $2^{\nu(j+1)}$ is the largest power of 2 dividing $j+1$, it follows that
    \[
        j+1 = 2^z + \sum_{a=\nu(j+1)+1}^{z-1} b_a 2^a + 2^{\nu(j+1)},
    \]
    which implies that
    \[
        j = 2^z + \sum_{a=\nu(j+1)+1}^{z-1} b_a 2^a + \sum_{a = 0}^{\nu(j+1)-1} 2^a.
    \]

    Let $i \in S_j^-$.
By \cref{lem:nonzero_entries_above}, there are three cases: either $i + 1 = 2^z$; $i+1 = 2^z + \sum_{a=\ell}^{z-1} b_a 2^a$ for some $z > \ell > \nu(j+1)$, or $i+1 = 2^z + \sum_{a=\nu(j+1)+1}^{z-1} b_a 2^a + \sum_{a=\ell}^{\nu(j+1)-1} 2^a$ for $\ell < \nu(j+1)$.

    \textbf{Case (i)}:
    Let $i = 2^z-1$.
    As $j+1$ is not a power of $2$, we have that $\nu(j+1)<z$ and $\nu(j-i) = \nu(j+1)$. Thus, by definition of $\lambda$, we have
\[
        \lambda_{i,j} = \frac{\mu_{z+1}-1}{2(1+\sqrt{2})^{2z}}\beta_{\nu(j+1)}.
    \]

    \textbf{Case (ii)}:
    In this case, neither $j+1$ nor $i+1$ are powers of two. Furthermore, $i+1$ is given explicitly as
$i+1 = 2^z + \sum_{a=\ell}^{z-1} b_a 2^a$ for some $z > \ell > \nu(j+1)$. Then $0 < j - i < 2^{\ell}$, implying that
    \[
        \lambda_{i,j} = \frac{\mu_{z+1}-1}{(1+\sqrt{2})^{2(z-p_i)+3}} \beta_{\nu(j -i)} = 
        \frac{\mu_{z+1}-1}{(1+\sqrt{2})^{2(z-p_i)+3}} \beta_{\nu(j+1)},
    \]
    where $p_i$ is the number of ones in the binary expansion of $i+1$ and we note that $\nu(j - i) = \nu((j+1)-(i+1)) = \nu(j+1)$.
    Now, note that if we sum over all $i$ of the form of case 2, there is exactly one such term for each possible value of $p_i$ from $2$ through $p-1$. That is, if we add all such $\lambda_{i,j}$, we obtain (see \nextmathematica)
    \[
        \sum_{p_i=2}^{p-1} \frac{\mu_{z+1}-1}{(1+\sqrt{2})^{2(z-p_i)+3}} \beta_{\nu(j+1)} = 
        \frac{1}{2} \left(\left(1+\sqrt{2}\right)^{2 (p-z-2)}  - (1+\sqrt{2})^{-2z}\right)(\mu_{z+1}-1)\beta_{\nu(j+1)}.
    \]

    \textbf{Case (iii)}:
    If 
    \[
        i+1 = 2^z + \sum_{a=\nu(j+1)+1}^{z-1} b_a 2^a + \sum_{a=\ell}^{\nu(j+1)-1} 2^a,
    \]
    for some $0 \leq \ell < \nu(j+1)$, then we have that $j-i = 2^{\ell}$, so that
    \[
        \lambda_{i,j} = \frac{\mu_{z+1}-1}{(1+\sqrt{2})^{2(z-p_i)+3}}.
    \]
    Once again, if we add up all such terms, we collect one for each possible value of $p_i$ from $p$ to $p+\nu(j+1)-1$, yielding (see \nextmathematica)
    \[
        \sum_{p_i=p}^{p+\nu(j+1)-1} \frac{\mu_{z+1}-1}{(1+\sqrt{2})^{2(z-p_i)+3}} = \frac{1}{2}(1+\sqrt{2})^{2(p-z-2)}\left((1+\sqrt{2})^{2\nu(j+1)} - 1\right)\left(\mu_{z+1}-1\right).
    \]

    If $j+1$ is not a power of two, then adding the sums in the three cases yields (see \nextmathematica)
    \[
        \frac{1}{2} (1+\sqrt{2})^{(-5+2p+\nu(j+1)-2z)}\beta_{\nu(j+1)+2}(\mu_{z+1}-1),
    \]
    as desired.

    Now, we consider the case when $j = 2^{z}-1$. By \Cref{lem:nonzero_entries_above}, $S_j^- = \{2^{z} - 2^{\ell} - 1 : 0 \le \ell < z\}$. 
    If $i = 2^{z-1}-1$, then
    \[
        \lambda_{i,j} = \frac{\mu_{z}-1}{2(1+\sqrt{2})^{2(z-1)}}.
    \]
    If $i = 2^{z} - 2^{\ell} - 1$ for $0 \le \ell < z-1$, then
    \[
        \lambda_{i,j} = \frac{\mu_{z}-1}{(1+\sqrt{2})^{2\ell+1}}.
    \]
    Adding all of these terms yields $\frac{1}{2}(\mu_{z}-1)$ (see \nextmathematica).
\end{proof}

\begin{lemma}
    \label{lem:SplusSum}
    Fix $j$ so that $2^z - 1 < j < 2^{z+1}-1$ where $z<k$. If there are $p$ one's in the binary expansion of $j+1$,
    then
    \[
        \sum_{i \in S_j^+} \lambda_{i,j} =
        \frac{1}{2} \left(1+\sqrt{2}\right)^{2 (p-z-2)}\beta_{\nu(j+1)+2} (\mu_{z+1}-1) .
    \]
\end{lemma}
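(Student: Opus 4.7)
The plan is to prove \Cref{lem:SplusSum} by induction on $p := p(j)$, with \Cref{lem:support_below_recursive_NE} playing the role for $S_j^+$ that \Cref{lem:nonzero_entries_above} played for $S_j^-$ in the proof of \Cref{lem:colsums_above_left}. Throughout I write $\nu := \nu(j+1)$, $j' := j - 2^z$, $z' := z(j')$, and $T(j) := \sum_{i \in S_j^+}\lambda_{i,j}$, noting that $\nu(j'+1) = \nu$ and $p(j') = p - 1$ whenever $j'$ is defined.

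For the base case $p=2$, we have $j = 2^z + 2^\nu - 1$ with $\nu \in [0,z-1]$, and $S_j^+ \subseteq \{2^z + 2^{\nu+1} - 1,\; 2^{z+1}-1\}$, with the two elements coinciding exactly when $\nu = z-1$. If $\nu = z-1$ only the row $2^{z+1}-1$ contributes, at distance $2^{z-1}$, and the Case~3/4 power formula (with $a=z-1$) directly yields $\beta_{z+1}$, matching the claim. If $\nu \le z-2$, the row $2^z+2^{\nu+1}-1$ is Case~1 with $\ell=\nu+1$ at its boundary position $j = i - 2^{\ell-1}$, and the row $2^{z+1}-1$ is Case~3/4 with $d''=2^z-2^\nu \in (2^{z-1}, 2^z)$ (never a power of $2$), so its intermediate formula with $a''=z-1$ and $\nu(j+1)=\nu$ applies; the sum then reduces to verifying the single identity $2(1+\sqrt 2)^\nu + \beta_\nu = \beta_{\nu+2}$, which is immediate from $\beta_a = 1 + (1+\sqrt 2)^{a-1}$.

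For the inductive step $p \geq 3$, \Cref{lem:support_below_recursive_NE} (with shift $2^z$) gives $S_j^+ = \{2^z+i' : i' \in S_{j'}^+\}\cup\{2^{z+1}-1\}$ as sets, with overlap precisely when $z' = z-1$. Splitting $S_{j'}^+ = S_{j'}^{+,\mathrm{open}} \sqcup \{2^{z'+1}-1\}$ and accounting for the possible overlap yields
\[
T(j) = \sum_{i' \in S_{j'}^{+,\mathrm{open}}}\lambda_{2^z+i',\,j} \;+\; \mathbf{1}[z'<z-1]\,\lambda_{2^z+2^{z'+1}-1,\,j} \;+\; \lambda_{2^{z+1}-1,\,j}.
\]
On the open part both indices sit strictly inside $(2^{z'}-1, 2^{z'+1}-1)$, so \Cref{lem:recurrence} applies termwise and collapses the first sum into $(1+\sqrt 2)^{2(z'-z+1)}\tfrac{\mu_{z+1}-1}{\mu_{z'+1}-1}\bigl(T(j') - \lambda_{2^{z'+1}-1,j'}\bigr)$. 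Invoking the inductive hypothesis for $T(j')$ and combining exponents produces the target value $\tfrac{1}{2}(1+\sqrt 2)^{2(p-z-2)}\beta_{\nu+2}(\mu_{z+1}-1)$ plus an error term; requiring the error to vanish reduces the step to the key identity
\[
\mathbf{1}[z'<z-1]\,\lambda_{2^z+2^{z'+1}-1,\,j} + \lambda_{2^{z+1}-1,\,j} = (1+\sqrt 2)^{2(z'-z+1)}\,\tfrac{\mu_{z+1}-1}{\mu_{z'+1}-1}\,\lambda_{2^{z'+1}-1,\,j'}.
\]

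The main obstacle is verifying this key identity. When $z' = z-1$ the indicator drops out and both entries fall under Case~3/4 with common distance $2^z-1-j'$; the entries differ by exactly the factor $\tfrac{\mu_{z+1}-1}{\mu_z-1}$ reflecting Case~3/4's proportionality to $\mu_\ell - 1$, so the identity holds in both the power and intermediate sub-cases. When $z' < z-1$, set $d := 2^{z'+1}-1-j' \in [1, 2^{z'}-1]$: the row $2^z+2^{z'+1}-1$ is Case~1 with $\ell=z'+1$, $p=2$ at distance $d$; the row $2^{z'+1}-1$ is Case~3 with the same $\ell'=z'+1$ and the same distance $d$; and the row $2^{z+1}-1$ is Case~3/4 with $\ell''=z+1$ at distance $d'' = 2^z - 2^{z'+1} + d \in (2^{z-1}, 2^z)$, which is never a power of $2$, so its intermediate formula with $a''=z-1$ always applies. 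Splitting on whether $d=2^a$ (Subcase~A) or $2^a < d < 2^{a+1}$ (Subcase~B) with $a \in [0, z'-1]$, direct substitution into the formulas of \Cref{subsec:lk_construction} yields an elementary algebraic identity: in Subcase~A the $-\beta_a$ inside the Case~1 power formula cancels against the $\beta_\nu = \beta_a$ from the Case~3/4 entry (using that $\nu(j+1) = a$ here), while in Subcase~B the ``$-1$'' inside the Case~1 intermediate formula cancels the ``$+1$'' from the Case~3/4 entry; in both subcases the surviving exponents of $(1+\sqrt 2)$ combine to exactly $2(z'-z+1) - 2(a+1)$, matching the right-hand side.
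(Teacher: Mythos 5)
Your proposal is correct and mirrors the paper's own proof: induction on $p$ via the shift $j\mapsto 2^z+j'$, invoking \Cref{lem:support_below_recursive_NE} to describe $S_j^+$, applying the scaling relation of \Cref{lem:recurrence} termwise to the interior of $S^+_{j'}$, and reducing the step to a residual identity among the boundary rows $2^z+2^{z'+1}-1$, $2^{z+1}-1$, and (scaled) $2^{z'+1}-1$, with a direct computation for the base case $p=2$. Your write-up is in fact slightly more careful than the paper's: you separate out the $z'=z-1$ case cleanly via the indicator and explicitly verify the key identity in both the boundary position $d=2^a$ (Subcase~A) and the interior position $2^a<d<2^{a+1}$ (Subcase~B), whereas the paper only writes out the interior case; but the structure, lemmas invoked, and base case are the same.
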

\begin{proof}
    We will show this by induction on the number of ones in the binary expansion of $j+1$. 

    If there are exactly 2 one's in the binary expansion of $j+1$, then
    \[
        j = 2^{z} + 2^{\ell} - 1,
    \]
    for some $\ell < z$.
    In this case, \cref{lem:support_below_recursive_NE} implies $S_j^+ = \{2^{z+1}-1, 2^z+2^{\ell+1}-1\}$.
    We consider two cases: either $2^z+2^{\ell+1}-1 = 2^{z+1}-1$, i.e. $\ell = z-1$, or it does not.

    If $\ell = z-1$, then $S_j^+ = \{2^{z+1}-1\}$, and 
    \[
        \sum_{i \in S_j^+} \lambda_{i,j} = \lambda_{2^{z+1}-1, 2^{z} + 2^{z-1} - 1} = 
        \frac{1}{2} \left(1+\sqrt{2}\right)^{-2z}\beta_{\ell+2} (\mu_{z+1}-1),
    \]
    as desired.

    On the other hand, if $\ell \neq z-1$, then $S_j^+ = \{2^{z+1}-1, 2^z+2^{\ell+1}-1\}$, where $\ell < z-1$. So,
    \[
        \sum_{i \in S_j^+} \lambda_{i,j} = \lambda_{2^{z+1}-1, 2^{z} + 2^{\ell} - 1} + \lambda_{2^z+2^{\ell+1}-1, 2^z+2^{\ell}-1}.
    \]
    From our definitions,
    \[
        \lambda_{2^{z+1}-1, 2^{z} + 2^{\ell} - 1} = \frac{1}{2}(1+\sqrt{2})^{-2z} \beta_{\ell}(\mu_{z+1}-1),
    \]
    and
    \[
        \lambda_{2^{z} +2^{\ell+1}-1, 2^{z} + 2^{\ell} - 1} = \frac{\mu_{z+1}-1}{(1+\sqrt{2})^{2(z-2)-\ell+4}}.
    \]
    Summing up these two terms gives
    \[
        \lambda_{2^{z+1}-1, 2^{z} + 2^{\ell} - 1} + \lambda_{2^z+2^{\ell+1}-1, 2^z+2^{\ell}-1} =
        \frac{1}{2} \left(1+\sqrt{2}\right)^{-2z}\beta_{\ell+2} (\mu_{z+1}-1)
    \]
    as can be seen in \nextmathematica.

    Now, we assume $p > 2$.
    Let $j = 2^z + j'$, where $j' < 2^z-1$ has $p-1$ one's in its binary expansion.
    We have by \cref{lem:support_below_recursive_NE} that $S_j^+ = \{i' + 2^z : i' \in S_{j'}^+\} \cup \{2^{z+1}-1\}$.
    Once again we consider two cases: either $2^{z+1}-1 \in \{i' + 2^z : i' \in S_{j'}^+\}$, or it is not.

    Assume that $2^{z+1}-1 \in \{i' + 2^z : i' \in S_{j'}^+\}$. This implies that $2^z-1 \in S_{j'}^+$, or equivalently that $z(j') = z-1$.
    In this case,
    \[
        \sum_{i \in S_j^+} \lambda_{i,j} = \sum_{i' \in S_{j'}^+} \lambda_{2^z + i', 2^z+j'}.
    \]
    Now, suppose $i'\in S^+_{j'}$ is not a power of 2. Since $z(i) = z(i') + 1$, and $p(i) = p(i') + 1$, \Cref{lem:recurrence} implies that
    \[
        \lambda_{2^z + i', 2^z+j'} = 
        \frac{\mu_{z+1}- 1}{\mu_{z}- 1}\lambda_{i', j'}.
    \]
    The only element of $S^+_{j'}$ and that is one less than a power of 2 is $i' = 2^z-1$. If $2^{z}-1 - 2^a > j' > 2^{z} - 1 - 2^{a+1}$, then $2^{z+1} - 1 - 2^a > j > 2^{z+1} - 1 - 2^a$, and
    \begin{align*}
        \lambda_{2^z + i', 2^z+j'} &= \lambda_{2^{z+1} - 1, 2^z+j'}\\
        &= \frac{1}{2}(\mu_{z+1}-1)(1+\sqrt{2})^{-2(a+1)}\beta_{\nu(j+1)}\\
        &= 
        \frac{\mu_{z+1}+ 1}{\mu_{z}+ 1}\lambda_{2^z-1, j'}.
    \end{align*}
    Therefore, 
    \[
        \sum_{i' \in S_{j'}^+} \lambda_{2^z + i', 2^z+j'} = \frac{\mu_{z+1}+ 1}{\mu_{z}+ 1}\sum_{i' \in S_{j'}^+} \lambda_{i', j'}=
        \frac{1}{2} \left(1+\sqrt{2}\right)^{2 (p-z-2)}\beta_{\nu(j+1)+2} (\mu_{z+1}-1),
    \]
    as desired.

    Now assume that $2^{z+1}-1 \not \in \{i' + 2^z : i' \in S_{j'}^+\}$. This is equivalent to $2^{z'+1}-1 > j' > 2^{z'}-1$ for some $z' < z-1$.
    Now, we have that 
    \[
        \sum_{i \in S_j^+} \lambda_{i,j} = \sum_{i' \in S_{j'}^+} \lambda_{2^z + i', 2^z+j'} + \lambda_{2^{z+1}-1, j}.
    \]
    By \Cref{lem:recurrence}, we note that for all $i' \in S_{j'}^+$ other than $i' = 2^{z'+1}-1$, 
    \[
        \lambda_{2^z + i', 2^z+j'} = 
        (1+\sqrt{2})^{2(1+z'-z)}\frac{\mu_{z+1}-1}{\mu_{z'+1}-1}\lambda_{i', j'}.
    \]
    It remains to consider $\lambda_{2^z+2^{z'+1}-1, 2^z+j'}$ and $\lambda_{2^{z+1}-1, 2^z+j'}$.

    Note that $2^{z+1} - 2^{z-1} > j > 2^{z+1} - 2^z$ and  that $j' + 1 < 2^{z'-1}$ is not a power of 2, so
    \[
        \lambda_{2^{z+1}-1, 2^z+j'} = \frac{1}{2}(1+\sqrt{2})^{-2z}\beta_{\nu(j'+1)}(\mu_{z+1}-1).
    \]

    Also, if $2^{z'+1}-1 - 2^a > j' > 2^{z'+1} - 1 - 2^{a+1}$, then
    \[
        \lambda_{2^z+2^{z'+1}-1, 2^z+j'} = \frac{\mu(z+1) - 1}{(1+\sqrt{2})^{2(z-2)+4}}\left((1+\sqrt{2})^{2(z'+1-a)}-1 \right)\beta_{\nu(j'+1)}.
    \]
    We see then that 
    \begin{align*}
        \lambda_{2^{z+1}-1, 2^z+j'} +  \lambda_{2^z+2^{z'+1}-1, 2^z+j'} &= 
         \frac{\mu(z+1) - 1}{(1+\sqrt{2})^{2z}}\left((1+\sqrt{2})^{2(z'+1-a)} \right)\beta_{\nu(j'+1)}\\
         &= 
         (1+\sqrt{2})^{2(1+z'-z)}\frac{\mu_{z+1}-1}{\mu_{z'+1}-1}\lambda_{2^{z'+1}-1, j'},
    \end{align*}
    \begin{align*}
        \lambda_{2^z+2^{z'}-1, 2^z+j'} &= \frac{\mu_{z+1}-1}{(1+\sqrt{2})^{2z-1}} (\rho_{z'})_{j' - 2^{z'}}\\
        &=
        \frac{\mu_{z+1}-1}{(1+\sqrt{2})^{2z-1}} 
        \left((1+\sqrt{2})^{2z'-1}(\sigma^{(z')})_{j' - 2^{z'}} - \frac{\beta_{\nu(j'+1)}}{2(1+\sqrt{2})}\right)\\
         &= 
        \frac{\mu_{z+1}-1}{(1+\sqrt{2})^{2z-1}} 
        \left(\frac{1}{\mu_{z+1}-1}(1+\sqrt{2})^{2z'-1}\lambda_{2^{z'}-1, j'}  - \frac{\beta_{\nu(j'+1)}}{2(1+\sqrt{2})}\right)\\
         &= 
        \frac{\mu_{z+1}-1}{(\mu_{z+1}-1)(1+\sqrt{2})^{2z-2z'-2}} \lambda_{2^{z'}-1, j'} 
         - \frac{\mu_{z+1}-1}{(1+\sqrt{2})^{2z}}\frac{\beta_{\nu(j'+1)}}{2}.
    \end{align*}

    Note that $\lambda_{2^z+2^{z'}-1, 2^z+j'} + \lambda_{2^{z+1}-1, 2^z+j'} = (1+\sqrt{2})^{2(1+z'-z)}\frac{\mu_{z+1}-1}{\mu_{z+1}-1} \lambda_{2^{z'}-1, j'}$, so that 
    \begin{align*}
        \sum_{i \in S_j^+} \lambda_{i,j} &= \sum_{i' \in S_{j'}^+} \lambda_{2^z + i', 2^z+j'} + \lambda_{2^{z+1}-1, j}\\
        &= (1+\sqrt{2})^{2(1+z'-z)}\frac{\mu_{z+1}-1}{\mu_{z+1}-1}  \sum_{i' \in S_{j'}^+} \lambda_{i', j'}\\
        &=
        \frac{1}{2} \left(1+\sqrt{2}\right)^{2 (p-z-2)}\beta_{\nu(j+1)+2} (\mu_{z+1}-1) .\qedhere
    \end{align*}
\end{proof}

\begin{lemma}
    \label{lem:rowcolhighpow2_expression}
    Suppose $j = 2^{k+1} - 2^\ell - 1$ for some $0\leq \ell < k$. Then, the sum of the entries in the $j$th column of $\lk$ is
    \begin{align*}
        \frac{\beta^2_{\ell+1}}{2(\mu_{\ell+1}-1)}.
    \end{align*}
\end{lemma}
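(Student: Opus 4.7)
The proof parallels the strategy of Lemmas~\ref{lem:colsums_above_left} and~\ref{lem:SplusSum}: identify which rows $i$ have $\lambda^{(k)}_{i,j} \neq 0$ for $j = 2^{k+1} - 2^\ell - 1$, and then sum those entries in closed form. The first (and only substantive) step is to characterize the support of the $j$th column. Since $\nu(j+1) = \ell$, I would write $i = j - d$ for $d \neq 0$ and ask when the support condition of row $i$ includes column $j$. A short 2-adic argument handles $d < 0$: either $|d| < 2^\ell$ forces $\nu(i+1) = \nu(|d|)$ and the support requirement $|d| \leq \lfloor 2^{\nu(i+1)-1}\rfloor$ fails, or $|d| \geq 2^\ell$ pushes $i$ to the all-zero row $2^{k+1}-1$; thus $S_j^+ = \emptyset$. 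For $d > 0$, by computing $\nu(2^{k+1} - 2^\ell - d)$ in terms of $\nu(2^\ell + d)$, the condition $d \leq 2^{\nu(i+1)}$ reduces to exactly two families: (i) $d = 2^r$ for $0 \leq r \leq \ell - 1$, and (ii) $d = 2^\ell (2^{r'} - 1)$ for $1 \leq r' \leq k - \ell$. Inspecting Section~\ref{subsec:lk_construction}, the former indices $i = j - 2^r$ satisfy the Case 2 defining conditions (their binary expansions contain $k - r > 1$ ones), while the latter are precisely the Case 5 rows $i = 2^{k+1} - 2^{\ell + r'} - 1$, with the Case 4 row arising when $r' = k - \ell$.

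With the support in hand, the Case 4 and Case 5 contributions can be read off directly:
\[
    \lambda^{(k)}_{2^k - 1,\,j} = \frac{\beta_{\ell+1}}{\sqrt{(\mu_k - 1)(\mu_{\ell+1} - 1)}}, \quad \lambda^{(k)}_{2^{k+1} - 2^{\ell'} - 1,\,j} = \left(\frac{1}{\sqrt{\mu_{\ell'} - 1}} - \frac{1}{\sqrt{\mu_{\ell'+1} - 1}}\right) \frac{\beta_{\ell+1}}{\sqrt{\mu_{\ell+1} - 1}}
\]
for $\ell+1 \leq \ell' \leq k-1$. Summing over $\ell'$ telescopes, and the combined Case 4 + Case 5 total simplifies to $\beta_{\ell+1}/(\mu_{\ell+1} - 1)$.

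For the Case 2 rows $i = j - 2^r$ with $0 \leq r \leq \ell - 1$, I would verify that $p(i) = k - r$ (counting ones in the binary expansion of $i + 1 = 2^{k+1} - 2^\ell - 2^r$), $z(2^{k+1} - 2 - i) = \ell$, and $\nu(i+1) = r$. Since $j - i = 2^r$, the index $j - i + \lfloor 2^{r-1}\rfloor$ selects the final ``$1$'' entry of $\rho_r$, giving
\[
    \lambda^{(k)}_{j - 2^r,\,j} = \frac{(1+\sqrt{2})^{2(\ell - r) - 1}}{\mu_{\ell+1} - 1}.
\]
Summing this geometric series over $r = 0, \ldots, \ell - 1$ yields $((1+\sqrt{2})^{2\ell} - 1)/(2(\mu_{\ell+1} - 1)) = ((\beta_{\ell+1} - 1)^2 - 1)/(2(\mu_{\ell+1} - 1))$. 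Adding to the Case 4/5 total and using the identity $(\beta_{\ell+1} - 1)^2 - 1 + 2\beta_{\ell+1} = \beta_{\ell+1}^2$ produces the claimed value $\beta_{\ell+1}^2/(2(\mu_{\ell+1} - 1))$. The main obstacle is the 2-adic case analysis in Step 1, which is elementary but requires careful bookkeeping to rule out spurious contributions from Case 2 rows whose $d$ is not a power of two and to correctly handle the boundary values $d = 2^\ell$ and $d \geq 2^\ell$.
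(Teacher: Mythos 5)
Your proof is correct and follows essentially the same route as the paper: determine the support of the $j$th column (Case~2 rows $j-2^r$ for $r\in[0,\ell-1]$, Case~5 rows $2^{k+1}-2^{\ell'}-1$ for $\ell'\in[\ell+1,k-1]$, and the Case~4 row $2^k-1$), evaluate each block of entries, sum the Case~2 geometric series and telescope the Case~4/5 contributions, then combine. The only cosmetic difference is that you re-derive the support via a direct 2-adic analysis of $d=j-i$, whereas the paper simply invokes its already-proved Lemma~\ref{lem:nonzero_entries_above} on the binary expansion of $j$; both give the same index set and the same final algebra.
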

\begin{proof}
Inspecting the support of $\lk$, we have that $S_j^+ = \varnothing$, so we need only consider the sum of the entries corresponding to element of $S_j^+$.

Note that the binary expansion of $j$ is given by $j = \sum_{a=0}^k b_a2^a$ where $b_a = 1$ if and only if $a\neq \ell$.
By \Cref{lem:nonzero_entries_above},
\begin{align*}
    S_j^{-} = \set{2^{k+1} - 2^\ell - 2^s - 1:\, s\in[0,\ell - 1]}\cup \set{2^{k+1} - 2^s - 1:\, s\in[\ell+1,k-1]}\cup\set{2^k - 1}.
\end{align*}

Suppose $i$ belongs to the first set of indices, i.e., $i = 2^{k+1} - 2^\ell - 2^s - 1$ for some $s\in[0,\ell -1]$.
In this case, $\lk_i$ is defined by Case 2.
We have $p(i) = (k - \ell) + (\ell-s) = k - s$, $\nu(i+1) = \nu(2^{k+1} - 2^\ell - 2^s) = s$,
and
$z(2^{k+1} - 1 - i) = z(2^\ell + 2^s) = \ell$.
Additionally, the $j$th column depends on the entry of $\lk_i$ that is $2^s$ entries to the right of the diagonal. The corresponding entry of $\rho_{\nu(i+1)}$ is 1. Thus, summing up the nonzero entries $\lk_{i,j}$ where $i$ falls in the first set of indices, we get
\begin{align*}
    \sum_{s = 0}^{\ell-1} \frac{(1+\sqrt{2})^{2((k-s)+\ell-k)-1}}{\mu_{\ell+1} - 1} &= \frac{(1+\sqrt{2})^{2\ell}}{2(\mu_{\ell+1} - 1)} \left(1 - (1+\sqrt{2})^{-2\ell}\right).
\end{align*}
This formula holds also in the case $\ell = 0$.

Now, suppose $i$ belongs to the second set of indices, i.e., $i = 2^{k+1} - 2^s - 1$ for some $s\in[\ell+1,k - 1]$. In this case, $\lk_i$ is defined by Case 5. Note that $\lk_{i,j}$ depends on the entry of $w_s$ that is the $(2^\ell + 1)$th entry from the right. As the $2^{\ell + 1}$ final entries of $w_s$ are given by $w_{\ell+1}$, it holds that this entry is given by $\frac{\beta_{\ell+1}}{\sqrt{\mu_{\ell+1}-1}}$ for all $w_s$. Summing up the entries of this form, we get
\begin{align*}
    \left(\frac{\beta_{\ell+1}}{\sqrt{\mu_{\ell+1}-1}}\right)\sum_{s=\ell+1}^{k-1} \left(\frac{1}{\sqrt{\mu_s - 1}} - \frac{1}{\sqrt{\mu_{s+1} - 1}}\right) = \left(\frac{\beta_{\ell+1}}{\sqrt{\mu_{\ell+1}-1}}\right)\left(\frac{1}{\sqrt{\mu_{\ell+1} - 1}} - \frac{1}{\sqrt{\mu_{k} - 1}}\right).
\end{align*}
This formula also holds in the case where $l = k - 1$.

The final entry in the $j$th column is the $\lk_{i,j}$ entry where $i = 2^k - 1$. Once again, the entry of $w_k$ that $\lk_{i,j}$ depends on is given by $\frac{\beta_{\ell+1}}{\sqrt{\mu_{\ell+1}-1}}$. Thus,
\begin{align*}
    \lk_{i,j} = \frac{1}{\sqrt{\mu_k - 1}}\frac{\beta_{\ell+1}}{\sqrt{\mu_{\ell+1}-1}}.
\end{align*}

Summing up all entries in the column gives
\begin{align*}
    \frac{(1+\sqrt{2})^{2\ell}}{2(\mu_{\ell+1} - 1)} \left(1 - (1+\sqrt{2})^{-2\ell}\right) + \frac{\beta_{\ell+1}}{\mu_{\ell+1}-1} &= \frac{(1+\sqrt{2})^{2\ell}  + 2 (1+\sqrt{2})^\ell + 1}{2(\mu_{\ell+1}-1)} = \frac{\beta_{\ell+1}^2}{2(\mu_{\ell+1}-1)}.\qedhere
\end{align*}
\end{proof}

\begin{lemma}
\label{lem:telescoping}
Suppose $0\leq r < \ell < 2^k-1$ so that $r+1$ is not a power of $2$ and so that $\ell > 2^{z(r) + 1}-1$. If $\rev(\ell)+1$ is not a power of 2 then
\begin{align*}
    \sum_{i = 0}^{\ell} \lk_{i,\rev(r)} = \frac{\beta_{\nu(r+1)}}{\sqrt{\mu_{z(\ell)+1}-1}\sqrt{\mu_{z(r)+1} - 1}}.
\end{align*}
If $\rev(\ell)+1$ is a power of 2, then 
\begin{align*}
    \sum_{i = 0}^{\ell} \lk_{i,\rev(r)} = \frac{\beta_{\nu(r+1)}}{\sqrt{\mu_{z(\ell)} - 1}\sqrt{\mu_{z(r)+1} - 1}}.
\end{align*}
\end{lemma}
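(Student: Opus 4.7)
The plan is to evaluate $\sum_{i=0}^{\ell}\lk_{i,\rev(r)}$ by enumerating its nonzero contributors and exploiting the telescoping structure built into the Case 4 and Case 5 definitions of $\lk$. First, using the support description of $\lk_i$ (row $i$'s support lies in $[i-\lfloor 2^{\nu(i+1)-1}\rfloor,\, i+2^{\nu(i+1)}]$), I would show that the only $i\leq\ell$ with $\lk_{i,\rev(r)}\neq 0$ are the Case 4 row $i=2^k-1$, the Case 5 rows $i=\rev(2^b-1)$ for a specific range of $b$'s determined by $r$ and $\ell$, and possibly a few adjacent Case 2 rows. Which rows appear is governed by the binary expansion of $r+1$ and the cutoff $i\leq\ell$.

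Next, using the piecewise formulas in \cref{subsec:lk_construction}, each contributing entry factors as $\tfrac{\beta_{\nu(r+1)}}{\sqrt{\mu_{z(r)+1}-1}}$ times a simpler term in the $\mu$-values: the Case 4 row contributes a factor of $\tfrac{1}{\sqrt{\mu_k-1}}$, while each Case 5 contribution with parameter $b$ yields a telescoping difference $\tfrac{1}{\sqrt{\mu_b-1}}-\tfrac{1}{\sqrt{\mu_{b+1}-1}}$. Any Case 2 rows that lie in between can be related to their neighboring Case 5 rows via \cref{lem:case_2_scaling}, ensuring the telescoping structure is preserved. After summing, the differences collapse, the Case 4 term cancels the endpoint $\tfrac{1}{\sqrt{\mu_k-1}}$, and what remains is $\tfrac{1}{\sqrt{\mu_{b_*}-1}}$ for some $b_*$ determined by $\ell$. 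The case split in the statement corresponds to whether $\ell$ sits at an ``index boundary'' (so that $\rev(\ell)+1$ is a power of $2$), in which case $b_*=z(\ell)$, or strictly between two such boundaries, in which case $b_*=z(\ell)+1$.

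The main obstacle will be the bookkeeping: one must carefully verify which Case 2 and Case 5 rows contribute for each $\ell$ and confirm that their combined contribution respects the telescoping. A natural way to manage this is to induct on $\ell$, since advancing $\ell$ by one either adds zero (if row $\ell$ does not reach column $\rev(r)$) or advances the telescoping by exactly one step. The induction step then reduces to checking finitely many local configurations and invoking the algebraic identities of \cref{lem:sqrtmu} and \cref{lem:relating_muprev_to_mu_and_beta}.
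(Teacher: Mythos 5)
Your approach matches the paper's: identify which rows have nonzero entries in column $\rev(r)$, observe that each contributing entry is $\tfrac{\beta_{\nu(r+1)}}{\sqrt{\mu_{z(r)+1}-1}}$ times either $\tfrac{1}{\sqrt{\mu_k-1}}$ (Case~4) or a difference $\tfrac{1}{\sqrt{\mu_b-1}}-\tfrac{1}{\sqrt{\mu_{b+1}-1}}$ (Case~5), and telescope. The paper simply performs the sum directly rather than inducting on $\ell$; your induction would work but adds nothing.

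The gap in your plan is the hedge ``possibly a few adjacent Case~2 rows,'' and it is a real gap for two reasons. First, establishing that no Case~2 rows contribute is precisely what the hypothesis $\ell > 2^{z(r)+1}-1$ buys you, and you need to prove it, not leave it open: apply \cref{lem:nonzero_entries_above} to $\rev(r)$ (whose binary digits are the complements of those of $r+1$). The resulting $S^-_{\rev(r)}$ splits into indices $\rev(2^a-1)$ for $a\in[z(r)+1,k]$, coming from the automatically-zero high bits of $r+1$, together with a second block coming from zero bits of $r+1$ in positions below $z(r)$; every element of that second block lies strictly above $\rev(2^{z(r)+1}-1)$, hence strictly above $\rev(\ell)$, so no such row enters the sum. (Meanwhile every row $i<2^k-1$ has support contained in $[0,2^k-1]$, so nothing below $2^k-1$ contributes either.) Second, even if a Case~2 row were in range, \cref{lem:case_2_scaling} would not ``preserve the telescoping'': that lemma relates two individual entries of $\lk$ by a scalar factor, not by a difference $\tfrac{1}{\sqrt{\mu_b-1}}-\tfrac{1}{\sqrt{\mu_{b+1}-1}}$, so your backup plan does not produce a telescoping term. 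Fortunately it is unnecessary. Once the contributor set is pinned down as $\{\rev(2^a-1): a\geq z(\ell)+1\}$, together with the extra row $\rev(\ell)=\rev(2^{z(\ell)}-1)$ when $\ell+1$ is a power of $2$, the sum collapses in one line. A small clarifying remark: the lemma's second case condition should be read as ``$\ell+1$ is a power of $2$'' (as the paper's own proof does); under the stated hypothesis $\ell<2^k-1$ the quantity $\rev(\ell)+1$ is never a power of $2$, and the upper summation index is best interpreted as $\rev(\ell)$ rather than $\ell$ — your proposal implicitly, and correctly, adopts this intended reading, but be explicit about it.
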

\begin{proof}
If $\rev(\ell)+1$ is not a power of 2, then the only nonzero entries of the form $\lambda_{i, \rev(r)}$ with $i < \rev(\ell)$ are those of the form $\rev(2^a-1)$ with $a > z(\ell)$.
In this case,
\begin{align*}
    \sum_{i = 0}^{\ell} \lk_{i,\rev(r)} &= \sum_{a = z(\ell)+ 1}^{k} \lk_{\rev(2^a-1), \rev(r)}\\
    &= 
    \sum_{a = z(\ell)+1}^{k-1} \frac{\beta_{\nu(r+1)}}{\sqrt{\mu_{z(r)+1} - 1}}\left(\frac{1}{\sqrt{\mu_\tau - 1}} - \frac{1}{\sqrt{\mu_{\tau+1} - 1}}\right) + \frac{\beta_{\nu(r+1)}}{\sqrt{\mu_{z(r)+1} - 1}}\left(\frac{1}{\sqrt{\mu_k - 1}}\right)\\
    &= 
    \frac{\beta_{\nu(r+1}}{\sqrt{\mu_{z(\ell)+1} - 1}\sqrt{\mu_{z(r)+1} - 1}}.
\end{align*}
The calculation is identical if $\ell+1$ is a power of 2, except that we add $\lambda_{\ell,r}$.
\end{proof}

\begin{lemma}
\label{lem:sum_above_diagonal_case_2}
Suppose $0\leq r < 2^k-1$ and $r+1$ is not a power of $2$. Then,
\begin{align*}
    \sum_{i\in S^-_{\rev(r)}} \lk_{i,\rev(r)} &= \frac{1}{2(\mu_{z(r) + 1} - 1)}
    \left((1+\sqrt{2})^{2z(r)-\nu(r+1)-2p(r)+1}\beta_{\nu(r+1)+2} + \beta_{\nu(r+1)}\right).
\end{align*}
\end{lemma}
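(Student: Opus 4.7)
Set $z := z(r)$, $p := p(r)$, $\nu := \nu(r+1)$, $j := \rev(r)$, and write $\varsigma := 1 + \sqrt{2}$. Since $r+1$ is not a power of $2$ we have $p \geq 2$, and by \cref{lem:rev_identities} we have $\nu(j+1) = \nu$. The plan is to enumerate $S^-_j$ explicitly and evaluate the sum in two halves. Observe first that Case 1 and Case 3 rows have supports contained in $[0, 2^k - 1]$ and so cannot reach column $j \geq 2^k$, contributing nothing. Thus $S^-_j$ splits into a \emph{backbone} piece $\{2^k - 1\} \cup \{\rev(2^a - 1) : z < a < k\}$ (the Case 4 row and the Case 5 rows whose right support reaches $j$), together with a \emph{local} piece consisting of Case 2 rows $i \in (2^k - 1, j)$ with $j - i \leq 2^{\nu(i+1)}$.

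The backbone contribution is a direct telescoping computation, closely mirroring \cref{lem:telescoping}: inserting the explicit values from Case 4 and Case 5 and using $\nu(j+1) = \nu$, we obtain
\[
\frac{\beta_\nu}{\sqrt{\mu_{z+1} - 1}}\left(\frac{1}{\sqrt{\mu_k - 1}} + \sum_{a = z+1}^{k-1}\left(\frac{1}{\sqrt{\mu_a - 1}} - \frac{1}{\sqrt{\mu_{a+1} - 1}}\right)\right) = \frac{\beta_\nu}{\mu_{z+1} - 1}.
\]

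For the local contribution, reindex by $r' := \rev(i) \in (r, 2^k - 1)$. The constraints become that $r' + 1$ is not a power of $2$ and $r' - r \leq 2^{\nu(r'+1)}$. Using \cref{lem:rev_identities} to substitute $p(i) = k - p(r') - \nu(r'+1) + 2$, each summand simplifies to $\varsigma^{2z(r') - 2p(r') - 2\nu(r'+1) + 3}/(\mu_{z(r')+1} - 1)$ multiplied either by $1$ (``boundary'' case $r' - r = 2^{\nu(r'+1)}$) or by $\beta_\nu$ (``range'' case $r' - r < 2^{\nu(r'+1)}$). A short binary-expansion analysis of $r+1 = \sum_a b_a 2^a$ parametrizes the admissible $r'$ uniquely by $\ell := \nu(r'+1)$ in two disjoint families: boundary, with $\ell \in \{0, \ldots, \nu - 1\}$ and $r' = r + 2^\ell$ (so $p(r') = p + 1$, $z(r') = z$); and range, with $\ell \in (\nu, z)$ satisfying $b_\ell = 0$ and $r' + 1 = \sum_{a > \ell} b_a 2^a + 2^\ell$ (so $p(r') = p_{>\ell}(r+1) + 1$, $z(r') = z$, where $p_{>\ell}$ counts 1-bits strictly above position $\ell$). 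The boundary terms form a geometric series summing to $(\varsigma^{2z - 2p + 2} - \varsigma^{2z - 2p - 2\nu + 2})/(2(\mu_{z+1} - 1))$. The range terms, grouped by the $p-1$ intervals between consecutive 1-bits of $r+1$, form a telescoping sum evaluating to $\beta_\nu(\varsigma^{2z - 2p - 2\nu + 2} - 1)/(2(\mu_{z+1} - 1))$. Summing the backbone, boundary, and range contributions and simplifying using $\beta_\nu - 1 = \varsigma^{\nu - 1}$ and $\beta_{\nu+2} = 1 + \varsigma^{\nu + 1}$ yields the claimed identity.

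The main obstacle is the telescoping of the range-case contribution. It requires correctly computing $p(r')$ as one plus the count of 1-bits of $r+1$ strictly above $\ell$, and then grouping the $\ell$'s by which pair of consecutive 1-bits of $r+1$ they lie between, so that neighboring gap exponents align and cancel in pairs. Secondary subtleties include verifying that $\ell = \nu$ produces no admissible $r'$ (the required shift would exceed $2^\ell$) and that $\ell > z$ gives $r' + 1 = 2^\ell$, which is excluded as a power of $2$, as well as handling the edge cases $\nu = 0$ (empty boundary family) and $p = 2$ (a single telescoping gap).
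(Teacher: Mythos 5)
Your proof is correct and takes essentially the same approach as the paper: both partition $S^-_{\rev(r)}$ according to the binary expansion of $\rev(r)$, evaluate the Case~4 and Case~5 rows via the telescoping computation of \cref{lem:telescoping}, and evaluate the remaining Case~2 contributions as geometric sums indexed by $\nu(i+1)$, then combine using $\beta_\nu - 1 = (1+\sqrt{2})^{\nu-1}$ and $\beta_{\nu+2} = 1 + (1+\sqrt{2})^{\nu+1}$. The only cosmetic difference is that you organize the range-case sum by grouping over the gaps between consecutive $1$-bits of $r+1$ and telescoping across gaps, whereas the paper observes that $p(i)$ runs over a consecutive integer range and evaluates a single geometric series; these are equivalent.
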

\begin{proof}
Let $\sum_{a=0}^{k} b_a2^a$ be the binary expansion of $\rev(r)$.
Equivalently if $\sum_{a=0}^k c_a2^a$ is the binary expansion of $r+1$, then $b_a = (1-c_a)$ for all $a\in[0,k]$.
Note that $c_k = 0$ so that $b_k = 1$.
The set
\begin{align*}
    S_{\rev(r)}^- = \set{\sum_{a=\tau}^k b_a 2^a - 1 :\, \begin{array}{l}
        \tau\in[0,k]\\
        b_\tau = 1
    \end{array}}.
\end{align*}
Suppose $\tau\in[0,k]$ and $b_\tau = 1$. Let $i = \sum_{a=\tau}^k b_a 2^a - 1$.
We have that $0\leq\nu(r+1)< z(r)<k$. 
We enumerate the possible values of $\lk_{i , \rev(r)}$ according to $\tau\in[z(r)+1,k]$, $\tau\in[\nu(r+1)+1,z(r)-1]$ and $\tau\in[0,\nu(r+1)-1]$.

Note that \Cref{lem:telescoping} implies that
\[
    \sum_{\tau = z(r)+1}^k \lambda_{\rev(2^{\tau}-1), \rev(r)} = \frac{\beta_{\nu(r+1)}}{\mu_{z(r) + 1} - 1}.
\]

Now, suppose $\tau \in[0, \nu(r+1)-1]$, then $\sum_{a=0}^{\tau - 1}b_a2^a + 1 = 2^{\tau}$. In this case  $\lk_i$ is defined according to Case 2, $p(i) = k+1 - p(r) - \tau$, $\nu(i+1) = \tau$, and $z(2^{k+1} - 2 - i)=z(r)$. Then,
\begin{align*}
    \lk_{i,\rev(r)} &= \frac{(1+\sqrt{2})^{2(z(r)- p(r) - \tau)+1}}{\mu_{z(r) + 1} - 1}.
\end{align*}
Summing up entries of this form gives
\begin{align*}
    \sum_{\tau = 0}^{\nu(r+1) -1}\frac{(1+\sqrt{2})^{2(z(r)- p(r) - \tau)+1}}{\mu_{z(r) + 1} - 1}
    &= \frac{(1+\sqrt{2})^{2(z(r)-p(r))+1}}{\mu_{z(r)+1}-1} \sum_{\tau = 0}^{\nu(r+1) - 1} (1+\sqrt{2})^{-2\tau}\\
    &= \frac{(1+\sqrt{2})^{2(z(r)-p(r))+2}}{2(\mu_{z(r)+1}-1)} \left(1 - (1+\sqrt{2})^{-2\nu(r+1)}\right).
\end{align*}

Now suppose $\tau \in[\nu(r+1)+1,z(r)-1]$ satisfies $b_\tau = 1$. In this case, $\lk_i$ is defined according to Case 2, $\nu(i+1) = \tau$, and $z(2^{k+1} - 2 - i)=z(r)$. Then,
\begin{align*}
    \lk_{i,\rev(r)}
&= \frac{(1+\sqrt{2})^{2(p(i) + z(r) - k)-1}}{\mu_{z(r)+1} - 1}\beta_{\nu(r+1)}.
\end{align*}
When we sum up over entries of this form, the value of $p(i)$ is in bijection with $[k - z(r)+1, k-\nu(r+1)-p(r)+1]$. Thus, the final part of this summation is
\begin{align*}
    \sum_{p=k-z(r)+1}^{k-\nu(r+1)-p(r)+1}\frac{(1+\sqrt{2})^{2(p + z(r) - k)-1}}{\mu_{z(r)+1} - 1}\beta_{\nu(r+1)}
&= \frac{\beta_{\nu(r+1)}}{2(\mu_{z(r)+1}-1)} \left((1+\sqrt{2})^{2(z(r)-\nu(r+1)-p(r))+2} - 1\right).
\end{align*}

Finally, summing up all entries gives the desired claim (see \nextmathematica).
\end{proof}

\begin{lemma}\label{lem:case2_below_diagonal_sum}
    Let $0\leq r < 2^k - 1$ and suppose $r+1$ is not a power of $2$. Then,
    \begin{align*}
        \sum_{i=S^+_{\rev(r)}} \lk_{i,\rev(r)} &= \frac{1}{2(\mu_{z(r)+1} - 1)}\bigg((1+\sqrt{2})^{2(z(r) - p(r) - \nu(r+1) + 1)}\beta^2_{\nu(r+1)+1}\\
        &\qquad\qquad + (2 - \beta_{\nu(r+1)+2})(1+\sqrt{2})^{2z(r) - 2p(r) - \nu(r+1) + 1} - \beta_{\nu(r+1)}\bigg).
    \end{align*}
    \end{lemma}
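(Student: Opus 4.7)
The plan is to mirror the direct enumeration strategy of the preceding \cref{lem:sum_above_diagonal_case_2}: enumerate $S^+_{\rev(r)}$ from the binary expansion of $r+1$, compute each $\lk_{i,\rev(r)}$ via Case 2 or Case 5, and then sum.

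Write $r+1 = \sum_{l=1}^{p} 2^{a_l}$ with $a_1 = \nu < a_2 < \cdots < a_p = z$, and let $c_a$ denote the $a$th bit of $r+1$. First I would characterize $S^+_{\rev(r)}$ by analyzing carries in $i+1 = \rev(r)+1+\delta$. Using the fact (easily derived) that $\rev(r)+1$ has binary expansion $0^{\nu}\,1\,(1-c_{\nu+1})\cdots(1-c_z)\,1^{k-z}$, one sees that for each $j \in \set{1,\dots,p-1}$, setting $\delta_j \coloneqq \sum_{l=1}^j 2^{a_l}$ gives $i_j \coloneqq \rev(r) + \delta_j \in S^+_{\rev(r)}$ precisely when $\delta_j \leq 2^{a_{j+1}-1}$ (equivalently $j=1$, or $j \geq 2$ with $a_{j+1} - a_j \geq 2$), and in that case $\nu(i_j+1) = a_{j+1}$. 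All other $i > \rev(r)$ fail the support condition. Intuitively, adding $\delta_j$ (whose bits match those of $r+1$ up through position $a_j$) cancels the low-order bits of $\rev(r)+1$ and shifts the first 1-bit to position $a_{j+1}$.

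Next, for each valid $j$, I would determine whether row $\lk_{i_j}$ is governed by Case 2 (generic) or Case 5 (when $i_j = \rev(2^{\ell}-1)$ for some $\ell$, which arises as a boundary) and read off $\lk_{i_j,\rev(r)}$ from the explicit formulas in \cref{subsec:lk_construction}. Since column $\rev(r)$ lies to the left of the diagonal in row $\lk_{i_j}$ at offset $\delta_j$, well within the left-radius $2^{a_{j+1}-1}$, this offset selects a specific entry of $\rho_{\nu(i_j+1)}$ (Case 2) or of the analogous Case 5 vector, yielding a closed-form expression in $\mu, \beta,$ and $1+\sqrt{2}$.

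Finally I would sum $\sum_j \lk_{i_j,\rev(r)}$, grouping the contributions so that the variable-length gap runs $a_{j+1}-a_j$ collapse via telescoping geometric sums. The critical fact to verify---and the main obstacle---is that the total depends only on the aggregates $(\nu,z,p)$ and not on the detailed bit pattern of $r+1$; this independence must emerge from cancellations as the variable-length sums collapse, paralleling the analogous phenomenon in \cref{lem:sum_above_diagonal_case_2}. The final algebraic identity matching the closed form on the right-hand side---involving $\beta_{\nu+1}^2$, $\beta_{\nu+2}$, $\beta_{\nu}$, and powers of $1+\sqrt{2}$---is best confirmed by an accompanying Mathematica verification (\nextmathematica), in keeping with the style of the other lemmas in this section.
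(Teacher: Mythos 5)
Your proposal takes a genuinely different route from the paper. The paper proves this lemma by \emph{induction on $p(r)$}: the base case $p(r)=2$ is computed directly, and for $p(r)\geq 3$ it peels off the top bit via $r'=r-2^{z}$, applies \cref{lem:recursive_support_case_2} to relate $S^+_{\rev(r)}$ to $S^+_{\rev(r')}$, and uses the scaling identity in \cref{lem:case_2_scaling} to pull the inductive hypothesis through (distinguishing the subcases $z(r')=z-1$ and $z(r')<z-1$). You instead propose a \emph{direct enumeration}, parallel to the paper's own proof of the companion \cref{lem:sum_above_diagonal_case_2}. Your characterization of $S^+_{\rev(r)}$ is correct: writing $i_j=\rev(r)+\delta_j$ with $\delta_j=\sum_{l\leq j}2^{a_l}$, the carry analysis you sketch does give $\nu(i_j+1)=a_{j+1}$, membership in $S^+_{\rev(r)}$ is exactly $j=1$ or ($j\geq 2$ and $a_{j+1}-a_j\geq 2$), and (since $\sum_{l>j}2^{a_l}\leq r+1<2^k$) every such $i_j\geq 2^k$, so only Cases~2 and~5 arise. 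You also correctly locate the Case~5 boundary: $i_j=\rev(2^\ell-1)$ forces $\sum_{l>j}2^{a_l}=2^\ell$, i.e.\ $j=p-1$ and $\ell=z$. The ``main obstacle'' you flag --- that the sum must depend only on $(\nu,z,p)$ and not on the individual gaps $a_{j+1}-a_j$ --- is real, and your proposed resolution (telescoping) does work: using $p(i_j)+\nu(i_j+1)=k-p+j+2$ (a consequence of \cref{lem:rev_identities}) together with $z(\rev(i_j))=z$, the generic Case~2 term for a valid $j\geq 2$ simplifies to $\tfrac{\beta_\nu}{2(\mu_{z+1}-1)}\bigl((1+\sqrt{2})^{2(j-p+z-a_j)}-(1+\sqrt{2})^{2((j+1)-p+z-a_{j+1})}\bigr)$, so the $a_j$-dependence cancels in the sum. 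The trade-off: the paper's inductive route outsources essentially all of this cancellation to \cref{lem:case_2_scaling} and the recursive support description, at the cost of threading two inductive subcases; your direct route is more uniform with \cref{lem:sum_above_diagonal_case_2} and makes the telescoping structure explicit, but requires separately verifying the support characterization's completeness, the two endpoint contributions ($j=1$, and $j=p-1$ when valid), and the distinction between Case~2 and Case~5 for the $j=p-1$ term. Both paths end with an algebraic identity best checked in Mathematica.
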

    \begin{proof}
    We will induct on $p(r)$. As $r+1$ is not a power of two, we have $p(r)\geq 2$.
    
    First, suppose $p(r) = 2$.
    We compute directly that $S_{\rev(r)}^+ = \set{\rev(2^{z(r)} - 1)}$.
    Then
    \begin{align*}
        &\lk_{\rev(2^{z(r)} - 1),\rev(r)}\\
        &= \begin{cases}
        \frac{(1+\sqrt{2})^{z(r)-1}}{\mu_{z(r)+1}-1} &\text{if }\nu(r+1) + 1 =z(r)\\
        \frac{1}{2(\mu_{z(r)+1}-1)}\left((1+\sqrt{2})^{2(z(r)-\nu(r+1)-1)}\beta_{\nu(r+1)+2} - \beta_{\nu(r+1)}\right) &\text{if }\nu(r+1) + 1 < z(r).
        \end{cases}
    \end{align*}
    One can verify that the second expression coincides with the first expression when $\nu(r+1)+1 = z(r)$ (see \nextmathematica). Thus,
    \begin{align*}
        \lk_{\rev(2^{z(r)} - 1),\rev(r)} &= \frac{1}{2(\mu_{z(r)+1}-1)}\left((1+\sqrt{2})^{2(z(r)-\nu(r+1)-1)}\beta_{\nu(r+1)+2} - \beta_{\nu(r+1)}\right).
    \end{align*}
    We can also verify that this expression coincides with the claimed expression for $\sum_{i=S^+_{\rev(r)}} \lk_{i,\rev(r)}$ (see \nextmathematica).

    Now, suppose $p(r)\geq 3$ and let $r' = r - 2^z$.
    We have that $p(r') = p(r) - 1 \geq 2$ so that $r'+1$ is not a power of two. Let $z' = z(r')$. We will now apply \cref{lem:recursive_support_case_2}.
    There are two cases: where $z' = z - 1$ and where $z' < z-1$.

    In the first case, $z' = z-1$ and \cref{lem:recursive_support_case_2} states that $S^+_{\rev(r)} = \set{i - 2^z:\, i \in S^+_{\rev(r')}}$ so that
    \begin{align*}
        \sum_{i\in S^+_{\rev(r)}}\lk_{i,\rev(r)} &= \sum_{i\in S^+_{\rev(r')}}\lk_{i-2^z,\rev(r')-2^z}.
    \end{align*}
    For all $i\in S^+_{\rev(r')}$, it must hold that $ \rev(2^{z'+1} - 1)< \rev(r') <  i \leq  \rev(2^{z'} - 1)$.
    We may now apply \cref{lem:case_2_scaling} to get
    \begin{align*}
        \sum_{i\in S^+_{\rev(r)}}\lk_{i,\rev(r)} &= \sum_{i\in S^+_{\rev(r')}}\lk_{i-2^z,\rev(r')-2^z}\\
        &= \left((1+\sqrt{2})^{2(z-z' - 1)}\frac{\mu_{z'+1} - 1}{\mu_{z+1} - 1}\right)\sum_{i\in S^+_{\rev(r')}}\lk_{i,\rev(r')}\\
        &= \frac{1}{2(\mu_{z+1} - 1)}\bigg((1+\sqrt{2})^{2( z - p(r) - \nu(r+1) + 1)}\beta^2_{\nu(r+1)+1}\\
        &\qquad\qquad + (2 - \beta_{\nu(r+1)+2})(1+\sqrt{2})^{2z - 2p(r) - \nu(r+1) + 1} - \beta_{\nu(r+1)}\bigg).
    \end{align*}
    This is exactly the claimed expression.

    Now, consider the case $z' < z - 1$. In this case, \cref{lem:recursive_support_case_2} states that $S^+_{\rev(r)} = \set{\rev(2^z - 1)}\cup \set{i - 2^z:\, i \in S^+_{\rev(r')}}$. Again, for all $i\in S^+_{\rev(r')}$, it must hold that $ \rev(2^{z'+1} - 1)< \rev(r') <  i \leq  \rev(2^{z'} - 1)$.
    We may now apply \cref{lem:case_2_scaling} to get
    \begin{align*}
        \sum_{i\in S^+_{\rev(r)}}\lk_{i,\rev(r)} &= \lk_{\rev(2^z - 1), r} + \sum_{i\in S^+_{\rev(r')}}\lk_{i - 2^z, \rev(r') - 2^z}.
    \end{align*}
    We evaluate the two terms separately.
    First, note that $2^{z'} < r'+1 < 2^{z'+1}\leq 2^{z-1}$. Thus,
    \begin{align*}
        \lk_{\rev(2^z - 1), r} &= \frac{\beta_{\nu(r+1)}}{2(\mu_{z+1} - 1)}\left((1+\sqrt{2})^{2(z-z'-1)} - 1\right).
    \end{align*}
    For the second term, we apply the inductive hypothesis and \cref{lem:case_2_scaling} to get
    \begin{align*}
        \sum_{i\in S^+_{\rev(r')}}\lk_{i - 2^z, \rev(r') - 2^z} &= \frac{(1+\sqrt{2})^{2(z-z'-1)}}{2(\mu_{z+1}-1)}\bigg((1+\sqrt{2})^{2(z' - p(r) - \nu(r+1) + 2)}\beta^2_{\nu(r+1)+1}\\
        &\qquad\qquad + (2 - \beta_{\nu(r+1)+2})(1+\sqrt{2})^{2z' - 2p(r) - \nu(r+1) + 3} - \beta_{\nu(r+1)}\bigg).
    \end{align*}
    One can check that the sum of these two expressions is given by the claimed expression (see \nextmathematica).
\end{proof}

\subsection{Comparisons of Row and Column Sums}\label{subsec:RowColSums}
\begin{lemma}
    \label{lem:rowcol0}
    The sum of the zeroth row of $\lambda$ is one larger than the sum of the zeroth column of $\lambda$.
\end{lemma}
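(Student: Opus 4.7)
The plan is to compute the sum of row $0$ and the sum of column $0$ of $\lambda = \lambda^{(k)}$ directly, show the former equals $2$ and the latter equals $1$, and conclude. Since row $0$ corresponds to $i = 0 = 2^0 - 1$, it is governed by Case 3 of the construction with $\ell = 0$, for which the definition specifies $\lambda_0 \sim [0, 2]$. Unfolding the $\sim$ convention, the support of $\lambda_0$ is $\{1\}$ and the only nonzero entry is $\lambda_{0,1} = 2$. Hence the sum of row $0$ is exactly $2$.

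Next I would identify which rows contribute to column $0$. For $i \geq 1$, the support of $\lambda_i$ is the interval $[i - \lfloor 2^{\nu(i+1)-1}\rfloor, \, i + 2^{\nu(i+1)}] \setminus \{i\}$, so for $0$ to lie in this range we need $i \leq \lfloor 2^{\nu(i+1)-1}\rfloor$. Combining this with the elementary bound $i + 1 \geq 2^{\nu(i+1)}$ (since $2^{\nu(i+1)}$ divides $i+1$) forces $\nu(i+1) = 1$ and $i = 1$. Thus the column $0$ sum reduces to the single entry $\lambda_{1,0}$.

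Finally, I would compute $\lambda_{1,0}$ by examining row $1$. When $k \geq 2$, row $1$ is governed by Case 3 with $\ell = 1$, whose leftmost entry is $\frac{1}{2}(\mu_1 - 1)(1+\sqrt{2})^0 - 1 = \frac{1}{2}(\mu_1 - 1) - 1$. When $k = 1$, row $1$ is instead $2^k - 1$ and is governed by Case 4, but the formula for the leftmost entry has the same form $\frac{1}{2}(\mu_k - 1)(1+\sqrt{2})^{-(k-1)} - 1$, which at $k = 1$ again evaluates to $\frac{1}{2}(\mu_1 - 1) - 1$. Using $\alpha_0 = 3/2$ and $\mu_1 = 2\alpha_0 + 2 = 5$, we conclude $\lambda_{1,0} = 1$. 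Therefore the row-$0$ sum $= 2$ exceeds the column-$0$ sum $= 1$ by exactly one. There is no serious obstacle here; the only care required is correctly interpreting the $\sim$ indexing convention and handling the small $k = 1$ case separately from $k \geq 2$, though both subcases give the same numerical value.
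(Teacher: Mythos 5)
Your proof is correct and follows the same direct approach as the paper's: both compute the row-$0$ sum as $2$ and the column-$0$ sum as $1$ by identifying the single nonzero entry in each. You simply fill in more explicit detail (the support-interval argument forcing $i=1$, the Case 3 vs.\ Case 4 dichotomy for $k\geq 2$ vs.\ $k=1$, and the evaluation $\lambda_{1,0}=\tfrac{1}{2}(\mu_1-1)-1=1$), whereas the paper asserts these facts without justification.
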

\begin{proof}
    The only entry of $\lambda_0$ is 2. 
    On the other hand, the only row which has an entry in column 0 is $\lambda_1$, and $\lambda_{1,0} = 1$, which implies the lemma.
\end{proof}
\begin{lemma}
    \label{lem:rowcollow}
    For $i = 1,\dots,2^k-2$, the sum of the $i$th row of $\lambda$ is equal to the sum of the $i$th column of $\lambda$.
\end{lemma}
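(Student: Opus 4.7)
The plan is to split the range $i\in[1,2^k-2]$ according to whether $i+1$ is a power of two. In both cases, all of the heavy computational work has already been done by the row-sum formula in Lemma~\ref{lem:row_sums} and by the column-sum formulas in Lemma~\ref{lem:colsums_above_left} and Lemma~\ref{lem:SplusSum}, so the task reduces to matching the two sides algebraically.

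First, suppose $i=2^\ell-1$ with $0<\ell<k$, so that $\lambda_i$ is defined by Case 3 and Lemma~\ref{lem:row_sums} gives row sum $\tfrac{1}{2}(\mu_\ell-1)-1+\tfrac{\mu_{\ell+1}-1}{2(1+\sqrt{2})^\ell}$. The relevant support lemmas from Appendix~\ref{sec:supportFacts} give $S_i^+=\{2^{\ell+1}-1\}$, so the below-diagonal contribution to the column sum is the single entry $\lambda_{2^{\ell+1}-1,\,2^\ell-1}$. Reading this entry off from the construction of row $2^{\ell+1}-1$ (Case 3 if $\ell+1<k$, Case 4 if $\ell+1=k$, which specify the same formula for the leftmost nonzero entry) gives $\tfrac{1}{2}(\mu_{\ell+1}-1)(1+\sqrt{2})^{-\ell}-1$. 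Combined with the above-diagonal contribution $\tfrac{1}{2}(\mu_\ell-1)$ from Lemma~\ref{lem:colsums_above_left} (applied in its $j=2^\ell-1$ form), the column sum equals the row sum term by term.

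Next, suppose $i+1$ is not a power of two with $i+1<2^k$, so $\lambda_i$ is defined by Case 1. Write $z=z(i)$, $p=p(i)$, $\nu=\nu(i+1)$, and $t=1+\sqrt{2}$. Combining Lemma~\ref{lem:colsums_above_left} and Lemma~\ref{lem:SplusSum} and factoring gives a column sum of
\[
    \tfrac{1}{2}\beta_{\nu+2}(\mu_{z+1}-1)\,t^{2p-2z-4}\left(t^{\nu-1}+1\right),
\]
while Lemma~\ref{lem:row_sums}, after expanding $\beta_{\nu+1}^2=1+2t^\nu+t^{2\nu}$, gives a row sum of
\[
    \tfrac{1}{2}(\mu_{z+1}-1)\,t^{2p-2z-4}\left(1+2t^\nu+t^{2\nu}+2t^{\nu-1}\right).
\]
After cancelling the common prefactor $\tfrac{1}{2}(\mu_{z+1}-1)t^{2p-2z-4}$, the desired identity becomes $(1+t^{\nu+1})(1+t^{\nu-1})=1+2t^{\nu-1}+2t^\nu+t^{2\nu}$, which rearranges to $t^{\nu+1}=2t^\nu+t^{\nu-1}$, equivalent to the silver-ratio identity $t^2=2t+1$.

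I do not anticipate any serious obstacles: the two verifications are respectively a line-by-line comparison and a one-line reduction to $t^2=2t+1$. The main care required is bookkeeping --- correctly identifying which of the five defining cases governs each row and column indexed by $i$, and in particular recognizing that the sub-case $\ell+1=k$ in the first paragraph is absorbed into the same formula because Cases 3 and 4 were designed with matching leftmost entries. A Mathematica verification (cf.\ \nextmathematica) can be cited to mechanize the algebra if desired.
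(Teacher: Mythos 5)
Your proof is correct and follows the same route as the paper: split on whether $i+1$ is a power of two, read off the row sum from Lemma~\ref{lem:row_sums} and the column sum from Lemmas~\ref{lem:colsums_above_left} and~\ref{lem:SplusSum}, and compare. The one difference is that where the paper defers the final algebraic equivalence in the non-power-of-two case to a Mathematica check, you carry it out by hand and observe that after cancelling the common prefactor $\tfrac{1}{2}(\mu_{z+1}-1)t^{2p-2z-4}$ it reduces to the silver-ratio identity $t^2=2t+1$ with $t=1+\sqrt{2}$ --- a pleasant simplification.
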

\begin{proof}
    We first show this assuming $i = 2^z - 1$.
    It follows from our earlier work that the sum of the entries of the $i^{th}$ row is
    \[
        \frac{1}{2}(\mu_{z}-1)-1 + \frac{\mu_{z+1}-1}{2(1+\sqrt{2})^z}.
    \]
    On the other hand, the nonzero entries of the $i^{th}$ column are those indexed by $S_i^-$ and $2^{z+1}-1$.
    The sum of these entries is 
    \[
        \frac{1}{2}(\mu_{z}-1) + \left(\frac{\mu_{z+1}-1}{2(1+\sqrt{2})^z}-1\right).
    \]
    Thus, the row sum and the column sum are equal.
    
    We next show this assuming $i + 1$ is not a power of 2.
    If the number of one's in the binary expansion of $i+1$ is $p > 1$, then the sum of the entries in the $i^{th}$ row of $\lambda$ is
    \[
        (\mu_{z+1}-1)\left(\frac{1}{(1+\sqrt{2})^{2(z-p)+3}} \right)\left(\frac{\beta_{\nu(i+1)+1}^2}{2(1+\sqrt{2})} + (1+\sqrt{2})^{\nu(i+1)-2}\right).
    \]
    On the other hand, we have seen that the sum of the entries in the $i^{th}$ column is
    \begin{align*}
        &\sum_{j \in S_i^-} \lambda_{j,i} +
        \sum_{j \in S_i^+} \lambda_{j,i}\\
        &\quad= \frac{1}{2} (1+\sqrt{2})^{(-5+2p+\nu(i+1)-2z)}\beta_{\nu(i+1)+2}(\mu_{z+1}-1)
        + \frac{1}{2} \left(1+\sqrt{2}\right)^{2 (p-z-2)}\beta_{\nu(i+1)+2} (\mu_{z+1}-1).
\end{align*}
    We verify that these two expressions are equivalent in \nextmathematica.
\end{proof}
\begin{lemma}
    \label{lem:rowcolmid}
    Let $k \ge 1$ and $i = 2^k-1$.
    The sum of the $i$th row of $\lambda$ is equal to the sum of the $i$th column of $\lambda$.
\end{lemma}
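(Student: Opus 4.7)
The plan is to compute the row sum and the column sum at $i = 2^k - 1$ separately and match them. The row sum is immediate: by Case~4 of Lemma~\ref{lem:row_sums}, it equals $(\mu_k - 1)/2$. For the column sum, I decompose it as $\sum_{i' \in S^-_i}\lambda_{i',i} + \sum_{i' \in S^+_i}\lambda_{i',i}$ and invoke the second formula of Lemma~\ref{lem:colsums_above_left} (the case $j = 2^z - 1$ with $z = k$) to evaluate the first piece as $(\mu_k - 1)/2$. Once the $S^+$-part is shown to vanish, the conclusion follows at once.

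The heart of the argument is therefore the claim $S^+_{2^k - 1} = \emptyset$, i.e., no row $i'$ strictly below the diagonal has a nonzero entry in column $2^k - 1$. I will use the uniform description of the support of $\lk_{i'}$ as $[i' - \lfloor 2^{\nu(i'+1) - 1}\rfloor,\, i' + 2^{\nu(i'+1)}] \setminus \{i'\}$. Since $j = 2^k - 1 < i'$, membership of $j$ in this support forces $i' \leq (2^k - 1) + \lfloor 2^{\nu - 1}\rfloor$ where $\nu = \nu(i'+1)$. Writing $i' + 1 = 2^\nu m$ with $m$ odd and combining with $i' \geq 2^k$ yields the two-sided bound $m \in [\,2^{k-\nu} + 2^{-\nu},\, 2^{k-\nu} + 1/2\,]$. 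A short case analysis on $\nu$ then rules out every possibility: for $\nu \leq k$ or $\nu \geq k+2$ the interval contains no integer, while for $\nu = k + 1$ the only integer is $m = 1$, which forces $i' = 2^{k+1} - 1$, an all-zero row by construction.

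The main obstacle, such as it is, lies in this support-level case analysis, but it reduces to elementary arithmetic on powers of two and uses no nontrivial identity among $\mu$, $\alpha$, $\beta$. All of the algebraic content is already packaged in Lemmas~\ref{lem:row_sums} and~\ref{lem:colsums_above_left}; the present lemma simply verifies their compatibility at the special index $i = 2^k - 1$ after noting that the ``below-diagonal'' contribution is vacuous.
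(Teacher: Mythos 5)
Your proposal is correct and follows essentially the same route as the paper: the row sum comes from Case~4 of Lemma~\ref{lem:row_sums}, the above-diagonal column contribution comes from the $j=2^z-1$ case of Lemma~\ref{lem:colsums_above_left} with $z=k$, and the below-diagonal contribution vanishes. The paper's proof is a single-line citation of these two lemmas, implicitly using $S^+_{2^k-1}=\varnothing$ (a fact it states separately in the proof of Lemma~\ref{lem:medDiag}); you supply an explicit support-level derivation of that emptiness, and your casework on $\nu(i'+1)$ is correct — for $\nu\le k$ and $\nu\ge k+2$ the interval contains no odd integer, and for $\nu=k+1$ the only candidate is the all-zero row $i'=2^{k+1}-1$.
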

\begin{proof}
By \cref{lem:row_sums,lem:colsums_above_left}, we have that the $(2^k - 1)$th row sum and column sum are both $\frac{\mu_k - 1}{2}$.\qedhere

\end{proof}

\begin{lemma}
    \label{lem:rowcolhighrest}
    Suppose $2^k \le j < 2^{k+1}-1$.
    Then, the sum of the entries in the $j$th column of $\lk$ is equal to the sum of the entries in the $j$th row of $\lk$.
\end{lemma}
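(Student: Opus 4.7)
The plan is to partition the range $j\in[2^k, 2^{k+1}-2]$ into two sub-cases according to which of Cases 2 or 5 governs the row $\lk_j$ in the construction of Section~\ref{subsec:lk_construction}, and then combine the row-sum formula of \cref{lem:row_sums} with the column-sum formulas already established. Writing $j=\rev(r)$ with $r\in[0,2^k-2]$, the two sub-cases correspond to $r+1$ being a power of $2$ (giving Case 5) or not (giving Case 2).

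First I would dispatch the Case 5 sub-case, i.e.\ $j=2^{k+1}-2^\ell-1$ for some $\ell\in[0,k-1]$. \cref{lem:row_sums} gives row-sum $\beta_{\ell+1}^2/(2(\mu_{\ell+1}-1))$, while \cref{lem:rowcolhighpow2_expression} gives the identical expression for the column sum. So these cases are immediate. (It is worth double-checking that the edge values $\ell=0$ and $\ell=k-1$ fit the hypotheses of both lemmas, since \cref{lem:rowcolhighpow2_expression} was stated for the column sum under the same indexing.)

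Next I would handle the Case 2 sub-case, where $r<2^k-1$ and $r+1$ is not a power of $2$. For the column sum, I would add the two contributions provided by \cref{lem:sum_above_diagonal_case_2} (over $S^-_{\rev(r)}$) and \cref{lem:case2_below_diagonal_sum} (over $S^+_{\rev(r)}$). For the row sum, I would apply \cref{lem:row_sums} to $i=\rev(r)$ and translate the parameters back to $r$ by means of \cref{lem:rev_identities}: $\nu(i+1)=\nu(r+1)$, $z(2^{k+1}-2-i)=z(r)$, and $p(i)=k-p(r)-\nu(r+1)+2$, so that the exponent $2(p(i)+z(r)-k)-1$ becomes $2(z(r)-p(r)-\nu(r+1)+2)-1$. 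Both expressions then become rational functions of $(1+\sqrt 2)$, $\beta_{\nu(r+1)}$, $\beta_{\nu(r+1)+1}$, $\beta_{\nu(r+1)+2}$, and $(\mu_{z(r)+1}-1)$, parameterized by $z(r)$, $p(r)$, $\nu(r+1)$.

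The remaining obstacle is then a purely algebraic identity: after clearing the common factor $1/(2(\mu_{z(r)+1}-1))$, one must check that
\[
(1+\sqrt 2)^{2(z-p-\nu+2)-1}\!\left(\tfrac{\beta_{\nu+1}^2}{1+\sqrt 2}+2(1+\sqrt 2)^{\nu-2}\right)
\]
equals the sum of the three terms from \cref{lem:sum_above_diagonal_case_2,lem:case2_below_diagonal_sum}, namely
\[
(1+\sqrt 2)^{2z-\nu-2p+1}\beta_{\nu+2}+(1+\sqrt 2)^{2(z-p-\nu+1)}\beta_{\nu+1}^2+(2-\beta_{\nu+2})(1+\sqrt 2)^{2z-2p-\nu+1},
\]
using the closed form $\beta_m = 1+(1+\sqrt 2)^{m-1}$ to expand the $\beta$'s. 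This is a straightforward (if slightly tedious) manipulation with $(1+\sqrt 2)$-powers, readily verified by the companion Mathematica notebook, and I expect it to be the main (though routine) computational step.
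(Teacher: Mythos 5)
Your proposal is correct and follows exactly the same route as the paper: split on whether $j=\rev(r)$ has $r+1$ a power of $2$ (Case 5, handled by matching \cref{lem:row_sums} against \cref{lem:rowcolhighpow2_expression}) or not (Case 2, handled by combining \cref{lem:sum_above_diagonal_case_2} and \cref{lem:case2_below_diagonal_sum} for the column sum, translating the row-sum formula via $p(\rev(r))=k-p(r)-\nu(r+1)+2$ from \cref{lem:rev_identities}, and verifying the resulting algebraic identity). The final identity you write down matches the one the paper defers to the Mathematica notebook.
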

\begin{proof}
First, suppose that $j = \rev(2^\ell - 1)$ for some $0\leq \ell<k$.
    By \cref{lem:rowcolhighpow2_expression}, we have that the column sum is 
\begin{align*}
    \frac{\beta_{\ell+1}^2}{2(\mu_{\ell+1}-1)}.
\end{align*}

By \Cref{lem:row_sums}, this is also the associated row sum.

Now, suppose $j = \rev(r)$ where $0\leq r <2^k -1$ and $r+1$ is not a power of $2$. Then, by \cref{lem:sum_above_diagonal_case_2,lem:case2_below_diagonal_sum}, we have that the $j$th column sum is
\begin{align*}
\frac{1}{2(\mu_{z(r) + 1} - 1)}
    &\bigg((1+\sqrt{2})^{2z(r)-\nu(r+1)-2p(r)+1}\beta_{\nu(r+1)+2} + (1+\sqrt{2})^{2(z(r) - p(r) - \nu(r+1) + 1)}\beta^2_{\nu(r+1)+1}\\
        &\qquad\qquad + (2 - \beta_{\nu(r+1)+2})(1+\sqrt{2})^{2z(r) - 2p(r) - \nu(r+1) + 1}\bigg).
\end{align*}
On the other hand, noting that $p(\rev(r)) = k - p(r) - \nu(r+1) + 2$, we have that the row sum is given by
\begin{align*}
    \frac{(1+\sqrt{2})^{2(z(r) - p(r) - \nu(r+1) + 2) - 1}}{\mu_{z(r)+1} - 1}\left(\frac{\beta^2_{\nu(r+1)+1}}{2(1+\sqrt{2})} + (1+\sqrt{2})^{\nu(r+1) - 2}\right).
\end{align*}
These expressions are equal as is verified in \nextmathematica.
\end{proof}

\begin{lemma}
    \label{lem:rowcollast}
    The sum of the $2^{k+1}-1$ row of $\lambda$ is one less than the sum of the $2^{k+1}-1$ column of $\lambda$.
\end{lemma}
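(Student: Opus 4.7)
The plan is to establish that the $(2^{k+1}-1)$-th row sum of $\lambda$ equals $0$ while its $(2^{k+1}-1)$-th column sum equals $1$. The row sum is immediate: by the construction in \cref{subsec:lk_construction}, the row $\lambda_{2^{k+1}-1}$ is defined to be identically zero.

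For the column sum, I would first identify its support. From the row-schematic description $\lambda_i\sim v\in\R^{\lfloor 2^{\nu(i+1)-1}\rfloor+1+2^{\nu(i+1)}}$ indexing entries of $\lambda_i$ in $[i-\lfloor 2^{\nu(i+1)-1}\rfloor,\,i+2^{\nu(i+1)}]\setminus\{i\}$, the entry $\lambda_{i,\,2^{k+1}-1}$ with $i<2^{k+1}-1$ is nonzero precisely when $2^{k+1}-1-i\leq 2^{\nu(i+1)}$. Setting $m=2^{k+1}-1-i\geq 1$ and using $\nu(i+1)=\nu(2^{k+1}-m)=\nu(m)$ (valid for $1\leq m<2^{k+1}$), this condition simplifies to $m\leq 2^{\nu(m)}$, i.e.\ to $m$ being a power of two. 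So the nonzero rows are indexed exactly by $i=\mathrm{rev}(2^a-1)=2^{k+1}-1-2^a$ for $a\in[0,k]$; the row $a=k$ (namely $i=2^k-1$) is governed by Case 4 of \cref{subsec:lk_construction}, while the rows $a<k$ are governed by Case 5.

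Next I would read off the relevant entries. In both cases, column $2^{k+1}-1$ corresponds to the rightmost entry of the schematic vector, which is the rightmost entry of $w_k$ (Case 4) or $w_a$ (Case 5), scaled respectively by $1/\sqrt{\mu_k-1}$ or $1/\sqrt{\mu_a-1}-1/\sqrt{\mu_{a+1}-1}$. A quick induction on the recurrence $w_k=[\pi^{(k-1)}/\sqrt{\mu_k-1},\,\beta_k/\sqrt{\mu_k-1},\,w_{k-1}]$ with base case $w_0=[1]$ shows the rightmost entry of $w_a$ is always $1$. Summing then telescopes:
\[
\sum_{a=0}^{k-1}\left(\frac{1}{\sqrt{\mu_a-1}}-\frac{1}{\sqrt{\mu_{a+1}-1}}\right)+\frac{1}{\sqrt{\mu_k-1}}\;=\;\frac{1}{\sqrt{\mu_0-1}}\;=\;1,
\]
since $\mu_0=2$. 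Thus the column sum equals $1$, one more than the row sum $0$, as claimed.

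No step presents a serious obstacle; the argument is essentially bookkeeping of the row-schematic indexing. The key structural feature is that the single Case 4 contribution $1/\sqrt{\mu_k-1}$ supplies exactly the terminal term needed for the Case 5 telescope to close cleanly to $1/\sqrt{\mu_0-1}=1$.
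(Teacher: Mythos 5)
Your proof is correct and follows essentially the same approach as the paper: zero row sum by definition, then a telescoping column sum over the Case~5 rows capped off by the Case~4 contribution $1/\sqrt{\mu_k-1}$. You derive the support set $S^-_{2^{k+1}-1}$ from the $\nu$-based indexing and verify the rightmost-entry-of-$w_a$ claim by induction, both of which the paper leaves implicit, but the underlying argument is identical.
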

\begin{proof}

The sum of the entries in the $(2^{k+1} -1)$th row is zero by definition. We now deal with the sum of the entries in the $j = (2^{k+1} - 1)$th column.
We have that $S_{2^{k+1}-1}^- = \set{2^{k+1} - 2^\ell - 1:\, \ell\in[0,k]}$.

First, suppose $i = 2^{k+1} - 2^\ell - 1$ for some $\ell = 0,\dots, k - 1$, then $\lk_i$ is defined according to Case 5.
Summing up all entries of this form, we get
\begin{align*}
    \sum_{\ell = 0}^{k-1}\left(\frac{1}{\sqrt{\mu_{\ell - 1}}} - \frac{1}{\sqrt{\mu_{\ell + 1} - 1}}\right) = \frac{1}{\sqrt{\mu_0 - 1}} - \frac{1}{\sqrt{\mu_k - 1}} = 1 - \frac{1}{\sqrt{\mu_k - 1}}.
\end{align*}

The final entry in the $(2^{k+1}-1)$th column is where $i = 2^k - 1$. Then $\lambda_{i,j} = \frac{1}{\sqrt{\mu_k - 1}}$ and so the column sum is one.
\end{proof}     \section{Proof of \Cref{thm:Mvalue}} \label{sec:lambdaM}
In this section, we will show that $\lk$ satisfies the the second main condition of \Cref{thm:straightforward_from_rank_one}.
\begin{theorem}
    \label{thm:Mvalue}
    For all $k\geq 1$, it holds that
    \[
        M(\lk,\mathfrak{h}^{(k)}) = \phi^{(k)}(\phi^{(k)})^{\intercal},
    \]
    where $\phi^{(k)}$ is defined in \Cref{eq:phiDef}.
\end{theorem}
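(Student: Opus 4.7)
The plan is to expand $M(\lambda^{(k)},\mathfrak{h}^{(k)})$ using the decomposition $M=W_1+W_2$ from \cref{sec:certificate_part_1}, and then verify the matrix identity one entry at a time. Since the first $2^k-1$ coordinates of $\phi^{(k)}$ vanish, the target matrix $\phi^{(k)}(\phi^{(k)})^\intercal$ is supported on the bottom-right $(2^k+1)\times(2^k+1)$ block. I would split the verification into three regions: (i) the top-left $(2^k-1)\times(2^k-1)$ block, where each entry of $M$ must equal zero; (ii) the pivotal entry at $(2^k-1,2^k-1)$, which should equal $\mu_k-1$; and (iii) the bottom-right block, whose entries must factor as $\phi^{(k)}_a\phi^{(k)}_b$ with each factor either $\sqrt{\mu_k-1}$ or an entry of $w_k$.

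For a fixed entry $(a,b)$, I would catalogue which pairs $(i,j)$ with $\lambda^{(k)}_{i,j}\neq 0$ contribute a nonzero term to that position. An $A_{i,j}(\mathfrak{h}^{(k)})$-contribution occurs precisely when one of $\{a,b\}$ equals $j$ and the other lies in the interval $[\min(i,j),\max(i,j))$, weighted by a single stepsize $\mathfrak{h}^{(k)}_\ell$; a $C_{i,j}$-contribution requires $\{a,b\}\subseteq\{i,j\}$. Combining the five-case description of $\lambda^{(k)}$ from \cref{subsec:lk_construction}, the support analysis for $S_b^\pm$ in \cref{subsub:col_support_lemmas}, and the partial-sum identities collected in \cref{subsub:partial_rows}, each entry of $M(\lambda^{(k)},\mathfrak{h}^{(k)})$ should reduce to an explicit combination of $\alpha$'s, $\beta$'s, $\mu$'s, and $\pi^{(\cdot)}$-components that can be compared directly to the claimed formula.

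I would organize the comparison inductively on $k$, exploiting \cref{lem:recurrence,lem:case_2_scaling}, which express rows of $\lambda^{(k)}$ on ``outer'' blocks as precise rescalings of structurally similar rows from $\lambda^{(k')}$-type data with $k'<k$. Since the recursive definition $w_k=[\pi^{(k-1)}/\sqrt{\mu_k-1},\beta_k/\sqrt{\mu_k-1},w_{k-1}]$ matches this same nested block structure, the bottom-right portion of $M$ corresponding to the $w_{k-1}$-tail of $w_k$ should inherit by rescaling from the $k-1$ case, while the new contributions near index $2^k-1$ and the leading portion of $w_k$ would be closed using \cref{lem:sqrtmu,lem:relating_muprev_to_mu_and_beta}. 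The base case $k=1$ can be verified directly against the explicitly displayed $\lambda^{(1)}$.

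The main obstacle will be the cancellation in the top-left block: there, the $W_1$ contribution, which is a weighted combination of long stepsize values (including the central $\mu_k$ that dwarfs all others), must cancel exactly against the $W_2$ contribution (a weighted graph Laplacian built from $\lambda^{(k)}$). Producing this cancellation in closed form will likely require the same style of telescoping binary-structure bookkeeping used in \cref{sec:lambdadRowColSum} to compute row and column sums, together with the defining equation of $\alpha_\ell$, which already incarnates the ``tight'' cancellations hinted at in \cref{prop:product-rule}. As elsewhere in the paper, I anticipate that many of the resulting symbolic simplifications would be discharged in the accompanying Mathematica notebook.
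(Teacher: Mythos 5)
Your plan correctly identifies the key mechanisms: entry-by-entry comparison, a case split mirroring the five-case definition of $\lambda^{(k)}$, heavy reliance on the support descriptions of $S_j^{\pm}$ and the partial-sum identities, and discharging the residual algebra in Mathematica. This is in essence the route the paper takes in Section~\ref{sec:lambdaM}, where the diagonal entries are handled in \cref{lem:smallDiag,lem:medDiag,lem:largeDiag,lem:M66} and the off-diagonal entries via the $5\times 5$ case table (Table~\ref{table:proof_references_M_entries}), all built on the formula for a single entry in \cref{lem:M_entries}.

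The one place your organization diverges is the proposed outer induction on $k$. You invoke \cref{lem:recurrence,lem:case_2_scaling} as if they relate $\lambda^{(k)}$ to $\lambda^{(k')}$ for $k'<k$, but in fact both lemmas are statements \emph{internal} to a single fixed $\lambda^{(k)}$: they say that one block of $\lambda^{(k)}$ is an explicit rescaling of another block of the \emph{same} matrix. The paper never relates $M(\lambda^{(k)},\mathfrak{h}^{(k)})$ to $M(\lambda^{(k-1)},\mathfrak{h}^{(k-1)})$ --- the $(2^{k+1}+1)$ and $(2^{k}+1)$-dimensional matrices do not nest in any natural way, as the central ``pivot'' index $2^k-1$ shifts with $k$. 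Instead the paper fixes $k$ throughout and, for the hardest off-diagonal cases (Cases $(1,1)$ and $(2,2)$ in \cref{lem:M_11,lem:M_22}), runs an \emph{inner} induction on $\min\{p(i),p(j)\}$, using the scaling lemmas to reduce an entry $(2^z+i',2^z+j')$ to the entry $(i',j')$ one level shallower in the same matrix. If you attempted an outer induction on $k$, you would need some embedding $\lambda^{(k-1)}\hookrightarrow\lambda^{(k)}$ compatible with $M$, which does not exist here; the correct ``depth'' parameter to induct on is the binary-expansion population count, not $k$. Your three-region split (top-left zero block, pivot, bottom-right) is also coarser than what the bookkeeping requires: the zero block alone involves three of the five row-cases, each needing its own treatment. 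With the induction variable corrected and the case split refined to match $\lambda^{(k)}$'s five-case structure, the plan coincides with the paper's proof.
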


For the remainder of the section, we fix $k\geq 1$ and let $h = \hk$, $\lambda = \lk$, $M = M(\lambda, h)$, and $\phi = \phi^{(k)}$.

\begin{proof}[Proof of \Cref{thm:Mvalue}]
    We perform the computation of $M$ entry by entry.
    The nature of the definition of $\lambda$ requires us to break this computation down into a number of distinct cases. We give a summary of the possible cases in \Cref{table:proof_references_M_entries}.

    We show that $M_{i,i} = \frac{1}{2}\phi_i^2$ for all $i\in[0,2^{k+1}-1]$ in \Cref{lem:smallDiag,lem:largeDiag,lem:medDiag,lem:M66}.

    We then need to consider the off-diagonal entries of $M$.
    Note that $M$ is symmetric, so we only need to consider $M_{i,j}$ where $i > j$.

    We will break the remaining entries of $M$ into cases, mirroring the cases in the definition of $\lambda$. 
    To reiterate, the cases we consider for an index $i$ are 
    \begin{description}
        \item[Case 1] $i < 2^k-1$ and $i+1$ is not a power of 2.
        \item[Case 2] $i > 2^k-1$ and $2^{k+1} - i - 1$ is not a power of 2.
        \item[Case 3] $i = 2^{\ell}-1$ for some $\ell < k$.
        \item[Case 4] $i = 2^{k}-1$.
        \item[Case 5] $i = 2^{k+1}-2^{\ell}-1$ for some $\ell < k$.
    \end{description}

    We summarize the possible cases and where they are proved in \Cref{table:proof_references_M_entries}.\qedhere
    
    \begin{table}
    \centering
     \begin{tabular}{ c|cccccc}
      & $i$ in Case 1 & $i$ in Case 2 & $i$ in Case 3 & $i$ in Case 4 & $i$ in Case 5 & $i=2^{k+1} - 1$\\
     \hline
     $j$ in Case 1 & \Cref{lem:M_11} & \Cref{lem:supportSimp} & \cref{lem:M_13} & 
     \cref{lem:M_14} & \Cref{lem:supportSimp} & \Cref{lem:supportSimp}\\
     $j$ in Case 2 & $\cdot$ & \Cref{lem:M_22} & \Cref{lem:supportSimp} & \cref{lem:M_24} & \cref{lem:M_25} & \cref{lem:M_6}\\
     $j$ in Case 3 &  $\cdot$ & $\cdot$ & \cref{lem:M33_M34} & \cref{lem:M33_M34} & \Cref{lem:supportSimp} & \Cref{lem:supportSimp}\\
     $j$ in Case 4 & $\cdot$  & $\cdot$ & $\cdot$ & \cref{lem:medDiag} & \cref{lem:M_45} & \cref{lem:M_6}\\
     $j$ in Case 5 & $\cdot$ & $\cdot$ & $\cdot$ & $\cdot$ & \cref{lem:M_55} & \cref{lem:M_6}\\
     $j = 2^{k+1} - 1$ & $\cdot$ & $\cdot$ & $\cdot$ & $\cdot$ & $\cdot$ & \cref{lem:M66} 
     \end{tabular}
     \caption{Pointers to the proofs of \cref{thm:Mvalue} in the separate cases.}
     \label{table:proof_references_M_entries}
    \end{table}

\end{proof}

We will make extensive use \cref{thm:rowcolconstraint} as well as the computed expressions for the partial row 
 and column sums from \cref{subsub:partial_rows} and the computational descriptions of $S^+_j$ and $S^-_j$ in \cref{subsub:col_support_lemmas}.

\subsection{Diagonal Entries of $M$}
In this subsection, we will consider the various diagonal entries of $M$.
We divide these into four cases: $M_{i,i}$ where $0\leq i < 2^k-1$, where $i = 2^k-1$, where $2^k-1<i<2^{k+1}-1$, and where $i=2^{k+1}-1$.

First, we present a lemma concerning the entries of $M$.
\begin{lemma}
Let $0\leq r\leq 2^{k+1}-1$. Then,
\begin{align*}
     M_{r,r} = \frac{1}{2}\sum_{\ell=0}^{2^{k+1}-1} \left(\lambda_{r,\ell} + \lambda_{\ell, r}\right) -  \sum_{\ell \in S_{r}^+} h_{r}\lambda_{\ell, r}.
\end{align*}
If additionally, $0 < i < 2^{k+1}-1$, then
    \[
            M_{r,r} = \sum_{\ell=0}^{2^{k+1}-1} \lambda_{r,\ell} - h_{r} \sum_{\ell \in S_{r}^+} \lambda_{\ell, r}.
    \]
\end{lemma}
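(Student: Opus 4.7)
The plan is a direct unpacking of $M = W_1(\lambda,\mathfrak{h}) + W_2(\lambda)$ from the definitions given in Section~\ref{sec:review}, combined with a single appeal to \Cref{thm:rowcolconstraint}.

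First I would compute $(W_2(\lambda))_{r,r}$. Restricted to the $g$-block, the matrix $(C_{i,j})_{0:t,0:t}$ is the rank-one outer product $(e_i - e_j)(e_i - e_j)^\intercal$ under the convention $e_\star = 0$. Its $(r,r)$ diagonal entry is $1$ when exactly one of $i,j$ equals $r$ and $0$ otherwise. Summing contributions with weights $\lambda_{i,j}/2$ over pairs $i\neq j$, and using that the $\star$-row and $\star$-column of $\lambda$ vanish, yields
\[
(W_2(\lambda))_{r,r} = \tfrac{1}{2}\sum_{\ell=0}^{2^{k+1}-1}\bigl(\lambda_{r,\ell}+\lambda_{\ell,r}\bigr).
\]

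Next I would compute $(W_1(\lambda,\mathfrak{h}))_{r,r}$. A direct expansion shows $(A_{i,j}(\mathfrak{h}))_{r,r} = (\mathbf{g}_j)_r\,(\mathbf{x}_i - \mathbf{x}_j)_r$ is nonzero only when $j = r$, in which case using $\mathbf{x}_i = \mathbf{x}_0 - \sum_{\ell<i}\mathfrak{h}_\ell \mathbf{g}_\ell$ the entry evaluates to $-\mathfrak{h}_r$ for $i > r$ and to $0$ for $i < r$ (the contributions involving $i=\star$ or $j=\star$ vanish because $\lambda$ is zero on its $\star$-row and $\star$-column). Summing over all $(i,j)$ with $i\neq j$ and weight $\lambda_{i,j}$ therefore picks up only the indices $i\in S^+_r$, giving
\[
(W_1(\lambda,\mathfrak{h}))_{r,r} = -\mathfrak{h}_r\sum_{\ell\in S^+_r}\lambda_{\ell,r}.
\]
Adding the two contributions produces the first displayed equation of the lemma. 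For the second displayed equation I would invoke \Cref{thm:rowcolconstraint}, which states that whenever $0 < r < 2^{k+1}-1$ the $r$th row sum of $\lambda$ equals its $r$th column sum, i.e.\ $\sum_\ell\lambda_{r,\ell} = \sum_\ell\lambda_{\ell,r}$. Substituting this identity collapses the symmetrized sum $\tfrac{1}{2}\sum_\ell(\lambda_{r,\ell}+\lambda_{\ell,r})$ to $\sum_\ell\lambda_{r,\ell}$, yielding the second displayed equation.

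No step here is genuinely difficult; the lemma is bookkeeping on the PEP notation from Section~\ref{sec:review} followed by one appeal to the previously established row/column sum identities. The one subtlety to keep straight is that $A_{i,j}$ uses $\mathbf{g}_j$ rather than $\mathbf{g}_i$, so the $W_1$ contribution picks out the partial \emph{column} sum $\sum_{\ell\in S^+_r}\lambda_{\ell,r}$ rather than a partial row sum. This asymmetry is exactly what forces the first displayed equation to take the symmetric-plus-correction form and is also what allows the simplification to the cleaner column-sum expression in the interior case via \Cref{thm:rowcolconstraint}.
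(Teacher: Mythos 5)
Your proof is correct and takes essentially the same approach as the paper: expand $M_{r,r}$ via the decomposition $M = W_1 + W_2$, identify the $W_2$ contribution as the symmetrized row-plus-column sum, identify the $W_1$ contribution as $-\mathfrak{h}_r$ times a partial column sum over $S_r^+$ (using that $(\mathbf{x}_i - \mathbf{x}_r)_r = -\mathfrak{h}_r$ for $i>r$ and $0$ for $i<r$), and then invoke the row/column-sum balance from \Cref{thm:rowcolconstraint} for the interior case. In fact your write-up is more explicit than the paper's, which elides the $W_1$/$W_2$ computation (and even contains a copy-paste slip where the claimed intermediate formula simply repeats the initial expansion); you correctly identify the mild asymmetry that the $A_{i,j}$ factor uses $\mathbf{g}_j$, which is why the $W_1$ term is a partial column sum rather than a row sum.
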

\begin{proof}
We expand the entries of $M$ defined in \Cref{eq:dual-form} and note that $\lambda$ is zero on its $\star$th row and column:
\begin{align*}
    M_{r,r} &= \sum_{\substack{i,j=0\\i\neq j}}^{2^{k+1}-1} \lambda_{i,j} A_{i,j}(h)_{r,r} + \frac{1}{2}\sum_{\substack{i,j=0\\i\neq j}}^{2^{k+1}-1} \lambda_{i,j} (C_{i,j})_{r,r}.
\end{align*}
From this, it can be seen that 
\[
        M_{r,r} = \sum_{\substack{i,j=0\\i\neq j}}^{2^{k+1}-1} \lambda_{i,j} A_{i,j}(h)_{r,r} + \frac{1}{2}\sum_{\substack{i,j=0\\i\neq j}}^{2^{k+1}-1} \lambda_{i,j} (C_{i,j})_{r,r}.
\]
    If additionally, $0 < i < 2^{k+1}-1$, then by \Cref{thm:rowcolconstraint}, we have that 
    \[
        \sum_{\ell=0}^{2^{k+1}-1} \lambda_{r,\ell} = \sum_{\ell=0}^{2^{k+1}-1} \lambda_{\ell, r},
    \]
    from which the result follows.
\end{proof}
\begin{lemma}
\label{lem:smallDiag}
    If $i < 2^k-1$, then $M_{i,i} = \frac{1}{2}\phi_i^2=0$.
\end{lemma}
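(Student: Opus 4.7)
Since $\phi^{(k)}_i=0$ for all $i<2^k-1$ by \cref{eq:phiDef}, the claim reduces to showing $M_{i,i}=0$. For $0<i<2^{k+1}-1$, the preceding lemma (combined with \cref{thm:rowcolconstraint}, which equates the $i$th row and column sums of $\lambda$) gives
\begin{align*}
    M_{i,i}=\sum_{\ell=0}^{2^{k+1}-1}\lambda_{i,\ell}\;-\;h_i\sum_{\ell\in S_i^+}\lambda_{\ell,i},
\end{align*}
so my plan is to verify that the row sum of row $i$ equals $h_i$ times the sum of the entries strictly below the diagonal in column $i$. The trivial index $i=0$ must be handled via the general form of the formula because by \cref{lem:rowcol0} the zeroth row and column sums differ by one.

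Next, I would read off $h_i$ from the fractal recursion defining $\mathfrak{h}^{(k)}$: for $0\le i<2^k-1$, one has $h_i=\alpha_\ell$ when $i=2^\ell-1$ (Case 3 in the definition of $\lambda$) and $h_i=\beta_{\nu(i+1)}$ otherwise (Case 1). With this in hand, the required row sums are supplied by \cref{lem:row_sums}, while the required column partial sums $\sum_{\ell\in S_i^+}\lambda_{\ell,i}$ come from \cref{lem:SplusSum} in Case 1 and, in Case 3, from the singleton observation $S_{2^\ell-1}^+=\{2^{\ell+1}-1\}$ together with the explicit value $\lambda_{2^{\ell+1}-1,\,2^\ell-1}=\tfrac12(\mu_{\ell+1}-1)(1+\sqrt2)^{-\ell}-1$ (the leftmost nonzero entry of row $2^{\ell+1}-1$, whether that row falls in Case 3 or Case 4 of the definition of $\lambda$).

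In each case, the identity collapses to a single elementary algebraic check. For Case 1, after cancelling the factor $(\mu_{z(i)+1}-1)(1+\sqrt2)^{-(2(z(i)-p(i))+4)}$, what remains is $\beta_{\ell+1}^2+2(1+\sqrt2)^{\ell-1}=\beta_\ell\beta_{\ell+2}$ with $\ell=\nu(i+1)$, which follows directly from $\beta_j=1+(1+\sqrt2)^{j-1}$ and $((1+\sqrt2)-1)^2=2$. For Case 3 with $\ell\ge1$, applying \cref{lem:sqrtmu} to write $\mu_\ell-1$ and $\mu_{\ell+1}-1$ in terms of $a\coloneqq\alpha_\ell-1$ and $b\coloneqq\beta_{\ell+1}-1$ reduces the identity to the tautology $a(a+1)/(b-a)=(a+1)\cdot a/(b-a)$. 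Finally, for $i=0$, using the general formula together with the row sum $2$, the column sum $1$ and the entry $\lambda_{1,0}=\tfrac12(\mu_1-1)-1=1$ gives $M_{0,0}=\tfrac32-\alpha_0\cdot 1=0$.

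The content is almost entirely bookkeeping, and no individual step is difficult. The main obstacle I anticipate is organizing the several index quantities that appear across the invoked lemmas ($z(i)$, $p(i)$, $\nu(i+1)$, and the case exponent $\ell$) so that the cancellations in Case 1 are transparent; beyond that, the algebra is elementary and the argument is a direct application of results already in hand.
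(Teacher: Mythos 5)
Your proof is correct and follows the paper's argument exactly: the same reduction $M_{i,i}=\sum_\ell\lambda_{i,\ell}-h_i\sum_{\ell\in S_i^+}\lambda_{\ell,i}$, the same appeals to \cref{lem:row_sums}, \cref{lem:SplusSum}, and \cref{lem:sqrtmu}, and the same case split ($i=0$; $i+1$ not a power of two; $i=2^z-1$ with $z\geq 1$). The only difference is cosmetic: where the paper outsources the final cancellations to Mathematica, you exhibit the underlying identities $\beta_\ell\beta_{\ell+2}=\beta_{\ell+1}^2+2(1+\sqrt2)^{\ell-1}$ and $\text{rowsum}=\alpha_z\cdot(\text{partial colsum})=a(a+1)/(b-a)$ explicitly, which is a nice (and correct) clarification.
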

\begin{proof}
    When $i = 0$, we have that the $i$th row sum is $2$, the $i$th column sum is $1$, and $h_i= 3/2$. Thus,
    \begin{align*}
        M_{0,0} &= \frac{1}{2}\left(1+2\right) - \tfrac{3}{2}\cdot 1 = 0.
    \end{align*}
    
    Now, suppose $i>0$ and $2^z < i+1 < 2^{z+1}$ for some $z\in[1,k-1]$.
    By \Cref{lem:row_sums},
    \[
        \sum_{\ell=0}^{2^{k+1}-1} \lambda_{i,\ell}
        = 
        \left(\frac{\mu_{z+1}-1}{(1+\sqrt{2})^{2(z-p(i))+3}} \right)\left(\frac{\beta_{\nu(i+1)+1}^2}{2(1+\sqrt{2})} + (1+\sqrt{2})^{\nu(i+1)-2}\right).
    \]
    By \Cref{lem:SplusSum},
    \[
        h_{i} \sum_{j \in S_{i}^+} \lambda_{j, i} =  \beta_{\nu(i+1)} \frac{1}{2} \left(1+\sqrt{2}\right)^{2 (p(i)-z-2)}\beta_{\nu(i+1)+2} (\mu_{z+1}-1) .
    \]
    Subtracting these two expressions shows that $M_{i,i}=0$ (see \nextmathematica). 

    On the other hand, if $i = 2^{z}-1$ for some $z\in[1,k-1]$, then by \cref{lem:row_sums}
    \[
        \sum_{\ell=0}^{2^{k+1}-1} \lambda_{i,\ell} =
        \frac{1}{2}(\mu_{z}-1)-1+\frac{\mu_{z+1}-1}{2(1+\sqrt{2})^z},
    \]
    and
    \[
        h_i \sum_{j \in S_{i}^+} \lambda_{j, i} =  \alpha_{z}\lambda_{2^{z+1}-1,2^z-1}=\alpha_{z}\left(\frac{\mu_{z+1}-1}{2(1+\sqrt{2})^z}-1\right).
    \]
    We deduce that
    \[
        M_{i,i}=\frac{1}{2}(\mu_{z}-1)-1+\frac{\mu_{z+1}-1}{2(1+\sqrt{2})^z} - \alpha_{z}\left(\frac{\mu_{z+1}-1}{2(1+\sqrt{2})^z}-1\right).
    \]
    By \cref{lem:sqrtmu}, we have that
    \begin{gather*}
         \mu_{z} -1= 2\frac{(\alpha_z-1)^2}{\beta_{z+1}-\alpha_z},\\
         \mu_{z+1}-1= 2\frac{(\beta_{z+1}-1)^2}{\beta_{z+1}-\alpha_z}.
    \end{gather*}
    Substituting these expressions for $\mu_{z}$ and $\mu_{z+1}$ into our expression for $M_{i,i}$ shows that it is equal to zero (see \nextmathematica).\qedhere
    
\end{proof}

\begin{lemma}
\label{lem:medDiag}
It holds that
    \[M_{2^k-1,2^k-1} = \frac{1}{2}\phi_{2^k-1}^2=\mu_k-1.\]
\end{lemma}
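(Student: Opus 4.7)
The strategy is to apply the formula $M_{r,r} = \sum_\ell \lk_{r,\ell} - h_r\sum_{i\in S^+_r}\lk_{i,r}$ (valid for $0 < r < 2^{k+1}-1$ and established in the lemma immediately preceding) at the specific index $r=2^k-1$. Case~4 of \Cref{lem:row_sums} gives the row sum as $(\mu_k-1)/2$, so everything reduces to showing that $S^+_{2^k-1}=\emptyset$, i.e., that column $2^k-1$ of $\lk$ is never hit by any row $i > 2^k-1$.

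Since $\lk_{2^{k+1}-1}=0$, only rows built by Cases~2 and~5 of the construction in \Cref{subsec:lk_construction} could contribute. In both cases the rightward entries of $\lk_i$ lie in columns strictly greater than $i > 2^k-1$ (for Case~5 these are precisely columns $\rev(2^a-1)=2^{k+1}-2^a-1>i$), so only the leftward entries need to be examined.

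For Case~2, write $i=2^k+m$ with $0\leq m\leq 2^k-2$; the identity $\nu(i+1)=\nu(m+1)$ (which holds because $m+1<2^k$) shows that $2^k-1$ lies in the support only if $m+1\leq\lfloor 2^{\nu(m+1)-1}\rfloor$. This is impossible: if $\nu(m+1)=0$ the right-hand side is $0$, and if $\nu(m+1)\geq 1$ the divisibility $2^{\nu(m+1)}\mid(m+1)$ forces $m+1\geq 2^{\nu(m+1)}>2^{\nu(m+1)-1}$. For Case~5 with $i=2^{k+1}-2^\ell-1$ and $\ell<k$, the leftmost reachable column is $i-2^{\ell-1}=2^{k+1}-3\cdot 2^{\ell-1}-1$, which strictly exceeds $2^k-1$ whenever $3\cdot 2^{\ell-1}<2^k$; this holds for all $\ell\leq k-1$, and the degenerate case $\ell=0$ has empty left support. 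Hence $S^+_{2^k-1}=\emptyset$.

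Combining the two pieces gives $M_{2^k-1,2^k-1}$ equal to the row sum $(\mu_k-1)/2 = \tfrac{1}{2}\phi^2_{2^k-1}$, using $\phi^{(k)}_{2^k-1}=\sqrt{\mu_k-1}$ from the definition of $\phi^{(k)}$. The only real obstacle is the combinatorial bookkeeping needed to rule out Case~2 and Case~5 contributions to $S^+_{2^k-1}$, but each case reduces to a short $2$-adic valuation argument, so no deeper machinery is required.
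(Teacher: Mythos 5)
Your proof is correct and takes essentially the same approach as the paper: apply the diagonal identity $M_{r,r}=\sum_\ell\lambda_{r,\ell}-h_r\sum_{\ell\in S^+_r}\lambda_{\ell,r}$ at $r=2^k-1$, read $(\mu_k-1)/2$ off Case~4 of the row-sum lemma, and observe $S^+_{2^k-1}=\varnothing$. The paper simply asserts that last fact; you verify it, which is a nice addition, though your Case~2 argument (that $m+1\ge 2^{\nu(m+1)}>\lfloor 2^{\nu(m+1)-1}\rfloor$ for all $m\ge 0$) already applies uniformly to every row index in $(2^k-1,\,2^{k+1}-1)$ and so makes the separate Case~5 analysis redundant. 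One small note: the lemma statement's rightmost expression should read $\tfrac{1}{2}(\mu_k-1)$ rather than $\mu_k-1$, which is the value both you and the paper's own proof actually obtain.
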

\begin{proof}
By \Cref{lem:row_sums},
\[
    \sum_{\ell=0}^{2^{k+1}-1}  \lambda_{\ell, 2^k-1} = \frac{\mu_k-1}{2}.
\]
We also have that
\[
    h_{2^k-1} \sum_{\ell \in S_{2^k-1}^+} \lambda_{\ell, 2^k-1} = 0,
\]
since $S_{2^k-1}^+ = \varnothing$.
So, 
\[
    M_{2^k-1,2^k-1} = \sum_{\ell=0}^{2^{k+1}-1} \lambda_{2^k-1,\ell} - h_{2^{k}-1} \sum_{\ell \in S_{2^{k}-1}^+} \lambda_{\ell, 2^{k}-1} = \frac{1}{2}(\mu_k-1).\qedhere
\]
\end{proof}
\begin{lemma}
\label{lem:largeDiag}
    If $2^{k+1}-1 > i > 2^k-1$, then
    \[
        M_{i,i} = \frac{1}{2}\phi_{i}^2.
    \]
\end{lemma}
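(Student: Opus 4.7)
The plan is to split on which case $i$ falls into in the construction of $\lambda^{(k)}$, namely (Case~2) $2^{k+1}-1-i$ is not a power of two, or (Case~5) $i = 2^{k+1} - 2^\ell - 1$ for some $0 \le \ell < k$. In each sub-case I will apply the identity
\[
    M_{i,i} = \sum_{\ell=0}^{2^{k+1}-1}\lambda_{i,\ell} \;-\; h_i\sum_{\ell \in S_i^+}\lambda_{\ell,i}
\]
established at the start of this subsection, and match the resulting expression against $\tfrac{1}{2}\phi_i^2 = \tfrac{1}{2}(w_k)_{i-2^k}^2$.

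For Case~5, inspecting the support rule yields $S_i^+ = \varnothing$: any candidate $i' \in (i, 2^{k+1}-1)$ satisfies $\nu(i'+1)=\nu(i'-i)<\ell$, so $i'-i \ge 2^{\nu(i'+1)} > 2^{\nu(i'+1)-1}$ and $\lambda_{i',i}=0$, while $i'=2^{k+1}-1$ has $\lambda_{i'}=0$ by construction. Thus $M_{i,i}$ reduces to the row sum, which \cref{lem:row_sums} evaluates as $\beta_{\ell+1}^2/(2(\mu_{\ell+1}-1))$. On the other side, unfolding the recursion $w_m = [\pi^{(m-1)}/\sqrt{\mu_m-1},\,\beta_m/\sqrt{\mu_m-1},\,w_{m-1}]$ applied to $j=i-2^k = 2^k - 2^\ell - 1$ makes $j$ fall into the $w_{m-1}$ sub-block for every $m > \ell+1$, until at level $m=\ell+1$ the residual index equals $2^\ell-1$ and lands on the $\beta_{\ell+1}/\sqrt{\mu_{\ell+1}-1}$ sentinel; hence $\phi_i = \beta_{\ell+1}/\sqrt{\mu_{\ell+1}-1}$, and the two sides agree.

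For Case~2, write $r=\rev(i)$. The symmetry of $\mathfrak{h}^{(k)}$ together with \cref{lem:rev_identities} gives $h_i = \beta_{\nu(r+1)}$; \cref{lem:row_sums} gives the row sum; and \cref{lem:case2_below_diagonal_sum} gives $\sum_{\ell\in S_i^+}\lambda_{\ell,i}$. Unfolding $w_k$ to the appropriate depth (determined by $z(r)$) similarly identifies $\phi_i = \beta_{\nu(r+1)}/\sqrt{\mu_{z(r)+1}-1}$. Substituting everything and using the exponent-alignment identity $p(i)+z(r)-k = z(r)-p(r)-\nu(r+1)+2$ from \cref{lem:rev_identities}, the required equality $M_{i,i}=\tfrac12\phi_i^2$ reduces, after factoring out $1/(2(\mu_{z(r)+1}-1))$, to the purely $\beta$-level identity
\[
    2-\beta_u(2-\beta_{u+2}) = \beta_{u+1}^2 \qquad (u=\nu(r+1)),
\]
which follows at once from $\beta_v = 1 + (1+\sqrt{2})^{v-1}$ together with the silver-ratio relation $(1+\sqrt{2})^2 = 2(1+\sqrt{2})+1$. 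With this substitution the $\beta_{u+1}^2$-terms combine and cancel against the leftover $\beta_u^2$-contribution, leaving exactly $\beta_u^2$ on the right.

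The main obstacle will be indexing discipline rather than any genuinely new identity: both $\phi_i=(w_k)_{i-2^k}$ and the closed forms of \cref{lem:row_sums}, \cref{lem:case2_below_diagonal_sum} are naturally stated in terms of several overlapping indices ($\nu$, $z$, $p$ evaluated at both $i$ and $r$), and aligning them is the only non-mechanical step. Once the indexing is pinned down, no identity from \cref{sec:funFacts} beyond the characteristic equation of $1+\sqrt{2}$ is required for either sub-case; as with \cref{lem:smallDiag}, any remaining polynomial simplification may be deferred to the accompanying Mathematica notebook.
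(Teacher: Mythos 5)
Your proposal is correct and follows the same overall structure as the paper's proof: split on whether $i$ is in Case~2 or Case~5 of the $\lambda^{(k)}$ construction, use $M_{i,i} = \sum_\ell \lambda_{i,\ell} - h_i \sum_{\ell\in S_i^+}\lambda_{\ell,i}$, observe $S_i^+ = \varnothing$ in Case~5, and invoke the closed-form row-sum and partial-column-sum lemmas in Case~2.

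The one non-cosmetic difference from the paper is which partial-sum lemma you invoke in Case~2. You plug \cref{lem:case2_below_diagonal_sum} directly into $\sum_{\ell\in S_i^+}\lambda_{\ell,i}$, whereas the paper first uses the row-sum $=$ column-sum identity from \cref{thm:rowcolconstraint} to rewrite
\[
    M_{i,i} = (1-h_i)\sum_\ell\lambda_{i,\ell} + h_i\sum_{\ell\in S_i^-}\lambda_{\ell,i},
\]
and then invokes \cref{lem:sum_above_diagonal_case_2} for the $S_i^-$ sum instead. Both lemmas are proven and used elsewhere in the paper, so the choice is essentially a matter of taste; the paper's variant avoids the somewhat longer inductive proof of \cref{lem:case2_below_diagonal_sum} in this spot, while yours is more literal. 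Your identification of the single $\beta$-level identity $2-\beta_u(2-\beta_{u+2}) = \beta_{u+1}^2$ (equivalent to the characteristic equation of $1+\sqrt{2}$) as the only algebraic content needed after substitution is accurate: carrying out the substitution as you describe, the $\beta_{u+1}^2$-terms recombine via $\beta_u - 1 = (1+\sqrt{2})^{u-1}$ and cancel, leaving $\beta_u^2/(2(\mu_{z(r)+1}-1))$, as claimed.
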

\begin{proof}
Let $i= \rev(r)$ for some $r\in[0,2^{k}-1)$.
We divide into cases: either $r+1$ is a power of two or it is not.

If $r=2^a-1$ for some $a\in[0,k-1]$, then by \Cref{lem:row_sums},
\[
    \sum_{\ell=0}^{2^{k+1}-1} \lambda_{\rev(2^{a}-1),\ell} = \frac{\beta^2_{a+1}}{2(\mu_{a+1}-1)}.
\]
We also have that $S^+_i = \varnothing$, so that 
        $M_{i,i} = \frac{\beta^2_{a+1}}{2(\mu_{a+1}-1)}$.
On the other hand, we also have that
\[
    \frac{1}{2}\phi_i^2 = \frac{\beta_{a+1}^2}{2(\mu_{a+1}-1)}.
\]

Now suppose that $i = \rev(r)$ for some $0 \leq r < 2^k-1$ where $r+1$ not a power of 2. Then
\begin{align*}
    \frac{\phi_{\rev(r)}^2}{2} &= \frac{\beta_{\nu(r+1)}^2}{2(\mu_{z(r) + 1} - 1)}.
\end{align*}

By \Cref{lem:row_sums}, and the identity the identity $p(\rev(r)) = k - p(r) - \nu(r+1) + 2$ (see \cref{lem:rev_identities}), we have 
\[
    \sum_{\ell=0}^{2^{k+1}-1}  \lambda_{\rev(r),\ell} = 
    \frac{(1+\sqrt{2})^{2(z(r) - p(r) - \nu(r+1) )+3}}{\mu_{z(r) + 1} - 1}\left(\frac{\beta^2_{\nu(r+1)+1}}{2(1+\sqrt{2})} + (1+\sqrt{2})^{\nu(r+1) - 2}\right).
\]
By \Cref{lem:sum_above_diagonal_case_2},
\[
    \sum_{\ell \in S_{\rev(r)}^-}  \lambda_{\ell, \rev(r)} = 
    \frac{1}{2(\mu_{z(r) + 1} - 1)}
    \left((1+\sqrt{2})^{2z(r)-\nu(r+1)-2p(r)+1}\beta_{\nu(r+1)+2} + \beta_{\nu(r+1)}\right).
\]

Combining these identities gives
\begin{align*}
    M_{\rev(r), \rev(r)} &=  \sum_{\ell=0}^{2^{k+1}-1}\lambda_{\rev(r),\ell} - \beta_{\nu(r+1)}\sum_{\ell\in S^+_{\rev(r)}}\lambda_{\ell,\rev(r)}\\
    &=  (1-\beta_{\nu(r+1)})\sum_{\ell=0}^{2^{k+1}-1}\lambda_{\rev(r),\ell} + \beta_{\nu(r+1)}\sum_{\ell\in S^-_{\rev(r)}}\lambda_{\ell,\rev(r)}\\
    &= \frac{\beta_{\nu(r+1)}^2}{2(\mu_{z(r)+1}-1)}.
\end{align*}
See \nextmathematica.

\end{proof}

\begin{lemma}
\label{lem:M66}
It holds that
\begin{align*}
    M_{2^{k+1}-1,2^{k+1}-1} = \frac{\phi_{2^{k+1}-1}^2}{2}.
\end{align*}
\end{lemma}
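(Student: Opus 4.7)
The plan is to compute $M_{2^{k+1}-1, 2^{k+1}-1}$ directly via the diagonal-entry formula stated at the opening of this subsection, and to verify that it matches $\phi_{2^{k+1}-1}^2/2$; this case turns out to essentially reduce to bookkeeping already done in Section~\ref{sec:lambdadRowColSum}.

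First I would identify the target value. Since $\phi^{(k)} = [0_{2^{k}-1}, \sqrt{\mu_k-1}, w_k]$, the entry $\phi_{2^{k+1}-1}$ is the final entry of $w_k$. Unwinding the recursion $w_k = [\pi^{(k-1)}/\sqrt{\mu_k-1},\, \beta_k/\sqrt{\mu_k-1},\, w_{k-1}]$ down to the base case $w_0 = [1]$ shows that the final entry of $w_k$ equals $1$ for every $k\ge 0$. Consequently the claim reduces to $M_{2^{k+1}-1,2^{k+1}-1} = \tfrac{1}{2}$.

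Next I would specialize the general diagonal formula
\[
    M_{r,r} = \tfrac{1}{2}\sum_{\ell=0}^{2^{k+1}-1}\bigl(\lambda_{r,\ell}+\lambda_{\ell,r}\bigr) \;-\; \sum_{\ell \in S_r^+} h_r\,\lambda_{\ell,r}
\]
to $r = 2^{k+1}-1$. Two immediate simplifications occur: by construction in Section~\ref{subsec:lk_construction}, the $(2^{k+1}-1)$-st row of $\lambda$ is identically zero, so $\sum_\ell \lambda_{r,\ell} = 0$; and $S_r^+ = \varnothing$ since no index exceeds $r$. Therefore the formula collapses to $M_{r,r} = \tfrac{1}{2}\sum_{\ell}\lambda_{\ell,r}$, i.e., one half the $r$-th column sum of $\lambda$.

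Finally I would invoke \cref{lem:rowcollast}, which states precisely that the $(2^{k+1}-1)$-st column sum of $\lambda$ exceeds the corresponding row sum (which is $0$) by exactly $1$. Hence the column sum equals $1$, giving $M_{r,r} = \tfrac{1}{2}$, as required. There is no substantive obstacle: all the work has been absorbed into the earlier row/column accounting in Section~\ref{sec:lambdadRowColSum} and into the base-case check that $\phi$ terminates at $1$.
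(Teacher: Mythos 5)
Your argument is correct and matches the paper's proof: both reduce the computation to half the $(2^{k+1}-1)$-st column sum of $\lambda$ using that the corresponding row is zero and $S^+_{2^{k+1}-1}=\varnothing$, and then invoke \cref{lem:rowcollast} together with $\phi_{2^{k+1}-1}=1$. The only difference is that you spell out the recursive check that the last entry of $w_k$ is $1$, which the paper leaves implicit.
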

\begin{proof}
In this case,
\begin{align*}
    M_{2^{k+1}-1,2^{k+1}-1} &= \frac{1}{2}\left(\sum_{i<2^{k+1}-1} \lambda_{i,2^{k+1}-1}\right)= \frac{\phi_{2^{k+1}-1}^2}{2}.
\end{align*}
Here, we use the fact that the $(2^{k+1}-1)$th column sum is equal to one, the $(2^{k+1}-1)$th row is zero, and that $\phi_{2^{k+1}-1} = 1$.
\end{proof}

\subsection{Off-Diagonal Entries of $M$}

The following lemma gives a description for $M_{i,j}$ where $i\neq j$ and will be used repeatedly throughout this subsection.

\begin{lemma}
    \label{lem:M_entries}
Suppose $0\leq r<\ell \leq 2^{k+1} - 1$. Then
\begin{align*}
    M_{r,\ell} = \frac{1}{2}\left(
    \sum_{i=0}^r h_r\lambda_{i,\ell}-\sum_{i= \ell + 1}^{2^{k+1}-1} h_\ell \lambda_{i,r}-\lambda_{r,\ell}-\lambda_{\ell,r}\right).
\end{align*}
\end{lemma}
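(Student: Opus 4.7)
The plan is to expand $M_{r,\ell}$ directly from the decomposition $M(\lambda,h) = W_1(\lambda,h) + W_2(\lambda)$ introduced in Section~\ref{sec:certificate_part_1}, identify which terms in the two sums survive, and then use the hypothesis $r<\ell$ to evaluate the surviving contributions.

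For the $W_2$ piece, I would observe that $(C_{i,j})_{r,\ell} = (\mathbf{g}_i - \mathbf{g}_j)_r(\mathbf{g}_i - \mathbf{g}_j)_\ell$ and, restricted to indices in $[0,t]$ with $t=2^{k+1}-1$, one has $\mathbf{g}_s = e_s$ for $s\in[0,t]$ while $\mathbf{g}_\star = 0$. Hence $(C_{i,j})_{r,\ell}$ is nonzero only when $\{i,j\} = \{r,\ell\}$, in which case it equals $-1$. Since $\lambda$ vanishes on its $\star$-row and $\star$-column, only $(i,j)=(r,\ell)$ and $(i,j)=(\ell,r)$ contribute, supplying the $-\tfrac{1}{2}(\lambda_{r,\ell}+\lambda_{\ell,r})$ part of the claimed formula.

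For the $W_1$ piece, I would note that $A_{i,j}(h)_{r,\ell} = \tfrac{1}{2}\bigl((\mathbf{g}_j)_r(\mathbf{x}_i-\mathbf{x}_j)_\ell + (\mathbf{g}_j)_\ell(\mathbf{x}_i-\mathbf{x}_j)_r\bigr)$, and that $(\mathbf{g}_j)_s$ is nonzero only when $j=s\in[0,t]$. This forces $j\in\{r,\ell\}$, splitting the $W_1$ sum into $\sum_i \lambda_{i,r}A_{i,r}(h)_{r,\ell}$ and $\sum_i \lambda_{i,\ell}A_{i,\ell}(h)_{r,\ell}$ (the hypothesis $r\neq \ell$ kills the cross term inside each $A$). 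Using the recursion $\mathbf{x}_i = \mathbf{x}_0 - \sum_{s<i}h_s\mathbf{g}_s$ together with $r<\ell$, one checks that $(\mathbf{x}_i - \mathbf{x}_r)_\ell$ equals $-h_\ell$ when $i>\ell$ and $0$ otherwise, while $(\mathbf{x}_i - \mathbf{x}_\ell)_r$ equals $h_r$ when $i\leq r$ and $0$ otherwise. Summing over $i\in[0,t]$ yields $\tfrac{1}{2}h_r\sum_{i=0}^{r}\lambda_{i,\ell} - \tfrac{1}{2}h_\ell\sum_{i=\ell+1}^{2^{k+1}-1}\lambda_{i,r}$, which combined with the $W_2$ contribution above gives exactly the asserted identity.

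I do not anticipate any real obstacle: the argument is a direct unfolding of definitions. The only care needed is in bookkeeping which coordinates of each difference $\mathbf{x}_i-\mathbf{x}_j$ carry a nonzero $h_s$, and in verifying that any $i=\star$ or $j=\star$ case drops out because $\lambda$ is zero along the $\star$-row and $\star$-column (so one may freely restrict the summations to $i,j\in[0,t]$ throughout).
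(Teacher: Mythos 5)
Your proposal is correct and follows essentially the same route as the paper: expand $M_{r,\ell}$ via the $A_{i,j}$ and $C_{i,j}$ decomposition, observe that only $j\in\{r,\ell\}$ survives in the $W_1$ piece and only $\{i,j\}=\{r,\ell\}$ in the $W_2$ piece, evaluate the step-recursion to pin down the nonzero coordinates of $\mathbf{x}_i-\mathbf{x}_r$ and $\mathbf{x}_i-\mathbf{x}_\ell$ using $r<\ell$, and discard the $\star$-indexed terms since $\lambda$ vanishes on its $\star$-row and $\star$-column. This matches the paper's proof step for step.
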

\begin{proof}
We expand
\begin{align*}
    M_{r,\ell} &= \sum_{\substack{i,j=0\\i\neq j}}^{2^{k+1}-1} \lambda_{i,j} A_{i,j}(h)_{r,\ell} + \frac{1}{2}\sum_{\substack{i,j=0\\i\neq j}}^{2^{k+1}-1} \lambda_{i,j} (C_{i,j})_{r,\ell}.
\end{align*}
We will use the definition of $A$ and $C$ to pull out the nonzero entries of each sum. The first sum becomes
\begin{align*}
    &\sum_{\substack{i=0\\i\neq r}}^{2^{k+1}-1} \lambda_{i,r} A_{i,r}(h)_{r,\ell}
    + \sum_{\substack{i=0\\i\neq \ell}}^{2^{k+1}-1} \lambda_{i,\ell} A_{i,\ell}(h)_{r,\ell}\\
    &\qquad= \sum_{i= \ell + 1}^{2^{k+1}-1} \lambda_{i,r} A_{i,r}(h)_{r,\ell}
    + \sum_{i=0}^r \lambda_{i,\ell} A_{i,\ell}(h)_{r,\ell}\\
    &\qquad= -\frac{1}{2}\sum_{i= \ell + 1}^{2^{k+1}-1} h_\ell \lambda_{i,r}
    + \frac{1}{2}\sum_{i=0}^r h_r\lambda_{i,\ell}.
\end{align*}
The second sum becomes
\begin{align*}
    \frac{1}{2}\left(\lambda_{r,\ell}(C_{r,\ell})_{r,\ell} + \lambda_{\ell,r}(C_{\ell,r})_{r,\ell}\right)&=-\frac{1}{2}\left(\lambda_{r,\ell}+\lambda_{\ell,r}\right).\qedhere
\end{align*}
\end{proof}

We consider a useful lemma, which can be shown by just considering the support of $\lambda$:
\begin{lemma}
    \label{lem:supportSimp}
    Fix $i < j< 2^{k+1}-1$.
    If $z(i) > z(j) + 1$ then $M_{i,j} = 0$.
    
    Also, if $z(i) = z(j) + 1$ and $i+1$ is not a power of 2, then $M_{i,j} = 0$.
\end{lemma}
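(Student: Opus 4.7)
The proof is a direct support-based calculation. Following the $i>j$ convention used throughout Section~\ref{sec:lambdaM} (which appears to swap the roles of $i$ and $j$ relative to the lemma's literal statement), I apply \cref{lem:M_entries} to write
\[
M_{i,j} = \tfrac{1}{2}\Bigl(\sum_{s=0}^{j} h_j\,\lambda_{s,i} - \sum_{s=i+1}^{2^{k+1}-1} h_i\,\lambda_{s,j} - \lambda_{i,j} - \lambda_{j,i}\Bigr).
\]
It then suffices to show each term on the right vanishes, and my plan is to combine two global support bounds on columns of $\lambda$ with two arithmetic inequalities on $i$ and $j$.

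The support bounds I will use are (a) $S^-_i\subseteq[2^{z(i)}-1,\,i)$, immediate from the explicit description in \cref{lem:nonzero_entries_above} (every element of $S^-_i$ has the form $\sum_{a=t}^{z(i)}b_a 2^a - 1$ with leading bit $b_{z(i)}=1$, hence is at least $2^{z(i)}-1$); and (b) $S^+_j\subseteq(j,\,2^{z(j)+1}-1]$ for any $j$ with $z(j)<k$, which is recorded in the support lemmas of \Cref{sec:supportFacts}. Note that (b) rules out $s=2^{k+1}-1$ from $S^+_j$ whenever $z(j)<k$, which will matter for the boundary case below.

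The two arithmetic inequalities I need are $j<2^{z(i)}-1$ and $i>2^{z(j)+1}-1$. In the first subcase $z(i)\geq z(j)+2$, both follow by a one-line calculation: $j\leq 2^{z(j)+1}-2\leq 2^{z(i)-1}-2$ and $i\geq 2^{z(i)}-1\geq 2^{z(j)+2}-1$. In the second subcase $z(i)=z(j)+1$, the first inequality reduces to $j\leq 2^{z(j)+1}-2=2^{z(i)}-2$, while the second is exactly where the hypothesis that $i+1$ is not a power of two enters: $i+1\in[2^{z(i)},2^{z(i)+1})=[2^{z(j)+1},2^{z(j)+2})$ contains exactly one power of two at its left endpoint, so the hypothesis forces $i+1>2^{z(j)+1}$, i.e., $i\geq 2^{z(j)+1}$.

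With both inequalities in hand, every piece of $M_{i,j}$ dies by support. Every $s\in[0,j]$ satisfies $s\leq j<2^{z(i)}-1$, so $s\notin S^-_i$ and hence $\lambda_{s,i}=0$; this kills the first sum and in particular $\lambda_{j,i}$ (the $s=j$ instance). Every $s\in\{i\}\cup[i+1,2^{k+1}-1]$ satisfies $s\geq i>2^{z(j)+1}-1$, so $s\notin S^+_j$ and hence $\lambda_{s,j}=0$; this kills the second sum and $\lambda_{i,j}$. The main obstacle I anticipate is the boundary case $i=2^{k+1}-1$, which several cells of \cref{table:proof_references_M_entries} expect this lemma to subsume despite the literal hypothesis ``$i+1$ not a power of two'' failing there. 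My remedy is a direct argument in that case: $\lambda_{i,j}=0$ holds for free since row $2^{k+1}-1$ of $\lambda$ is identically zero by construction, the second sum is empty as an index set, and the first sum together with $\lambda_{j,i}$ is controlled via the explicit identity $S^-_{2^{k+1}-1}=\{2^{k+1}-2^t-1:\,t\in[0,k]\}\subseteq[2^k-1,2^{k+1}-2]$, which is disjoint from $[0,j]$ whenever $j<2^k-1$.
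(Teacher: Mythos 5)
Your overall strategy matches the paper's: expand $M_{i,j}$ via \cref{lem:M_entries}, then kill every term by showing its $\lambda$-factor falls outside the support. The paper phrases the vanishing in terms of the \emph{row} support of each $\lambda_\ell$ being confined to a dyadic block, while you phrase it in terms of the \emph{column} supports $S_i^-$ and $S_j^+$; these are dual bookkeeping and lead to the same conclusion. Your explicit handling of $i = 2^{k+1}-1$ is a welcome addition, since the table does rely on this case and the lemma statement's literal bound $i < 2^{k+1}-1$ excludes it (though note it is that bound, not the ``$i+1$ not a power of two'' clause, that actually fails there --- for $j < 2^k-1$ one has $z(2^{k+1}-1)-z(j)\geq 2$, so the first subcase is the one in force).

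There is, however, a real inaccuracy in your support bound (a). You claim $S_i^- \subseteq [2^{z(i)}-1,\,i)$ on the grounds that every element of $S_i^-$ has leading bit at position $z(i)$. But in \cref{lem:nonzero_entries_above} the exponent $z$ is the position of the leading bit of $i$ itself, which equals $z(i)$ only when $i+1$ is not a power of two; if $i = 2^m-1$ then $z = m-1 = z(i)-1$ and $S_i^-$ can contain elements as small as $2^{z(i)-1}-1$. A concrete counterexample: $i = 7$ gives $z(i)=3$ and $S_7^- = \{3,5,6\}$, not contained in $[7,7)$. Since the first subcase of the lemma ($z(i)\geq z(j)+2$) does allow $i+1$ to be a power of two, your stated sufficient inequality ``$j < 2^{z(i)}-1$'' is not actually enough to conclude $[0,j]\cap S_i^-=\varnothing$. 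The argument is saved only because the arithmetic you already derive in that subcase is stronger --- $j \leq 2^{z(j)+1}-2 \leq 2^{z(i)-1}-2$ --- which does clear the corrected bound $S_i^- \subseteq [2^{z(i)-1}-1,\,i)$. So the proof is repairable with the material already on the page, but as written the logical chain from the stated premises to the conclusion has a hole precisely when $i+1$ is a power of two. (Minor further point: \cref{lem:nonzero_entries_above} is stated only for indices up to $2^k-1$; the description does extend verbatim to all of $[1,2^{k+1}-2]$ since it depends only on $\nu$, but you should say so.)
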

\begin{proof}
    In all of the following calculations, we will consider the expression
    \[
        M_{i,j} = -\frac{1}{2}\left(\lambda_{i,j} + \lambda_{j, i}\right) -  \frac{1}{2}h_{i} \sum_{\ell = i+1}^{2^k-1} \lambda_{\ell,j} + \frac{1}{2}h_{j}  \sum_{\ell = *}^{j} \lambda_{\ell,i}.
    \]

    For the first statement, we will argue that if $z(j) < z(i) - 1$, or it is the case that $z(j) = z(i) - 1$ and $i+1$ is not a power of 2, then every term in this sum is 0.
    It is not hard to see by considering the binary expansion of an integer $\ell$ that $\ell + 2^{\nu(\ell+1)} \le 2^{z(\ell)} - 1$, and that $\ell - 2^{\nu(\ell+1) - 1} \ge 2^{z(\ell)} - 1$, unless $\ell = 2^{z(\ell)}-1$.
    Our characterization of the support of $\lambda$ then implies that for all $\ell \le j \le 2^{z(j)+1}-1$, $\lambda_{\ell, i} = 0$ and that for all $\ell \ge i > 2^{z(i)}-1$, and this clearly implies that $M_{i,j} = 0$.

\end{proof}

We begin by taking care of the easier cases.

\begin{lemma}
\label{lem:M_13}
    Suppose $0 < r < 2^{k} - 1$ and $r+1$ is not a power of $2$.
    Also let $0\leq \ell < k$. Then,
    \begin{align*}
        M_{r, 2^\ell - 1} = 0 = \frac{\phi_r \phi_{2^\ell - 1}}{2}.
    \end{align*}
\end{lemma}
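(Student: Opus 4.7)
The right-hand side vanishes trivially: by the definition \eqref{eq:phiDef} of $\phi^{(k)}$, both indices fall in the initial zero block since $r < 2^k - 1$ and $\ell < k$, so $\phi_r = \phi_{2^\ell - 1} = 0$. The substantive task is therefore to show $M_{r, 2^\ell - 1} = 0$.

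My plan is to first apply \Cref{lem:supportSimp} (with whichever of $r$ or $2^\ell - 1$ is the larger index in the role of ``$i$'', using symmetry of $M$ when convenient) to dispose of all pairs where $|z(r) - \ell|$ is sufficiently large. Since $2^\ell - 1 + 1 = 2^\ell$ is a power of $2$ (so the ``additional'' clause of the lemma does not help when $r < 2^\ell - 1$), while $r + 1$ is assumed not to be a power of $2$ (so it does help when $r > 2^\ell - 1$), this leaves exactly two remaining regimes: (a) $r \in (2^{\ell-1} - 1, 2^\ell - 1)$, which automatically forces $\ell \geq 2$ and makes $r+1$ not a power of~$2$ for free; and (b) $r \in (2^\ell - 1, 2^{\ell+1} - 1)$, placing $r$ inside the $\pi^{(\ell)}$ block immediately after $\alpha_\ell$ in $\mathfrak{h}^{(k)}$.

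Case (b) is the cleaner of the two. I would apply \Cref{lem:M_entries} with the ordered pair $(2^\ell - 1, r)$ and collapse each partial sum using \Cref{lem:nonzero_entries_above} and the description of $S^+_{2^\ell - 1}$: the sum over $i \leq 2^\ell - 1$ contains only $i = 2^\ell - 1$ itself (contributing $\lambda_{2^\ell - 1, r}$), while the sum over $i > r$ contains only $i = 2^{\ell+1} - 1$. The ``self'' entry $\lambda_{r, 2^\ell - 1}$ vanishes by a direct support check (the offset $r - (2^\ell - 1)$ exceeds $2^{\nu(r+1)-1}$ because $r+1$ has its $\ell^{\mathrm{th}}$ bit set together with a strictly lower nonzero bit), and $\lambda_{2^\ell - 1, r}$ is read off from the Case~3 formula. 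Substituting $h_{2^\ell - 1} = \alpha_\ell$, $h_r = \beta_{\nu(r+1)}$, and the Case~3/Case~4 expressions $\lambda_{2^\ell - 1, r} = \tfrac{\mu_{\ell+1}-1}{2(1+\sqrt{2})^{2\ell}}\beta_{\nu(r+1)}$ and $\lambda_{2^{\ell+1}-1, 2^\ell - 1} = \tfrac{\mu_{\ell+1}-1}{2(1+\sqrt{2})^\ell} - 1$, then applying \Cref{lem:sqrtmu} together with $\beta_{\ell+1} - 1 = (1+\sqrt{2})^\ell$, the target identity collapses to the trivial $(\alpha_\ell - 1) + (\beta_{\ell+1} - \alpha_\ell) = \beta_{\ell+1} - 1$.

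Case (a) is where the main difficulty lies and where I expect to spend most of my effort. After invoking \Cref{lem:M_entries} with the ordered pair $(r, 2^\ell - 1)$, the sum $\sum_{i \geq 2^\ell} \lambda_{i, r}$ vanishes because $S_r^+ \subseteq [r+1, 2^\ell - 1]$. The partial column sum $\sum_{i=0}^r \lambda_{i, 2^\ell - 1}$ ranges over exactly the indices $\{2^\ell - 2^s - 1 : s_* \leq s \leq \ell - 1\}$ in $S_{2^\ell - 1}^-$, where $s_* \coloneqq \lceil \log_2(2^\ell - 1 - r) \rceil$; blending the Case~1 entries (for $s \leq \ell - 2$) with the single Case~3 entry (at $s = \ell - 1$) and using the geometric identity $1 - (1+\sqrt{2})^{-2} = 2/(1+\sqrt{2})$, I expect this to telescope cleanly to $\tfrac{\mu_\ell - 1}{2(1+\sqrt{2})^{2 s_*}}$. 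A final split on whether $d \coloneqq 2^\ell - 1 - r$ is a power of $2$ is then required: if $d$ is not a power of~$2$ then $\lambda_{r, 2^\ell - 1} = 0$ by support and the remaining Case~3 term cancels immediately against the column sum; otherwise $d = 2^a$, both off-diagonal entries are nonzero (from Case~1 and Case~3 respectively), and the identity reduces to $\beta_a (1+\sqrt{2})^2 = 2(1+\sqrt{2}) + \beta_{a+2}$, which holds because $(1+\sqrt{2})^2 = 3 + 2\sqrt{2} = 1 + 2(1+\sqrt{2})$. The main obstacle will be executing the geometric telescoping and, crucially, verifying that the cutoff exponent $s_*$ coincides precisely with the exponent appearing in the Case~3 formula for $\lambda_{2^\ell - 1, r}$, which is what makes the entire expression vanish.
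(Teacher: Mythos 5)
Your proposal is correct and follows essentially the same route as the paper: both split on whether $r$ lies above or below $2^\ell-1$, dispose of the far-away pairs via \cref{lem:supportSimp}, reduce via \cref{lem:M_entries} to a handful of explicit $\lambda$-entries plus a single partial row/column sum (which telescopes geometrically using $1-(1+\sqrt{2})^{-2}=2/(1+\sqrt{2})$), and then verify the resulting one-line algebraic identity using \cref{lem:sqrtmu} or the relation $(1+\sqrt{2})^2=1+2(1+\sqrt{2})$. Your cutoff $s_*=\lceil\log_2(2^\ell-1-r)\rceil$ coincides with the paper's $\tau$ defined by $2^{\tau-1}<2^\ell-1-r\leq 2^\tau$, and both of your final scalar identities match the ones the paper checks (for the $r>2^\ell-1$ regime and the $d=2^a$ subcase respectively), so the calculation plan is sound.
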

\begin{proof}
We split the proof into cases depending on if $2^{\ell}-1<r$ or if $r<2^{\ell}-1$.

First suppose $2^\ell - 1 < r$. Then by \cref{lem:M_entries},
\begin{align*}
    M_{2^\ell - 1, r} &= \frac{1}{2}\left(
        \sum_{i=0}^{2^\ell - 1} h_{2^\ell - 1}\lambda_{i,r}-\sum_{i= r + 1}^{2^{k+1}-1} h_r \lambda_{i,2^\ell - 1}-\lambda_{2^\ell - 1,r}\right).
\end{align*}
If $r + 1> 2^{\ell+1}$, \Cref{lem:supportSimp} implies that this is 0. Thus, we may assume $r+1 < 2^{\ell + 1}$ in the remainder of this case. Then,
\begin{align*}
     M_{r, 2^\ell - 1}  &= \frac{1}{2}\left(
        (\alpha_\ell-1)\lambda_{2^\ell - 1,r}-\beta_{\nu(r+1)}\lambda_{2^{\ell+1}-1,2^\ell - 1}\right)\\
        &= \frac{1}{2}\left(
            (\alpha_\ell-1)\frac{\mu_{\ell+1} - 1}{2(1+\sqrt{2})^{2\ell}}\beta_{\nu(r +1)}-\beta_{\nu(r+1)}\left(\frac{\mu_{\ell+1}-1}{2(1+\sqrt{2})^\ell}-1\right)\right)\\
        &= \frac{\beta_{\nu(r+1)}}{2}\left(
            (\alpha_\ell-1)\frac{\mu_{\ell+1} - 1}{2(1+\sqrt{2})^{2\ell}}-\frac{\mu_{\ell+1}-1}{2(1+\sqrt{2})^\ell}+1\right)\\
        &= 0.
\end{align*}
The last line uses the expression for $\mu_{\ell+1}$ given in \cref{lem:sqrtmu} (see \nextmathematica).

Now, suppose $r<2^\ell - 1$. Then,
\begin{align*}
    M_{r,2^\ell - 1} &= \frac{1}{2}\left(
    \sum_{i=0}^r h_r\lambda_{i,2^\ell - 1}-\lambda_{r,2^\ell - 1}-\lambda_{2^\ell - 1,r}\right).
\end{align*}
If $r + 1 < 2^{\ell - 1}$, \Cref{lem:supportSimp} implies that this is 0. Thus, we may suppose $r+1 > 2^{\ell - 1}$ in the remainder of this case.
Let $2^{\tau-1} < 2^\ell - 1 - r \leq 2^{\tau}$ where $\tau\in[1, \ell - 1]$. Then,
\begin{align*}
    M_{r,2^\ell - 1} &= \frac{1}{2}\left(
        \beta_{\nu(r+1)}\sum_{s=\tau}^{\ell-1} \lambda_{2^\ell - 2^s - 1,2^\ell - 1}-\lambda_{r,2^\ell - 1}-\lambda_{2^\ell - 1,r}\right). 
\end{align*}
The inner summation is given by
\begin{align*}
    \sum_{s=\tau}^{\ell-1} \lambda_{2^\ell - 2^s - 1,2^\ell - 1} &= \sum_{s=\tau}^{\ell-2} \lambda_{2^\ell - 2^s - 1,2^\ell - 1} + \lambda_{2^{\ell-1} - 1,2^\ell - 1}\\
    &= \sum_{s=\tau}^{\ell-2}\frac{\mu_{\ell}-1}{(1+\sqrt{2})^{2s+1}} + \frac{\mu_\ell - 1}{2(1+\sqrt{2})^{2(\ell-1)}}\\
    &= \frac{\mu_\ell - 1}{2}(1+\sqrt{2})^{-2\tau}.
\end{align*}
Here, we have used the fact that $z(2^\ell-2^s-1)=\ell-1$ and $p(2^\ell-2^s-1)=\ell-s$.
We also compute
\begin{align*}
    \lambda_{r,2^\ell-1} &= \begin{cases}
        \frac{\mu_{\ell} - 1}{(1+\sqrt{2})^{2\tau + 1}} & \text{if } 2^{\ell} - 1 - r = 2^\tau\\
        0 & \text{else},
    \end{cases}\\
    \lambda_{2^\ell-1,r} &= \begin{cases}
        \frac{\mu_\ell - 1}{2(1+\sqrt{2})^{2(\tau+1)}} \beta_{\tau+2} & \text{if } 2^{\ell} - 1 - r = 2^\tau\\
        \frac{\mu_\ell - 1}{2(1+\sqrt{2})^{2\tau}} \beta_{\nu(r+1)} & \text{else}.
    \end{cases}
\end{align*}
When $2^\ell -1 - r = 2^\tau$, we have that
\begin{align*}
    M_{r, 2^\ell - 1} &= \frac{\mu_\ell - 1}{2(1+\sqrt{2})^{2\tau}}\left(\beta_\tau - \frac{2}{1+\sqrt{2}} -  \frac{\beta_{\tau+2}}{(1+\sqrt{2})^2} \right) = 0.
\end{align*}
See \nextmathematica.

When $2^\ell - 1 - r < 2^\tau$, we have that
\begin{align*}
    M_{r, 2^\ell - 1} &=
    \beta_{\nu(r+1)}\frac{\mu_\ell - 1}{2}(1+\sqrt{2})^{-2\tau}  - \frac{\mu_\ell - 1}{2(1+\sqrt{2})^{2\tau}} \beta_{\nu(r+1)} = 0.\qedhere
\end{align*}
\end{proof}

\begin{lemma}
\label{lem:M_14}
Suppose $0\leq r < 2^k -1$ and $r+1$ is not a power of $2$. Then,
\begin{align*}
    M_{r,2^k - 1} = 0 = \frac{\phi_{r} \phi_{2^{k} - 1}}{2}.
\end{align*}
\end{lemma}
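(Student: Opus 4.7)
Following the template of \cref{lem:M_13} with $\ell$ replaced by $k$, I would apply \cref{lem:M_entries}, noting that $h_{2^k - 1} = \mu_k$, to write
\[
    M_{r, 2^k - 1} = \frac{1}{2}\left(h_r \sum_{i=0}^{r} \lambda_{i, 2^k - 1} - \mu_k \sum_{i=2^k}^{2^{k+1}-1} \lambda_{i, r} - \lambda_{r, 2^k - 1} - \lambda_{2^k - 1, r}\right).
\]
The rows of $\lambda$ indexed by $[2^k, 2^{k+1} - 1]$ fall under Cases~2 and~5; an inspection of their supports (each contained in an interval whose minimum index is at least $2^k + 2^{k-2} - 1$) shows the second sum is identically zero, so it is the other three terms that must combine.

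If $z(r) \leq k - 2$, then $r \leq 2^{k-1} - 2$. By \cref{lem:nonzero_entries_above}, the minimum element of $S^-_{2^k - 1}$ is $2^{k-1} - 1 > r$, so the first column sum also vanishes; the entries $\lambda_{r, 2^k - 1}$ and $\lambda_{2^k - 1, r}$ vanish for the same support reason, and $M_{r, 2^k - 1} = 0$ immediately.

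The substantive case is $z(r) = k - 1$, i.e.\ $2^{k-1} - 1 < r < 2^k - 1$. Let $\tau \in [0, k - 1]$ be defined by $2^{\tau - 1} < 2^k - 1 - r \leq 2^\tau$. Combining the Case~1 entries $\lambda_{2^k - 1 - 2^\nu,\, 2^k - 1} = (\mu_k - 1)(1+\sqrt{2})^{-(2\nu + 1)}$ for $\nu \in [\tau, k-2]$ with the Case~3 entry $\lambda_{2^{k-1} - 1,\, 2^k - 1} = (\mu_k - 1)/(2(1+\sqrt{2})^{2(k-1)})$ produces a geometric series whose tail telescopes against the Case~3 contribution to give
\[
    \sum_{i=0}^{r} \lambda_{i, 2^k - 1} = \frac{\mu_k - 1}{2(1+\sqrt{2})^{2\tau}}.
\]
I would then split, exactly as in the proof of \cref{lem:M_13}, into the corner subcase $r = 2^k - 1 - 2^\tau$ (where $\lambda_{r, 2^k - 1}$ and $\lambda_{2^k - 1, r}$ are read off the $j = i - 2^a$ rules of Cases~1 and~4) and the interior subcase $2^{\tau - 1} < 2^k - 1 - r < 2^\tau$ (where $\lambda_{r, 2^k - 1} = 0$ and $\lambda_{2^k - 1, r}$ uses the open-interval rule of Case~4). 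In both subcases the four terms reduce to the same identity $\beta_\tau - \tfrac{2}{1+\sqrt{2}} - \tfrac{\beta_{\tau + 2}}{(1+\sqrt{2})^2} = 0$ that already appears in the proof of \cref{lem:M_13}, so no genuinely new algebra is needed.

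The main obstacle is purely bookkeeping: one must correctly identify which Case~4 subcase supplies $\lambda_{2^k - 1, r}$ and verify that the extra Case~3 contribution from $\lambda_{2^{k-1} - 1, 2^k - 1}$ is exactly what is needed to collapse the geometric series into an expression depending only on $\tau$. Once this is set up, the cancellation is immediate and can be confirmed by a short Mathematica check of the same flavor as those throughout this section.
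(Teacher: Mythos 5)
Your approach mirrors the paper's exactly: apply Lemma~\ref{lem:M_entries}, kill the second sum via a support argument, dispatch the range $z(r)\le k-2$ trivially, and evaluate the remaining three terms by telescoping the Case~1 and Case~3 entries into a geometric series, splitting into the corner subcase $r=2^k-1-2^\tau$ and the interior subcase. Your bookkeeping has a few small slips that do not affect the argument: the lower bound on the left edges of the supports of rows $i\ge 2^k$ is only $>2^k-1$ (for instance the support of row $2^k+1$ contains $2^k$), not $\ge 2^k+2^{k-2}-1$, though this still suffices since $r<2^k-1$; the entry $\lambda_{r,2^k-1}$ in the corner subcase is read off Case~1's $j=i+2^{\nu(i+1)}$ rule rather than a $j=i-2^a$ rule; and in the interior subcase the cancellation is immediate ($\beta_{\nu(r+1)}$ times the column sum equals $\lambda_{2^k-1,r}$ with $\lambda_{r,2^k-1}=0$) and does not actually need the $\beta_\tau-\tfrac{2}{1+\sqrt{2}}-\tfrac{\beta_{\tau+2}}{(1+\sqrt{2})^2}=0$ identity.
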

\begin{proof}
The second identity holds as $\phi_r = 0$. We turn to the first identity. By \cref{lem:M_entries}, we have
\begin{align*}
    M_{r,2^k - 1} &= \frac{1}{2}\left(
        \beta_{\nu(r+1)}\sum_{i=0}^r \lambda_{i,2^k - 1} -\lambda_{2^k - 1,r}-\lambda_{r,2^k - 1}\right).
\end{align*}
Note that if $r< 2^{k-1}-1$, then every term in this expression is zero. Thus we will assume $r > 2^{k-1}-1$.
Now, we will write $2^k - (r+1) = \tau$. Note that $1\leq \tau \leq 2^{k-1} - 1$.

There are two remaining cases. First, suppose $\tau = 2^\ell$ for some $\ell\in[0,k-2]$.
In this case, 
$\beta_{\nu(r+1)}= \beta_{\ell}$ and
$r = 2^k - 2^\ell -1$ so that the inner summation is give by
\begin{align*}
    \sum_{s = \ell}^{k-1} \lambda_{2^k - 2^s - 1,2^k -1} &= \sum_{s=\ell}^{k-2} \lambda_{2^k - 2^s - 1,2^k -1} + \lambda_{2^{k-1} - 1, 2^k - 1}\\
    &= \sum_{s=\ell}^{k-2} \frac{\mu_{k}-1}{(1+\sqrt{2})^{2s+1}} + \frac{\mu_k - 1}{2(1+\sqrt{2})^{2(k-1)}}\\
    &=\frac{\mu_k - 1}{2(1+\sqrt{2})^{2\ell}}. 
\end{align*}
Additionally,
\begin{align*}
    \lambda_{2^k - 1,2^k - 2^\ell - 1} + \lambda_{2^k - 2^\ell - 1,2^k - 1} &= \frac{\mu_k - 1}{2}\left(\frac{\beta_{\ell+2}}{(1+\sqrt{2})^{2(\ell+1)}} + \frac{2}{(1+\sqrt{2})^{2\ell+1}}\right).\end{align*}
Combining these identities gives $M_{2^k - 2^\ell - 1,2^k - 1} = 0$ (see \nextmathematica).

The final case is when $\tau$ is not a power of $2$. Let $\ell$ so that $2^\ell < \tau < 2^{\ell+1}$. Note that we must have $\ell\in[0,k-2]$. The inner summation is then given by
\begin{align*}
    \sum_{s=\ell+1}^{k-1} \lambda_{2^k - 2^s - 1,2^k - 1}
    &=  \sum_{s=\ell+1}^{k-2} \lambda_{2^k - 2^s - 1,2^k -1} + \lambda_{2^{k-1} - 1, 2^k - 1}\\
    &= \sum_{s=\ell+1}^{k-2} \frac{\mu_{k}-1}{(1+\sqrt{2})^{2s+1}} + \frac{\mu_k - 1}{2(1+\sqrt{2})^{2(k-1)}}\\
    &= \frac{\mu_k - 1}{2(1+\sqrt{2})^{2(\ell+1)}}. 
\end{align*}
Next,
\begin{align*}
    \lambda_{2^k - 1,r} + \lambda_{r,2^k - 1} &= \lambda_{2^k - 1,r}
= \frac{(\mu_k - 1)}{2(1+\sqrt{2})^{2(\ell+1)}}\beta_{\nu(r+1)}.
\end{align*}
Combining these identities gives $M_{r,2^k - 1} = 0$.
\end{proof}

\begin{lemma}
\label{lem:M_24}
    Suppose $0\leq r< 2^k-1$ and $r+1$ is not a power of $2$. Then,
    \begin{align*}
        M_{2^k - 1,\rev(r)} = \frac{\phi_{\rev(r)} \phi_{2^k - 1}}{2}.
    \end{align*}
\end{lemma}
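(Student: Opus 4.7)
The plan is to apply \cref{lem:M_entries} with $r = 2^k - 1$ and $\ell = \rev(r)$ (noting $r < \ell$ since the assumption $r < 2^k - 1$ in the statement gives $\rev(r) > 2^k - 1$). A quick support analysis collapses most of the terms in that formula. For any $i \leq 2^k - 2$, writing $i + 1 = 2^{\nu(i+1)} q$ with $q$ odd, the rightmost column in the support of $\lk_i$ is $i + 2^{\nu(i+1)} = 2^{\nu(i+1)}(q+1) - 1 \leq 2^k - 1$, so $\lk_{i, \rev(r)} = 0$ whenever $\rev(r) \geq 2^k$. A symmetric analysis on the left endpoints of the supports shows that $\lk_{i', 2^k - 1} = 0$ for every $i' > 2^k - 1$, which in particular forces $\lk_{\rev(r), 2^k - 1} = 0$. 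Hence \cref{lem:M_entries} collapses to
\[
    M_{2^k - 1, \rev(r)} = \frac{1}{2}\left(\mu_k\, \lk_{2^k - 1, \rev(r)} - \lk_{2^k - 1, \rev(r)}\right) = \frac{\mu_k - 1}{2}\, \lk_{2^k - 1, \rev(r)}.
\]

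Since $r + 1$ is not a power of $2$, the index $\rev(r)$ sits strictly between the landmarks $\rev(2^{z(r)+1} - 1)$ and $\rev(2^{z(r)} - 1)$, so Case 4 of the definition of $\lk$ together with $\nu(\rev(r)+1) = \nu(r+1)$ from \cref{lem:rev_identities} yields
\[
    \lk_{2^k - 1, \rev(r)} = \frac{\beta_{\nu(r+1)}}{\sqrt{\mu_k - 1}\sqrt{\mu_{z(r)+1} - 1}}.
\]

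It remains to identify $\phi^{(k)}_{\rev(r)}$ in closed form. Using the recursive description $w_k = [\pi^{(k-1)}/\sqrt{\mu_k - 1}, \beta_k/\sqrt{\mu_k - 1}, w_{k-1}]$ together with the identity $\nu(2^k - a) = \nu(a)$ for $0 < a < 2^k$, an elementary induction on $k$ establishes that
\[
    \phi^{(k)}_{\rev(r)} = \frac{\beta_{\nu(r+1)}}{\sqrt{\mu_{z(r)+1} - 1}}
\]
for every $r < 2^k - 1$ with $r + 1$ not a power of $2$. The induction splits according to $z(r)$: when $z(r) = k - 1$ the shifted index $\rev(r) - 2^k$ lands in the $\pi^{(k-1)}/\sqrt{\mu_k - 1}$ block of $w_k$ and the identity is immediate from $\nu(2^k - (r+1)) = \nu(r+1)$, while when $z(r) < k - 1$ it lands in the $w_{k-1}$ block and the inductive hypothesis applies directly.

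Combining the three displays gives
\[
    M_{2^k - 1, \rev(r)} = \frac{\sqrt{\mu_k - 1}}{2}\, \phi^{(k)}_{\rev(r)} = \frac{\phi^{(k)}_{2^k - 1}\, \phi^{(k)}_{\rev(r)}}{2},
\]
as claimed. The only mildly delicate step is the initial support argument, which isolates $\lk_{2^k - 1, \rev(r)}$ as the unique nonzero contribution to $\sum_{i=0}^{2^k - 1}\lk_{i, \rev(r)}$; once this collapse is in hand, the rest is direct unfolding of the recursive definition of $w_k$ against Case 4 of $\lk$.
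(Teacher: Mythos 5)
Your proof is correct and follows essentially the same route as the paper: apply \cref{lem:M_entries} with the pair $(2^k-1, \rev(r))$, use support observations to collapse both summations down to $\lambda_{2^k-1,\rev(r)}$, pull out the factor $\mu_k - 1$, and match against $\phi$. The one place you take a slightly longer path is in identifying $\phi^{(k)}_{\rev(r)}$: the paper observes directly from the definitions that $\lambda_{2^k-1,\rev(r)} = (w_k)_{2^k-1-r}/\sqrt{\mu_k-1}$ and $\phi^{(k)}_{\rev(r)} = (w_k)_{2^k-1-r}$, so the equality is immediate without ever needing a closed form for $(w_k)_{2^k-1-r}$. You instead write $\lambda_{2^k-1,\rev(r)}$ in its explicit Case 4 entry-by-entry form and then run a separate induction on $k$ (using the recursive description of $w_k$ and $\nu(2^k - a) = \nu(a)$) to prove $\phi^{(k)}_{\rev(r)} = \beta_{\nu(r+1)}/\sqrt{\mu_{z(r)+1}-1}$. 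That induction is correct (the two branches $z(r) = k-1$ and $z(r) < k-1$ correctly hit the $\pi^{(k-1)}$ and $w_{k-1}$ blocks respectively, and the base case $k=1$ is vacuous since there is no admissible $r$), and it does produce a genuinely useful closed-form identity for $\phi^{(k)}$ on the back half of the vector; but for this particular lemma it is extra machinery, since Case 4 of $\lambda$ and the definition of $\phi$ are engineered precisely so the two sides match entry-by-entry through $w_k$ without needing the closed form.

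One small presentational note: when citing \cref{lem:M_entries} be careful about the clash of variable names. The lemma uses $r < \ell$ for the pair of indices, while the statement you are proving also has an $r$ meaning something else; in your writeup "$r = 2^k-1$ and $\ell = \rev(r)$" momentarily uses $r$ in both roles. The argument is unambiguous from context, but renaming one of them would avoid any reader confusion.
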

\begin{proof}
    Let $\tau$ so that $2^\tau < r+1 < 2^{\tau + 1}$. Note, we must have $\tau\in[0,k-1]$. 
    By \cref{lem:M_entries}, we have
    \begin{align*}
        M_{2^k - 1,\rev(r)} &= \frac{1}{2}\left(
            \sum_{i=0}^{2^k -1} h_{2^k -1}\lambda_{i,\rev(r)}-\lambda_{2^k -1,\rev(r)}\right)\\
        &= \frac{\mu_k - 1}{2}\lambda_{2^k - 1,\rev(r)}\\
        &= \frac{\sqrt{\mu_k - 1}}{2}(w_k)_{2^k-r-1}.
    \end{align*}
    On the other hand, $\phi_{2^k - 1} = \sqrt{\mu_k - 1}$ and $\phi_{\rev(r)} = (w_k)_{2^k-r-1}$.
\end{proof}

\begin{lemma}
\label{lem:M_25}
Suppose $0<r<2^k - 1$ and $r+1$ is not a power of $2$. Also let $0\leq \ell < k$. Then,
\begin{align*}
    M_{\rev(r),\rev(2^\ell - 1)} = \frac{\phi_{\rev(r)}\phi_{\rev(2^\ell-1)}}{2}.
\end{align*}
\end{lemma}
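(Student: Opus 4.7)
The plan is to follow the template of the neighboring proofs (\Cref{lem:M_24,lem:M_14,lem:M_13}) and expand $M_{\rev(r),\rev(2^\ell-1)}$ via \Cref{lem:M_entries}. By the symmetry $M_{i,j}=M_{j,i}$ we may assume without loss of generality that $r > 2^\ell - 1$, so $\rev(r) < \rev(2^\ell-1)$. Since $\mathfrak{h}^{(k)}$ is symmetric around its middle entry, $h_{\rev(r)}=\beta_{\nu(r+1)}$ (as $r+1$ is not a power of two) and $h_{\rev(2^\ell-1)}=\alpha_\ell$, so \Cref{lem:M_entries} gives
\begin{align*}
2 M_{\rev(r),\rev(2^\ell-1)} &= \beta_{\nu(r+1)}\sum_{i=0}^{\rev(r)} \lambda_{i,\rev(2^\ell-1)} - \alpha_\ell\sum_{i=\rev(2^\ell-1)+1}^{2^{k+1}-1} \lambda_{i,\rev(r)} \\
&\quad - \lambda_{\rev(r),\rev(2^\ell-1)} - \lambda_{\rev(2^\ell-1),\rev(r)}.
\end{align*}
The target $\tfrac12\phi_{\rev(r)}\phi_{\rev(2^\ell-1)} = \tfrac12 (w_k)_{\rev(r)-2^k}(w_k)_{2^k-2^\ell-1}$ can then be read off via the recursive decomposition $w_k = [\pi^{(k-1)}/\sqrt{\mu_k-1},\,\beta_k/\sqrt{\mu_k-1},\,w_{k-1}]$.

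Each of the four terms can be handled with existing machinery. The ``backbone'' contributions from rows $\rev(2^a-1)$ to the first column sum telescope exactly as in \Cref{lem:telescoping}, while the remaining Case-2 contributions from non-backbone rows reduce to the partial-sum identities \Cref{lem:sumpi,lem:sumsigma,lem:sum_of_wk}; the same strategy is used inside \Cref{lem:rowcolhighpow2_expression}. The second column sum is a tail of $\sum_{i\in S^+_{\rev(r)}}\lambda_{i,\rev(r)}$ restricted to $i>\rev(2^\ell-1)$, and can be extracted either by a light modification of \Cref{lem:case2_below_diagonal_sum} or by subtracting a computable head from the full column sum derived in \Cref{lem:rowcolhighrest}. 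The two single-entry terms are direct look-ups from the Case-2 and Case-5 definitions of $\lambda$, whose exact piecewise values depend on where $\rev(2^\ell-1)$ falls within the support of $\lambda_{\rev(r)}$.

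I would then split into subcases according to how $\ell$ compares with $\nu(r+1)$ and $z(r)$; these determine which piecewise branches of $\lambda_{\rev(r),\rev(2^\ell-1)}$ and $\lambda_{\rev(2^\ell-1),\rev(r)}$ are active and simultaneously pick out which blocks of $w_k$ contain $\phi_{\rev(r)}$ and $\phi_{\rev(2^\ell-1)}$. In each subcase, after applying \Cref{lem:sqrtmu,lem:relating_muprev_to_mu_and_beta} to put all $\sqrt{\mu_\ell-1}$ and cross-terms into a common form, the identity should reduce to a polynomial identity in $\alpha_\ell,\beta_\ell,\mu_\ell$ that is verifiable symbolically in Mathematica, as elsewhere in this section. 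The main obstacle is the combinatorial bookkeeping: both $\lambda_{\rev(r)}$ and $\lambda_{\rev(2^\ell-1)}$ are multi-branch piecewise objects, the upper limit $\rev(r)$ of the first column sum may slice through various backbone indices, and the support overlap with the second sum also shifts with $\ell$; tracking these configurations cleanly—paralleling the effort expended in \Cref{lem:sum_above_diagonal_case_2,lem:case2_below_diagonal_sum}—is where essentially all the work will go.
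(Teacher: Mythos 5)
Your high-level plan---expand via \Cref{lem:M_entries}, telescope the backbone rows of the column sums, and reduce to algebraic identities in $\alpha_\ell,\beta_\ell,\mu_\ell$---is exactly the paper's approach. But the claimed ``without loss of generality'' step is not valid, and it silently drops a genuine case. The symmetry $M_{i,j}=M_{j,i}$ is already exhausted in computing a single entry; it does \emph{not} let you choose which of $\rev(r)$ and $\rev(2^\ell-1)$ is the smaller index, because the two play structurally different roles: $\rev(r)$ is a Case-2 row of $\lambda^{(k)}$ with stepsize $h_{\rev(r)}=\beta_{\nu(r+1)}$, while $\rev(2^\ell-1)$ is a Case-5 row with stepsize $h_{\rev(2^\ell-1)}=\alpha_\ell$. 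When $r<2^\ell-1$ (so $\rev(r)>\rev(2^\ell-1)$), applying \Cref{lem:M_entries} with the smaller index first produces a genuinely different expansion in which $\alpha_\ell$ multiplies the surviving sum and the relevant support analysis is that of column $\rev(r)$ rather than column $\rev(2^\ell-1)$. This ordering does occur for admissible $(r,\ell)$ and must be handled separately; the paper treats it as its first of three cases. Your plan as written would simply skip it.

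Within the regime $\rev(r)<\rev(2^\ell-1)$, the rest of your outline goes through, though the paper's secondary split is by the position of $\rev(r)$ relative to the threshold $\rev(2^\ell+2^{\ell-1}-1)$ rather than directly by comparing $\ell$ with $\nu(r+1)$ and $z(r)$: that threshold governs whether the cross-terms $\lambda_{\rev(r),\rev(2^\ell-1)}$ and $\lambda_{\rev(2^\ell-1),\rev(r)}$ can be nonzero and whether the first column sum slices into the non-backbone part of $S^-_{\rev(2^\ell-1)}$. Once you restore the missing ordering as an explicit extra case, executing the telescoping and $\lambda$ look-ups as you describe recovers the paper's computation.
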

\begin{proof}
We split the proof into three cases: where $\rev(r) > \rev(2^{\ell} - 1)$,
where $\rev(2^{\ell}+2^{\ell-1}-1) \leq \rev(r) < \rev(2^\ell - 1)$
and 
where $\rev(r) < \rev(2^{\ell}+2^{\ell-1}-1)$.

First, suppose $\rev(r) > \rev(2^{\ell} - 1)$. Then,
\begin{align*}
    M_{\rev(2^\ell -1),\rev(r)} &= \frac{1}{2}\left(
    \alpha_\ell \sum_{i=0}^{\rev(2^\ell - 1)} \lambda_{i,\rev(r)}-\lambda_{\rev(2^\ell -1),\rev(r)}\right).
\end{align*}
We deal with the inner summation separately. Let $s\in[1,\ell-1]$ so that $2^s < r+1 < 2^{s+1}$. Then,
\begin{align*}
    \sum_{i=0}^{\rev(2^\ell -1)} \lambda_{i,\rev(r)} &= \sum_{\psi = \ell}^{k} \lambda_{\rev(2^\psi - 1), \rev(r)}
    = \frac{\beta_{\nu(r+1)}}{\sqrt{\mu_{\ell}-1}\sqrt{\mu_{s+1} - 1}}.
\end{align*}
We also compute
\begin{align*}
    \lambda_{\rev(2^\ell - 1),\rev(r)} &= \left(\frac{1}{\sqrt{\mu_\ell - 1}} - \frac{1}{\sqrt{\mu_{\ell+1}-1}}\right)\frac{\beta_{\nu(r+1)}}{\sqrt{\mu_{s+1}-1}}.
\end{align*}
Combining these two identities gives
\begin{align*}
    M_{\rev(2^\ell -1),\rev(r)} &= \left(\frac{\beta_{\nu(r+1)}}{2\sqrt{\mu_{s+1}-1}}\right)\left(\frac{\alpha_\ell-1}{\sqrt{\mu_{\ell}-1}} + \frac{1}{\sqrt{\mu_{\ell+1}-1}}\right).
\end{align*}
The second term in parentheses is equal to $\beta_{\ell+1}/\sqrt{\mu_{\ell+1}-1}$ by \cref{lem:sqrtmu}.
We compare this with
\begin{align*}
    \phi_{\rev(2^\ell - 1)} &= \frac{\beta_{\ell+1}}{\sqrt{\mu_{\ell+1}-1}} \qquad\text{and}\qquad
    \phi_{\rev(r)} = \frac{\beta_{\nu(r+1)}}{\sqrt{\mu_{s+1} - 1}}.
\end{align*}
This completes the proof assuming $\rev(r)>\rev(2^{\ell}-1)$.

Now, suppose $\rev(r)<\rev(2^{\ell}+2^{\ell-1}-1)$. Let $s\in[\ell,k-1]$ so that $2^{s} < r+1 < 2^{s+1}$. Then,
\begin{align*}
    M_{\rev(r),\rev(2^\ell -1)} &= \frac{\beta_{\nu(r+1)}}{2}
        \sum_{\psi = s+1}^k \lambda_{\rev(2^\psi - 1),\rev(2^\ell -1)}\\
        &= \frac{\beta_{\nu(r+1)}}{2\sqrt{\mu_{s+1}-1}}\frac{\beta_{\ell+1}}{\sqrt{\mu_{\ell+1} - 1}}.
\end{align*}

In the final case, suppose $\rev(2^{\ell}+2^{\ell-1}-1)\leq  \rev(r) < \rev(2^{\ell}-1)$.
\begin{align*}
    M_{\rev(r),\rev(2^\ell - 1)} &= \frac{1}{2}\left(\beta_{\nu(r+1)}
        \sum_{i=0}^{\rev(r)} \lambda_{i,\rev(2^\ell - 1)}-\lambda_{\rev(r),\rev(2^\ell - 1)}-\lambda_{\rev(2^\ell - 1),\rev(r)}\right).
\end{align*}
Let $s\in[1,\ell-1]$ so that $2^\ell + 2^{s-1} < r + 1 \leq 2^\ell + 2^s$. We deal with the inner summation next:
\begin{align*}
    \sum_{i=0}^{\rev(r)} \lambda_{i,\rev(2^\ell - 1)} &= \sum_{\psi = s}^{\ell - 1} \lambda_{\rev(2^\ell + 2^\psi - 1),\rev(2^\ell - 1)} + \sum_{\psi = \ell+1}^{k} \lambda_{\rev(2^\psi - 1),\rev(2^\ell - 1)}\\
    &= \sum_{\psi = s}^{\ell - 1} \frac{(1+\sqrt{2})^{2(\ell-\psi)-1}}{\mu_{\ell+1}-1} + \frac{\beta_{\ell+1}}{\mu_{\ell+1} - 1}\\
    &= \frac{(1+\sqrt{2})^{2(\ell-s)}-1}{2(\mu_{\ell+1}-1)} + \frac{\beta_{\ell+1}}{\mu_{\ell+1} - 1}.
\end{align*}
Next, we compute
\begin{align*}
    \lambda_{\rev(r),\rev(2^\ell - 1)} &= \begin{cases}
    \frac{(1+\sqrt{2})^{2(\ell-s)- 1}}{\mu_{\ell+1} - 1} & \text{if } r= 2^\ell + 2^s - 1\\
    0 & \text{else}
    \end{cases}
\end{align*}
and
\begin{align*}
    \lambda_{\rev(2^\ell - 1),\rev(r)} &= \begin{cases}
    \frac{1}{2(\mu_{\ell+1} - 1)}\left((1+\sqrt{2})^{2(\ell-s-1)}\beta_{s+2} - \beta_{s}\right) & \text{if } r= 2^\ell + 2^s - 1\\
    \frac{\beta_{\nu(r+1)}}{2(\mu_{\ell+1} - 1)}\left((1+\sqrt{2})^{2(\ell-s)} - 1\right) & \text{else}.
    \end{cases}
\end{align*}
Combining these identities and using the fact that $\beta_s = \beta_{\nu(r+1)}$ in the case where $r= 2^\ell + 2^s - 1$ gives
\begin{align*}
    M_{\rev(r),\rev(2^\ell - 1)} &= \frac{\beta_{\ell+1}\beta_{\nu(r+1)}}{2(\mu_{\ell+1} - 1)}.\qedhere
\end{align*}
\end{proof}

\begin{lemma}
\label{lem:useful_identity_for_case33_case34}
Suppose $0\leq \ell\leq k-1$. Then,
\begin{align*}
    \lambda_{2^{\ell}-1,2^{\ell + 1} - 1}h_{2^{\ell}-1} -\left(\lambda_{2^{\ell} -1,2^{\ell+1} - 1}+\lambda_{2^{\ell+1} -1,2^{\ell} - 1}\right) = 0.
\end{align*}
\end{lemma}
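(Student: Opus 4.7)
The plan is to substitute the explicit formulas for each quantity and reduce the identity to a known algebraic relation between $\alpha_\ell$, $\beta_{\ell+1}$, and $\mu_{\ell+1}$ that was already established in \cref{lem:sqrtmu}.

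First I would identify each entry. The stepsize pattern $\mathfrak{h}^{(k)}$ places $\alpha_\ell$ at position $2^\ell - 1$ for each $\ell < k$ (since the concatenation $[\alpha_0, \pi^{(0)}, \alpha_1, \pi^{(1)}, \dots]$ together with the fact $|\pi^{(j)}| = 2^j - 1$ forces $\alpha_\ell$ to land at cumulative index $2^\ell - 1$), so $h_{2^\ell - 1} = \alpha_\ell$. Next, the row $\lambda_{2^\ell - 1}$ is described by Case 3 of Section~\ref{subsec:lk_construction}, giving
\[
    \lambda_{2^\ell - 1, 2^{\ell+1} - 1} = \frac{\mu_{\ell+1}-1}{2(1+\sqrt{2})^{2\ell}}.
\]
Similarly, the entry $\lambda_{2^{\ell+1}-1, 2^\ell - 1}$ comes from Case 3 (if $\ell+1 < k$) or Case 4 (if $\ell+1 = k$); in both cases the relevant formula agrees and yields
\[
    \lambda_{2^{\ell+1} - 1, 2^\ell - 1} = \frac{1}{2}(\mu_{\ell+1}-1)(1+\sqrt{2})^{-\ell} - 1.
\]

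Plugging these in and multiplying the target identity by $2(1+\sqrt{2})^{2\ell}$, the claim reduces to
\[
    (\mu_{\ell+1} - 1)(\alpha_\ell - 1) + 2(1+\sqrt{2})^{2\ell} = (\mu_{\ell+1} - 1)(1+\sqrt{2})^{\ell}.
\]
Using $(1+\sqrt{2})^{\ell} = \beta_{\ell+1} - 1$, this rearranges to
\[
    (\mu_{\ell+1} - 1)(\beta_{\ell+1} - \alpha_\ell) = 2(\beta_{\ell+1} - 1)^2,
\]
which is exactly the second identity of \cref{lem:sqrtmu}. This closes the proof.

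There is no real obstacle here: the statement is a one-line consequence of the defining quadratic for $\alpha_\ell$ once one has correctly read off the three explicit entries from the definitions of $\mathfrak{h}^{(k)}$ and $\lk$. The only mild bookkeeping concern is checking that the formula for $\lambda_{2^{\ell+1}-1, 2^\ell - 1}$ is the same whether $\ell + 1 < k$ (Case 3) or $\ell + 1 = k$ (Case 4), but a direct inspection of the two definitions shows the expressions coincide.
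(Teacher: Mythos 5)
Your proposal is correct and follows essentially the same route as the paper: read off $h_{2^\ell-1}=\alpha_\ell$ and the two $\lambda$ entries from the Case~3/Case~4 definitions, clear denominators, and reduce to the algebraic relation $(\mu_{\ell+1}-1)(\beta_{\ell+1}-\alpha_\ell)=2(\beta_{\ell+1}-1)^2$ from \cref{lem:sqrtmu}. The extra check that the Case~3 and Case~4 formulas for $\lambda_{2^{\ell+1}-1,2^\ell-1}$ coincide when $\ell+1=k$ is a small but welcome addition that the paper leaves implicit.
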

\begin{proof}
We expand
\begin{align*}
    \text{LHS}
    &=\lambda_{2^{\ell}-1,2^{\ell+1} - 1}(\alpha_{\ell} - 1) - \lambda_{2^{\ell+1} -1,2^{\ell} - 1}\\
    & = \frac{\mu_{\ell+1} - 1}{2(1+\sqrt{2})^{2\ell}}(\alpha_{\ell} - 1) - \frac{(\mu_{\ell+1} - 1)}{2(1+\sqrt{2})^{\ell}} + 1\\
    & = \frac{1}{2(1+\sqrt{2})^{2\ell}}\left((\mu_{\ell+1} - 1)(\alpha_{\ell}-\beta_{\ell+1}) + 2(1+\sqrt{2})^{2\ell}\right).\end{align*}
Combining this expression for the left-hand side with our expression for $\mu_{\ell+1}$ in \cref{lem:sqrtmu} shows that it is equal to zero (see \nextmathematica).
\end{proof}

\begin{lemma}
\label{lem:M33_M34}
Suppose $0\leq\ell< \tau\leq k$. Then,
\begin{align*}
    M_{2^\ell - 1,2^\tau -1} = 0=\frac{\phi_{2^\ell - 1}\phi_{2^\tau - 1}}{2}.
\end{align*}
\end{lemma}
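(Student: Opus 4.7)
The second equality is immediate from the definition $\phi^{(k)} = [0_{2^k-1}, \sqrt{\mu_k-1}, w_k]$: since $\ell < \tau \leq k$ forces $\ell \leq k-1$, the index $2^\ell - 1 \leq 2^{k-1}-1 < 2^k -1$ lies in the initial zero block of $\phi^{(k)}$, so $\phi_{2^\ell-1} = 0$ and the product vanishes regardless of $\phi_{2^\tau-1}$.

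For the first equality, the plan is to split on whether $\tau \geq \ell + 2$ or $\tau = \ell + 1$. In the former case, I will invoke \Cref{lem:supportSimp} on $M_{2^\tau - 1, 2^\ell - 1}$, using symmetry of $M$: since $2^\tau - 1 < 2^{k+1} - 1$ and $z(2^\tau - 1) = \tau \geq \ell + 2 > z(2^\ell - 1) + 1$, the lemma forces $M_{2^\tau - 1, 2^\ell - 1} = 0$, and hence $M_{2^\ell - 1, 2^\tau - 1} = 0$ by symmetry. Note the edge case $\tau = \ell + 1$ cannot be handled by \Cref{lem:supportSimp} because $2^\tau - 1 + 1 = 2^\tau$ is a power of two, so the second clause of that lemma does not fire, and a direct computation is needed.

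For the remaining case $\tau = \ell + 1$, I will apply \Cref{lem:M_entries} to obtain
\begin{align*}
    M_{2^\ell - 1, 2^{\ell+1} - 1} = \frac{1}{2}\Biggl( & h_{2^\ell - 1}\sum_{i=0}^{2^\ell - 1}\lambda_{i, 2^{\ell+1} - 1} - h_{2^{\ell+1} - 1}\sum_{i = 2^{\ell+1}}^{2^{k+1} - 1}\lambda_{i, 2^\ell - 1} \\
    & -\lambda_{2^\ell - 1, 2^{\ell+1} - 1} - \lambda_{2^{\ell+1} - 1, 2^\ell - 1}\Biggr),
\end{align*}
and then prune the two summations via support analysis. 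By \Cref{lem:nonzero_entries_above}, $S^-_{2^{\ell+1}-1} = \{2^{\ell+1} - 2^r - 1 : r \in [0,\ell]\}$, and only $r = \ell$ (giving $2^\ell - 1$) yields an index in $[0, 2^\ell - 1]$, so the first sum collapses to $\lambda_{2^\ell - 1, 2^{\ell+1} - 1}$. Similarly, $S^+_{2^\ell - 1} = \{2^{\ell+1} - 1\}$ (as recorded in \Cref{sec:supportFacts}), so every $i \geq 2^{\ell+1}$ contributes zero and the second sum vanishes. The remaining expression is precisely $\frac{1}{2}$ times the left-hand side of \Cref{lem:useful_identity_for_case33_case34}, which gives $M_{2^\ell - 1, 2^{\ell+1} - 1} = 0$.

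The main obstacle is essentially already discharged by \Cref{lem:useful_identity_for_case33_case34}, which packages the nontrivial algebraic content (relating the two entries via the defining equation of $\alpha_\ell$ and the formula for $\mu_{\ell+1}$ in \Cref{lem:sqrtmu}). What remains for me to verify is purely bookkeeping: the support pruning that reduces both summations to at most a single term each, as described above.
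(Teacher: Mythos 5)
Your proof is correct and takes essentially the same approach as the paper: the paper's proof also reduces the second equality to $\phi_{2^\ell-1}=0$ and then case-splits on $\tau=\ell+1$ (invoking \cref{lem:useful_identity_for_case33_case34}) versus $\tau\geq\ell+2$ (invoking \cref{lem:supportSimp}). Your version additionally spells out the support pruning that collapses the \cref{lem:M_entries} expression to the quantity in \cref{lem:useful_identity_for_case33_case34}, and correctly reads \cref{lem:supportSimp} as applying to the index pair with the larger $z$-value first (the lemma's statement says $i<j$ but its proof and every usage in the paper treat $i$ as the larger index, so your symmetry step is the right way to square the two).
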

\begin{proof}
As $\ell < k$, we have that $\phi_{2^\ell - 1} = 0$. Thus, it suffices to show that $M_{2^\ell - 1, 2^\tau -1}  = 0$.

If $\ell=\tau-1$, then this follows from \cref{lem:useful_identity_for_case33_case34}. On the other hand, if $\ell<\tau - 1$, then this follows from \cref{lem:supportSimp}.\qedhere

\end{proof}

\begin{lemma}
\label{lem:M_45}
    Suppose $0\leq\ell<k$. Then,
    \begin{align*}
        M_{2^k - 1,\rev(2^\ell -1)} = \frac{\phi_{2^k - 1}\phi_{\rev(2^\ell-1)}}{2}.
    \end{align*}
\end{lemma}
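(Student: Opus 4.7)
The plan is to apply \cref{lem:M_entries} with the index pair $(r,\ell') = (2^k-1,\rev(2^\ell-1))$, which is valid since $\rev(2^\ell-1) = 2^{k+1}-2^\ell -1 > 2^k - 1$ whenever $\ell < k$. This yields
\[
    M_{2^k-1,\rev(2^\ell-1)} = \frac{1}{2}\Bigg(h_{2^k-1}\!\!\sum_{i=0}^{2^k-1} \lambda_{i,\rev(2^\ell-1)} \;-\! \sum_{i=\rev(2^\ell-1)+1}^{2^{k+1}-1}\!\! h_{\rev(2^\ell-1)} \lambda_{i,2^k-1} -\lambda_{2^k-1,\rev(2^\ell-1)} - \lambda_{\rev(2^\ell-1),2^k-1}\Bigg).
\]
The second sum vanishes because $S^+_{2^k-1}=\varnothing$, so $\lambda_{i,2^k-1}=0$ for every $i>2^k-1$. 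The last term $\lambda_{\rev(2^\ell-1),2^k-1}$ is zero by inspecting the support description of Cases~2 and~5 (rows above $2^k-1$ do not reach into column $2^k-1$).

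The main structural step is to verify that among the rows $i\in[0,2^k-1]$, only $i=2^k-1$ contributes to the first sum. For every $i<2^k-1$, the row $\lambda_i$ falls under Case~1 or Case~3, and its support lies in the interval $[i-2^{\nu(i+1)-1},\, i+2^{\nu(i+1)}]$. Since $i+1\leq 2^k-1$ forces $\nu(i+1)\leq k-1$, the right endpoint satisfies $i+2^{\nu(i+1)}\leq 2^k-1$, whereas $\rev(2^\ell-1)=2^{k+1}-2^\ell -1\geq 2^k$, so $\lambda_{i,\rev(2^\ell-1)}=0$. Therefore the first sum reduces to the single term $\lambda_{2^k-1,\rev(2^\ell-1)}$.

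Plugging $h_{2^k-1}=\mu_k$ and using the Case~4 formula $\lambda_{2^k-1,\rev(2^\ell-1)}=\frac{\beta_{\ell+1}}{\sqrt{\mu_k-1}\sqrt{\mu_{\ell+1}-1}}$, the remaining expression simplifies to
\[
    M_{2^k-1,\rev(2^\ell-1)} = \frac{\mu_k-1}{2}\cdot\frac{\beta_{\ell+1}}{\sqrt{\mu_k-1}\sqrt{\mu_{\ell+1}-1}} = \frac{\sqrt{\mu_k-1}}{2}\cdot\frac{\beta_{\ell+1}}{\sqrt{\mu_{\ell+1}-1}}.
\]
Recalling $\phi_{2^k-1}=\sqrt{\mu_k-1}$ and $\phi_{\rev(2^\ell-1)}=\beta_{\ell+1}/\sqrt{\mu_{\ell+1}-1}$ (as computed in the proof of \cref{lem:largeDiag}) gives the claim. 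The only nontrivial step is the support argument ruling out contributions from rows $i<2^k-1$; this is straightforward but requires separately checking that Case~3 rows ($i=2^r-1$ with $r<k$), whose support extends to $2^{r+1}-1\leq 2^k-1$, also do not reach column $\rev(2^\ell-1)$.
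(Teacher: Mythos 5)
Your proof is correct and follows essentially the same route as the paper's: you apply \cref{lem:M_entries} with $(r,\ell') = (2^k-1,\rev(2^\ell-1))$, observe that the second sum and $\lambda_{\rev(2^\ell-1),2^k-1}$ vanish, reduce the first sum to the single term $i=2^k-1$, and combine with $h_{2^k-1}=\mu_k$ to obtain $\frac{\mu_k-1}{2}\lambda_{2^k-1,\rev(2^\ell-1)}=\frac{1}{2}\phi_{2^k-1}\phi_{\rev(2^\ell-1)}$. The paper states the same support facts slightly more tersely (packaging them as ``the only nonzero entry in the first two summations'' and an implicit $\lambda_{\rev(2^\ell-1),2^k-1}=0$), whereas you spell out the row-support bound $i+2^{\nu(i+1)}\le 2^k-1$ for $i<2^k-1$; that bound is true (by carry propagation in the binary expansion of $i+1$), though the way you phrase it --- as if it followed merely from $\nu(i+1)\le k-1$ --- slightly understates the argument. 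No gap; same proof.
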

\begin{proof}
    By \cref{lem:M_entries},
    \begin{align*}
        M_{2^k - 1, \rev(2^\ell -1)}
        &= \frac{1}{2}\Bigg(
    \sum_{i=0}^{2^k-1} h_{2^k-1}\lambda_{i,\rev(2^\ell-1)}-\sum_{i= \rev(2^\ell-1)+1}^{2^{k+1}-1} h_{\rev(2^\ell-1)} \lambda_{i,2^{k}-1}\\
    &\hspace{6em}-\lambda_{2^k-1,\rev(2^\ell-1)}-\lambda_{\rev(2^\ell-1),2^k-1}\bigg).
    \end{align*}

    The only nonzero entry in the first two summations is
    \begin{align*}
    h_{2^{k} - 1}\lambda_{2^k - 1,\rev(2^\ell-1)}
        &= \frac{\mu_k}{\sqrt{\mu_k - 1}}\frac{\beta_{\ell+1}}{\sqrt{\mu_{\ell+1} - 1}}.
    \end{align*}

    The two remaining terms are
    \begin{align*}
        &\lambda_{\rev(2^\ell -1),2^k - 1}+\lambda_{2^k - 1, \rev(2^\ell -1)}= \frac{\beta_{\ell+1}}{\sqrt{\mu_k-1}\sqrt{\mu_{\ell+1}-1}}.
    \end{align*}

    Substituting both expressions back in gives
    \begin{align*}
         M_{2^k - 1, \rev(2^\ell -1)} &= \frac{1}{2}\sqrt{\mu_k-1}\frac{\beta_{\ell+1}}{\sqrt{\mu_{\ell+1}-1}} = \frac{\phi_{2^k - 1}\phi_{\rev(2^\ell - 1)}}{2}.\qedhere
    \end{align*}
\end{proof}

\begin{lemma}
\label{lem:M_55}
Suppose $0\leq r < \tau <k$. Then
\begin{align*}
    M_{2\rev(2^\tau - 1), \rev(2^r - 1)} = \frac{\phi_{\rev(2^\tau - 1)} \phi_{\rev(2^r - 1)}}{2}.
\end{align*}
\end{lemma}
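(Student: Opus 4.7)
\emph{Proposal.} The subscript $2\rev(2^\tau-1)$ is out of range for $M$ and appears to be a typo; I read the claim as
$M_{\rev(2^\tau-1),\rev(2^r-1)} = \tfrac{1}{2}\phi_{\rev(2^\tau-1)}\phi_{\rev(2^r-1)} = \tfrac{\beta_{\tau+1}\beta_{r+1}}{2\sqrt{(\mu_{\tau+1}-1)(\mu_{r+1}-1)}}$. Set $i_\tau:=\rev(2^\tau-1)$ and $i_r:=\rev(2^r-1)$, so that $i_\tau<i_r$ and both rows of $\lk$ are defined by Case 5 (with parameters $\tau$ and $r$ respectively). By the symmetry of $\mathfrak h^{(k)}$, $h_{i_\tau}=\alpha_\tau$ and $h_{i_r}=\alpha_r$. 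Applying \cref{lem:M_entries},
$$2M_{i_\tau,i_r} \;=\; \alpha_\tau \sum_{i=0}^{i_\tau}\lambda_{i,i_r} \;-\; \alpha_r \sum_{i=i_r+1}^{2^{k+1}-1}\lambda_{i,i_\tau} \;-\; \lambda_{i_\tau,i_r}\;-\;\lambda_{i_r,i_\tau}.$$

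The plan is to evaluate each of the four pieces. (i) $\lambda_{i_r,i_\tau}=0$: the left-support of the Case 5 row $i_r$ only reaches down to $i_r-2^{r-1}$, but $i_r-i_\tau=2^\tau-2^r>2^{r-1}$ for $\tau>r$. (ii) $\lambda_{i_\tau,i_r}$: this is in the $w_\tau$-block of row $i_\tau$ at offset $2^\tau-2^r$. Unrolling the recursion $w_s=[\pi^{(s-1)}/\sqrt{\mu_s-1},\,\beta_s/\sqrt{\mu_s-1},\,w_{s-1}]$ shows the offset lands in the middle slot of the embedded $w_{r+1}$, giving $(w_\tau)_{2^\tau-2^r}=\beta_{r+1}/\sqrt{\mu_{r+1}-1}$, hence $\lambda_{i_\tau,i_r}=\big(\tfrac{1}{\sqrt{\mu_\tau-1}}-\tfrac{1}{\sqrt{\mu_{\tau+1}-1}}\big)\tfrac{\beta_{r+1}}{\sqrt{\mu_{r+1}-1}}$. (iii) The upper partial column sum: only rows $\rev(2^a-1)$ with $a\in[\tau,k]$ hit column $i_r$ from above (Case 5 for $a<k$, Case 4 for $a=k$), and the same telescoping used in \cref{lem:telescoping} yields $\sum_{i\le i_\tau}\lambda_{i,i_r}=\tfrac{\beta_{r+1}}{\sqrt{\mu_\tau-1}\sqrt{\mu_{r+1}-1}}$.

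(iv) The lower partial column sum $\sum_{i>i_r}\lambda_{i,i_\tau}$: I will obtain this by subtracting the known partial sums from the full column sum. \cref{lem:rowcolhighpow2_expression} gives $\sum_i \lambda_{i,i_\tau}=\beta_{\tau+1}^2/(2(\mu_{\tau+1}-1))$. The same telescoping as in (iii) applied to column $i_\tau$ gives $\sum_{i<i_\tau}\lambda_{i,i_\tau} = \beta_{\tau+1}/(\mu_{\tau+1}-1)$. Thus $\sum_{i>i_r}\lambda_{i,i_\tau} = \tfrac{\beta_{\tau+1}(\beta_{\tau+1}-2)}{2(\mu_{\tau+1}-1)} - \sum_{i_\tau<i\le i_r}\lambda_{i,i_\tau}$. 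Substituting these four pieces into $2M_{i_\tau,i_r}$ and simplifying via \cref{lem:sqrtmu} (which converts $\alpha_\tau,\alpha_r$ into combinations of $\beta_{\tau+1},\beta_{r+1}$ and the $\sqrt{\mu-1}$ factors) and \cref{lem:relating_muprev_to_mu_and_beta} should match the target expression; I expect the unwanted cross-terms involving $\alpha_r,\alpha_\tau$ to cancel, isolating $\beta_{\tau+1}\beta_{r+1}/(2\sqrt{(\mu_{\tau+1}-1)(\mu_{r+1}-1)})$.

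The main obstacle is the middle-strip sum $\sum_{i_\tau<i\le i_r}\lambda_{i,i_\tau}$ in step (iv). A support argument analogous to (i) shows that Case 5 rows $\rev(2^\ell-1)$ with $r\le \ell<\tau$ in this strip contribute nothing (since $2^\tau-2^\ell>2^{\ell-1}$). The remaining contributions come from the Case 2 rows $\rev(r')$ with $2^r-1<r'<2^\tau-1$ and $r'+1$ not a power of two; these must be catalogued via the $\rho_{\nu(r'+1)}$ pattern, grouped by $\nu(r'+1)$, and summed, paralleling the index manipulations of \cref{lem:sum_above_diagonal_case_2}. If direct computation proves unwieldy, a cleaner alternative is to recognize this middle strip as a partial ``row sum of $\lambda$ restricted to column $i_\tau$'' block that can be reduced to a $\rev$-based variant of \cref{lem:telescoping} adapted to partial sums, which I expect to produce $\beta_{\tau+1}/\big(\sqrt{\mu_r-1}\sqrt{\mu_{\tau+1}-1}\big) - \beta_{\tau+1}/(\mu_{\tau+1}-1)$ or similar. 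Once this residual piece is in closed form, final simplification is a routine Mathematica check analogous to those done throughout \cref{sec:lambdaM}.
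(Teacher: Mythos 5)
Your reading of the subscript as a typo is right, and your steps (i)--(iii) together with the final simplification via \cref{lem:sqrtmu} are exactly the paper's argument: the second partial column sum and $\lambda_{\rev(2^r-1),\rev(2^\tau-1)}$ vanish, the sum $\sum_{i\le \rev(2^\tau-1)}\lambda_{i,\rev(2^r-1)}$ telescopes to $\beta_{r+1}/\bigl(\sqrt{\mu_\tau-1}\sqrt{\mu_{r+1}-1}\bigr)$, and $\tfrac{\alpha_\tau-1}{\sqrt{\mu_\tau-1}}+\tfrac{1}{\sqrt{\mu_{\tau+1}-1}}=\tfrac{\beta_{\tau+1}}{\sqrt{\mu_{\tau+1}-1}}$ closes the computation.

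The problem is step (iv), which is both an unnecessary detour and, as written, incorrect. The quantity you need is $\sum_{i>i_r}\lambda_{i,i_\tau}$, and it is zero outright: the column $i_\tau=\rev(2^\tau-1)=2^{k+1}-2^\tau-1$ has \emph{no} nonzero entries below the diagonal ($S^+_{\rev(2^\tau-1)}=\varnothing$, as already noted in the proof of \cref{lem:rowcolhighpow2_expression}; for $i>i_\tau$ one has $\nu(i+1)\le\tau-1$ so $i-i_\tau>2^{\nu(i+1)-1}$ always fails the support condition). So your ``main obstacle,'' the middle strip $\sum_{i_\tau<i\le i_r}\lambda_{i,i_\tau}$, never enters \cref{lem:M_entries} in the first place and is itself zero; no catalogue of Case 2 rows is needed. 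Moreover, your subtraction scheme rests on the claim $\sum_{i<i_\tau}\lambda_{i,i_\tau}=\beta_{\tau+1}/(\mu_{\tau+1}-1)$, which is wrong: the telescoping over rows $\rev(2^a-1)$, $a\in[\tau+1,k]$, captures only part of $S^-_{i_\tau}$ and omits the Case 2 rows $2^{k+1}-2^\tau-2^s-1$, $s\in[0,\tau-1]$, whose contribution is $\tfrac{(1+\sqrt2)^{2\tau}-1}{2(\mu_{\tau+1}-1)}=\tfrac{\beta_{\tau+1}(\beta_{\tau+1}-2)}{2(\mu_{\tau+1}-1)}$ (see \cref{lem:rowcolhighpow2_expression}); the full above-diagonal sum equals the entire column sum $\beta_{\tau+1}^2/(2(\mu_{\tau+1}-1))$. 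Executed literally, your decomposition would therefore assign a spurious nonzero value $\tfrac{\beta_{\tau+1}(\beta_{\tau+1}-2)}{2(\mu_{\tau+1}-1)}$ to the tail (up to whatever you computed for the middle strip), multiplied by $\alpha_r$, and the target identity would not fall out. Replace all of step (iv) with the one-line support observation and the proof is complete.
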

\begin{proof}
We have
\begin{align*}
    &M_{\rev(2^\tau - 1), \rev(2^r - 1)}\\
    &\qquad=\frac{1}{2}\left(
        \alpha_\tau\sum_{i=0}^{\rev(2^\tau - 1)} \lambda_{i,\rev(2^r - 1)}-\lambda_{\rev(2^\tau - 1),\rev(2^r - 1)}\right)\\
    &\qquad = \frac{1}{2}\left(
        \alpha_\tau\sum_{i=0}^{\rev(2^\tau - 1)} \lambda_{i,\rev(2^r - 1)}-  \left(\frac{1}{\sqrt{\mu_\tau - 1}} - \frac{1}{\sqrt{\mu_{\tau + 1} - 1}}\right)\frac{\beta_{r+1}}{\sqrt{\mu_{r+1} - 1}}\right).
\end{align*}
We deal with the inner summation next. Note that $\rev(2^r - 1)= \sum_{i=0}^k b_a2^a$ where $b_a = 0$ if and only if $a = r$. Thus, by \cref{lem:nonzero_entries_above}, the relevant entries in the summation correspond to indices $i = \rev(2^\ell -1)$ where $\ell \in[\tau,k]$. Thus, the inner summation is given by
\begin{align*}
    \sum_{i=0}^{\rev(2^\tau - 1)} \lambda_{i,\rev(2^r - 1)} &= \sum_{\ell = \tau}^k \lambda_{\rev(2^\ell - 1),\rev(2^r - 1)}\\
    &= \frac{1}{\sqrt{\mu_{\tau} - 1}}\frac{\beta_{r+1}}{\sqrt{\mu_{r+1} - 1}}.
\end{align*}
Plugging this back in gives
\begin{align*}
    M_{\rev(2^\tau - 1), \rev(2^r - 1)}
    &= \frac{1}{2}\left(
        \alpha_\tau\frac{1}{\sqrt{\mu_{\tau} - 1}}\frac{\beta_{r+1}}{\sqrt{\mu_{r+1} - 1}} -  \left(\frac{1}{\sqrt{\mu_\tau - 1}} - \frac{1}{\sqrt{\mu_{\tau + 1} - 1}}\right)\frac{\beta_{r+1}}{\sqrt{\mu_{r+1} - 1}}\right)\\
    &= \frac{\beta_{r+1}}{2\sqrt{\mu_{r+1}-1}}\left(\frac{\alpha_\tau-1}{\sqrt{\mu_\tau - 1}} + \frac{1}{\sqrt{\mu_{\tau + 1} - 1}}\right)\\
    &= \frac{\beta_{r+1}}{2\sqrt{\mu_{r+1}-1}}\frac{\beta_{\tau+1}}{\sqrt{\mu_{\tau+1} - 1}}.
\end{align*}
Here, on the last line we have used \cref{lem:sqrtmu}. This completes the proof as $\phi_{\rev(2^r-1)} = \frac{\beta_{r+1}}{\sqrt{\mu_{r+1} - 1}}$ and $\phi_{\rev(2^\tau - 1)} = \frac{\beta_{\tau+1}}{\sqrt{\mu_{\tau+1} - 1}}$.
\end{proof}

\begin{lemma}
\label{lem:M_6}
Suppose $0\leq s \leq 2^{k+1}-2$. Then
\begin{align*}
    M_{s,2^{k+1}-1} = \frac{\phi_s}{2}=\frac{\phi_s \phi_{2^{k+1}-1}}{2}.
\end{align*}
\end{lemma}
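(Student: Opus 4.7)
\textbf{Proof proposal for Lemma~\ref{lem:M_6}.}
Note first that $\phi_{2^{k+1}-1}=1$ (the last entry of $w_k$ is the last entry of $w_0=[1]$ by the recursion), so the two claimed equalities coincide. The plan is to apply \cref{lem:M_entries} with $r=s$ and $\ell=2^{k+1}-1$. Since $\lambda$ vanishes on row $2^{k+1}-1$ (by definition) and the second summation is empty (because $i$ ranges over $\emptyset$), the formula collapses to
\begin{align*}
    M_{s,2^{k+1}-1}\;=\;\tfrac{1}{2}\!\left(h_s\sum_{i=0}^{s}\lambda_{i,2^{k+1}-1}-\lambda_{s,2^{k+1}-1}\right).
\end{align*}
The key observation is that the last column of $\lambda$ has very simple structure: its only nonzero entries come from Cases~4 and~5 of the construction, namely $\lambda_{2^k-1,2^{k+1}-1}=\tfrac{1}{\sqrt{\mu_k-1}}$ and, for $\ell\in[0,k-1]$, $\lambda_{2^{k+1}-2^\ell-1,2^{k+1}-1}=\tfrac{1}{\sqrt{\mu_\ell-1}}-\tfrac{1}{\sqrt{\mu_{\ell+1}-1}}$ (using that the last entry of every $w_\ell$ is $1$). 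I will also use the symmetry $h_s=h_{\rev(s)}$ together with the explicit identifications $h_{2^\ell-1}=\alpha_\ell$ for $\ell<k$ and $h_{2^k-1}=\mu_k$, and $h_m=\beta_{\nu(m+1)}$ for $m\in(2^\ell-1,2^{\ell+1}-1)$ with $\ell<k$, all of which follow immediately from the construction of $\mathfrak{h}^{(k)}$.

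The proof proceeds by a four-way case split on $s$. \emph{Case A: $s<2^k-1$.} Both the partial column sum and $\lambda_{s,2^{k+1}-1}$ vanish, so $M_{s,2^{k+1}-1}=0=\phi_s/2$. \emph{Case B: $s=2^k-1$.} Only the entry at $i=2^k-1$ contributes, and $h_s=\mu_k$, giving $M_{s,2^{k+1}-1}=\tfrac{1}{2}\cdot\tfrac{\mu_k-1}{\sqrt{\mu_k-1}}=\tfrac{\sqrt{\mu_k-1}}{2}=\tfrac{\phi_s}{2}$. \emph{Case C: $s=\rev(2^a-1)=2^{k+1}-2^a-1$ for some $a\in[0,k-1]$.} The partial column sum telescopes to $\tfrac{1}{\sqrt{\mu_a-1}}$, while $\lambda_{s,2^{k+1}-1}=\tfrac{1}{\sqrt{\mu_a-1}}-\tfrac{1}{\sqrt{\mu_{a+1}-1}}$ and $h_s=\alpha_a$. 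Substituting yields $M_{s,2^{k+1}-1}=\tfrac{1}{2}\!\left(\tfrac{\alpha_a-1}{\sqrt{\mu_a-1}}+\tfrac{1}{\sqrt{\mu_{a+1}-1}}\right)$, and invoking \cref{lem:sqrtmu} (which gives $\tfrac{\alpha_a-1}{\sqrt{\mu_a-1}}=\tfrac{\beta_{a+1}-1}{\sqrt{\mu_{a+1}-1}}$) collapses this to $\tfrac{\beta_{a+1}}{2\sqrt{\mu_{a+1}-1}}=\tfrac{\phi_s}{2}$.

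\emph{Case D: $s=\rev(r)$ with $r+1$ not a power of $2$ and $r<2^k-1$.} Here I need to check that $\lambda_{s,2^{k+1}-1}=0$: the support of $\lambda_s$ (Case~2 of the construction) extends only to $s+2^{\nu(s+1)}=s+2^{\nu(r+1)}<2^{k+1}-1$, the latter strict because $r+1>2^{\nu(r+1)}$ in this subcase. The partial column sum again telescopes to $\tfrac{1}{\sqrt{\mu_{z(r)+1}-1}}$, and $h_s=\beta_{\nu(r+1)}$, so $M_{s,2^{k+1}-1}=\tfrac{\beta_{\nu(r+1)}}{2\sqrt{\mu_{z(r)+1}-1}}=\tfrac{\phi_s}{2}$, using the general formula $\phi_{\rev(r)}=\tfrac{\beta_{\nu(r+1)}}{\sqrt{\mu_{z(r)+1}-1}}$ that one reads off by unrolling $w_k$.

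The only nontrivial step is the $\alpha$-to-$\beta$ conversion in Case~C, which is precisely what \cref{lem:sqrtmu} is designed for; the remaining work is bookkeeping about the support of the last column and telescoping. I expect the cleanest presentation to state the shape of the last column once, then handle the four cases in order.
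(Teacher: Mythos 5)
Your proposal is correct and follows essentially the same route as the paper: note $\phi_{2^{k+1}-1}=1$, specialize \cref{lem:M_entries}, observe that the last column of $\lambda$ is supported only on rows $2^k-1$ and $\rev(2^\ell-1)$, kill the case $s<2^k-1$, and then run the same three-way case split (Cases 2, 4, 5 of the construction) with the identical telescoping computation and the same use of \cref{lem:sqrtmu} in the $\alpha_\ell$-to-$\beta_{\ell+1}$ conversion. The only cosmetic difference is that for $s<2^k-1$ you argue the vanishing directly from the support rather than citing \cref{lem:supportSimp}, which is the same fact.
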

\begin{proof}
The second equality follows from the fact that $\phi_{2^{k+1}-1} = 1$.

We turn to the first equality. Let $0\leq s\leq 2^{k+1}-2$. First, note if $0\leq s \leq 2^{k}-2$, then $\phi_s=0$ and $M_{s,2^{k+1}-1}=0$ by \cref{lem:supportSimp}.
Thus, we may assume $2^{k}-1\leq s\leq 2^{k+1}-2$ in the remainder of this proof. We will break the rest of the proof into cases depending on how $\lambda_{s}$ is defined, i.e., Cases 2, 4, and 5.

\textbf{Case 2}: Suppose $s=\rev(r)$ for some $0\leq r< 2^{k}-1$ for which $r+1$ is not a power of $2$.
By \cref{lem:M_entries},
\begin{align*}
    M_{\rev(r),2^{k+1}-1} &= \frac{1}{2}\left(
        \sum_{i=0}^{\rev(r)} h_{\rev(r)}\lambda_{i,2^{k+1}-1}-\lambda_{\rev(r),2^{k+1}-1}\right)\\
        &=\frac{h_{\rev(r)}}{2}
            \sum_{i=0}^{\rev(r)} \lambda_{i,2^{k+1}-1}.
\end{align*}
By \cref{lem:nonzero_entries_above}, the nonzero summands in the sum correspond to indices $i = \rev(2^\tau - 1)$ where $z(r) + 1 \leq\tau \leq k$. We also have that $h_s = \beta_{\nu(r + 1)}$.
Combining these identities gives
\begin{align*}
    M_{\rev(r),2^{k+1}-1} &= \frac{\beta_{\nu(r+1)}}{2}\sum_{\tau=z(r) + 1}^k \lambda_{\rev(2^\tau - 1), 2^{k+1}-1}\\
    &= \frac{\beta_{\nu(r+1)}}{2}\left(\sum_{\tau=z(r) + 1}^{k-1} \lambda_{\rev(2^\tau - 1), 2^{k+1}-1} + \lambda_{2^k - 1, 2^{k+1}-1}\right)\\
    &= \frac{\beta_{\nu(r+1)}}{2}\frac{1}{\sqrt{\mu_{z(r) + 1} - 1}}.
\end{align*}
Note that $\phi_{\rev(r)} = \frac{\beta_{\nu(r+1)}}{\sqrt{\mu_{z(r)+1} - 1}}$.

\textbf{Case 4}: Suppose $s = 2^k - 1$. By \cref{lem:M_entries},
\begin{align*}
    M_{2^k-1,2^{k+1}-1} &= \frac{1}{2}\left(
    \sum_{i=0}^{2^k-1} g_{2^k-1}\lambda_{i,2^{k+1}-1}-\lambda_{2^k-1,2^{k+1}-1}\right)\\
    &=\frac{1}{2}(\mu_k-1) \lambda_{2^k-1,2^{k+1}-1}\\
    &= \frac{\sqrt{\mu_k - 1}}{2}.
\end{align*}
On the other hand, $\phi_{2^k-1} = \sqrt{\mu_k - 1}$.

\textbf{Case 5}: Suppose $s = \rev(2^\ell-1)$ for some $0\leq \ell < k$. By \cref{lem:M_entries},
\begin{align*}
    M_{\rev(2^\ell-1),2^{k+1}-1} &= \frac{1}{2}\left(
        \sum_{i=0}^{\rev(2^\ell - 1)} h_{\rev(2^\ell - 1)}\lambda_{i,2^{k+1}-1} - \lambda_{\rev(2^\ell - 1),2^{k+1}-1}\right)\\
        &= \frac{1}{2}\left(\alpha_\ell \sum_{i=0}^{\rev(2^\ell - 1)}\lambda_{i,2^{k+1}-1} - \frac{1}{\sqrt{\mu_\ell - 1}} + \frac{1}{\sqrt{\mu_{\ell+1}-1}}\right).
\end{align*}
We handle the inner summation below. By \cref{lem:nonzero_entries_above}, the relevant indices in the summation are given by $i =\rev(2^\tau - 1)$ where $\ell \leq \tau \leq k$. Thus,
\begin{align*}
    \sum_{i=0}^{\rev(2^\ell - 1)}\lambda_{i,2^{k+1}-1} &= 
    \sum_{\tau = \ell}^k \lambda_{\rev(2^\tau - 1),2^{k+1}-1}\\
    &=\sum_{\tau = \ell}^{k-1} \lambda_{\rev(2^\tau - 1),2^{k+1}-1} + \lambda_{2^k - 1,2^{k+1}-1}\\
    &= \frac{1}{\sqrt{\mu_\ell - 1}}.
\end{align*}
Plugging back in gives
\begin{align*}
    M_{\rev(2^\ell - 1),2^{k+1}-1} &= \frac{1}{2}\left(\frac{\alpha_\ell - 1}{\sqrt{\mu_\ell - 1}} + \frac{1}{\sqrt{\mu_{\ell+1}-1}}\right)\\
    &= \frac{\beta_{\ell+1}}{2\sqrt{\mu_{\ell+1}-1}},
\end{align*}
where we have used \cref{lem:sqrtmu} in the second line.
Note that $\phi_{\rev(2^\ell - 1)} = \frac{\beta_{\ell+1}}{\sqrt{\mu_{\ell+1} - 1}}$.
\end{proof}

\subsection{Off-Diagonal Entries in Case $(1,1)$}

Our goal for this subsection will be to prove \Cref{lem:M_11}, stating that the off-diagonal entries $M_{i,j}$ are zero for all $0\leq j< i< 2^k-1$, where neither $i+1$ nor $j+1$ are powers of two (equivalently, where $p(i),p(j)\geq2$).
We will prove \cref{lem:M_11} inductively on the value of $\min(p(i),p(j))$.
We will make use of the following lemma

\begin{lemma}
\label{lem:secondDigit}
    Fix $i > j$ such that $i+1$ and $j+1$ are both not powers of 2. If $z(i) = z(j)$, so that $i = 2^z + i'$ and $j = 2^z + j'$, and $z(j') < z(i') -1$, then $M_{i,j} = 0$.
\end{lemma}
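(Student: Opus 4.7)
I plan to apply \cref{lem:M_entries} with $r = j$ and $\ell = i$ to get
\[
2 M_{j,i} = h_j \sum_{s=0}^{j} \lambda_{s,i} \;-\; h_i \sum_{s=i+1}^{2^{k+1}-1} \lambda_{s,j} \;-\; \lambda_{i,j} - \lambda_{j,i},
\]
and show that the two ``direct'' entries vanish by support, while each partial column sum collapses to a single boundary term so that the two surviving cross terms cancel. Set $w := z(i')$; the hypothesis gives $j' \leq 2^{w-1}-2$ and $i' \geq 2^w - 1$, hence $i - j = i' - j' \geq 2^{w-1}+1$. Note $z \leq k-1$ here since the lemma is invoked within \cref{lem:M_11}, and consequently $w \leq z - 1$.

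For the direct entries, since $\nu(i+1) = \nu(i'+1) \leq z(i') = w$ and $\nu(j+1) = \nu(j'+1) \leq z(j') \leq w-2$, the left support radius of row $i$ is at most $2^{w-1}$ and the right support radius of row $j$ is at most $2^{w-2}$. Both are strictly less than $i - j$, so $\lambda_{i,j} = \lambda_{j,i} = 0$.

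For the partial sums, \cref{lem:nonzero_entries_above} identifies $S^-_i$ as the ``binary truncations'' of $i+1$. Since the highest bit of $i'+1$ is at position $w$, the smallest truncation giving a contribution beyond $2^z-1$ equals $2^z+2^w-1$, which exceeds $j < 2^z + 2^{w-1} - 1$. Hence $\sum_{s \leq j} \lambda_{s,i} = \lambda_{2^z - 1, i}$. For the bottom sum, \cref{lem:support_below_recursive_NE} (with ``$\ell$'' equal to our $z$, valid since $z(j')<z<k$) gives $S^+_j = \{2^z + s : s \in S^+_{j'}\} \cup \{2^{z+1}-1\}$; the shifted elements lie in $(j, 2^z + 2^{z(j')+1}-1] \subseteq (j, i]$, so only $2^{z+1}-1$ survives past $i$, and $\sum_{s \geq i+1} \lambda_{s, j} = \lambda_{2^{z+1} - 1, j}$.

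Evaluating these two boundary entries via the Case 3 formula for row $2^z - 1$ (at interior column $i$) and the Case 3 formula for row $2^{z+1} - 1$---or Case 4 when $z = k-1$, whose left-of-diagonal entries agree with Case 3---at column $j$, which falls into the ``in-between'' sub-case with $a = z - 1$ since $j' < 2^{z-1} - 1$, yields
\[
\lambda_{2^z - 1,\, i} = \frac{\mu_{z+1} - 1}{2(1+\sqrt{2})^{2z}}\beta_{\nu(i+1)}, \qquad \lambda_{2^{z+1} - 1,\, j} = \frac{\mu_{z+1} - 1}{2(1+\sqrt{2})^{2z}}\beta_{\nu(j+1)}.
\]
Because neither $i$ nor $j$ sits at a position of the form $2^\ell - 1$ or $\rev(2^\ell - 1)$, the building-block step sizes satisfy $h_i = \beta_{\nu(i+1)}$ and $h_j = \beta_{\nu(j+1)}$. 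Therefore $h_j \lambda_{2^z - 1,i} = \beta_{\nu(j+1)}\beta_{\nu(i+1)} \cdot \tfrac{\mu_{z+1} - 1}{2(1+\sqrt{2})^{2z}} = h_i \lambda_{2^{z+1} - 1, j}$, the two cross terms cancel, and $M_{i,j} = 0$. The main obstacle is the combinatorial bookkeeping in the partial-sum collapse, but this reduces to elementary bit-arithmetic and direct appeal to the support lemmas of Section~\ref{sec:supportFacts}.
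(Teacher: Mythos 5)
Your proposal is correct and follows essentially the same route as the paper's own proof: apply \cref{lem:M_entries}, show $\lambda_{i,j}=\lambda_{j,i}=0$ by comparing $i-j$ against the left/right support radii, collapse the two partial column sums to the single boundary terms $\lambda_{2^z-1,i}$ and $\lambda_{2^{z+1}-1,j}$, and observe that $h_j\lambda_{2^z-1,i}$ and $h_i\lambda_{2^{z+1}-1,j}$ cancel. The only cosmetic difference is that you identify the surviving support elements by quoting \cref{lem:nonzero_entries_above} and \cref{lem:support_below_recursive_NE} from Appendix~\ref{sec:supportFacts}, while the paper redoes the same bit-arithmetic inline, and you handle the $z+1=k$ boundary (where Case~4 rather than Case~3 governs the row $2^{z+1}-1$) a bit more explicitly; neither change alters the argument.
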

\begin{proof}
    Note that $\lambda_{i,j} = 0$, since 
    \[i - 2^{\nu(i+1)-1} = 2^{z} + i' -  2^{\nu(i'+1)-1} \ge 2^{z} + 2^{z(i')} > 2^z + j' = j.\]
   Similarly, $\lambda_{j,i} = 0$, since
    \[j + 2^{\nu(j+1)} = 2^{z} + j' +  2^{\nu(j'+1)} \le 2^{z} + 2^{z(j')+1} < 2^z + i' = i.\]
    If $\ell < 2^{z(i)}-1$, then as above, $\lambda_{\ell, i} = 0$, so let $2^{z(i)} - 1 \le \ell \le j$. We then have that $\ell = 2^{z(i)} + \ell'$, where $\ell' \le j'$. If $\ell' \neq 0$, then $\nu(\ell'+1) = \nu(\ell+1)$, and
    \[
        \ell + 2^{\nu(\ell+1)} = 2^{z(i)} + \ell' + 2^{\nu(\ell'+1)} \le 2^{z(i)} + 2^{z(\ell'+1)+1} < 2^{z(i)} + i'.
    \]
    Therefore, the only nonzero term in the sum $\sum_{\ell = *}^{j} \lambda_{\ell,i}$ is $\lambda_{2^{z(i)}-1, i}$.
    Similarly, if $\ell > i$, and $\ell = 2^{z(i)} + \ell'$, where $\ell' < 2^{z(i)}-1$, then 
    \[
        \ell - 2^{\nu(\ell+1)-1} = 2^{z(i)} + \ell' - 2^{\nu(\ell'+1)-1} \ge 2^{z(i)} + 2^{z(\ell'+1)-1} > j.
    \]
    This implies that $\sum_{\ell = i+1}^{2^k-1} \lambda_{\ell,j} = \lambda_{2^{z(i)+1}-1,j}$. We conclude that
    \[
        M_{i,j} = -\frac{1}{2}h_{i} \lambda_{2^{z+1}-1,j} + \frac{1}{2}h_{j}  \lambda_{2^z-1,i}.
    \]
    Now, note that because $i+1$ is not a power of 2,
    \[
        \lambda_{2^z-1,i} = \frac{\mu_{z+1}-1}{2(1+\sqrt{2})^{2z}} \beta_{\nu(i+1)}.
    \]
    Also note that because $j < 2^{z} + 2^{z-1}-1$, and $j$ is not a power of 2,
    \[
        \lambda_{2^{z+1}-1,j} = \frac{\mu_{z+1}-1}{2(1+\sqrt{2})^{2z}} \beta_{\nu(j+1)}.
    \]
    The result follows by noting that $h_i = \beta_{\nu(i+1)}$ and  $h_j = \beta_{\nu(j+1)}$, so that 
    \[
        M_{i,j} = -\frac{1}{2}\beta_{\nu(i+1)} \frac{\mu_{z+1}-1}{2(1+\sqrt{2})^{2z}} \beta_{\nu(j+1)} + \frac{1}{2}\beta_{\nu(j+1)}  \frac{\mu_{z+1}-1}{2(1+\sqrt{2})^{2z}} \beta_{\nu(j+1)} = 0. \qedhere
    \]
\end{proof}
The following lemma will be the base case for the subsequent inductive proof and itself requires nontrivial calculations.
\begin{lemma}
    \label{lem:base_case11}
    Suppose $0\leq j<i<2^k-1$ satisfy $\min \{p(i), p(j)\} = 2$. Then, $M_{i,j}=0$.
\end{lemma}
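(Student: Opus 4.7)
The plan is to use \Cref{lem:M_entries}, which expresses $M_{i,j}$ for $i>j$ as
\[
 M_{i,j} = \frac{1}{2}\left(h_i \sum_{\ell=0}^{i} \lambda_{\ell,j} - h_j \sum_{\ell=i+1}^{2^{k+1}-1} \lambda_{\ell,i} - \lambda_{i,j} - \lambda_{j,i}\right),
\]
together with the support lemmas already established, to reduce the problem to a small number of concrete configurations, each of which we verify by direct algebra. First, I would apply \Cref{lem:supportSimp} to dispatch the cases $z(i) > z(j)+1$ and $z(i) = z(j)+1$ (where $i+1$ is not a power of $2$ by hypothesis), leaving only $z(i) = z(j) = z$. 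Writing $i = 2^z + i'$ and $j = 2^z + j'$ with $0 \le j' < i' < 2^z - 1$, I would then use \Cref{lem:secondDigit} to rule out $z(j') < z(i') - 1$, so only $z(j') \in \{z(i')-1, z(i')\}$ remains.

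Imposing the base-case hypothesis $\min\{p(i),p(j)\} = 2$ and using $p(i) = p(i')+1$, $p(j) = p(j')+1$, only two shapes for $(i,j)$ survive:
\begin{itemize}
\item \textbf{Type A:} $p(i)=2$, so $i' = 2^{a}-1$ for some $1 \le a < z$, and $j' \in [2^{a-1}, 2^a - 1)$ with $p(j') \ge 1$ (covering both $p(j)=2$ and $p(j)\ge 3$).
\item \textbf{Type B:} $p(j)=2$ and $p(i) \ge 3$, so $j' = 2^b - 1$ for some $0 \le b < z$ and $i' \in (2^b-1, 2^{b+2}-1)$ with $p(i') \ge 2$.
\end{itemize}

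For each shape, I would expand $M_{i,j}$ via \Cref{lem:M_entries}. The truncated column sums $\sum_{\ell=0}^i \lambda_{\ell,j}$ and $\sum_{\ell=i+1}^{2^{k+1}-1}\lambda_{\ell,i}$ admit clean descriptions as partial sums of $\rho$-entries on a handful of adjacent rows, following the enumeration style of \Cref{lem:colsums_above_left,lem:SplusSum} and the support lemmas in \Cref{subsub:col_support_lemmas}. Plugging these in, along with the explicit formulas for $\lambda_{i,j}$ and $\lambda_{j,i}$ from the Case~1 definition of $\lambda^{(k)}$ and the values $h_i = \beta_{\nu(i+1)}$, $h_j = \beta_{\nu(j+1)}$, I expect the telescoping in the $\rho$-sums to combine with the $\beta_{\nu(i+1)} - \beta_{\nu(j+1)}$ asymmetry so that the resulting identity reduces to a consequence of the defining quadratic of $\alpha_{z}$ (equivalently, \Cref{lem:sqrtmu}).

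The main obstacle will be combinatorial bookkeeping rather than algebra: in each of Types A and B, the precise positions of $j'$ inside the allowed interval (e.g.\ $j' = 2^{a-1}$ vs.\ $j' = 2^{a-1} + 1$ vs.\ $j' = 2^a - 2$ in Type A) produce slightly different truncation points and different values of $\nu(j+1)$, so a systematic subdivision is needed to avoid omissions or overlap. Following the paper's convention, I would defer the final symbolic simplifications in each subcase to a Mathematica verification via the \texttt{nextmathematica} mechanism, so that the proof itself only needs to identify the relevant sums and the closed-form expressions they collapse to.
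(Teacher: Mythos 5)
Your reduction and case split mirror the paper's own proof (dispatch $z(i)\neq z(j)$ via \Cref{lem:supportSimp}, constrain $z(j')\in\{z(i')-1,z(i')\}$ via \Cref{lem:secondDigit}, then split on which of $p(i),p(j)$ equals~$2$); your Types A/B are the paper's Cases (i)--(iii) regrouped, with the computations deferred to Mathematica in the same spirit. But the formula you wrote for $M_{i,j}$ is not what \Cref{lem:M_entries} gives. Taking $r=j$, $\ell=i$ there (since $j<i$),
\[
M_{i,j} \;=\; \tfrac{1}{2}\Bigl(h_j\sum_{s=0}^{j}\lambda_{s,i} \;-\; h_i\sum_{s=i+1}^{2^{k+1}-1}\lambda_{s,j} \;-\; \lambda_{i,j} \;-\; \lambda_{j,i}\Bigr),
\]
whereas you paired $h_i$ with $\sum_{s\le i}\lambda_{s,j}$ and $h_j$ with $\sum_{s>i}\lambda_{s,i}$; you have the stepsize factors and the column indices swapped, and the cutoff on the first sum ($s\le i$ vs.\ the correct $s\le j$) is also wrong. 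These quantities genuinely differ, so the downstream cancellation will not close if you compute from your version.

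Two smaller points. Your Type~A interval should be $j'\in[2^{a-1}-1,\,2^a-1)$, not $[2^{a-1},\,2^a-1)$: the excluded left endpoint $j'=2^{a-1}-1$ is precisely where $p(j)=2$ (the paper's Case~(i)), so as written your interval does not in fact ``cover both $p(j)=2$ and $p(j)\ge 3$'' as you claim. And the final cancellation in these entries does not reduce to the defining quadratic of $\alpha_z$ or \Cref{lem:sqrtmu}: because neither $i+1$ nor $j+1$ is a power of two, $h_i=\beta_{\nu(i+1)}$, $h_j=\beta_{\nu(j+1)}$, and every $\lambda$-value appearing (from the Case~1 rows and the tails of the $2^z-1$ and $2^{z+1}-1$ rows) is $(\mu_{z+1}-1)$ times a rational expression in $1+\sqrt 2$. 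The identity $M_{i,j}=0$ is a purely geometric-series computation in $1+\sqrt 2$, which is what the paper's Mathematica calls verify; the $\alpha$-quadratic enters only when an index one less than a power of two contributes a stepsize $h=\alpha_\ell$, which does not happen in this lemma.
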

\begin{proof}
   If $z(i) \neq z(j)$ then the result follows from \Cref{lem:supportSimp}. 
    So, assume that $i = 2^z + i'$ and $j = 2^z + j'$ where $i', j' < 2^z$.
     \Cref{lem:secondDigit} shows that if $z(i') > z(j') + 1$, then $M_{i,j} = 0$. From now on, we will assume that $z(i') \le z(j') + 1$.

   We consider three cases, either $p(i) = p(j) = 2$; $p(i) = 2$ and $p(j) > 2$, and $p(i) > 2$ and $p(j) = 2$.

   \textbf{Case (i): $p(i) = p(j) = 2$}

   In this case, $i = 2^z + 2^r - 1$ and $j = 2^z + 2^t - 1$ for some $t < r < z$.
   By the assumption that $j<i$ and that $z(i')> z(j'+1)$, we have that $t = r-1$.
    Now, considering the support of $\lambda$, we have that
    \begin{align*}
        M_{i,j} &= -\frac{1}{2}\left(\lambda_{i,j} + \lambda_{j, i}\right) -  \frac{1}{2}h_{i} \sum_{\ell = i+1}^{2^{k+1}-1} \lambda_{\ell,j} + \frac{1}{2}h_{j}  \sum_{\ell = 0}^{j} \lambda_{\ell,i}\\
        &=-\frac{1}{2}\left(\lambda_{i,j} + \lambda_{j, i}\right) -  \frac{1}{2}h_{i} \lambda_{2^{z+1}-1,j} + \frac{1}{2}h_{j}  \left(\lambda_{j,i} + \lambda_{2^z-1,i}\right).
    \end{align*}

We note that:
    \begin{gather*}
                \lambda_{2^z-1,i} = 
        \lambda_{2^z-1,2^z + 2^r - 1} = \frac{\mu_{z+1}-1}{2(1+\sqrt{2})^{2z}}\beta_{r}\\
        \lambda_{2^{z+1}-1,j} = 
        \lambda_{2^{z+1}-1,2^z + 2^{r-1} - 1} = \frac{\mu_{z+1}-1}{2(1+\sqrt{2})^{2z}}\beta_{r-1}\\
        \lambda_{i,j} = 
        \lambda_{2^{z}+2^r-1,2^z + 2^{r-1} - 1} = \frac{\mu_{z+1}-1}{(1+\sqrt{2})^{2(z-2)-r+5}}\\
        \lambda_{j,i} = 
        \lambda_{2^{z}+2^{r-1}-1,2^z + 2^{r} - 1} = \frac{\mu_{z+1}-1}{(1+\sqrt{2})^{2(z-2)+3}}
    \end{gather*}
    Combining these values shows that $M_{i,j}=0$ (see \nextmathematica).

    \textbf{Case (ii): $p(i) = 2$, and $p(j) > 2$}

    Let $i = 2^z + 2^r - 1$, and let $j = 2^z + j'$, where $2^{r-1} - 1 < j' < 2^r-1$.
    We begin by noting
    \[
        \sum_{\ell = i+1}^{2^{k+1}-1} \lambda_{\ell,j} = \lambda_{2^{z+1}-1, j} = \frac{(\mu_{z+1}-1)}{2(1+\sqrt{2})^{2z}} \beta_{\nu(j+1)}.
    \]

    We break the remainder of the proof into two cases: where $j' = 2^r - 2^a - 1$ for some $a\geq 0$ or where $2^r-2^a -1< j' < 2^r - 2^{a-1} -1$ for some $a\geq 1$.

    For now, suppose $j'= 2^r - 2^a -1$ for some $a\geq 0$. Then,
by \Cref{lem:nonzero_entries_above}, $S_i^- = \{\ell \le i: \lambda_{\ell,i} \neq 0\} = \{2^z + 2^r - 2^b - 1: b \in [0,r]\}$. This means that 
\begin{align}
        \label{eq:useful_sum_above_11_base}
        \sum_{\ell = 0}^{j} \lambda_{\ell,i} &= \sum_{b = a}^r \lambda_{2^z + 2^r - 2^b - 1, i}\nonumber\\
        &= \sum_{b = a}^{r-1}\frac{\mu_{z+1}-1}{(1+\sqrt{2})^{2(z-(r-b+1)) + 3}} + \lambda_{2^z-1, i}\nonumber\\
        &= \frac{\mu_{z+1}-1}{2}(1+\sqrt{2})^{-2z}((1+\sqrt{2})^{2(r-a)} - 1) + \frac{\mu_{z+1}-1}{2(1+\sqrt{2})^{2z}}\beta_{r}.
\end{align}
Additionally, we have:
    \begin{gather*}
        \lambda_{i,j} = 
\frac{\mu_{z+1}-1}{(1+\sqrt{2})^{2(z-2)+3}}\left(\frac{1}{2}(1+\sqrt{2})^{2(r-a)-3}\beta_{a+2} - \frac{\beta_{a}}{2(1+\sqrt{2})}\right),\\
        \lambda_{j,i} 
= \frac{\mu_{z+1}-1}{(1+\sqrt{2})^{2(z-(1 + r - a)) + 3}}   .     
    \end{gather*}        
    Combining these expressions show (see \nextmathematica)
    \begin{align*}
        M_{i,j}= \frac{1}{2}\left(\beta_{a}\sum_{\ell=0}^{j}\lambda_{\ell,j} - \beta_r\sum_{\ell=i+1}^{2^{k+1}-1}\lambda_{\ell,j} - \lambda_{i,j}-\lambda_{j,i}\right)=0.
    \end{align*}

    Now, we consider the other subcase, in which, for some $a \ge 1$,
    \[
        2^r - 2^{a} - 1 < j' < 2^r - 2^{a-1} - 1.
    \]

    Again, we have that
    \begin{align*}
        \sum_{\ell=0}^{j}\lambda_{\ell,i}=\sum_{b=a}^r \lambda_{2^z+2^r-2^b-1,i},
    \end{align*}
    so that this sum is given by \eqref{eq:useful_sum_above_11_base}.
    Next, we have
    \begin{gather*}
        \lambda_{i,j} = 
        \frac{\mu_{z+1}-1}{2(1+\sqrt{2})^{2(z-2)+4}}((1+\sqrt{2})^{2(r-a+1)-2}-1)\beta_{\nu(j+1)},\\
        \lambda_{j,i} = 0.
    \end{gather*}
    Combining these expressions shows (see \nextmathematica)
    \[
        M_{i,j} = \frac{1}{2}\left(\beta_{\nu(j+1)}  \sum_{\ell = 0}^{j} \lambda_{\ell,i} -  \beta_{r} \sum_{\ell = i+1}^{2^{k+1}-1} \lambda_{\ell,j}  -\left(\lambda_{i,j} + \lambda_{j, i}\right)\right)= 0.
    \]

    \textbf{Case (iii): $p(j) = 2$ and $p(i) > 2$}
    
    Let $j = 2^z + 2^r - 1$.
    There are two subcases: where $z(i') = r+1$ and that where $z(i') = r$.

    If $z(i') = r+1$, then $i = 2^z + 2^{r+1} + i''$, where $i'' > 0$ by the assumption that $p(i)>2$. In this case,

\begin{align*}
        M_{i,j} &= \frac{1}{2}\left(\beta_r\sum_{\ell=0}^j\lambda_{\ell,i}   - \beta_{\nu(i+1)} \sum_{\ell=i+1}^{2^{k+1}-1}\lambda_{\ell,j}-\lambda_{i,j}-\lambda_{j,i}\right)\\
        &=\frac{1}{2}\left(\beta_r\lambda_{2^z-1,i}-\beta_{\nu(i+1)}\lambda_{2^{z+1}-1,j}\right).
    \end{align*}
    Combining this expression with the identities
    \begin{gather*}
        \lambda_{2^z-1,i}= \frac{\mu_{z+1}-1}{2(1+\sqrt{2})^{2z}}\beta_{\nu(i+1)}\\
        \lambda_{2^{z+1}-1,j} = \frac{\mu_{z+1}-1}{2(1+\sqrt{2})^{2z}}\beta_{r},
    \end{gather*}
    shows that $M_{i,j}=0$.
    
    Now, consider the case in which $z(i') = r$.
    In this case, we have
    \begin{gather*}
        \lambda_{j,i} = \frac{\mu_{z+1}-1}{(1+\sqrt{2})^{2(z-2)+3}} \beta_{\nu(i+1)},\\
        \lambda_{i,j} = 0.
    \end{gather*}
    It can be seen by \Cref{lem:nonzero_entries_above} that $S_i^- \cap \{0,\dots, j\} = \{j, 2^z-1\}$. That is,
    \[
        \sum_{\ell = 0}^{j} \lambda_{\ell,i} = \lambda_{j,i} + \lambda_{2^z-1, i} = \frac{\mu_{z+1}-1}{(1+\sqrt{2})^{2(z-2)+3}} \beta_{\nu(i+1)} + \frac{\mu_{z+1}-1}{2(1+\sqrt{2})^{2z}} \beta_{\nu(i+1)}.
    \]

    Similarly, it can be seen that 
    \begin{align*}
        \sum_{\ell = i+1}^{2^{k+1}-1} \lambda_{\ell,j}  &= \begin{cases}
            \lambda_{2^z + 2^{r+1} - 1,j} + \lambda_{2^{z+1}-1, j} & \text{ if } r < z-1\\
            \lambda_{2^{z+1}-1, j} & \text{ if } r = z-1\\
        \end{cases}\\
        &= \begin{cases}
            \frac{\mu_{z+1} - 1}{(1+\sqrt{2})^{2(z-2)- r+4}} + \frac{(\mu_{z+1}-1)}{2(1+\sqrt{2})^{2z}}\beta_{r} & \text{ if } r < z-1\\
            \frac{(\mu_{z+1}-1)}{2(1+\sqrt{2})^{2z}}\beta_{r+2} & \text{ if } r = z-1.\\
        \end{cases}
    \end{align*}
    One may check (see \nextmathematica) that the two expressions coincide when $r = z-1$. Thus, we may take the first expression in all cases.

    Combining the previous expressions, we have that
    \[
        M_{i,j} = \frac{1}{2}\left(\beta_r \sum_{\ell = 0}^{j} \lambda_{\ell,i} - \beta_{\nu(i+1)} \sum_{\ell = i+1}^{2^{k+1}-1} \lambda_{\ell,j} - \lambda_{j,i}\right) = 0.
    \]
    See \nextmathematica.
\end{proof}

\begin{lemma}
    \label{lem:M_11}
    Suppose $0\leq j<i < 2^k-1$, where neither $i+1$ nor $j+1$ is a power of $2$. Then, $M_{i,j}=0$.
\end{lemma}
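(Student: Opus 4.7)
The plan is to proceed by induction on $\min(p(i), p(j))$, with base case $\min(p(i), p(j)) = 2$ provided by \cref{lem:base_case11}. Now assume $\min(p(i), p(j)) \geq 3$. Since $p(i), p(j) \geq 3$ implies that neither $i+1$ nor $j+1$ is a power of $2$, \cref{lem:supportSimp} gives $M_{i,j} = 0$ whenever $z(i) > z(j)$, and $z(i) < z(j)$ is ruled out by $i > j$. So it suffices to handle $z(i) = z(j) =: z$, and we write $i = 2^z + i'$, $j = 2^z + j'$ with $0 \leq j' < i' < 2^z$ and $p(i'), p(j') \geq 2$. If $z(i') \geq z(j') + 2$, then $M_{i,j}=0$ by \cref{lem:secondDigit}. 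So the remaining subcases are $z(i') = z(j') =: z''$ and $z(i') = z(j') + 1$.

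In the subcase $z(i') = z(j') = z''$, I would expand $M_{i,j}$ via \cref{lem:M_entries} and decompose both partial column sums via
\[
    S^-_i = \{2^z - 1\} \cup \{2^z + s : s \in S^-_{i'}\}, \qquad S^+_j = \{2^{z+1} - 1\} \cup \{2^z + s : s \in S^+_{j'}\}
\]
(see \cref{sec:supportFacts}). For all "bulk" summands, i.e.\ those with $s$ strictly between $2^{z''}-1$ and $2^{z''+1}-1$, \cref{lem:recurrence} yields $\lambda_{2^z + s, 2^z + t} = c \cdot \lambda_{s, t}$ with a common scalar $c = (1+\sqrt 2)^{2(z''-z+1)}(\mu_{z+1}-1)/(\mu_{z''+1}-1)$. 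Using the stepsize identities $h_i = \beta_{\nu(i'+1)} = h_{i'}$ and $h_j = h_{j'}$ (both $i+1$ and $i'+1$ have the same $2$-adic valuation), these bulk pieces reassemble into $c \cdot M_{i', j'}$, which vanishes by the inductive hypothesis since $\min(p(i'), p(j')) = \min(p(i), p(j)) - 1$.

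The main obstacle is verifying that the net boundary contribution vanishes. The boundary terms not matched by the bulk scaling are the four entries (i) $\lambda_{2^z - 1, i}$ and (ii) $\lambda_{2^z + 2^{z''} - 1, i}$ in $\sum_{s \leq j}\lambda_{s,i}$, together with (iii) $\lambda_{2^{z+1} - 1, j}$ and (iv) $\lambda_{2^z + 2^{z''+1} - 1, j}$ in $\sum_{s > i}\lambda_{s,j}$; in each case, \cref{lem:recurrence} fails because the corresponding unlifted row $2^{z''}-1$ or $2^{z''+1}-1$ is defined via Case~3 rather than Case~1. Reading off the explicit Case~1 and Case~3 formulas from \cref{subsec:lk_construction}, the net correction to $\sum_{s \leq j}\lambda_{s,i} - c \sum_{s \leq j'}\lambda_{s, i'}$ factors as
\[
    \frac{(\mu_{z+1}-1)\beta_{\nu(i'+1)}}{2(1+\sqrt 2)^{2z}} \bigl(1 + 2(1+\sqrt 2) - (1+\sqrt 2)^2\bigr),
\]
which vanishes because $1+\sqrt 2$ is a root of $u^2 - 2u - 1 = 0$. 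The cancellation for $\sum_{s>i}\lambda_{s,j}$ follows by the mirror computation on Case~3 rows, and the parallel subcase $z(i') = z(j')+1$ is handled by the same approach after adjusting the structure of $S^\pm_{j'}$. Minor edge cases (e.g.\ $z'' = z - 1$, where the two boundary indices $2^{z+1}-1$ and $2^z + 2^{z''+1}-1$ coincide) are verified by direct substitution.
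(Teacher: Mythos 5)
Your outline matches the paper's proof strategy closely: induct on $\min(p(i),p(j))$ with base case \cref{lem:base_case11}, reduce to $z(i)=z(j)=z$ via \cref{lem:supportSimp}, lift $M_{i,j}$ to a scalar multiple of $M_{i',j'}$ using \cref{lem:recurrence} together with $h_i=h_{i'}$ and $h_j=h_{j'}$, and cancel the boundary terms. The four boundary indices you identify are the right ones, and the scalar identity $1 + 2(1+\sqrt{2}) - (1+\sqrt{2})^2 = 0$ for the $i$-side boundary is exactly what the paper verifies.

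The gap is the assertion that the $j$-side boundary ``follows by the mirror computation on Case~3 rows.'' It does not. The $i$-side boundary entries $\lambda_{2^z-1,i}$, $\lambda_{2^z+2^{z'}-1,i}$, $\lambda_{2^{z'}-1,i'}$ all sit to the \emph{right} of the diagonal in their respective rows, where (by the construction in \cref{subsec:lk_construction}) every entry is a fixed scalar times $\beta_{\nu(i+1)}$; the cancellation is therefore a single polynomial identity in $1+\sqrt{2}$. The $j$-side boundary entries $\lambda_{2^{z+1}-1,j}$, $\lambda_{2^z+2^{z'+1}-1,j}$, $\lambda_{2^{z'+1}-1,j'}$ sit to the \emph{left} of the diagonal, where the entry formulas involve $\sigma$ and $\pi$ and branch on whether the offset $2^{z'+1}-1-j'$ is exactly a power of~$2$ or lies strictly between consecutive powers. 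The paper handles $z'=z-1$ by observing the cancellation is built into the recursive definition of $\sigma$, and for $z'<z-1$ it needs a further two-way case split on the form of the offset, with separate algebraic verification in each. So the $j$-side cancellation requires its own structural argument; treating it as a reflection of the $i$-side computation leaves a real hole. A secondary imprecision: routing $z(i')=z(j')+1$ through the full inductive decomposition is unnecessary and awkward (the ``bulk'' of $S^\pm_{j'}$ contributes nothing); the paper handles all of $z(i')\neq z(j')$ at once by showing only $\lambda_{2^z-1,i}$ and $\lambda_{2^{z+1}-1,j}$ survive and cancel directly.
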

\begin{proof}
    By \Cref{lem:supportSimp}, if $z(i) \neq z(j)$, then $M_{i,j} = 0$.
    We may thus assume $z(i)=z(j)<k$ and let $z$ denote their common value.

    We will show the result by induction on $\min\{p(i),p(j)\}$. 
    The base case, where $\min \{p(i), p(j)\} = 2$ is shown in \Cref{lem:base_case11}.
    In the remainder, we assume that $p(i),p(j)\geq 3$.

    Now, set $i' = i - 2^z$ and $j' = j - 2^z$.
    We see that $p(i') = p(i) - 1$ and $p(j') = p(j) - 1$ by considering the binary expansion of $i+1$ and $j+1$. Thus, neither $i'+1$ nor $j'+1$ is a power of two.

    We show the claim directly if $z(i')\neq z(j')$. In this case, it holds that $z(j') < z(i')$. In the sum
    \begin{align*}
        2M_{i,j} = -\left(\lambda_{i,j} + \lambda_{j, i}\right) -  h_{i} \sum_{\ell = i+1}^{2^{k+1}-1} \lambda_{\ell,j} + h_{j}  \sum_{\ell = 0}^{j} \lambda_{\ell,i},
    \end{align*}
    there are only two nonzero terms:
    \begin{align*}
        2M_{i,j} &= -h_{i} \lambda_{2^{z+1}-1,j} + h_{j} \lambda_{2^z-1,i}\\
            &= -\beta_{\nu(i+1)} \left(\frac{\mu_{z+1}-1}{2(1+\sqrt{2})^{2z}}\beta_{\nu(j+1)}\right) + \beta_{\nu(j+1)}  \left(\frac{\mu_{z+1}-1}{2(1+\sqrt{2})^{2z}}\beta_{\nu(i+1)}\right)\\
            &= 0.
    \end{align*}
    Here, the expression for $\lambda_{2^{z+1}-1,j}$ follows from the fact that $z(j') < z$.

    Now, assume that $z(i') = z(j')$ and denote their common value by $z'$.  By the inductive hypothesis, we may assume that 
    \[
        2M_{i',j'} = -\left(\lambda_{i',j'} + \lambda_{j', i'}\right) - h_{i'} \sum_{\ell = i'+1}^{2^{k+1}-1} \lambda_{\ell,j'} + h_{j'}  \sum_{\ell = 0}^{j'} \lambda_{\ell,i'} = 0.
    \]    Noting that $\lambda_{\ell,j'} = 0$ when $\ell \geq 2^{z'+1}$, and $\lambda_{\ell,i'} = 0$ when $\ell \leq 2^{z'}-2$, we can rewrite this as
    \[
        -\left(\lambda_{i',j'} + \lambda_{j', i'}\right) -  h_{i'} \sum_{\ell = i'+1}^{2^{z'+1}-1} \lambda_{\ell,j'} + h_{j'}  \sum_{\ell = 2^{z'}-1}^{j'} \lambda_{\ell,i'} = 0.
    \]
    We now wish to compare $M_{i,j}$ to $M_{i',j'}$.
    For this, we divide the summation defining $M_{i,j}$ into parts and then rearrange:
    \begin{align*}
        2M_{i,j} &= -\left(\lambda_{i,j} + \lambda_{j, i}\right) - h_{i} \sum_{\ell = i+1}^{2^z + 2^{z'+1} - 2} \lambda_{\ell,j} -
                               h_{i} \sum_{\ell = 2^z + 2^{z'+1}-1}^{2^{z+1} - 1} \lambda_{\ell,j}\\
                                &\qquad+ h_{j}  \sum_{\ell = 2^{z} - 1}^{2^{z} + 2^{z'} - 1} \lambda_{\ell,i} +
                               h_{j}  \sum_{\ell = 2^{z} + 2^{z'}}^{j} \lambda_{\ell,i}\\
                        &= \left(-\left(\lambda_{i,j} + \lambda_{j, i}\right)-h_{i} \sum_{\ell = i+1}^{2^z + 2^{z'+1} - 2} \lambda_{\ell,j}+ h_{j}  \sum_{\ell = 2^{z} + 2^{z'}}^{j} \lambda_{\ell,i}\right) \\
                               &\qquad + h_{j}  \sum_{\ell = 2^{z} - 1}^{2^{z} + 2^{z'} - 1} \lambda_{\ell,i}
                                -h_{i} \sum_{\ell = 2^z + 2^{z'+1} - 1}^{2^{z+1}-1} \lambda_{\ell,j}.
    \end{align*}
    The term in parentheses can be rewritten
    \begin{align*}
        &-\left(\lambda_{i,j} + \lambda_{j, i}\right)- h_{i} \sum_{\ell = i+1}^{2^z + 2^{z'+1} - 2} \lambda_{\ell,j}+ h_{j}  \sum_{\ell = 2^{z} + 2^{z'}}^{j} \lambda_{\ell,i}\\
        &\qquad =
        -\left(\lambda_{2^z+i',2^z+j'} + \lambda_{2^z+j', 2^z+i'}\right)- h_{2^z+i'} \sum_{\ell = 2^z+i'+1}^{2^z + 2^{z'+1} - 2} \lambda_{\ell,2^z+j'}+ h_{2^z+j'}  \sum_{\ell = 2^{z} + 2^{z'}}^{2^z+j'} \lambda_{\ell,2^z+i'}.
    \end{align*}
    Note that $h_{2^z+i'} = h_{i'}$, since $i'+1$ is not a power of 2, and $\nu(2^z+ i'+1) = \nu(i'+1)$. Similarly, $h_{2^z+j'} = h_{j'}$. We finally recall \Cref{lem:recurrence}, which shows that this expression is the same as
    \begin{align*}
        &(1+\sqrt{2})^{2(z'-z+1)} \frac{\mu_{z+1} - 1}{\mu_{z'+1}-1}\left(-\left(\lambda_{i',j'} + \lambda_{j', i'}\right)- h_{i'} \sum_{\ell = i'+1}^{2^{z'+1} - 2} \lambda_{\ell,j'}+h_{j'}  \sum_{\ell = 2^{z'}}^{j'} \lambda_{\ell,i'}\right)\\
        &\qquad= 
        (1+\sqrt{2})^{2(z'-z+1)} \frac{\mu_{z+1} - 1}{\mu_{z'+1}-1}\left(2M_{i',j'}+h_{i'}\lambda_{2^{z'+1}-1, j'} -  h_{j'}\lambda_{2^{z'}-1, i'}\right)
        \\
        &\qquad=(1+\sqrt{2})^{2(z'-z+1)} \frac{\mu_{z+1} - 1}{\mu_{z'+1}-1}\left(h_{i'}\lambda_{2^{z'+1}-1, j'} -  h_{j'}\lambda_{2^{z'}-1, i'}\right).
\end{align*}
    
    The next term we consider is
    \[
        h_{j}  \sum_{\ell = 2^{z} - 1}^{2^{z} + 2^{z'} - 1} \lambda_{\ell,i} = h_j (\lambda_{2^z-1,i} + \lambda_{2^{z} + 2^{z'} - 1,i}).
    \]

    The last term we need to consider is
    \[
        -h_{i} \sum_{\ell = 2^z + 2^{z'+1} - 1}^{2^{z+1} - 1} \lambda_{\ell,j}.
    \]
    Here, there are two cases: either $z' = z-1$ or it does not.
    
    Suppose that $z'  = z-1$, then
    \[
        -h_{i} \sum_{\ell = 2^z + 2^{z'+1} - 1}^{2^{z+1} - 1} \lambda_{\ell,j} = -h_{i} \lambda_{2^{z+1}-1,j},\]
    and combining, we obtain that 
    \begin{align*}
        2M_{i,j} &= 
        \frac{\mu_{z+1} - 1}{\mu_{z}-1}\left(h_{i'}\lambda_{2^{z}-1, j'} -  h_{j'}\lambda_{2^{z-1}-1, i'}\right)\\
        &\qquad+ h_j (\lambda_{2^z-1,i} + \lambda_{2^{z} + 2^{z-1} - 1,i})  - h_{i} \lambda_{2^{z+1}-1,j}\\
        &= h_i \left(\frac{\mu_{z+1} - 1}{\mu_{z}-1} \lambda_{2^{z}-1, j'} - \lambda_{2^{z+1}-1,j}\right) + h_j\left(\lambda_{2^z-1,i} + \lambda_{2^{z} + 2^{z-1} - 1,i} - \frac{\mu_{z+1} - 1}{\mu_{z}-1}\lambda_{2^{z-1}-1, i'}\right).
    \end{align*}
Here, we use the fact that $h_i= h_{i'}= \beta_{\nu(i+1)}$ and similarly  $h_j= h_{j'}= \beta_{\nu(j+1)}$.
    The first term in parentheses is zero by the definition of $\sigma$. 
    
    The second term in parantheses is also zero upon plugging in the various values of $\lambda$ (see \nextmathematica):
    \begin{gather*}
        \lambda_{2^{z-1}-1, i'} = \frac{\mu_{z} - 1}{2(1+\sqrt{2})^{2(z-1)}}\beta_{\nu(i+1)},\\
        \lambda_{2^z-1,i} = \frac{\mu_{z+1}-1}{2(1+\sqrt{2})^{2z}} \beta_{\nu(i+1)},\\
        \lambda_{2^{z} + 2^{z-1} - 1,i} = \frac{\mu_{z+1}-1}{(1+\sqrt{2})^{2z-1}} \beta_{\nu(i+1)}.
    \end{gather*}
    We deduce that $M_{i,j}=0$.

    If $z' < z-1$, then
    \begin{align*}
        -h_{i} \sum_{\ell = 2^z + 2^{z'+1} - 1}^{2^{z+1} - 1} \lambda_{\ell,j} &= -h_{i} (\lambda_{2^{z+1}-1,j} + \lambda_{2^{z} + 2^{z'+1}-1,j}).
    \end{align*}
    So that 
    \begin{align*}
        2M_{i,j} &= 
        (1+\sqrt{2})^{2(z'-z+1)} \frac{\mu_{z+1} - 1}{\mu_{z'+1}-1}\left(h_{i'}\lambda_{2^{z'+1}-1, j'} -  h_{j'}\lambda_{2^{z'}-1, i'}\right)\\
        &\qquad + h_j (\lambda_{2^z-1,i} + \lambda_{2^{z} + 2^{z'} - 1,i}) - h_{i} (\lambda_{2^{z+1}-1,j} + \lambda_{2^{z} + 2^{z'+1}-1,j}) \\
        &= 
        h_{i} \left((1+\sqrt{2})^{2(z'-z+1)} \frac{\mu_{z+1} - 1}{\mu_{z'+1}-1}\lambda_{2^{z'+1}-1, j'} - (\lambda_{2^{z+1}-1,j} + \lambda_{2^{z} + 2^{z'+1}-1,j}) \right)\\
        &\qquad -  h_{j}\left((1+\sqrt{2})^{2(z'-z+1)} \frac{\mu_{z+1} - 1}{\mu_{z'+1}-1}\lambda_{2^{z'}-1, i'} - (\lambda_{2^z-1,i} + \lambda_{2^{z} + 2^{z'} - 1,i}) \right).
    \end{align*}
    The second term is identically zero due to the identities (see \nextmathematica)
    \begin{gather*}
        \lambda_{2^{z'}-1, i'} = \frac{\mu_{z'+1}-1}{2(1+\sqrt{2})^{2z'}}\beta_{\nu(i+1)},\\
        \lambda_{2^z-1,i} = \frac{\mu_{z+1}-1}{2(1+\sqrt{2})^{2z}}\beta_{\nu(i+1)},\\
        \lambda_{2^{z} + 2^{z'} - 1,i} = \frac{\mu_{z+1}-1}{(1+\sqrt{2})^{2z - 1}}\beta_{\nu(i+1)}.
    \end{gather*}
    It remains to show that the first term is also zero. Let $r = 2^{z'+1}-1 - j'$. We have that $1\leq r < 2^{z'}$ and that $\tau= z(r-1) \in[0,z'-1]$. There are two final cases: where $r = 2^\tau$ and where $2^{\tau}<r< 2^{\tau+1}$.

    In the first case, we additionally have that $\nu(j+1) = \tau$ and
    \begin{align*}
        \lambda_{2^{z'+1}-1,j'} &= \frac{(\mu_{z'+1} -1)\beta_{\tau+2}}{2(1+\sqrt{2})^{2(\tau+1)}},\\
        \lambda_{2^{z+1} - 1, j} &= \frac{(\mu_{z+1}-1)\beta_{\tau}}{2(1+\sqrt{2})^{2z}},\\
        \lambda_{2^z+2^{z'+1}-1, j} &= \frac{\mu_{z+1} - 1}{2(1+\sqrt{2})^{2z}} \left((1+\sqrt{2})^{2(z'-\tau)}\beta_{\tau+2} - \beta_\tau\right).
    \end{align*}

    Otherwise, we have $\nu(j+1) = \nu(r)$ and
    \begin{align*}
        \lambda_{2^{z'+1}-1,j'} &= \frac{(\mu_{z'+1} - 1)\beta_{\nu(j+1)}}{2(1+\sqrt{2})^{2(\tau+1)}},\\
        \lambda_{2^{z+1} - 1, j} &= \frac{(\mu_{z+1} - 1)\beta_{\nu(j+1)}}{2(1+\sqrt{2})^{2z}},\\
        \lambda_{2^z+2^{z'+1}-1, j} 
        &= \frac{(\mu_{z+1}-1)\beta_{\nu(j+1)}}{2(1+\sqrt{2})^{2z}}\left((1+\sqrt{2})^{2(z'-\tau)} - 1\right).
    \end{align*}
    In both cases, plugging the relevant $\lambda$ values into the first term in parentheses shows that it is equal to zero. See \nextmathematica{} and \nextmathematica.\qedhere

\end{proof}

\subsection{Off-Diagonal Entries in Case $(2,2)$}

Our goal for this subsection will be to prove \Cref{lem:M_13}, stating that the off-diagonal entries of $M_{i,j}$ with $i,j < 2^k-1$ are all 0.
We start with a simplifying lemma:
\begin{lemma}
    \label{lem:supportSimp2}
    Letting $h = \mathfrak{h}^{(k)}$ and $\lambda = \lambda^{(k)}$, and fix $j < i \le 2^k-1$ so that neither $i+1$ nor $j+1$ are powers of 2. If $z(i) \neq z(j)$, then $M(h, \lambda)_{\rev(i), \rev(j)} = \frac{1}{2}\phi_{\rev(i)} \phi_{\rev(i)}$.
\end{lemma}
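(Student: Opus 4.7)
The plan is to compute $M_{\rev(i), \rev(j)}$ directly via \cref{lem:M_entries}. Both $\rev(i)$ and $\rev(j)$ fall under Case 2 of the construction of $\lambda$, so $h_{\rev(i)} = \beta_{\nu(i+1)}$ and $h_{\rev(j)} = \beta_{\nu(j+1)}$; since $j < i$ and $z$ is non-decreasing, the hypothesis $z(i) \neq z(j)$ forces $z(j) < z(i)$. Since $\rev(i) < \rev(j)$, \cref{lem:M_entries} gives
\begin{align*}
    2 M_{\rev(i), \rev(j)} = \beta_{\nu(i+1)}\Sigma_1 - \beta_{\nu(j+1)}\Sigma_2 - \lambda_{\rev(i), \rev(j)} - \lambda_{\rev(j), \rev(i)},
\end{align*}
where $\Sigma_1 = \sum_{s=0}^{\rev(i)}\lambda_{s, \rev(j)}$ and $\Sigma_2 = \sum_{s=\rev(j)+1}^{2^{k+1}-1}\lambda_{s, \rev(i)}$. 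My strategy is to show that the last three terms vanish by support arguments and then compute $\Sigma_1$ in closed form via telescoping.

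The support vanishing arguments all hinge on a pair of elementary bounds: for any $m$ with $m+1$ not a power of $2$, $m \geq 2^{z(m)} + 2^{\nu(m+1)} - 1$ and $m + 2^{\nu(m+1)} \leq 2^{z(m)+1} - 1$. Applied with $m = i, j$ and combined with $z(j) < z(i)$, these give $i - j > 2^{\nu(i+1)}$ and $i - j > 2^{\nu(j+1)-1}$, which kill $\lambda_{\rev(i),\rev(j)}$ and $\lambda_{\rev(j),\rev(i)}$ respectively. For $\Sigma_2$, the same inequalities show that every Case 2 row $s = \rev(s')$ with $s' < j$ contributes zero (using $z(s') \leq z(j) < z(i)$), while each Case 5 row $s = \rev(2^a - 1)$ with $a \leq z(j)$ contributes zero (its leftward reach condition $i + 1 \leq 3 \cdot 2^{a-1}$ fails, since $a \leq z(i) - 1$ forces $3 \cdot 2^{a-1} < 2^{z(i)} \leq i$); the row $s = 2^{k+1}-1$ is identically zero.

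For $\Sigma_1$, the same support bounds imply that only the Case 4 row $s = 2^k - 1$ and the Case 5 rows $s = \rev(2^a - 1)$ for $a \in [z(i)+1, k-1]$ contribute. For each such row, the entry at column $\rev(j)$ equals a known scalar times $(w_a)_{2^a - 2 - j}$. Repeatedly applying $w_a = [\pi^{(a-1)}/\sqrt{\mu_a - 1},\, \beta_a/\sqrt{\mu_a - 1},\, w_{a-1}]$, the index $2^a - 2 - j$ lands in the $w_{a-1}$-block at each level until reaching $a = z(j)+1$, where it lands in the $\pi^{(z(j))}$-block at position $2^{z(j)+1} - 2 - j$; since $\nu(2^{z(j)+1} - 1 - j) = \nu(j+1)$, the value there equals $\beta_{\nu(j+1)}/\sqrt{\mu_{z(j)+1}-1} = \phi_{\rev(j)}$. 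The resulting expressions
\begin{align*}
    \lambda_{2^k - 1, \rev(j)} = \frac{\phi_{\rev(j)}}{\sqrt{\mu_k - 1}}, \qquad \lambda_{\rev(2^a - 1), \rev(j)} = \left(\frac{1}{\sqrt{\mu_a - 1}} - \frac{1}{\sqrt{\mu_{a+1} - 1}}\right)\phi_{\rev(j)}
\end{align*}
telescope to $\Sigma_1 = \phi_{\rev(j)}/\sqrt{\mu_{z(i)+1}-1}$, yielding
\begin{align*}
M_{\rev(i), \rev(j)} = \tfrac{1}{2}\beta_{\nu(i+1)}\phi_{\rev(j)}/\sqrt{\mu_{z(i)+1}-1} = \tfrac{1}{2}\phi_{\rev(i)}\phi_{\rev(j)}.
\end{align*}

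The main obstacle will be the recursive unrolling of $(w_a)_{2^a - 2 - j}$: one must verify at each level that the current index lands in the $w_\cdot$-block, and not the $\pi$- or $\beta$-block, all the way down to $a = z(j)+1$. This requires maintaining the inequality $j \leq 2^{a-1} - 2$ throughout and invoking $j+1$ not a power of $2$ at the bottom layer to identify the correct $\pi^{(z(j))}$ entry via $\nu(2^{z(j)+1} - 1 - j) = \nu(j+1)$. The other support checks reduce to variants of the elementary bounds already used in the proofs of \cref{lem:M_11,lem:M_14,lem:M_25} and should be routine.
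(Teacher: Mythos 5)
Your proof is correct and follows the same overall strategy as the paper's. Both arguments apply \cref{lem:M_entries} with $r=\rev(i)$, $\ell=\rev(j)$, kill the $\lambda_{\rev(i),\rev(j)}$, $\lambda_{\rev(j),\rev(i)}$, and lower-sum terms by support bounds coming from the fact that $z(j) < z(i)$, and reduce the upper sum $\Sigma_1$ to a telescoping series over the Case~4 row $2^k-1$ and the Case~5 rows $\rev(2^a-1)$, $a\in[z(i)+1,k-1]$, yielding $\Sigma_1 = \phi_{\rev(j)}/\sqrt{\mu_{z(i)+1}-1}$. The only difference is that you re-derive this telescoping column sum inline whereas the paper outsources it to \cref{lem:telescoping}; the content is identical.
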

\begin{proof}
    If $z(i) \neq z(j)$, then $\lambda_{\rev(i),\rev(j)} = \lambda_{\rev(j),\rev(i)} = 0$. For $\ell > \rev(i)$, $\lambda_{\ell, \rev(j)} = 0$. 

    \Cref{lem:telescoping} implies that $\sum_{\ell = 0}^{\rev{j}} \lambda_{\ell, \rev(i)} = \frac{\beta_{\nu(\rev(i)+1)}}{\sqrt{\mu_{z(i)+1}-1}\sqrt{\mu_{z(j)+1}-1}}$. Therefore,
    \begin{align*}
        M(h, \lambda)_{\rev(i),\rev(j)} &= \frac{1}{2}h_{j}  \sum_{\ell=0}^{\rev(i)} \lambda_{\rev(2^a-1), \rev(i)}\\
            &= \frac{\beta_{\nu(j+1)}\beta_{\nu(i+1)}}{\sqrt{\mu_{z(i)+1}-1}\sqrt{\mu_{z(j)+1}-1}}\\
            &= \frac{1}{2}\phi_{\rev(i)} \phi_{\rev(i)}.
    \end{align*}
    Here, we use the fact that the series is telescoping to simplify the computation.
\end{proof}
The following lemma will be the base case for a subsequent inductive proof.
\begin{lemma}
    \label{lem:base_case22}
    Let $r < \ell < 2^k-1$, with $\min(p(r), p(\ell)) = 2$, then 
    \[
        M_{\rev(\ell),\rev(r)} = \frac{1}{2}\phi_{\rev(r)}\phi_{\rev(\ell)}.
    \]
\end{lemma}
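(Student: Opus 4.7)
\medskip

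\noindent\textit{Proof plan.} First I would dispose of the case $z(r)\neq z(\ell)$ by appealing to \cref{lem:supportSimp2}. In the remaining case, set $z\coloneqq z(r)=z(\ell)<k$ and write $r=2^z+r'$, $\ell=2^z+\ell'$ with $0\le r'<\ell'<2^z-1$. Since $r,\ell<2^k-1$ and $p(r),p(\ell)\ge 2$, a direct check shows that neither $\rev(r)+1$ nor $\rev(\ell)+1$ is a power of $2$, so both $\lambda_{\rev(r)}$ and $\lambda_{\rev(\ell)}$ are defined via Case~2 of the certificate. Using the identification $\phi_{\rev(s)}=\beta_{\nu(s+1)}/\sqrt{\mu_{z+1}-1}$ extracted from \cref{lem:largeDiag}, the target identity reduces to
\[
    M_{\rev(\ell),\rev(r)}\;=\;\frac{\beta_{\nu(r+1)}\beta_{\nu(\ell+1)}}{2(\mu_{z+1}-1)}.
\]

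Next, I would invoke \cref{lem:M_entries} and the symmetry $h_{\rev(s)}=h_s=\beta_{\nu(s+1)}$ of the building block $\mathfrak{h}^{(k)}$ to write
\[
    2M_{\rev(\ell),\rev(r)}=\beta_{\nu(\ell+1)}\sum_{i=0}^{\rev(\ell)}\lambda_{i,\rev(r)}-\beta_{\nu(r+1)}\sum_{i=\rev(r)+1}^{2^{k+1}-1}\lambda_{i,\rev(\ell)}-\lambda_{\rev(\ell),\rev(r)}-\lambda_{\rev(r),\rev(\ell)}.
\]
The supports of the two columns are enumerated explicitly via \cref{lem:nonzero_entries_above,lem:support_below_recursive_NE,lem:recursive_support_case_2}. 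The telescoping subfamily of the first sum---the entries at $i=\rev(2^{\psi}-1)$ for $\psi\in[z+1,k]$---collapses by \cref{lem:telescoping} (and the computation performed in the proof of \cref{lem:M_25}) to $\beta_{\nu(r+1)}/(\mu_{z+1}-1)$. After multiplication by $\beta_{\nu(\ell+1)}$, this contribution reproduces exactly the desired right-hand side, so it remains to verify that all other contributions cancel.

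Mirroring the structure of \cref{lem:base_case11}, I would carry out this cancellation by splitting into three subcases according to which of $p(r),p(\ell)$ equals $2$: (i) $p(r)=p(\ell)=2$, so $r=2^z+2^s-1$ and $\ell=2^z+2^t-1$ with $s<t<z$; (ii) $p(r)=2$, $p(\ell)>2$; and (iii) $p(r)>2$, $p(\ell)=2$. In each subcase the non-telescoping rows contributing to the two partial sums are finitely many and explicitly determined by the binary expansion of $\rev(r)$ or $\rev(\ell)$, and each such $\lambda$-entry admits a closed form from the piecewise Case~2 formula. Substituting these values and using the identities of \cref{lem:sqrtmu,lem:relating_muprev_to_mu_and_beta} reduces the identity in each subcase to an algebraic equation in $(1+\sqrt 2)$, $\mu_{z+1}$, and the $\beta_a$'s, which I would discharge by Mathematica, exactly as in the closing computations of \cref{lem:base_case11}.

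The principal obstacle is pure bookkeeping rather than conceptual novelty. The Case~2 formula for $\lambda_{i,j}$ is piecewise in $|i-j|$ and in the $2$-adic valuations of $i+1,j+1$, and subcases (ii)--(iii) further subdivide depending on whether the highest differing bit of $\ell'$ (respectively $r'$) coincides with or exceeds $z(r')$ (respectively $z(\ell')$)---precisely the trichotomy appearing in the corresponding subcase of \cref{lem:base_case11}. The only structural difference from that earlier proof is that the target is nonzero, but this nonzero piece is entirely captured by the telescoping tail identified above; the remaining cancellation proceeds in direct analogy to the $(1,1)$ base case.
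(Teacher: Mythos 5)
Your proof takes essentially the same route as the paper: dispose of $z(r)\neq z(\ell)$ via the support lemma, reduce the target to $\beta_{\nu(r+1)}\beta_{\nu(\ell+1)}/(2(\mu_{z+1}-1))$, split according to which of $p(r),p(\ell)$ equals $2$, isolate the telescoping tail via \cref{lem:telescoping}, and discharge the residual cancellation in Mathematica; the paper uses a two-way split (its Case~(i), $p(\ell)=2$, handles $p(r)=2$ and $p(r)>2$ uniformly, and its Case~(ii) is $p(r)=2,\,p(\ell)\geq 3$) where you propose three, but this is immaterial. One small slip worth flagging: the condition placing $\lambda_{\rev(r)}$ in Case~2 of the certificate construction is that $2^{k+1}-1-\rev(r)=r+1$ not be a power of $2$, not that $\rev(r)+1$ be a non-power of $2$ (the latter holds vacuously for every $r\in[0,2^k-2]$); since $p(r)\geq 2$ forces $r+1$ to have at least two binary ones, your conclusion is still correct, but the stated criterion is not the one appearing in the definition of $\lambda^{(k)}$.
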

\begin{proof}
    If $z(r) \neq z(\ell)$, then \Cref{lem:telescoping} implies the result, so assume that $z(r) = z(\ell)$. Let their common value be $z$.
    
    We consider two cases: where $p(\ell) = 2$, and where $p(r) = 2,$ and $p(\ell) \ge 3$.

    \textbf{Case (i): $p(\ell) = 2$}
    
    Let $\ell = 2^z + 2^b - 1$, and let $r = 2^z + r'$ with $b > z(r')$. 
    In this case,
    $\lambda_{\rev(r), \rev(\ell)} = 0$
    because $\rev(r) - 2^{\nu(\rev(r)+1)-1} = 2^k - 2^z - (r' + \lfloor 2^{\nu(r'+1)-1} \rfloor) > 2^k - 2^z - 2^b - 1$.

    By definition,
    \[
        \lambda_{\rev(\ell), \rev(r)} = \frac{(1+\sqrt{2})^{2(p(\rev(\ell)) + z - k)-1}}{\mu_{z+1}-1}\beta_{\nu(r+1)}.
    \]
    Noting that $p(\rev(\ell)) = k-b$, so
    \[
        \lambda_{\rev(\ell), \rev(r)} = \frac{(1+\sqrt{2})^{2(z - b)-1}}{\mu_{z+1}-1}\beta_{\nu(r+1)}.
    \]

    It can be seen by considering the support of column $\rev(\ell) = 2^{k+1} - 2^z - 2^b - 1$  that 
    \begin{align*}
        \sum_{c = \rev(r)+1}^{2^{k+1}-1} \lambda_{c, \rev(\ell)} &=  \lambda_{\rev(2^z-1), \rev(\ell)}\\
        &= \frac{1}{2(\mu_{z+1}-1)}((1+\sqrt{2})^{2(z-b-1)}\beta_{b+2}-\beta_b).
    \end{align*}

    Note that the binary expansion of 
    \[
    \rev(r) = 2^{k+1} - r - 2 = 2^{k+1} - 2^z - r' - 2
    \]
    is of the form
    \[
        \sum_{t = z}^{k+1} 2^t + \sum_{t=z(r')}^{z-1} 2^t + \dots,
    \]
    so by \Cref{lem:nonzero_entries_above}, we have that 
    \[
        S^-_{\rev(r)} \cap \{0, \dots, \rev(\ell)\} = \{
        \sum_{t = z+1}^{k+1} 2^t + \sum_{t=t_0}^{z-1} 2^t - 1 : t_0 \ge b\} \cup \{
        \sum_{t = t_0}^{k+1} 2^t - 1 : t_0 \ge z\}.
    \]
    Therefore,
    
    \begin{align*}
        \sum_{c = 0}^{\rev(\ell)} \lambda_{c, \rev(r)} &=  \sum_{t = z+1}^{k} \lambda_{\rev(2^t-1), \rev(r)} + \sum_{t = b}^{z-1} \lambda_{\rev(2^z+2^t-1), \rev(r)}\\
        &=  \frac{\beta_a}{\mu_{z+1}-1} + \sum_{t = b}^{z-1} \frac{(1+\sqrt{2})^{2(p(\rev(2^z+2^t-1))+z-k)-1}}{\mu_{z+1}-1}\beta_{\nu(r+1)}\\
        &= \frac{1}{2}\left((1+\sqrt{2})^{2(z-b)}+1\right)\frac{\beta_{\nu(r+1)}}{\mu_{z+1}-1}.
    \end{align*}
    Here, in the second line, we use \Cref{lem:telescoping} to simplify the first summation. The second summation uses \cref{lem:rev_identities} to write $p(\rev(2^z + 2^t -1)) = k - t$ (see \nextmathematica).

    Combining, we obtain that
    \begin{align*}
        M_{\rev(\ell), \rev(r)} &= 
        \frac{1}{2(\mu_{z+1}-1)}
        \bigg(
         -((1+\sqrt{2})^{2(z - b)-1})\beta_{\nu(r+1)} - \frac{\beta_{\nu(r+1)}}{2}\left((1+\sqrt{2})^{2(z-b-1)}\beta_{b+2}-\beta_b\right)\\
         &\qquad+\beta_b  \left(\frac{1}{2}\beta_{2(z-b)+1}\beta_a\right)\bigg)\\
         &= \frac{1}{2(\mu_{z+1}-1)}\beta_{\nu(r+1)}\beta_b\\
         &= \frac{1}{2}\phi_{\rev(\ell)} \phi_{\rev(r)}.
    \end{align*}
    See \nextmathematica.

    \textbf{Case (ii): $p(r) = 2$ and $p(\ell) \ge 3$}
    Write $r = 2^z + 2^a -1$ where $z>a$.
    By \cref{lem:M_entries}, we have
    \begin{align*}
        \frac{1}{2}\left(
            \beta_{\nu(\ell+1)}\sum_{i=0}^{\rev(\ell)}\lambda_{i,\rev(r)}-\beta_{\nu(r+1)}\sum_{i= \rev(r) + 1}^{2^{k+1}-1} \lambda_{i,\rev(\ell)}-\lambda_{\rev(\ell),\rev(r)}-\lambda_{\rev(r),\rev(\ell)}\right).
    \end{align*}
    
    Let $z'= z(\ell - 2^z)$. Note that we must have $z'\geq a$ as otherwise $r>\ell$. Now, there are two cases $z'>a$ and $z'= a$.
    
    First, suppose $z'>a$. The set
    \begin{align*}
        S_{\rev(r)}^- \cap \set{0,\dots, \rev(\ell)} = \set{\rev(2^z + 2^\tau -1):\, \tau \in[z'+1, z-1]} \cup \set{\rev(2^\tau - 1):\, \tau\in[z+1,k]}.
    \end{align*}
    Thus, the first summation is given by
    \begin{align*}
        &\sum_{\tau\in[z+1,k]}\lambda_{\rev(2^\tau -1), \rev(r)} + \sum_{\tau\in[z'+1,z-1]}\lambda_{\rev(2^z+2^\tau -1), \rev(r)}\\
        &\qquad= \sum_{\tau\in[z+1,k-1]}\left(\frac{1}{\sqrt{\mu_\tau-1}}-\frac{1}{\sqrt{\mu_{\tau+1}-1}}\right)\frac{\beta_{\nu(r+1)}}{\sqrt{\mu_{z+1}-1}} + \frac{1}{\sqrt{\mu_{k}-1}}\frac{\beta_{\nu(r+1)}}{\sqrt{\mu_{z+1}-1}}\\
        &\qquad\qquad+ \sum_{\tau\in[z'+1,z-1]}\lambda_{\rev(2^z+2^\tau -1), \rev(r)}\\
        &\qquad=\frac{\beta_{\nu(r+1)}}{\mu_{z+1}-1} + \sum_{\tau\in[z'+1,z-1]}\frac{(1+\sqrt{2})^{2(z-\tau)-1}}{\mu_{z+1}-1}\beta_{\nu(r+1)}\\
        &\qquad = \frac{\beta_{\nu(r+1)}}{\mu_{z+1}-1}\left(1+\sum_{\tau\in[z'+1,z-1]}(1+\sqrt{2})^{2(z-\tau)-1}\right)\\
        &\qquad = \begin{cases}
            \frac{\beta_{\nu(r+1)}}{\mu_{z+1}-1} & \text{if }z=z'+1\\
            \frac{\beta_{\nu(r+1)}}{2(\mu_{z+1}-1)}\left((1+\sqrt{2})^{2(z-z'-1)}+1\right)&\text{else}.
        \end{cases}
    \end{align*}
    
    The only possible nonzero term in the second summation occurs in row $\rev(2^z-1)$. Thus, the second summation is equal to 
    \begin{align*}
        \sum_{i=\rev(r)+1}^{2^{k+1}-1}\lambda_{i,\rev(\ell)} &= \lambda_{\rev(2^z-1), \rev(\ell)}\\
        &= \begin{cases}
        0 & \text{if }z=z'+1\\
        \frac{\beta_{\nu(\ell+1)}}{2(\mu_{z+1}-1)}\left((1+\sqrt{2})^{2(z-z'-1)}-1\right)& \text{else} .
        \end{cases}
    \end{align*}
    
    We also have that
    \begin{gather*}
        \lambda_{\rev(\ell),\rev(r)} = \lambda_{\rev(r),\rev(\ell)} = 0
    \end{gather*}
    in this case.
    
    In both cases, plugging our expressions for the two summations back into our expression for $M_{\rev(r),\rev(\ell)}$ shows that
    \begin{align*}
        M_{\rev(r),\rev(\ell)} = \frac{\beta_{\nu(r+1)}\beta_{\nu(\ell+1)}}{2(\mu_{z(r)+1}-1)}
    \end{align*}
    as desired.

    Now, suppose $z'=a$ and let $z'' = z(\ell - 2^z - 2^{z'})$.
    We further distinguish two cases: where $z'' = \nu(\ell+1)$ and where $z'' > \nu(\ell + 1)$.
    
    If $z'' = \nu(\ell + 1)$, then $\ell = 2^z + 2^{z'} + 2^{z''} - 1$ and
    \begin{align*}
        S^-_{\rev(r)} \cap \set{0,\dots, \rev(\ell)}  &=\set{\rev(2^z + 2^{z'} + 2^\tau - 1):\, \tau\in[z'',z'-1]}\\
        &\qquad \cup \set{\rev(2^z + 2^\tau - 1):\, \tau\in[z'+1,z-1]}\\
        &\qquad \cup \set{\rev(2^\tau -1) :\, \tau \in[z+1,k]}.
    \end{align*}
    Thus,
    \begin{align*}
        \sum_{i=0}^{\rev(\ell)}\lambda_{i,\rev(r)} &= \sum_{\tau = z''}^{z'-1}\lambda_{\rev(2^z+2^{z'} + 2^\tau - 1),\rev(r)} + \sum_{\tau = z'+1}^{z-1} \lambda_{\rev(2^z + 2^\tau -1),\rev(r)} + \sum_{\tau = z+1}^k \lambda_{\rev(2^\tau - 1),\rev(r)}\\
        &= \sum_{\tau = z''}^{z'-1}\frac{(1+\sqrt{2})^{2(z-\tau) - 3}}{\mu_{z+1} - 1} + \sum_{\tau = z'+1}^{z-1}\frac{(1+\sqrt{2})^{2(z-\tau) - 1}}{\mu_{z+1} - 1}\beta_{\nu(r+1)} \\
        &\qquad\qquad + \sum_{\tau = z+1}^{k-1}\left(\frac{1}{\sqrt{\mu_\tau -1}} - \frac{1}{\sqrt{\mu_{\tau + 1} -1}}\right)\frac{\beta_{\nu(r+1)}}{\sqrt{\mu_{z+1} - 1}} + \frac{\beta_{\nu(r+1)}}{\sqrt{\mu_k - 1}\sqrt{\mu_{z+1} - 1}}\\
        &= \frac{(1+\sqrt{2})^{2z - 2}}{2(\mu_{z+1} - 1)}\left((1+\sqrt{2})^{-2z''} - (1+\sqrt{2})^{-2z'}\right)  + \frac{\beta_{\nu(r+1)}}{\mu_{z+1} - 1}\\
        &\qquad\qquad +
        \begin{cases}
        0 & \text{if }z= z'+1\\
        \frac{\beta_{\nu(r+1)}}{2(\mu_{z+1} - 1)}\left((1+\sqrt{2})^{2(z-z'-1)} - 1\right)& \text{else}
        \end{cases}
.    \end{align*}

    The only possible nonzero term in the second summation is
    \begin{align*}
        \lambda_{\rev(2^z - 1), \rev(\ell)} &= \begin{cases}
        0 & \text{if } z = z'+1\\
        \frac{\beta_{\nu(\ell+1)}}{2(\mu_{z+1} - 1)}\left((1+\sqrt{2})^{2(z-z'-1)} - 1\right) & \text{else}.
        \end{cases}
    \end{align*}
    
    We deduce that regardless of whether $z = z'+1$, that
    \begin{align}
        \label{eq:M22_base_case_intermediate}
        &\beta_{\nu(\ell+1)}\sum_{i=0}^{\rev(\ell)}\lambda_{i,\rev(r)}-\beta_{\nu(r+1)}\sum_{i= \rev(r) + 1}^{2^{k+1}-1} \lambda_{i,\rev(\ell)}\\
        &\qquad\qquad= \beta_{\nu(\ell+1)}\frac{(1+\sqrt{2})^{2z - 2}}{2(\mu_{z+1} - 1)}\left((1+\sqrt{2})^{-2z''} - (1+\sqrt{2})^{-2z'}\right)  + \frac{\beta_{\nu(r+1)}\beta_{\nu(\ell+1)}}{\mu_{z+1} - 1}.
    \end{align}
    
    We also compute
    \begin{align*}
        \lambda_{\rev(r),\rev(\ell)} &= \begin{cases}
        \frac{(1+\sqrt{2})^{2(z - z')-3+z'}}{\mu_{z+1} - 1} &\text{if } z' = z'' + 1\\
        \frac{(1+\sqrt{2})^{2(z - z')-1}}{\mu_{z+1}-1}\left(\frac{1}{2}(1+\sqrt{2})^{2(z'-z'')-3}\beta_{z''+2} - \frac{\beta_{z''}}{2(1+\sqrt{2})}\right) &\text{else},
        \end{cases}\\
        \lambda_{\rev(\ell),\rev(r)} &= \frac{(1+\sqrt{2})^{2(z - z'')-3}}{\mu_{z+1} - 1}.
    \end{align*}
    
    One may check (see \nextmathematica) that the two expressions for $\lambda_{\rev(r),\rev(\ell)}$ coincide when $z' = z'' + 1$, thus we may take the second expression even when $z' = z'' + 1$. Finally, subtracting $\lambda_{\rev(r),\rev(\ell)}$ and $\lambda_{\rev(\ell),\rev(r)}$ from \eqref{eq:M22_base_case_intermediate} gives
    \begin{align*}
        M_{\rev(r),\rev(\ell)}&=\frac{1}{2}\bigg(\beta_{z''}\frac{(1+\sqrt{2})^{2z - 2}}{2(\mu_{z+1} - 1)}\left((1+\sqrt{2})^{-2z''} - (1+\sqrt{2})^{-2z'}\right)  + \frac{\beta_{\nu(r+1)}\beta_{\nu(\ell+1)}}{\mu_{z+1} - 1}\\
        &\qquad\qquad - \frac{(1+\sqrt{2})^{2(z - z'')-3}}{\mu_{z+1} - 1} - \frac{(1+\sqrt{2})^{2(z - z')-1}}{\mu_{z+1}-1}\left(\frac{1}{2}(1+\sqrt{2})^{2(z'-z'')-3}\beta_{z''+2} - \frac{\beta_{z''}}{2(1+\sqrt{2})}\right)\bigg)\\
        &= \frac{1}{2}\frac{\beta_{\nu(r+1)}\beta_{\nu(\ell+1)}}{\mu_{z+1} - 1},
    \end{align*}
    as desired (see \nextmathematica).
    
    The final case is when $z'=a$ and $z'' > \nu(\ell+1)$.
    In this case
    \begin{align*}
        S^-_{\rev(r)} \cap \set{0,\dots, \rev(\ell)}  &=\set{\rev(2^z + 2^{z'} + 2^\tau - 1):\, \tau\in[z''+1,z'-1]}\\
        &\qquad \cup \set{\rev(2^z + 2^\tau - 1):\, \tau\in[z'+1,z-1]}\\
        &\qquad \cup \set{\rev(2^\tau -1) :\, \tau \in[z+1,k]}.
    \end{align*}
    Thus,
    \begin{align*}
    \sum_{i=0}^{\rev(\ell)}\lambda_{i,\rev(r)} &= \sum_{\tau=z''+1}^{z'-1} \lambda_{\rev(2^z +2^{z'} + 2^\tau - 1), \rev(r)} + \sum_{\tau=z'+1}^{z-1} \lambda_{\rev(2^z + 2^\tau -1), \rev(r)} + \sum_{\tau = z+1}^k \lambda_{\rev(2^\tau - 1), \rev(r)}\\
    &= \sum_{\tau=z''+1}^{z'-1} \frac{(1+\sqrt{2})^{2(z-\tau)-3}}{\mu_{z+1} - 1} + \sum_{\tau = z'+1}^{z-1}\frac{(1+\sqrt{2})^{2(z- \tau)-1}}{\mu_{z+1} - 1}\beta_{\nu(r+1)} + \frac{\beta_{\nu(r+1)}}{\mu_{z+1} - 1}.
    \end{align*}
    
    The only possible nonzero term in $S^+_{\rev(\ell)}\cap\set{\rev(r)+1,\dots,2^{k+1}-1}$ is
    \begin{align*}
        \lambda_{\rev(2^z - 1), \rev(\ell)} &= \begin{cases}
        0 & \text{if } z = z'+1\\
        \frac{\beta_{\nu(\ell+1)}}{2(\mu_{z+1} - 1)}\left((1+\sqrt{2})^{2(z-z'-1)} - 1\right) & \text{else}.
        \end{cases}
    \end{align*}
    
    We also compute
    \begin{gather*}
        \lambda_{\rev(r),\rev(\ell)} = \begin{cases}
        0 & \text{if } z' = z'' + 1\\
        \frac{(1+\sqrt{2})^{2(z-z'-1)}}{2(\mu_{z+1} - 1)}\left((1+\sqrt{2})^{2(z'-z'') - 2} - 1\right)\beta_{\nu(\ell+1)} & \text{else}
        \end{cases},\\
        \lambda_{\rev(\ell),\rev(r)} = 0.
    \end{gather*}
    
    Combining these expressions with our expression for $M_{\rev(r),\rev(\ell)}$ gives
    \begin{align}
        M_{\rev(r),\rev(\ell)} &= \frac{1}{2}\Bigg(
        \beta_{\nu(\ell+1)}\left(\sum_{\tau=z''+1}^{z'-1} \frac{(1+\sqrt{2})^{2(z-\tau)-3}}{\mu_{z+1} - 1} + \beta_{z'}\sum_{\tau = z'+1}^{z-1}\frac{(1+\sqrt{2})^{2(z- \tau)-1}}{\mu_{z+1} - 1}+ \frac{\beta_{\nu(r+1)}}{\mu_{z+1} - 1}\right)\nonumber\\
        &\qquad\qquad- \beta_{z'} \lambda_{\rev(2^z - 1), \rev(\ell)}
        - \lambda_{\rev(r),\rev(\ell)}
        \Bigg)\nonumber\\
        &= \frac{\beta_{\nu(\ell+1)}\beta_{\nu(r+1)}}{2(\mu_{z+1} - 1)} + \frac{1}{2}\left[\beta_{\nu(\ell+1)}\sum_{\tau=z''+1}^{z'-1} \frac{(1+\sqrt{2})^{2(z-\tau)-3}}{\mu_{z+1} - 1} - \lambda_{\rev(r),\rev(\ell)}\right] \label{eq:M22_base_case_pr2_pell4}\\
        &\qquad\qquad +\frac{\beta_{z'}}{2}\left[\beta_{\nu(\ell+1)}\sum_{\tau = z'+1}^{z-1}\frac{(1+\sqrt{2})^{2(z- \tau)-1}}{\mu_{z+1} - 1} - \lambda_{\rev(2^z - 1), \rev(\ell)}\right].\nonumber
    \end{align}
    It remains to show that the two square-bracketed terms in \eqref{eq:M22_base_case_pr2_pell4} are zero.
    
    For the first square-bracketed term, note that if $z' = z''+1$, then the summation is empty and $\lambda_{\rev(r),\rev(\ell)}=0$. Else, the square-bracketed term is
    \begin{align*}
        \frac{\beta_{\nu(\ell+1)}}{2(\mu_{z+1} - 1)}\left(
        \sum_{\tau=z''+1}^{z'-1} 2(1+\sqrt{2})^{2(z-\tau)-3} - (1+\sqrt{2})^{2(z-z'-1)}\left((1+\sqrt{2})^{2(z'-z'') - 2} - 1\right)\right).
    \end{align*}
    This is identically zero as can be seen in \nextmathematica.
    
    For the second square-bracketed term, note that if $z=z'+1$, then summation is empty and $\lambda_{\rev(2^z - 1), \rev(\ell)}=0$. Else, the square-bracketed term is
    \begin{align*}
        \frac{\beta_{\nu(\ell+1)}}{2(\mu_{z+1} - 1)}\left(\sum_{\tau = z'+1}^{z-1}2(1+\sqrt{2})^{2(z- \tau)-1} - (1+\sqrt{2})^{2(z-z'-1)} + 1\right).
    \end{align*}
    This is identically zero as can be seen in \nextmathematica.
        
\end{proof}

\begin{lemma}
    \label{lem:M_22}
Suppose $0\leq r < \ell < 2^k-1$ and neither $r+1$ nor $\ell+1$ a power of $2$. Then,
\[
    M_{\rev(\ell),\rev(r)} = \frac{1}{2}\phi_{\rev(r)}\phi_{\rev(\ell)}.
\]
\end{lemma}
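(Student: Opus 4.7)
The proof will parallel the inductive structure of Lemma~\ref{lem:M_11}, with the two ``base'' cases already in hand: Lemma~\ref{lem:supportSimp2} handles $z(r)\neq z(\ell)$, and Lemma~\ref{lem:base_case22} handles $\min(p(r),p(\ell))=2$. So I induct on $\min(p(r),p(\ell))$ and treat the case $p(r),p(\ell)\geq 3$ with $z(r)=z(\ell)=z$. Write $r=2^z+r''$ and $\ell=2^z+\ell''$, so that $p(r''),p(\ell'')\geq 2$ and neither $r''+1$ nor $\ell''+1$ is a power of two. Split into two sub-cases according to whether $z(r'')=z(\ell'')$.

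If $z(r'')\neq z(\ell'')$, the supports described by \cref{lem:nonzero_entries_above} (and the corresponding characterization of $S^+_j$ from \cref{subsub:col_support_lemmas}) collapse the four summations in \cref{lem:M_entries} down to a handful of nonzero terms, localized at the boundary indices $\rev(2^z-1)$ and $\rev(2^{z+1}-1)$. Substituting the explicit Case~2 and Case~5 formulas for these boundary entries and simplifying directly yields the claim.

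If $z(r'')=z(\ell'')=z^*<z$, I would apply \cref{lem:M_entries} to both $M_{\rev(\ell),\rev(r)}$ and $M_{\rev(\ell''),\rev(r'')}$, using the symmetry $h_{\rev(i)}=h_i=\beta_{\nu(i+1)}$ (valid since neither $r+1$ nor $\ell+1$ is a power of two). Decompose the summations defining $M_{\rev(\ell),\rev(r)}$ into an ``interior'' part indexed by points of the form $\rev(i)-2^z$ (matching the support appearing in $M_{\rev(\ell''),\rev(r'')}$) plus boundary terms at indices such as $\rev(2^z-1)$, $\rev(2^{z^*+1}-1)$, and $\rev(2^{z^*+1}+2^z-1)$. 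The scaling identity of \cref{lem:case_2_scaling} rescales the interior contribution, yielding
\[
M_{\rev(\ell),\rev(r)} = (1+\sqrt{2})^{2(z-z^*-1)}\,\frac{\mu_{z^*+1}-1}{\mu_{z+1}-1}\, M_{\rev(\ell''),\rev(r'')} + \tfrac{1}{2}(\text{boundary}).
\]
Invoking the inductive hypothesis and using the elementary identity $\phi_{\rev(r)}\phi_{\rev(\ell)}/\bigl(\phi_{\rev(r'')}\phi_{\rev(\ell'')}\bigr) = (\mu_{z^*+1}-1)/(\mu_{z+1}-1)$ then reduces the claim to verifying the boundary-term identity
\[
(\text{boundary}) \;=\; \frac{\beta_{\nu(r+1)}\beta_{\nu(\ell+1)}}{\mu_{z+1}-1}\bigl(1-(1+\sqrt{2})^{2(z-z^*-1)}\bigr).
\]

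The main obstacle will be verifying this boundary-term identity. In Lemma~\ref{lem:M_11} both sides of the analogous step vanish, so the extra factor $(1+\sqrt{2})^{2(z-z^*-1)}$ coming from \cref{lem:case_2_scaling} (or \cref{lem:recurrence}) is essentially invisible; here the nonzero target $\tfrac12\phi_{\rev(\ell)}\phi_{\rev(r)}$ means the mismatch between the $\lambda$-scaling and the $\phi\phi$-scaling must be supplied exactly by a delicately choreographed combination of Case~2 and Case~5 boundary contributions. These identities should follow by direct substitution of the closed forms of \cref{lem:sqrtmu} and \cref{lem:relating_muprev_to_mu_and_beta} together with the recursive definitions of $\sigma^{(k)}$ and $w_k$; as throughout \cref{sec:lambdaM}, the final algebraic verifications will likely be offloaded to Mathematica.
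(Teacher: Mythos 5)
Your plan follows the paper's proof in all structural respects: induction on $\min(p(r),p(\ell))$ with base case Lemma~\ref{lem:base_case22}, Lemma~\ref{lem:supportSimp2} to dispatch $z(r)\neq z(\ell)$, a direct computation when $z(r'')\neq z(\ell'')$, and an application of Lemma~\ref{lem:case_2_scaling} together with the inductive hypothesis when $z(r'')=z(\ell'')$. Your $\phi$-ratio identity is correct, and the boundary target $\frac{\beta_{\nu(r+1)}\beta_{\nu(\ell+1)}}{\mu_{z+1}-1}\bigl(1-(1+\sqrt{2})^{2(z-z^*-1)}\bigr)$ is indeed what must hold; your $M = c\cdot M'' + \tfrac12(\text{boundary})$ formulation is arguably cleaner than the paper's own presentation (which first rearranges the inductive hypothesis and then matches the sums piece by piece), but the two are equivalent bookkeeping.

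What you do not supply is the verification of that boundary identity, and this is where the weight of the proof actually sits. Two cautions. First, the ``boundary'' in your decomposition must absorb not only a handful of isolated $\lambda$-entries but also the telescoping block of $\sum_i\lambda_{i,\rev(r)}$ over rows $i$ with $z(2^{k+1}-2-i)>z$; these terms do not rescale by the factor from Lemma~\ref{lem:case_2_scaling}, and you must collapse them with Lemma~\ref{lem:telescoping} instead. Second, even with that done, the remaining boundary rows (e.g., $\rev(2^z-1)$ against $\rev(\ell)$, $\rev(2^z+2^{z^*}-1)$, and the row $\rev(2^{z^*}-1)$ imported from the shifted problem) must still be treated in two sub-cases, $z^*=z-1$ versus $z^*<z-1$, because some of these entries vanish in the former case — your indicated indices do not quite match what actually appears. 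Once those pieces are assembled, the final substitution of the explicit $\lambda$ values is mechanical, as you say, but ``should follow by direct substitution'' is not a proof; the paper carries this out in full.
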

\begin{proof}
We induct on $\min(p(r),p(\ell))$. The base case in which $\min(p(r), p(\ell)) = 2$ is given in \Cref{lem:base_case22}.

Now, suppose $\min(p(r),p(\ell))\geq 3$. If $z(\ell) \neq z(r)$, then we may directly apply \Cref{lem:supportSimp2}.

In the remainder of the proof, we may assume without loss of generality that $z\coloneqq z(\ell)=z(r)$ and $\min(p(r),p(\ell))\geq 3$.

Let $\ell'= \ell-2^z$ and $r'= r-2^z$. We will handle the case $z(\ell')\neq z(r')$ directly. In this case
\begin{align*}
    &M_{\rev(\ell),\rev(r)}\\
    &\qquad= \frac{1}{2}\bigg(
    \beta_{\nu(\ell+1)}\sum_{i=0}^{\rev(\ell)} \lambda_{i,\rev(r)}-\beta_{\nu(r+1)}\sum_{i= \rev(r) +1}^{2^{k+1}-1} \lambda_{i,\rev(\ell)}\\
    &\qquad\qquad\qquad -\lambda_{\rev(\ell),\rev(r)}-\lambda_{\rev(r),\rev(\ell)}\bigg).
\end{align*}
The entries $\lambda_{\rev(r),\rev(\ell)}$ and $\lambda_{\rev(\ell),\rev(r)}$ are both zero. The relevant indices in the first summation are given by the two sets
\begin{align*}
    \set{\rev(2^z + 2^\tau - 1:\, \tau\in[z(\ell')+1, z-1]} \cup \set{\rev(2^\tau - 1):\, \tau\in[z+1,k]}.
\end{align*}
The only nonzero entry in the second summation occurs at entry $\lambda_{\rev(2^z-1),\rev(\ell)}$. Thus,
\begin{align*}
    M_{\rev(r),\rev(\ell)}
    &=\frac{\beta_{\nu(\ell+1)}}{2}\sum_{\tau = z(\ell')+1}^{z-1} \lambda_{\rev(2^z+2^\tau-1), \rev(r)} + \frac{\beta_{\nu(\ell+1)}}{2}\sum_{\tau=z+1}^k \lambda_{\rev(2^\tau - 1), \rev(r)}\\
    &\qquad - \frac{\beta_{\nu(r+1)}}{2}\lambda_{\rev(2^z-1),\rev(\ell)}\\
    &= \frac{\beta_{\nu(\ell+1)}\beta_{\nu(r+1)}}{2(\mu_{z+1} - 1)} + \frac{\beta_{\nu(r+1)}}{2}\left(\frac{\beta_{\nu(\ell+1)}}{\mu_{z+1} - 1}\sum_{\tau = z(\ell')+1}^{z-1} (1+\sqrt{2})^{2(z-\tau) - 1} - \lambda_{\rev(2^z-1),\rev(\ell)}\right).
\end{align*}
Here, we have used \Cref{lem:telescoping} to simplify the second summation on the first line. The goal is to now show that the term in parentheses is zero. When $z(\ell') = z-1$, the summation is empty and the term $\lambda_{\rev(2^z-1),\rev(\ell)}$ is zero. On the other hand, if $z(\ell') < z-1$, then
\begin{align*}
    \lambda_{\rev(2^z - 1), \rev(\ell)} = \frac{\beta_{\nu(\ell+1)}}{2(\mu_{z+1} - 1)}\left((1+\sqrt{2})^{2(z-z(\ell')-1)}-1 \right).
\end{align*}
Substituting this expression shows that the term in parentheses is zero (see \nextmathematica).

Thus, in the remainder we may additionally assume $z'\coloneqq z(\ell')=z(r')$.
We will repeatedly use the fact that $\nu(r+1)= \nu(2^z + r' + 1 )= \nu(r'+1)$ and similarly for $\ell$ and $\ell'$.

By the inductive hypothesis, it holds that 
\begin{align*}
    &\frac{\beta_{\nu(r+1)}\beta_{\nu(\ell+1)}}{2(\mu_{z'+1}-1)}\\
    &\qquad = M_{\rev(\ell'), \rev(r')}\\
    &\qquad=\frac{1}{2}\bigg(
        \beta_{\nu(\ell+1)}\sum_{i=2^k-1}^{\rev(\ell')} \lk_{i,\rev(r)}-\beta_{\nu(r+1)}\sum_{i= \rev(r)+1}^{2^{k+1}-2^{z'}-1} \lk_{i,\rev(\ell')}\\
        &\qquad\qquad\qquad -\lk_{\rev(\ell'),\rev(r')}-\lk_{\rev(r'),\rev(\ell')}\bigg)\\
    &\qquad=\frac{1}{2}\bigg(
        \beta_{\nu(\ell+1)}\sum_{i=2^k-1}^{2^{k+1}-2^{z'+1}-1} \lk_{i,\rev(r')}+\beta_{\nu(\ell+1)}\sum_{i=2^{k+1}-2^{z'+1}}^{\rev(\ell')} \lk_{i,\rev(r')}\\
        &\qquad\qquad\qquad-\beta_{\nu(r+1)}\sum_{i= 2^{k+1}-r' - 1}^{2^{k+1}-2^{z'}-2} \lk_{i,\rev(\ell')}-\beta_{\nu(r+1)}\lk_{2^{k+1}-2^{z'}-1,\rev(\ell')}\\
        &\qquad\qquad\qquad -\lk_{\rev(\ell'),\rev(r')}-\lk_{\rev(r'),\rev(\ell')}\bigg)\\
    &\qquad=\frac{1}{2}\bigg(
        \frac{\beta_{\nu(\ell+1)}\beta_{\nu(r+1)}}{\mu_{z'+1}-1}+\beta_{\nu(\ell+1)}\sum_{i=2^{k+1}-2^{z'+1}}^{\rev(\ell')} \lk_{i,\rev(r')}\\
        &\qquad\qquad\qquad-\beta_{\nu(r+1)}\sum_{i= 2^{k+1}-r' - 1}^{2^{k+1}-2^{z'}-2} \lk_{i,\rev(\ell')}-\beta_{\nu(r+1)}\lk_{2^{k+1}-2^{z'}-1,\rev(\ell')}\\
        &\qquad\qquad\qquad -\lk_{\rev(\ell'),\rev(r')}-\lk_{\rev(r'),\rev(\ell')}\bigg).
\end{align*}
Here, we have used the fact that 
\[\sum_{i=2^k-1}^{2^{k+1}-2^{z'+1}-1} \lk_{i,2^{k+1}-2-r'} = 
 \frac{\beta_{\nu(r'+1)}}{\mu_{z'+1}-1},
\]
from \Cref{lem:telescoping}.

Rearranging this identity, we have that
\begin{align*}
    &\beta_{\nu(\ell+1)}\sum_{i=2^{k+1}-2^{z'+1}}^{2^{k+1}-2-\ell'} \lk_{i,2^{k+1}-2-r'}-\beta_{\nu(r+1)}\sum_{i= 2^{k+1}-r' - 1}^{2^{k+1}-2^{z'}-2} \lk_{i,2^{k+1}-2-\ell'}\\
    &\qquad\qquad = \beta_{\nu(r+1)}\lk_{2^{k+1}-2^{z'}-1,2^{k+1}-2-\ell'} + \lk_{2^{k+1}-2-\ell',2^{k+1}-2-r'}+\lk_{2^{k+1}-2-r',2^{k+1}-2-\ell'}.
\end{align*}

We will now rearrange terms in the summation for $M_{\rev(\ell), \rev(r)}$:
\begin{align*}
    &M_{\rev(\ell), \rev(r)}\\
    &\qquad=\frac{1}{2}\bigg(
        \beta_{\nu(\ell+1)}\sum_{i=2^k-1}^{2^{k+1}-2^{z}-2^{z'+1}-1} \lk_{i,2^{k+1}-2-r}+\beta_{\nu(\ell+1)}\sum_{i=2^{k+1}-2^{z}-2^{z'+1}}^{2^{k+1}-2-\ell} \lk_{i,2^{k+1}-2-r}\\
    &\qquad\qquad\qquad-\beta_{\nu(r+1)}\sum_{i= 2^{k+1}-r - 1}^{2^{k+1}-2^{z}-2^{z'}-2} \lk_{i,2^{k+1}-2-\ell}-\beta_{\nu(r+1)}\sum_{i= 2^{k+1}-2^z-2^{z'} - 1}^{2^{k+1}-2^{z}-1} \lk_{i,2^{k+1}-2-\ell}\\
    &\qquad\qquad\qquad -\lk_{2^{k+1}-2-\ell,2^{k+1}-2-r}-\lk_{2^{k+1}-2-r,2^{k+1}-2-\ell}\bigg)\\
    &\qquad= \bigg[\frac{1}{2}\bigg(
        \beta_{\nu(\ell+1)}\sum_{i=2^{k+1}-2^{z}-2^{z'+1}}^{2^{k+1}-2-\ell} \lk_{i,2^{k+1}-2-r}-\beta_{\nu(r+1)}\sum_{i= 2^{k+1}-r - 1}^{2^{k+1}-2^{z}-2^{z'}-2} \lk_{i,2^{k+1}-2-\ell}\\
        &\qquad\qquad\qquad -\lk_{2^{k+1}-2-\ell,2^{k+1}-2-r}-\lk_{2^{k+1}-2-r,2^{k+1}-2-\ell}\bigg)\bigg]\\
        &\qquad\qquad\qquad+
    \left[\frac{\beta_{\nu(\ell+1)}}{2}\sum_{i=2^k-1}^{2^{k+1}-2^{z}-2^{z'+1}-1} \lk_{i,2^{k+1}-2-r}\right]-\left[\frac{\beta_{\nu(r+1)}}{2}\sum_{i= 2^{k+1}-2^z-2^{z'} - 1}^{2^{k+1}-2^{z}-1} \lk_{i,2^{k+1}-2-\ell}\right].
\end{align*}
We deal with the first square-bracketed term above. Applying \cref{lem:case_2_scaling} and the identity we get from the inductive hypothesis, we may simplify this term to
\begin{align*}
    &\frac{1}{2}\bigg(
        \beta_{\nu(\ell+1)}\sum_{i=2^{k+1}-2^{z}-2^{z'+1}}^{2^{k+1}-2-\ell} \lk_{i,2^{k+1}-2-r}-\beta_{\nu(r+1)}\sum_{i= 2^{k+1}-r - 1}^{2^{k+1}-2^{z}-2^{z'}-2} \lk_{i,2^{k+1}-2-\ell}\\
        &\qquad\qquad\qquad -\lk_{2^{k+1}-2-\ell,2^{k+1}-2-r}-\lk_{2^{k+1}-2-r,2^{k+1}-2-\ell}\bigg)\\
    &\qquad= \frac{(1+\sqrt{2})^{2(z-z'-1)}(\mu_{z'+1}-1)}{2(\mu_{z+1}-1)}\bigg(
        \beta_{\nu(\ell+1)}\sum_{i=2^{k+1}-2^{z'+1}}^{2^{k+1}-2-\ell'} \lk_{i,2^{k+1}-2-r'}-\beta_{\nu(r+1)}\sum_{i= 2^{k+1}-r' - 1}^{2^{k+1}-2^{z'}-2} \lk_{i,2^{k+1}-2-\ell'}\\
        &\qquad\qquad\qquad -\lk_{2^{k+1}-2-\ell',2^{k+1}-2-r'}-\lk_{2^{k+1}-2-r',2^{k+1}-2-\ell'}\bigg)\\
    &\qquad= \frac{(1+\sqrt{2})^{2(z-z'-1)}(\mu_{z'+1}-1)}{2(\mu_{z+1}-1)}\left(\beta_{\nu(r+1)}\lk_{2^{k+1}-2^{z'}-1,2^{k+1}-2-\ell'}\right).
\end{align*}

The third square-bracketed term is
\begin{align*}
    &\frac{\beta_{\nu(r+1)}}{2}\sum_{i= 2^{k+1}-2^z-2^{z'} - 1}^{2^{k+1}-2^{z}-1} \lk_{i,2^{k+1}-2-\ell}\\
    &\qquad = \frac{\beta_{\nu(r+1)}}{2}\left(\lk_{2^{k+1} - 2^z - 1, 2^{k+1} - 2 - \ell} + \lk_{2^{k+1} - 2^z - 2^{z'} - 1, 2^{k+1} - 2 - \ell}\right).
\end{align*}

There are two cases for the second term: either $z' = z - 1$ or $z' < z - 1$.

Suppose $z'=z-1$. Then, the second term is
\begin{align*}
    \frac{\beta_{\nu(\ell+1)}}{2}\sum_{i=2^k-1}^{2^{k+1}-2^{z+1}-1} \lk_{i,2^{k+1}-2-r} &= \frac{\beta_{\nu(\ell+1)}\beta_{\nu(r+1)}}{2(\mu_{z+1} - 1)}
\end{align*}
and $\lk_{2^{k+1} - 2^z - 1, 2^{k+1} - 2 - \ell}=0$.
Thus, summing up all of our terms gives
\begin{align*}
    &\frac{(1+\sqrt{2})^{2(z-z'-1)}(\mu_{z'+1}-1)}{2(\mu_{z+1}-1)}\left(\beta_{\nu(r+1)}\lk_{2^{k+1}-2^{z'}-1,2^{k+1}-2-\ell'}\right) + \frac{\beta_{\nu(\ell+1)}\beta_{\nu(r+1)}}{2(\mu_{z+1} - 1)}\\
    &\qquad\qquad - \frac{\beta_{\nu(r+1)}}{2}\left(\lk_{2^{k+1} - 2^z - 1, 2^{k+1} - 2 - \ell} + \lk_{2^{k+1} - 2^z - 2^{z'} - 1, 2^{k+1} - 2 - \ell}\right)\\
    &\qquad = \frac{\beta_{\nu(\ell+1)}\beta_{\nu(r+1)}}{2(\mu_{z+1} - 1)}.
\end{align*}

Now, suppose $z' < z - 1$.
Then, the second term is
\begin{align*}
    \frac{\beta_{\nu(\ell+1)}}{2}\sum_{i=2^k-1}^{2^{k+1}-2^{z}-2^{z'+1}-1} \lk_{i,2^{k+1}-2-r} &= \frac{\beta_{\nu(\ell+1)}\beta_{\nu(r+1)}}{4(\mu_{z+1} - 1)}\left(1 + (1+\sqrt{2})^{2(z-z'-1)}\right)
\end{align*}
and
\begin{align*}
    \lk_{2^{k+1} - 2^z - 1, 2^{k+1} - 2 - \ell} &= \frac{\beta_{\nu(\ell+1)}}{2(\mu_{z+1} - 1)}\left((1+\sqrt{2})^{2(z-z'-1)} - 1\right).
\end{align*}
Summing up all terms gives
\begin{align*}
    &\frac{(1+\sqrt{2})^{2(z-z'-1)}(\mu_{z'+1}-1)}{2(\mu_{z+1}-1)}\left(\beta_{\nu(r+1)}\lk_{2^{k+1}-2^{z'}-1,2^{k+1}-2-\ell'}\right)+ \frac{\beta_{\nu(\ell+1)}\beta_{\nu(r+1)}}{4(\mu_{z+1} - 1)}\left(1 + (1+\sqrt{2})^{2(z-z'-1)}\right)\\
    &\qquad\qquad - \frac{\beta_{\nu(r+1)}}{2}\left(\lk_{2^{k+1} - 2^z - 1, 2^{k+1} - 2 - \ell} + \lk_{2^{k+1} - 2^z - 2^{z'} - 1, 2^{k+1} - 2 - \ell}\right)\\
    &\qquad=\frac{\beta_{\nu(r+1)}}{2(\mu_{z+1} - 1)}\left(1 + (1+\sqrt{2})^{2(z-z'-1)}\right) - \frac{\beta_{\nu(r+1)}}{2}\lk_{2^{k+1} - 2^z - 1, 2^{k+1} - 2 - \ell}\\
    &\qquad=\frac{\beta_{\nu(\ell+1)}\beta_{\nu(r+1)}}{4(\mu_{z+1} - 1)}\left(1 + (1+\sqrt{2})^{2(z-z'-1)}\right) - \frac{\beta_{\nu(\ell+1)}\beta_{\nu(r+1)}}{4(\mu_{z+1} - 1)}\left((1+\sqrt{2})^{2(z-z'-1)} - 1\right)\\
    &\qquad =\frac{\beta_{\nu(\ell+1)}\beta_{\nu(r+1)}}{2(\mu_{z+1} - 1)}.\qedhere
\end{align*}

\end{proof} 
\begin{thebibliography}{10}

\bibitem{Grimmer2023-long}
B.~Grimmer.
\newblock {Provably Faster Gradient Descent via Long Steps}.
\newblock {\em arxiv:2307.06324}, 2023.

\bibitem{gupta2023branch}
Shuvomoy~Das Gupta, Bart P.G.~Van Parys, and Ernest Ryu.
\newblock Branch-and-bound performance estimation programming: A unified
  methodology for constructing optimal optimization methods.
\newblock {\em Mathematical Programming}, 2023.

\bibitem{altschuler2023acceleration}
Jason~M. Altschuler and Pablo~A. Parrilo.
\newblock Acceleration by stepsize hedging i: Multi-step descent and the silver
  stepsize schedule, 2023.

\bibitem{Bertsekas2015ConvexOA}
Dimitri~P. Bertsekas.
\newblock Convex optimization algorithms.
\newblock 2015.

\bibitem{Teboulle2022}
Marc Teboulle and Yakov Vaisbourd.
\newblock An elementary approach to tight worst case complexity analysis of
  gradient based methods.
\newblock {\em Math. Program.}, 201(1–2):63–96, oct 2022.

\bibitem{taylor2017interpolation}
Adrien Taylor, Julien Hendrickx, and Fran\c{c}ois Glineur.
\newblock Smooth strongly convex interpolation and exact worst-case performance
  of first-order methods.
\newblock {\em Mathematical Programming}, 161:307–345, 2017.

\bibitem{nesterov-textbook}
Yurii Nesterov.
\newblock {\em Introductory Lectures on Convex Optimization: A Basic Course}.
\newblock Springer Publishing Company, Incorporated, 1 edition, 2014.

\bibitem{Olivier2014}
Olivier Devolder, Fran\c{c}ois Glineur, and Yurii Nesterov.
\newblock First-order methods of smooth convex optimization with inexact
  oracle.
\newblock {\em Math. Program.}, 146(1–2):37–75, aug 2014.

\bibitem{Smith2015CyclicalLR}
Leslie~N. Smith.
\newblock Cyclical learning rates for training neural networks.
\newblock {\em 2017 IEEE Winter Conference on Applications of Computer Vision
  (WACV)}, pages 464--472, 2015.

\bibitem{Smith2017SuperconvergenceVF}
Leslie~N. Smith and Nicholay Topin.
\newblock Super-convergence: very fast training of neural networks using large
  learning rates.
\newblock In {\em Defense + Commercial Sensing}, 2017.

\bibitem{altschuler2018greed}
Jason Altschuler.
\newblock Greed, hedging, and acceleration in convex optimization.
\newblock Master's thesis, Massachusetts Institute of Technology, 2018.

\bibitem{young1953}
David Young.
\newblock On richardson's method for solving linear systems with positive
  definite matrices.
\newblock {\em Journal of Mathematics and Physics}, 32(1-4):243--255, 1953.

\bibitem{Abbaszadehpeivasti2021TheEW}
Hadi Abbaszadehpeivasti, Etienne de~Klerk, and Moslem Zamani.
\newblock The exact worst-case convergence rate of the gradient method with
  fixed step lengths for l-smooth functions.
\newblock {\em Optimization Letters}, 16:1649 -- 1661, 2021.

\bibitem{drori2012PerformanceOF}
Yoel Drori and Marc Teboulle.
\newblock Performance of first-order methods for smooth convex minimization: a
  novel approach.
\newblock {\em Mathematical Programming}, 145:451--482, 2012.

\bibitem{taylor2017smooth}
Adrien Taylor, Julien Hendrickx, and Fran\c{c}ois Glineur.
\newblock Exact worst-case performance of first-order methods for composite
  convex optimization.
\newblock {\em SIAM Journal on Optimization}, 27(3):1283--1313, 2017.

\bibitem{mathematica}
Wolfram~Research{,} Inc.
\newblock Mathematica, {V}ersion 13.3.
\newblock Champaign, IL, 2023.

\bibitem{Drori2018EfficientFM}
Yoel Drori and Adrien Taylor.
\newblock Efficient first-order methods for convex minimization: a constructive
  approach.
\newblock {\em Mathematical Programming}, 184:183 -- 220, 2018.

\end{thebibliography}
\end{document}